\definecolor{labelkey}{gray}{.8}
\definecolor{refkey}{gray}{.8}
\DeclareSymbolFont{sfoperators}{OT1}{ptm}{m}{n}
\DeclareSymbolFontAlphabet{\mathsf}{sfoperators}
\def\operator@font{\mathgroup\symsfoperators}
\newcommand{\eqlaw}{\stackrel{\mbox{\tiny\rm law}}{=}}
\newcommand{\eqdef}{\stackrel{\mbox{\tiny\rm def}}{=}}
\newtheorem{thm}{Theorem}[section]
\newtheorem{defn}[thm]{Definition}
\newtheorem{lem}[thm]{Lemma}
\newtheorem{prop}[thm]{Proposition}
\newtheorem{cor}[thm]{Corollary}
\newtheorem{assumption}[thm]{Assumption}
\theoremstyle{remark}
\newtheorem{remark}[thm]{Remark}
\newtheorem{rmk}[thm]{Remark}
\DeclareMathOperator{\Cov}{Cov}
\DeclareMathOperator{\Span}{Span}
\DeclareMathOperator{\id}{id}
\DeclareMathOperator{\supp}{supp}
\def\th@newremark{\th@remark\thm@headfont{\bfseries}}
\def\bdiamond{\mathop{\mathpalette\bdi@mond\relax}}
\newcommand\bdi@mond[2]{%
\vcenter{\hbox{\m@th
\scalebox{\ifx#1\displaystyle 2.6\else1.8\fi}{$#1\diamond$}%
}}%
}
\def\bDiamond{\mathop{\mathpalette\bDi@mond\relax}}
\newcommand\bDi@mond[2]{%
\vcenter{\hbox{\m@th
\scalebox{\ifx#1\displaystyle 2.6\else1.2\fi}{$#1\DDDiamond$}%
}}%
}
\newcommand*{\bigcdot}{}
\DeclareRobustCommand*{\bigcdot}{%
  \mathord{\hspace{0.1em}\mathpalette\bigcdot@{}\hspace{0.1em}}%
}
\newcommand*{\bigcdot@scalefactor}{.5}
\newcommand*{\bigcdot@widthfactor}{1.15}
\newcommand*{\bigcdot@}[2]{%
  \sbox0{$#1\vcenter{}$}
  \sbox2{$#1\cdot\m@th$}%
  \hbox to \bigcdot@widthfactor\wd2{%
    \hfil
    \raise\ht0\hbox{%
      \scalebox{\bigcdot@scalefactor}{%
        \lower\ht0\hbox{$#1\bullet\m@th$}%
      }%
    }%
    \hfil
  }%
}
\newcommand{\EE}{\mathbb{E}}     
\newcommand{\PP}{\mathbb{P}}     
\newcommand{\aA}{\mathcal{A}}
\newcommand{\bB}{\mathcal{B}}
\newcommand{\cC}{\mathcal{C}}
\newcommand{\dD}{\mathcal{D}}
\newcommand{\fF}{\mathcal{F}}
\newcommand{\gG}{\mathcal{G}}
\newcommand{\iI}{\mathcal{I}}
\newcommand{\kK}{\mathcal{K}}
\newcommand{\lL}{\mathcal{L}}
\newcommand{\mM}{\mathcal{M}}
\newcommand{\oO}{\mathcal{O}}
\newcommand{\pP}{\mathcal{P}}
\newcommand{\qQ}{\mathcal{Q}}
\newcommand{\rR}{\mathcal{R}}
\newcommand{\sS}{\mathcal{S}}
\newcommand{\tT}{\mathcal{T}}
\newcommand{\xX}{\mathcal{X}}
\def\CO{\mathcal{O}}
\newcommand{\fa}{\mathfrak{a}}
\newcommand{\fb}{\mathfrak{b}}
\newcommand{\fc}{\nu}
\newcommand{\fX}{\mathfrak{X}}
\newcommand{\fK}{\mathfrak{K}}
\newcommand{\E}{\mathbf{E}}
\renewcommand{\P}{\mathbf{P}}
\newcommand{\N}{\mathbf{N}}
\newcommand{\R}{\mathbf{R}}
\newcommand{\T}{\mathbf{T}}
\newcommand{\Z}{\mathbf{Z}}
\newcommand{\0}{\mathbf{0}}
\newcommand{\one}{\mathbf{1}}
\newcommand{\s}{\mathbf{a}}
\renewcommand{\d}{\partial}
\newcommand{\z}{\mathbf{z}}
\newcommand{\eps}{\varepsilon}
\newcommand{\Laplace}{\Delta}
\newcommand{\md}{\mathrm{d}}
\newcommand{\zzz}{z}
\newcommand{\VVoneB}{V^{(1\mathrm{b})}_{s}}
\newcommand{\VVoneC}{V^{(1\mathrm{c})}_{s}}
\newcommand{\VVmodone}{V^{(1)}_{s}}
\newcommand{\VVmodoneA}{V^{(1\mathrm{a})}_{s}}
\newcommand{\VVmodoneB}{V^{(1\mathrm{b})}_{s}}
\newcommand{\VVmodoneC}{V^{(1\mathrm{c})}_{s}}
\newcommand{\VVmodtwo}{V^{(2)}_{s}}
\newcommand{\DDD}{D_{\eps,\ell}}
\newcommand{\DDDmodone}{\hat D^{(1)}_{s}}
\newcommand{\PHI}{\Phi_{\eps,\ell}}
\newcommand{\PHImod}{\Phi^{(H)}_{\eps,\ell}}
\newcommand{\PSI}{\Psi_{\eps,\ell,s}}
\newcommand{\PSImod}{\tilde\Psi_{\eps,\ell,s}}
\newcommand{\weight}{\mathtt{w}}
\def\rmw{\weight}
\newcommand{\VERT}[1]{{\lvert\kern-0.28ex\lvert\kern-0.28ex\lvert #1
    \rvert\kern-0.28ex\rvert\kern-0.28ex\rvert}}
\let\f\frac
\newcommand{\ou}{\<1_black> }
\newcommand{\oud}{\<2_black> }
\newcommand{\out}{\<3_black>}
\newcommand{\outz}{\<K*3_black>}
\newcommand{\outu}{\<1K*3_black>}
\newcommand{\oudz}{\<K*2_black>}
\newcommand{\outd}{\<2K*3_black>}
\newcommand{\oudd}{\<2K*2_black>}
\newcommand{\XXX}{\mathbf{X}}
\newcommand{\bou}{\<1_blue> }
\newcommand{\boud}{\<2_blue> }
\newcommand{\bout}{\<3_blue>}
\newcommand{\boutz}{\<K*3_blue>}
\newcommand{\boutu}{\<1K*3_blue>}
\newcommand{\boudz}{\<K*2_blue>}
\newcommand{\boutd}{\<2K*3_blue>}
\newcommand{\boudd}{\<2K*2_blue>}
\newcommand{\bXXX}{\blue{\mathbf{X}}}
\newcommand{\oneb}{\blue{\mathbf{1}}}
\newcommand{\blue}[1]{{\color{blue}#1}}
\newcommand{\Cone}{C_{\eps,\ell}^{(1)}}
\newcommand{\Ctwo}{C_{\eps,\ell}^{(2)}}
\newcommand{\Cones}{C_{\eps,\ell,s}^{(1)}}
\newcommand{\Ctwos}{C_{\eps,\ell,s}^{(2)}}
\newcommand{\Besovt}[2][e_{\delta+t,\zzz}]{\bB^{#2}_{p, p}(#1)}
\newcommand{\grad}{\nabla}
\newcommand{\Besov}[2]{\bB^{#1}_{#2}}
\newcommand{\bracket}[1]{\left\langle{#1}\right\rangle}
\newcommand{\para}{\olessthan}
\newcommand{\rpara}{\ogreaterthan}
\newcommand{\reso}{\odot}
\newcommand{\pareq}{\tikzsetnextfilename{pareq}\mathrel{\rlap{\kern0.07em\tikz[x=0.1em,y=0.1em,baseline=0.04em] \draw[line width=0.3pt,black] (0,1.93) -- (4.5,0);}\olessthan}}
\newcommand{\rpareq}{\tikzsetnextfilename{rpareq}\mathrel{\rlap{\kern0.23em\tikz[x=0.1em,y=0.1em,baseline=0.04em] \draw[line width=0.3pt,black] (0,0) -- (4.5,1.93);}\ogreaterthan}}
\def\dash{\leavevmode\unskip\kern0.18em--\penalty\exhyphenpenalty\kern0.18em}
\def\slash{\leavevmode\unskip\kern0.15em/\penalty\exhyphenpenalty\kern0.15em}
\DeclareRobustCommand{\TitleEquation}[2]{\texorpdfstring{\StrLeft{\f@series}{1}[\@firstchar]$\if%
b\@firstchar\boldsymbol{#1}\else#1\fi$}{#2}}
\DeclarePairedDelimiter\abs{\lvert}{\rvert}
\newcounter{step}
\renewcommand*{\thestep}{(\Alph{step})}
\def\step{\refstepcounter{step}\smallskip\noindent\textit{\thestep} \textit}
\colorlet{darkblue}{blue!90!black}
\colorlet{darkred}{red!90!black}
\colorlet{darkgreen}{green!50!black}
\definecolor{darkpurple}{rgb}{0.65,0.1,0.7}
\begin{document}
\title{Ergodicity of infinite volume \TitleEquation{\Phi^4_3}{Phi 43} at high temperature}

\author{Pawe\l\ Duch\inst1 \orcidlink{0000-0002-2445-2189}, Martin Hairer\inst{1,2}\orcidlink{0000-0002-2141-6561}, 
Jaeyun Yi\inst1 \orcidlink{0000-0002-2940-9307}, and
Wenhao Zhao\inst1 \orcidlink{0000-0002-7489-1884}}

\institute{EPFL, Switzerland, \email{pawel.duch@epfl.ch, martin.hairer@epfl.ch, stork3827@gmail.com, wenhao.zhao@epfl.ch}
\and Imperial College London, UK, \email{m.hairer@imperial.ac.uk}  
}

\maketitle

\begin{abstract}
We consider the infinite volume $\Phi^4_3$ dynamic and show that it is globally 
well-posed in a suitable weighted Besov space of distributions. At 
high temperatures \slash small coupling, we furthermore show that the difference between
any two solutions driven by the same realisation of the noise converges to zero exponentially fast.
This allows us to characterise the infinite-volume $\Phi^4_3$ measure at high temperature as the unique invariant measure of the dynamic, and to prove that it satisfies all Osterwalder--Schrader axioms, including invariance under translations, rotations, and reflections, as well as exponential decay of correlations.

\vspace{1em}

\noindent{\it Mathematics Subject Classification:} 60H15, 60H17, 60L30, 81T08, 81S20\\
\noindent{\it Keywords:} Stochastic quantisation, $\Phi^4_3$ dynamic, unique ergodicity
\end{abstract}

\tableofcontents

\setcounter{tocdepth}{2}
\microtypesetup{protrusion=false}
\microtypesetup{protrusion=true}

\section{Introduction}

The simplest interacting bosonic field theory is the so-called $\Phi^4$ theory
with (formal) Lagrangian given by 
\begin{equ}
H_{m,\lambda}(\Phi) = \int \Bigl(\f12 |\nabla \Phi(x)|^2 + \f m2|\Phi(x)|^2 + \f\lambda4  |\Phi(x)|^4\Bigr)\,\md x\;.
\end{equ}
One major achievement of the programme of constructive field theory 
that took place in the late 70's was the construction of a family (parametrised by $m$ and $\lambda$) 
of  non-Gaussian probability measures on the
space of Schwartz distributions $\dD'(\R^d)$ with $d < 4$ that exhibits all the properties
one would expect from the measures formally given by 
$Z_{m,\lambda}^{-1}\exp(-2H_{m,\lambda}(\Phi))\,\md\Phi$, with $\md\Phi$ denoting the 
(non-existent) Lebesgue measure on $\dD'(\R^d)$ and $Z_{m,\lambda}$ denoting the normalisation constant enforcing 
that the measures are probability measures. 
See for example \cite{Fel74,FO76,MS76,GJ87} and references therein for the original construction.
When $d \ge 4$, there is strong evidence \cite{Aiz82,Fro82,AD21} that no such measures exist in the sense that limit points of their natural approximations
all turn out to be Gaussian.

In the present article we will always consider the case $d=3$, with mass $m=1$ and coupling constant $\lambda > 0$ small. 
When $\lambda = 0$, the measure described above can unambiguously be defined (in any dimension) as 
the Gaussian measure
with covariance function given by the Green function of the selfadjoint operator $1-\Delta$.
For $\eps \ll 1$ and $\ell \gg 1$ with $\ell \in \eps \N$, let  $\T_{\eps,\ell}^d = (\eps \Z / \ell \Z)^d$ be the 
discrete torus of size $\ell$ and let $\P_{\eps,\ell}$ be the Gaussian measure 
on $\R^{\T_{\eps,\ell}^d}$ with covariance given by the inverse of the 
matrix $\id -\Delta_\eps$, where $\Delta_\eps$ is the discrete Laplacian. In order
to have any chance of obtaining a nontrivial limiting measure, one needs to
``renormalise'' the mass $m$ in $H$ by considering the approximation
\begin{equ}[e:muepsell]
\hat \mu_{\eps,\ell}(\md\Phi) = Z_{\eps,\ell}^{-1} \exp \Biggl(-2\lambda \int_{\T_{\eps,\ell}^d} \Bigl( \frac{|\Phi(x)|^4}{4}- \bigl(3 C_\eps^{(1)} - 9\lambda C_\eps^{(2)}\bigr)\frac{|\Phi(x)|^2}{2}\Bigr)\,dx\biggr)\,\P_{\eps,\ell}(\md\Phi)\;,
\end{equ}
where $dx$ denotes $\eps^d$ times the counting measure,
$C_\eps^{(1)}$ denotes the variance of $\Phi(x)$ under $\P_{\eps,\ell}$
(which is asymptotically independent of $\ell$ as $\ell \to \infty$), and 
$C_\eps^{(2)}$ is an additional correction that, in dimension $3$, diverges like $\abs{\log\eps}$
as $\eps \to 0$.

\begin{remark}\label{rem:mass}
For any fixed $\lambda$, the value of the ``mass'' $m$ (including its sign) can be adjusted simply 
by changing $C_\eps^{(2)}$ by some $\CO(1)$ quantity. In this article however,
we consider the renormalisation constants as fixed functions of $\eps$ and then choose 
$\lambda$ sufficiently small, so that the sign of the mass is well defined.
See also Section~\ref{sec:regimes} below for a discussion on how ``large mass'', ``small coupling'', and ``high temperature''
are essentially equivalent notions in our context, so the focus on $\lambda$ as our free parameter
is arbitrary and just made for convenience. 
\end{remark}

The stochastic quantisation procedure originally proposed by Parisi and Wu \cite{PW81}
is based on the observation that $\hat \mu_{\eps,\ell}$ is the (unique) invariant measure
for the stochastic differential equation
\begin{equ}[e:Phi4approx]
\md\Phi_{\eps,\ell} = \bigl(\Delta_\eps \Phi_{\eps,\ell} - \Phi_{\eps,\ell} - \lambda \Phi_{\eps,\ell}^3 + (3 \lambda C_\eps^{(1)} - 9\lambda^2 C_\eps^{(2)}) \Phi_{\eps,\ell}\bigr)\,\md t + \md W_{\eps,\ell}\;,
\end{equ} 
where $W_{\eps,\ell}$ denotes the cylindrical Wiener process on $L^2(\T_{\eps,\ell}^d)$.
The theories of regularity structures \cite{Hai14} and paracontrolled distributions \cite{GIP15}
were developed in part in order to provide a 
meaning to the limit of $\Phi_{\eps,\ell}$ as $\eps \to 0$. 
The idea is to consider the mild form of the equation
\begin{equ}[e:Phi4]
\md\Phi = (\Delta \Phi - \Phi - \lambda \Phi^3)\,\md t + \md W\;,
\end{equ}
as a fixed point problem in a space of \textit{modelled distributions} that are 
locally described by a linear combination of elements of a \textit{model}, similarly
to the way in which smooth functions can locally be described by a Taylor polynomial.
The interpretation of the term $\Phi^3$ (and in particular the appearance of the renormalisation
constants that are apparent in \eqref{e:Phi4approx}) is then encoded in the construction of the model,
which is where renormalisation takes place. One advantage of this perspective is that
it provides an \textit{intrinsic} meaning to solutions to \eqref{e:Phi4} which can
then be shown to coincide with the limits of a large number of different regularisations. 
One a priori obtains a well-posed local solution theory for \eqref{e:Phi4} in finite volume,
but it was shown in \cite{MW17a,MW17b,GH19,AK20,MW20} that this solution theory is global in time
with very strong a priori bounds. In particular, the size of the solutions remains bounded as the size 
of the domain tends to infinity.

Our first main result is that one has an intrinsic solution theory for \eqref{e:Phi4}
on all of $\R^3$. A loose formulation of this result is as follows, where $E$ denotes
some weighted space of distributions in $\cC^{-1/2-\kappa}$ (for $\kappa$ small)
that allows for some slow algebraic growth. The precise formulation of this result is
provided in Theorem~\ref{thm:global_solution} below.

\begin{thm}\label{thm:A}
For the regularity structure associated to \eqref{e:Phi4} as in \cite{Hai14}, 
consider the model given by the BPHZ lift of space-time white noise as in \cite{BHZ19,HS24}
as well as an initial condition belonging to $E$.
Then, the mild form of \eqref{e:Phi4} posed on all of $\R^3$ admits a unique solution in some suitable
weighted space of modelled distributions. Furthermore, the reconstruction of this 
solution coincides with the limit
$\lim_{\ell \to \infty}\lim_{\eps \to 0} \Phi_{\eps,\ell}$, belongs to $E$ for all 
times, depends continuously on the initial data, and admits an invariant measure.
\end{thm}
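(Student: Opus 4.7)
The plan has four stages: local well-posedness in weighted spaces of modelled distributions, a global-in-space-and-time a priori bound, identification with the double lattice limit, and existence of an invariant measure via Krylov--Bogolyubov.

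For local well-posedness, I would introduce weighted analogues of the spaces of models and modelled distributions of \cite{Hai14} on $\R\times\R^3$, using a spatial polynomial weight of the form $\rmw(x)=(1+|x|)^{-\sigma}$ for some small $\sigma>0$. Stationarity of the BPHZ model furnished by \cite{BHZ19,HS24}, combined with Kolmogorov-type moment bounds, shows that its local seminorms on unit parabolic cylinders grow at most polynomially in the base point, and hence are finite in the weighted topology for models almost surely. A weighted Schauder estimate together with the standard contraction argument then yields a unique mild solution on a random but positive time interval in the weighted space of modelled distributions, for any initial datum in $E$. Continuity with respect to the initial data and the model is inherited from the Lipschitz dependence in the fixed point.

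The crux of the argument, and the main obstacle, is a global-in-space and global-in-time a priori bound that rules out blow-up on all of $\R^3$. I would adapt the coming-down-from-infinity estimate of Moinat--Weber \cite{MW20}: on any parabolic cylinder $Q_R(z)$ contained in the domain of existence, $\sup_{Q_{R/2}(z)}|\Phi|$ is bounded by $R^{-1}$ plus a polynomial in the local seminorms of the model near $z$. Since the latter grow only polynomially in $z$ by stationarity, this gives a uniform-in-time bound on $\rmw\cdot|\Phi|$ that keeps $\Phi(t)\in E$ for all $t>0$ and allows the local existence argument to be iterated indefinitely. The delicate point is to verify that the Moinat--Weber localisation is compatible with the weighted norms: one has to trace through their proof to check that the constants depend only on local model data and on the distance to the boundary of the cylinder, so that the spatial weight can be inserted uniformly in the base point $z$.

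To identify $\Phi$ with $\lim_{\ell\to\infty}\lim_{\eps\to 0}\Phi_{\eps,\ell}$, I would first take $\eps\to 0$ at fixed $\ell$ using the convergence of lattice BPHZ models on the torus from \cite{HS24}, obtaining the torus $\Phi^4_3$ dynamic. For the outer limit $\ell\to\infty$, the restriction of the infinite-volume BPHZ model to a fixed compact set differs from the torus BPHZ model only by boundary contributions that vanish on compact sets as $\ell\to\infty$, and the local-in-space a priori bound above then propagates this convergence through the solution map. Finally, for the invariant measure I would apply Krylov--Bogolyubov in $E$: tightness of the time-averaged laws $\frac1T\int_0^T\mathrm{Law}(\Phi(t))\,\md t$ follows from a uniform-in-time moment bound in a slightly more regular weighted space $E'$ that embeds compactly into $E$, which in turn follows from applying the coming-down-from-infinity bound at time $t-1$; crucially this bound is independent of the initial data and so supplies the dissipation needed for tightness.
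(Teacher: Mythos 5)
Your outline founders at the very first step. The ``standard contraction argument'' in a weighted space of modelled distributions does not close for the cubic nonlinearity in infinite volume: the product of two modelled distributions controlled with spatial weights $\weight_1$ and $\weight_2$ is only controlled with the weight $\weight_1\weight_2$ (see Lemma~\ref{lem:multiplication}), and the Schauder estimate does not improve polynomial weights. Hence the fixed-point map $U\mapsto \kK(\one_>U^3+\cdots)$ sends a ball of $\dD^{\gamma,\eta}_{T,\weight}$ into the strictly larger space $\dD^{\gamma',\eta'}_{T,\weight^3\weight_\Pi^{N}}$ and is not a self-map, no matter how small $T$ is. The time-dependent exponentially decaying weights of \cite{HL15,HL18}, which trade a factor $(t-s)^{\fc/2}$ for a \emph{single} weight loss, only rescue equations that are \emph{linear} in the unknown; for $U^3$ one would need $\weight(t,x)\lesssim(t-s)^{c}\weight(s,x)^3$ uniformly in $s<t$, which fails. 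This is precisely the obstruction that forces the paper to abandon a direct fixed point for existence. Instead, existence is obtained by: (i) applying the Moinat--Weber localisation not to $\Phi$ directly but to the remainder $v^\star$ obtained after subtracting the initial-data contribution and absorbing it into modified trees $\ou^\star,\oud^{\,\star},\out^{\,\star}$, so that the resulting equation holds on all of space-time (no $\one_>$) and the blow-up at $t=0$ improves from $t^{-1/2}$ to $t^{\eta/2}=t^{-1/4-\kappa/2}$ (Lemma~\ref{le:apriori_bound_Linfty}), making the cubic term time-integrable; (ii) upgrading this to a bound on $\VERT{V_{\eps,\ell}}_{\gamma,\eta;T,w^9}$ and extracting subsequential limits by compactness (Lemma~\ref{le:compactness_modelled_distribution}); (iii) identifying every limit point as a solution of the abstract equation \eqref{e:limit_characterisation}; and (iv) proving uniqueness of such solutions by observing that the \emph{difference} of two solutions satisfies a linear PAM-type equation \eqref{e:difference_eq}, to which the time-decaying-weight Schauder estimate (Theorem~\ref{thm:integration_K_exp}) does apply. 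Your proposal contains no substitute for this mechanism, so neither local existence nor uniqueness is actually established.

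Your remaining steps are closer to the mark but inherit the gap. The global continuation and the control of $\Phi(t)$ in $E$ do rest on \cite{MW20} with polynomially growing model seminorms, as you say; the identification of the double limit is essentially right (in fact, by the compact support of the truncated kernel, the periodic and infinite-volume BPHZ models \emph{coincide} on any compact set once $\ell$ is large, so no boundary-contribution estimate is needed); and Krylov--Bogolyubov is a viable alternative to the paper's route, which instead proves tightness of the finite-volume invariant measures $\mu_\ell$ and shows every accumulation point is invariant. But none of this can be run until existence and uniqueness of the infinite-volume solution are secured by an argument that actually closes.
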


Note that this result holds for all (strictly positive) values of the constant $\lambda$ and, 
as already hinted at in Remark~\ref{rem:mass}, it consequently also holds for 
all values of $m$ (not just positive ones).

Our second main result is then that, 
when $\lambda$ is small enough, solutions to \eqref{e:Phi4} not only admit
a unique invariant measure, but they satisfy a ``one force, one solution''
principle or, in other words, they admit a unique global random fixed point.
This can be formulated as follows, 
see Theorem~\ref{thm:linearised} for the precise statement.

\begin{thm}
	There exist $\lambda_\star,\gamma > 0$ such that, for all $\lambda \in (0,\lambda_\star]$ the Markov process constructed in Theorem~\ref{thm:A} admits a unique invariant measure in $E$, and $\E \|\Phi_t - \tilde \Phi_t\|_E \lesssim e^{-\gamma t}$, uniformly over $t \ge 1$ and  $\Phi,\tilde\Phi$ solving~\eqref{e:Phi4}.
\end{thm}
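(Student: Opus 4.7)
The plan is to reduce uniqueness of the invariant measure to the exponential contraction bound, and then to establish that bound by an energy estimate for the noise-free PDE satisfied by the difference $v := \Phi - \tilde\Phi$. For the reduction: given the contraction $\E \|\Phi_t - \tilde\Phi_t\|_E \lesssim e^{-\gamma t}$ and the existence of an invariant measure in $E$ supplied by Theorem~\ref{thm:A}, if $\mu_1, \mu_2$ are both invariant one samples initial data independently from $\mu_1$ and $\mu_2$ and drives both copies of the dynamic by the same realisation of $W$; the contraction then forces the Wasserstein-$1$ distance between $\mu_1$ and $\mu_2$ (in the norm of $E$) to vanish, so $\mu_1 = \mu_2$.

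The crucial feature for the contraction is that the noise cancels in the difference, so $v$ satisfies the random but noise-free equation
\begin{equation*}
\partial_t v = \Delta v - v - \lambda\,(\Phi^3 - \tilde\Phi^3) = \Delta v - v - \lambda\, v\cdot Q, \qquad Q := \Phi^2 + \Phi\tilde\Phi + \tilde\Phi^2,
\end{equation*}
with $Q$ interpreted in the BPHZ-renormalised sense supplied by Theorem~\ref{thm:A}. The identity $Q = \tfrac34(\Phi+\tilde\Phi)^2 + \tfrac14 v^2$ makes $Q \ge 0$ classically, and the formal $L^2$ energy estimate
\begin{equation*}
\tfrac12 \partial_t \|v\|_{L^2}^2 + \|v\|_{L^2}^2 + \|\nabla v\|_{L^2}^2 + \tfrac{\lambda}{4}\|v\|_{L^4}^4 + \tfrac{3\lambda}{4}\langle v^2, (\Phi+\tilde\Phi)^2\rangle = 0
\end{equation*}
already gives $\|v(t)\|_{L^2} \le e^{-t}\|v(0)\|_{L^2}$ for every $\lambda \ge 0$. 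The real task is to lift this heuristic to the renormalised, weighted, infinite-volume setting of Theorem~\ref{thm:A}.

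Writing $\Phi = Z + Y$ and $\tilde\Phi = Z + \tilde Y$ with $Z$ the common stochastic convolution and $Y, \tilde Y$ the smoother remainders, one has $v = Y - \tilde Y$ with positive regularity and $Q = \tfrac34 \Wick{(\Phi+\tilde\Phi)^2} + \tfrac14 v^2$ up to renormalisation constants that can be absorbed into the mass term. Testing the equation for $v$ against $v\cdot \rmw^2$ with a polynomial weight $\rmw$ compatible with $E$, the $\tfrac14 v^2$ piece still produces a favourable cubic dissipation, while the Wick square piece generates a mixed term controlled by paracontrolled / Besov multiplier estimates of the form
\begin{equation*}
\lambda\,\bigl|\langle v^2 \rmw^2,\, \Wick{(\Phi+\tilde\Phi)^2}\rangle\bigr| \lesssim \lambda\, \|v\|_{L^2_\rmw}^{1-\theta}\, \|v^2\|_{L^2_\rmw}^{1+\theta}\, M_t
\end{equation*}
for some small $\theta>0$, with $M_t$ a stochastic multiplier norm having uniform-in-$t$ moments coming from the stationarity bounds on the model. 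Choosing $\lambda \le \lambda_\star$ small then allows the mass $\|v\|_{L^2_\rmw}^2$ together with the cubic dissipation $\tfrac\lambda4\|v^2\|_{L^2_\rmw}^2$ to absorb this error and the commutator contributions from $\nabla\rmw$, yielding an inequality $\partial_t \|v\|_{L^2_\rmw}^2 \le -2\gamma\,\|v\|_{L^2_\rmw}^2 + F_t$ with a random forcing $F_t$ of controlled expectation; Gronwall and parabolic smoothing on the interval $[0,1]$ (to turn the rough $E$-norm of $v(0)$ into an $L^2_\rmw$-control of $v(1)$) convert this into $\E\|v_t\|_E \lesssim e^{-\gamma t}$ for $t\ge 1$.

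The hard part is executing this program intrinsically at the level of modelled distributions on all of $\R^3$: since $\Phi$ and $\tilde\Phi$ are not functions, the energy estimate has to be performed on the equation for the remainder $v$, with all paraproduct and multiplier estimates tracked through the polynomial weight $\rmw$, and the stochastic multiplier norms must be bounded uniformly in $t\ge 0$ and in the regularisation parameters so that everything survives the $\eps\to 0$, $\ell\to\infty$ limits. These uniform model and a priori bounds on $\|\Phi_t\|_E$ and $\|\tilde\Phi_t\|_E$ are exactly the outputs of Theorem~\ref{thm:A} that should make the closing argument go through.
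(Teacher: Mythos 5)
Your reduction of uniqueness to the contraction bound (coupling two invariant initial conditions through the same realisation of the noise and letting the Wasserstein distance collapse) matches the paper, and so does your starting point for the contraction itself: the noise cancels, $v=\Phi-\tilde\Phi$ solves a linear equation with potential $\lambda Q$, and the favourable sign of the genuinely function-valued part of $Q$ plus smallness of $\lambda$ must carry the day. The gap is in the central analytic step. Your energy estimate rests on the claim that $\lambda\bigl|\langle v^2 w^2, \Wick{(\Phi+\tilde\Phi)^2}\rangle\bigr|$ obeys a Besov multiplier bound. In three dimensions this pairing is not even classically defined: since the first- and third-order stochastic objects cancel in the difference, $v$ has regularity at best $\tfrac12+4\kappa$ (this is exactly the coming-down-from-infinity bound of Lemma~\ref{le:psi}), hence so does $v^2$, whereas $\Wick{(\Phi+\tilde\Phi)^2}$ contains the Wick square of the stochastic convolution, of regularity $-1-2\kappa$. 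The sum of regularities is negative, so the resonant part of the product requires a paracontrolled expansion of $v$ and a commutator renormalisation (it is precisely where the logarithmically divergent constant $C^{(2)}_\eps$ re-enters), none of which is supplied by the proposed multiplier estimate. This is the obstruction the paper flags explicitly when it says the low regularity in $d=3$ prevents a direct adaptation of the $\T^2$ energy estimates of~\cite{KT22}. The paper's route is structurally different: an exponential transform (Lemma~\ref{lem:exp_transform_mod}) conjugates away the most singular potential term and absorbs $C^{(2)}$, the non-negative term $3\lambda\PSImod^2$ is exploited through a Feynman--Kac \emph{comparison} (Lemma~\ref{lem:comparison_mod}) rather than through $L^2$ dissipation, and the remainder is closed by an $L^p$ Duhamel fixed point with time-dependent exponential weights in the spirit of~\cite{HL15}, not by testing against $v$.

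A second, independent problem is probabilistic. Even granting your multiplier bound, $M_t$ is random and multiplies $\|v\|^2$-type quantities, so $\E\bigl(M_t\|v_t\|^2_{L^2_w}\bigr)$ does not factor and a Gronwall argument in expectation does not close; worse, a pathwise Gronwall produces $\exp\bigl(\int_0^t M_s\,\md s\bigr)$, whose moments are not controlled by uniform-in-$t$ moment bounds on $M$. The paper circumvents this by stopping the dynamics when the enhanced noise exceeds a deterministic threshold, proving a pathwise contraction up to the stopping time (Proposition~\ref{pr:deterministic_main}), iterating via the strong Markov property with a geometric control on the number of iterations needed to reach time one, and only then taking expectations; spatial stationarity is then used to average the centre of the weight over $\R^3$ and recover identical weights on both sides. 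Some version of this stopping-time and iteration machinery is indispensable, and your proposal does not contain it.
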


\begin{remark}
One (almost) immediate consequence of these results is that the $\Phi^4_3$ measure
is translation, rotation, and reflection invariant. Combining this with a coupling method, one also obtains exponential decay of correlations, see Theorem~\ref{thm:uniqueness}.
\end{remark}

\begin{rmk}	
In the recent work \cite{RolandHendrik}, the authors used the log-Sobolev inequality established in \cite{RolandLogSob} to prove exponential ergodicity in the whole high temperature regime for $\Phi^4_2$. Since a log-Sobolev inequality has also been established for the $\Phi^4_3$ model in~\cite{RolandLogSob}, it is conceivable that their strategy could be adapted to the $\Phi^4_3$ setting. A key feature of our approach is that it relies solely on PDE techniques and does not require any prior information about the invariant measure. In particular, our method is quite robust and can be extended to the $\oO(N)$ vector-valued $\Phi^4_3$ model and the $\pP(\Phi)_2$ model. One disadvantage however is that we are not able to cover
the entire high temperature regime up to the phase transition.
\end{rmk}

\begin{remark} \label{rmk:invariance_gibbs}
In the discrete case, it was shown in \cite{doss1978processus,HS77,FritzGibbs,BRW04} that that Gibbs measures are equivalent
to invariant measures for the corresponding infinite-dimensional SDE. 
In the continuum case, it is not clear a priori how to even formulate the Gibbs property for $\Phi^4_3$,
but the formulation for $\Phi^4_2$ is clear since in finite volume it is absolutely continuous with respect to the free field. We believe that it is much easier to show that every
Gibbs measure is invariant for the infinite-volume dynamic than the converse.
In this sense, our result is a strong form of uniqueness for the $\Phi^4_3$ measure
at high temperature. An intrinsic continuum formulation of the Gibbs property for the $\Phi^4$ model 
and the uniqueness of the corresponding Gibbs measure at high temperature have been established 
in two dimensions \cite{AHZ89a, AHZ89b},
but a rigorous formulation of the relation between Gibbs measures and invariant measures is beyond the scope
of the present article. 
Regarding ergodicity for the $\Phi^4_2$ Langevin \slash Glauber dynamic, 
partial progress (showing that extremal Gibbs states are necessarily ergodic invariant measures for the dynamic) was made 
in \cite{albeverio1997ergodicity} and the problem was solved completely
in \cite{RolandHendrik} (all the way to the critical point). For $\Phi^4_3$, the recent work \cite{BG25} proves the domain Markov property on a cylinder, which may be relevant for formulating the Gibbs property.
\end{remark}

\begin{remark}
At fixed $\ell > 0$, the uniqueness of the invariant measure for the process $\Phi_\ell = \lim_{\eps \to 0} \Phi_{\eps,\ell}$ 
follows from the fact that it has full support \cite{HS22b} and satisfies the strong Feller property \cite{HM18b}. 
In infinite volume, there is no reason in general to expect the strong Feller property to hold
since it already fails for the massive stochastic heat equation. 
\end{remark}

\subsection{Short literature review}

It has been known since the seventies that bosonic QFTs satisfying the Wightman axioms \cite{Wig76} can be obtained from probability measures on the space of tempered distributions satisfying the Osterwalder--Schrader (OS) axioms, namely Euclidean invariance, reflection positivity, and decay of correlations, as well as some regularity properties (see for example \cite{OS75}, \cite[Section~6.1]{GJ87} for more details). Assuming a small coupling constant $\lambda$, the construction of the $\Phi^4_3$ measure and the verification of the OS axioms was completed in \cite{GJ73,FO76} using the phase-cell 
expansion method and in \cite{MS76} by the cluster expansion method. There have been subsequent efforts (e.g.\ \cite{brydges1983new, Wat89, BDH95}, etc.) to provide simpler proofs of the results in \cite{FO76, MS76}.
We also refer to \cite{GH21} for a recent review on the subject.

As already pointed out, the idea of stochastic quantisation proposed in \cite{PW81} is to view the $\Phi^4_3$ measure as the invariant measure of the $\Phi^4_3$ dynamic \eqref{e:Phi4}, for which we are now 
able to give an intrinsic rigorous meaning. Therefore, it is natural to revisit the construction 
of $\Phi^4_3$ from this dynamical perspective. There has been much recent progress in this direction. 
In \cite{GH21, DGR23} the authors proved the tightness of lattice approximation to the $\Phi^4_3$ 
measure and the OS axioms of every accumulation point except for the rotation invariance and the 
clustering properties. The quartic exponential tails of the $\Phi^4_3$ measure and a simple proof of 
its non-Guassianity were obtained in \cite{HS22}. A concise proof of the Euclidean invariance 
of the $\pP(\Phi)_2$ measure using stochastic quantisation techniques was given in~\cite{DDJ25}.

The present work may be seen as the culmination of the stochastic quantisation program for the $\Phi^4_3$ model. By employing techniques from stochastic partial differential equations, we verify all the OS axioms for $\Phi^4_3$ in the small-coupling regime, thereby recovering the results of~\cite{FO76, MS76} via an entirely different approach. Moreover, we construct the infinite volume dynamic \eqref{e:Phi4}, and prove that when $\lambda$ is small, it is exponentially mixing and admits a unique invariant measure.

Ergodicity and exponential decay of correlations for SPDEs in infinite volume have previously been studied in \cite{Fun91, GHR25}, under the assumption that the nonlinearity is convex. In the case of $\Phi^4$, convexity is destroyed by renormalisation, which is the main challenge of the current work. To address this, our main input is the new bound \eqref{e:linearised_1} for the linearised equation \eqref{e:linearised}. Since the linearised equation takes the form of a Parabolic Anderson Model, it is natural to try to apply the argument in \cite{HL15}. However, a direct application yields bounds with exponentially growing time-dependent weights and poor probabilistic integrability. To overcome this, we exploit the spatial stationarity of the enhanced noise, employ the stopping time argument from \cite{KT22} and use the coming down from infinity property from \cite{MW20}.

In the low temperature regime, one expects multiple invariant measures for the dynamic \eqref{e:Phi4}. The low temperature regime was studied in \cite{GJS75,GJS76a, GJS76b} for $\Phi^4_2$, and in \cite{FSS76,CGW22} for $\Phi^4_3$. It would also be interesting to study the dynamic \eqref{e:Phi4} in this regime, and to derive properties of the invariant measures from it.

\subsection{Relation between parameter regimes}
\label{sec:regimes}

Let us discuss in a bit more detail the relation between ``temperature'', ``mass'', and ``coupling''.
Recall that $\hat \mu_{\eps,\ell}$ can be written as
\begin{equ}
\hat \mu_{\eps,\ell}(\md\Phi) \propto \exp \bigl(- 2H_{\lambda}^{(\ell,\eps)}\bigr)\, \md\Phi\;,
\end{equ}
for some ``renormalised'' discretisation $H_{\lambda}^{(\ell,\eps)}$ of $H_{1,\lambda}$.

Introducing an inverse temperature $\beta$, it would be natural to also consider the measure 
``at inverse temperature $\beta$'' given by $\exp(-2\beta H_{\lambda}^{(\ell,\eps)}(\Phi))\,\md\Phi$, which 
can be written as
\begin{equ}
 \exp \Biggl(-2\beta\lambda \int_{\T_{\eps,\ell}^d} \Bigl( \frac{|\Phi(x)|^4}{4}- \bigl(3 C_\eps^{(1)} - 9\lambda C_\eps^{(2)}\bigr)\frac{|\Phi(x)|^2}{2}\Bigr)\,dx\biggr)\,\P_{\eps,\ell}^{(\beta)}(\md\Phi)\;,
\end{equ}
where $\P_{\eps,\ell}^{(\beta)}$ has covariance $(2\beta)^{-1}(\id-\Delta_\eps)^{-1}$.
Since $\P_{\eps,\ell}^{(\beta)}$ is the image of $\P_{\eps,\ell}$ under multiplication of $\Phi$ by $\beta^{-1/2}$,
this is essentially equivalent to considering the measure
\begin{equ}
\hat \mu_{\eps,\ell}^{(\beta)}(\md\Phi) \propto \exp \Biggl(-2\lambda \int_{\T_{\eps,\ell}^d} \Bigl( \frac{|\Phi(x)|^4}{4\beta}- \bigl(3 C_\eps^{(1)} - 9\lambda C_\eps^{(2)}\bigr)\frac{|\Phi(x)|^2}{2}\Bigr)\,dx\biggr)\,\P_{\eps,\ell}(\md\Phi)\;.
\end{equ}
Setting $\hat \lambda = \lambda/\beta$ and making the dependence on $\lambda$ explicit, one finds that
\begin{equ}
\hat \mu_{\eps,\ell,\lambda}^{(\beta)}(\md\Phi) \propto \exp \Bigl(-\delta m\int_{\T_{\eps,\ell}^d}|\Phi(x)|^2\,dx\Bigr) \hat \mu_{\eps,\ell,\hat \lambda}(\md\Phi)\;,
\end{equ}
with $\delta m = 3\hat \lambda (1-\beta) C_\eps^{(1)} + 9\hat \lambda^2 (\beta^2-1) C_\eps^{(2)}$.
This shows that the temperature is in fact essentially fixed: if we want $\hat \mu_{\eps,\ell,\lambda}^{(\beta)}$ to have
a non-trivial limit as $\eps \to 0$, then $|\beta-1|$ can be at most of order $1/C_\eps^{(1)}\approx \eps$.
This is consistent with \cite{MW17c, HI18, GMW25} where the authors derive the $\Phi^4_d$ measure as the scaling 
limit of a long-range Ising model near its critical temperature. Furthermore, since $C_\eps^{(1)}\gg C_\eps^{(2)}$, we see that $\beta < 1$ (``high temperature'')
yields a positive change $\delta m$ of the mass, while the coupling $\lambda$ remains essentially unchanged since 
$\beta$ is very close to $1$.

On the other hand, one finds that, setting $m = 1+\delta m$, the image of the measure 
\begin{equ}[e:massive]
\exp \Bigl(-\delta m\int_{\T_{\eps,\ell}^d}|\Phi(x)|^2\,dx\Bigr) \P_{\eps,\ell}(\md\Phi)\;,
\end{equ}
under the map $\Phi \mapsto m^{1/4} \Phi(\sqrt m \bigcdot)$
is given by $\P_{\sqrt m\eps,\sqrt m\ell}(\md\Phi)$, so that, setting
$\tilde \eps = \sqrt m\eps$ and $\tilde \ell = \sqrt m\ell$, the measure $\hat \mu_{\eps,\ell,\lambda}^{(\beta)}$
is essentially equivalent to the measure
\begin{equ}
\exp \Biggl(-2\hat\lambda \int_{\T_{\tilde\eps,\tilde\ell}^d} \Bigl( \frac{|\Phi(x)|^4}{4\sqrt m}- \bigl(3 C_\eps^{(1)} - 9\hat\lambda C_\eps^{(2)}\bigr)\frac{|\Phi(x)|^2}{2m}\Bigr)\,dx\biggr)\,\P_{\tilde\eps,\tilde\ell}(\md\Phi)\;.
\end{equ}
Setting $\tilde \lambda = \hat \lambda / \sqrt m$
and noting that $C_\eps^{(1)} \propto \eps^{-1}$ while $C_\eps^{(2)} \propto \abs{\log \eps}$,
there is a constant $c>0$ such that this in turn equals
\begin{equ}
\exp \Biggl(-2\tilde\lambda \int_{\T_{\tilde\eps,\tilde\ell}^d} \Bigl( \frac{|\Phi(x)|^4}{4}- \bigl(3 C_{\tilde\eps}^{(1)} - 9\tilde\lambda C_{\tilde\eps}^{(2)}+ c \tilde\lambda \log m\bigr)\frac{|\Phi(x)|^2}{2}\Bigr)\,dx\biggr)\,\P_{\tilde \eps,\tilde\ell}(\md\Phi)\;.
\end{equ}
In other words, the measure with mass $m>1$ and coupling $\lambda$ is equivalent to the measure with mass
$1 - c\frac{\log m}m\lambda^2$ and coupling $\lambda/\sqrt m$, so that ``high temperature'', ``large mass'' 
and ``small coupling'' are equivalent regimes.

\begin{remark}
The somewhat strange correction $c\frac{\log m}m\lambda^2$ appearing here is a consequence of the
fact that even when $m \neq 1$, our renormalisation constants $C_\eps^{(1)}$ and $C_\eps^{(2)}$ are defined in
a way that doesn't depend on $m$.
\end{remark}

\subsection*{Acknowledgements}
WZ is grateful to Nimit Rana for interesting discussions on \cite{GHR25}. 

\section{Main technical results}

In this section, we state the precise formulations of our main results. To set the stage, we begin by collecting known facts about the finite volume dynamic on a torus $\T_\ell^3$ of length $\ell\in\N_+$. Next, we present our key new results: construction of the infinite volume dynamic and a decay estimate for the solutions to the linearised equation. The proofs of these results are deferred to Sections~\ref{sec:ergodicity} and~\ref{sec:global_solution_theory}, respectively. Finally, we discuss a number of applications of these results.

By a function \slash distribution on $\T_\ell^3$ we mean a periodic
function \slash distribution on $\R^3$ with period $\ell\in\N_+$.
For a distribution $\phi$ and test function $f$ we denote by $\phi(f)\equiv\langle\phi,f\rangle$
the usual pairing that generalises the integral over $\R^3$. 
We denote by $\cC^\alpha(\T^3_\ell)$ the standard H{\"o}lder--Besov space of regularity $\alpha\in\R$,
by $C^\infty_{\mathrm c}(\R^3)$ the space of smooth compactly supported functions and by
$C^2_{\mathrm{b}}(\R^k)$ the space of bounded twice differentiable functions with bounded
derivatives up to order $2$.
We say that a function $H \colon \dD'(\R^3) \to \R$ is cylindrical if it is of the form
$H(\phi) = h(\phi(f_1),\ldots,\phi(f_k))$ for some $k \ge 1$, some test functions
$f_i \in C^\infty(\R^3)$, and some $h \in C^2_{\mathrm b}(\R^k)$.
We write $\mathrm{D}H$ for its $L^2(\T^3_\ell)$ gradient, namely
$\mathrm{D}H(\phi) = \sum_j (\d_j h)(\phi(f_1),\ldots,\phi(f_k)) f_{j,\ell}$, where $f_{j,\ell}$ denotes the periodisation of $f_j$ with period $\ell$.

Let $\lL \eqdef \d_t - \Laplace+1$ and $\lambda>0$. Given a cylindrical functional $H$ on $\dD'(\R^3)$, 
we denote by $\PHImod(\phi;\bigcdot)$ the solution to the stochastic PDE
\begin{equ}
\label{e:phi4_mod}
\lL \Phi_{\eps,\ell} =
\xi_{\eps,\ell}
- \lambda \Phi_{\eps,\ell}^3
+ C_{\eps, \ell}(\lambda) \Phi_{\eps,\ell}
+ \mathrm{D}H(\Phi_{\eps,\ell})\;,
\qquad
\Phi_{\eps,\ell}(0) = \phi\;,
\end{equ}
where $\xi_{\eps,\ell}$ is the periodisation of space-time white noise on $\R\times\R^3$, mollified in space at scale $\eps\in(0,1]$ and $C_{\eps, \ell}(\lambda)$ is a renormalisation constant. The precise definitions of $\xi_{\eps,\ell}$ and $C_{\eps, \ell}(\lambda)$ can be found in Definition~\ref{def:smoothed_noise} below. We included an extra drift term $\mathrm{D}H$ in the equation 
in anticipation of the proof of correlation decay.

Note that when $H = 0$, equation~\eqref{e:phi4_mod} reduces to the standard $\Phi^4_3$ equation, and in this case we denote the solution by $\Phi_{\eps, \ell}(\phi;\bigcdot)$. For fixed size of the torus, the existence of the $\eps \rightarrow 0$ limit of the solution, as well as its properties, were studied in \cite{Hai14,MW17b,HM18,CC18,HS22}, etc. We summarise the relevant properties in the finite volume setting in the following theorem.

\begin{defn}\label{def:kappa}
	We denote by $\bar\kappa=\frac{1}{10}$ a small parameter and let $\kappa=\bar\kappa^4$.
\end{defn}

\begin{thm}\label{thm:dynamic_on_torus}
Fix $\lambda>0$, $\ell\in\N_+$ and a cylindrical functional $H$ on $\dD'(\R^3)$. The dynamic governed by~\eqref{e:phi4_mod} converges globally in time in probability as $\eps\searrow0$. More precisely, there exists a~continuous random map
\begin{equs}[e:solMap]
	\cC^{-\frac12-\kappa}(\T^3_\ell) \ni \phi
	\mapsto \Phi_\ell^{(H)}(\phi;\bigcdot)\in
	C(\R_\geq,\cC^{-\frac12-\kappa}(\T^3_\ell))
\end{equs}
such that
\begin{equ}
\lim_{\eps\searrow0}\sup_{t\in[0,T]} \|\PHImod(\phi;t,\bigcdot)-\Phi_\ell^{(H)}(\phi;t,\bigcdot)\|_{\cC^{-\f12-\kappa}(\T^3_\ell)}=0
\end{equ}
for every $T>0$ and $\phi\in \cC^{-\frac12-\kappa}(\T^3_\ell)$, with convergence taking place in probability. The limiting dynamic $\Phi_\ell^{(H)}$ is exponentially ergodic with unique invariant measure $\mu_\ell^{(H)}$.
Moreover, writing $\mu_{\ell}$ as a shorthand for $\mu_{\ell}^{(0)}$, we have
\begin{equ}[eq:tildemu]
 \mu_{\ell}^{(H)}(\md\phi) \propto e^{2H(\phi)}\,\mu_{\ell}(\md\phi) \;.
\end{equ}
\end{thm}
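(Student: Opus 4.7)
The plan is to reduce Theorem~\ref{thm:dynamic_on_torus} to the corresponding, already established statements for the standard $\Phi^4_3$ dynamic (i.e.\ $H=0$) on a fixed torus, by treating $\mathrm{D}H$ as a smooth, bounded, Lipschitz perturbation that does not interact with the renormalisation. Once this is in place, the invariant measure $\mu_\ell^{(H)}$ can be identified by computing it explicitly at $\eps>0$ and passing to the limit.

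For the $\eps\to 0$ convergence and the existence of the continuous solution map, I would fix one of the available solution theories for $\Phi^4_3$ (regularity structures \cite{Hai14} or paracontrolled calculus, together with the BPHZ lift \cite{BHZ19,HS24}) and write $\Phi_{\eps,\ell}=\XXX_{\eps,\ell}+v_{\eps,\ell}$ with $\XXX_{\eps,\ell}$ encoding the singular stochastic data built from $\xi_{\eps,\ell}$. The remainder equation for $v_{\eps,\ell}$ then picks up the extra term $\mathrm{D}H(\XXX_{\eps,\ell}+v_{\eps,\ell})$. Since $h\in C^2_{\mathrm b}$ and $H$ depends on $\phi$ only through the finitely many continuous linear functionals $\phi(f_j)$, this extra term takes values in $C^\infty(\T_\ell^3)$ and is globally Lipschitz and bounded as a function of $v_{\eps,\ell}$ in any of the relevant norms. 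The local fixed-point argument of \cite{Hai14,MW17b} therefore goes through verbatim, and the convergence of the renormalised stochastic objects yields the convergence assertion together with continuity in $\phi$. Globalisation in time reduces to checking that the contribution of the bounded term $\mathrm{D}H(\Phi_{\eps,\ell})$ to the a priori ``coming-down-from-infinity'' bounds of \cite{MW17b,GH19,MW20} is controlled by a Cauchy--Schwarz bound against $\|\Phi_{\eps,\ell}\|_{L^2}$ and absorbed by the coercive $-\lambda\Phi^3$ drift, after which the existing estimates yield the continuous map in \eqref{e:solMap}.

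To prove exponential ergodicity and uniqueness of the invariant measure, I would verify the three standard ingredients for the Hairer--Mattingly criterion in the perturbed setting: a Lyapunov function coming from the coming-down-from-infinity bound of \cite{MW20}, the strong Feller property of \cite{HM18b}, and topological irreducibility via the full-support statement of \cite{HS22b}. All three proofs rely on the noise and the local structure of the nonlinearity rather than on reversibility, so the smooth bounded perturbation $\mathrm{D}H$ does not obstruct any of them. This yields existence, uniqueness, and exponential convergence of the transition probabilities to a unique invariant measure $\mu_\ell^{(H)}$.

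Finally, to identify $\mu_\ell^{(H)}$, I would use that at fixed $\eps>0$, equation~\eqref{e:phi4_mod} is a Langevin SDE on $\R^{\T_{\eps,\ell}^3}$ whose drift differs from that of the unperturbed equation by the smooth $\mathrm{D}H$. A direct Fokker--Planck computation gives the invariant measure at level $\eps$ as $\mu_{\eps,\ell}^{(H)}(\md\phi)\propto e^{2H(\phi)}\hat\mu_{\eps,\ell}(\md\phi)$. Since $\hat\mu_{\eps,\ell}\to\mu_\ell$ weakly as $\eps\to 0$ (one of the outputs of the construction of $\mu_\ell$) and $H$ is bounded and continuous on $\cC^{-1/2-\kappa}(\T_\ell^3)$, the measures $\mu_{\eps,\ell}^{(H)}$ converge weakly to $e^{2H(\phi)}\mu_\ell(\md\phi)/Z$. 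The already-proved $\eps\to 0$ convergence of the dynamic together with the Feller property of $\Phi_\ell^{(H)}$ ensure that this weak limit is invariant, and by the uniqueness it must coincide with $\mu_\ell^{(H)}$, giving \eqref{eq:tildemu}. The most delicate point I anticipate is verifying the Lyapunov estimate for the perturbed equation in a form strong enough to close the Meyn--Tweedie argument, but this is really a routine adaptation of \cite{MW20}.
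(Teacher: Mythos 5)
Your proposal follows essentially the same route as the paper, whose proof of this theorem is a short sequence of citations covering exactly the steps you describe: local well-posedness with the extra drift $\mathrm{D}H$ by modifying \cite{Hai14} as in \cite{HS22}, globalisation via the coming-down-from-infinity estimate (Lemma~\ref{le:psi}, after \cite{MW20}), ergodicity from the strong Feller property \cite{HM18b} combined with full support \cite{HS22b} and the Lyapunov structure, and the identity \eqref{eq:tildemu} from \cite[Corollary~2.5]{HS22}.

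One step is not correct as literally stated. In \eqref{e:phi4_mod} the regularisation is by \emph{spatial mollification} of the noise, $\xi_{\eps,\ell}=M_\eps\star\xi_\ell$, so at fixed $\eps>0$ the equation is an SPDE on the continuum torus driven by spatially coloured noise whose covariance $M_\eps\star M_\eps$ does not precondition the drift. The fixed-$\eps$ dynamic is therefore not reversible, and its invariant measure admits no explicit Gibbs representation; in particular it is not $e^{2H(\phi)}\hat\mu_{\eps,\ell}(\md\phi)$ (the measure \eqref{e:muepsell} is the invariant measure of the \emph{lattice} approximation \eqref{e:Phi4approx}, which is a different regularisation). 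The standard repair, and what the cited \cite[Corollary~2.5]{HS22} actually does, is to introduce an auxiliary approximation (lattice or spectral Galerkin) for which the gradient structure is exact, compute the invariant measure there, and then show that both approximations converge to the same limiting dynamic so that the explicit formula survives in the limit. This is a fixable omission rather than a wrong approach, but the extra comparison of regularisations is genuinely needed to obtain \eqref{eq:tildemu}.
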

\begin{proof}
The global in time convergence of $\Phi_{\eps,\ell}$ as $\eps\searrow0$ was proved in~\cite{MW17b}. 
The local in time convergence of $\PHImod$ can be proved by modifying~\cite{Hai14}, as 
in~\cite{HS22}, to account for the additional non-local term. Global convergence is a 
consequence of the ``coming down from infinity'' estimate stated in Lemma~\ref{le:psi} below. 
Ergodicity of the 
dynamic for $\Phi_\ell$ follows from the proof of~\cite[Corollary~1.13]{HS22b}, which relies on 
the strong Feller property shown in \cite{HM18b}. Ergodicity of the dynamic $\Phi_\ell^{(H)}$
can be shown by adapting the argument in the proof of~\cite[Theorem~2.2]{HS22}.
The identity~\eqref{eq:tildemu} follows from~\cite[Corollary~2.5]{HS22}.
\end{proof}

\begin{defn}\label{def:weights_w}
	For $x\in\R^3$, set $\bracket{x}\eqdef(1+|x|_2^2)^{1/2}$, 
	where $|x|_2$ is the Euclidean norm.
	Let $w=\langle \bigcdot\rangle^{-\kappa}\in C(\R^3)$ be a fixed weight decaying polynomially at infinity. We denote by $\cC^\alpha(w)$ the weighted H{\"o}lder--Besov space, see also Definition~\ref{defn:weighted_Besov}.
\end{defn}

\begin{thm}\label{thm:tightness}
The sequence of measures $(\mu_\ell)_{\ell\in\N_+}$ on $\cC^{-\f12-\kappa}(w)$ is tight.
\end{thm}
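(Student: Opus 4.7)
The plan is to reduce tightness to a uniform-in-$\ell$ moment bound in a strictly stronger weighted Besov space and then apply a compact embedding. The key inputs are the invariance of $\mu_\ell$ under the dynamic from Theorem~\ref{thm:dynamic_on_torus} and a local coming-down-from-infinity estimate \`a la \cite{MW20} (Lemma~\ref{le:psi}). Choose $\kappa'\in(0,\kappa)$ and $\alpha'\in(-\tfrac12-\kappa,-\tfrac12-\kappa/2)$, and set $w'=\langle\bigcdot\rangle^{-\kappa'}$. Standard results on weighted Besov spaces (cf.\ Definition~\ref{defn:weighted_Besov}) give that the embedding $\cC^{\alpha'}(w')\hookrightarrow \cC^{-\frac12-\kappa}(w)$ is compact, so by Prokhorov's theorem it suffices to prove
\[
\sup_{\ell\in\N_+}\int \|\phi\|_{\cC^{\alpha'}(w')}\,\mu_\ell(\md\phi) <\infty.
\]
By invariance, the law of $\Phi_\ell(\phi;1)$ under $\phi\sim\mu_\ell$ is again $\mu_\ell$, so it is enough to bound $\E\|\Phi_\ell(\phi;1)\|_{\cC^{\alpha'}(w')}$ uniformly in $\ell\in\N_+$ and in the initial condition $\phi$.

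To obtain such a uniform bound, I would decompose $\Phi_\ell=\Psi_\ell+v_\ell$, where $\Psi_\ell$ collects the BPHZ-renormalised stochastic trees built from $\xi_{\eps,\ell}$ and $v_\ell$ is the more regular remainder. The coming-down-from-infinity estimate provides, for every $x\in\T^3_\ell$,
\[
\|v_\ell(1,\bigcdot)\|_{\cC^{\alpha'}(B(x,1))}\;\leq\; P\bigl(\mathfrak{N}_\ell(x)\bigr)\,,
\]
uniformly in $\ell$ and in $\phi$, where $\mathfrak{N}_\ell(x)$ denotes a collection of local seminorms of $\Psi_\ell$ on $B(x,2)\times[0,1]$ and $P$ is a fixed polynomial. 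By space-stationarity of $\xi_{\eps,\ell}$, the distribution of $\mathfrak{N}_\ell(x)$ does not depend on $x$, and standard stochastic estimates for BPHZ-renormalised trees give moments of all orders, again uniform in $\ell$. The same seminorms directly dominate $\|\Psi_\ell(1,\bigcdot)\|_{\cC^{\alpha'}(B(x,1))}$, so we obtain an analogous local bound on $\Phi_\ell$ itself.

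To pass from local to weighted global control, take the $p$-th moment with $p>3/\kappa'$ and dominate the continuous sup appearing in $\|\bigcdot\|_{\cC^{\alpha'}(w')}$ by a sup over a lattice (up to a harmless continuity correction using that $\Phi_\ell\in\cC^{\alpha'}$ locally). Then
\[
\E\,\|\Phi_\ell(\phi;1)\|_{\cC^{\alpha'}(w')}^p\;\lesssim\;\sum_{x\in\Z^3} w'(x)^p\,\E\, P(\mathfrak{N}_\ell(x))^p\;\lesssim\;\sum_{x\in\Z^3}\langle x\rangle^{-p\kappa'}<\infty\,,
\]
where finiteness uses $p\kappa'>3$. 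This bound is uniform in $\ell$ and $\phi$, so integrating against $\mu_\ell(\md\phi)$ yields the required moment bound and hence tightness. The main obstacle will be establishing the coming-down-from-infinity estimate in the correct local, $\ell$-uniform, $\phi$-insensitive form — essentially the content of \cite{MW20}, but requiring careful adaptation to the torus-with-extra-drift setting of \eqref{e:phi4_mod}, which is already needed for global well-posedness in Theorem~\ref{thm:dynamic_on_torus}. The secondary technical point is checking that the polynomial combinations of renormalised tree seminorms occurring in $P(\mathfrak{N}_\ell(x))$ have all moments finite with constants independent of $\ell$, which follows from the usual BPHZ stochastic estimates together with space-stationarity.
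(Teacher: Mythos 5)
Your proposal is correct and follows essentially the route the paper indicates: its one-line proof of Theorem~\ref{thm:tightness} points precisely to the space-time localisation \slash coming-down-from-infinity estimate of \cite{MW20} (stated as Theorem~\ref{thm:spacetime_localization} and Lemma~\ref{le:MW20}), which combined with invariance of $\mu_\ell$, stationarity and uniform-in-$\ell$ moment bounds on the renormalised trees, and the compact embedding $\cC^{-\frac12-\frac\kappa2}(w^{\frac12})\hookrightarrow\cC^{-\frac12-\kappa}(w)$ from \cite[Theorem~6.31]{Tri06} gives exactly the uniform moment bound you describe. Your elaboration of the Prokhorov \slash compact-embedding reduction and the lattice summation is the standard way to fill in that citation, and contains no gaps.
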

\begin{proof}
This result has been well-known since \cite{FO76,MS76} and follows 
in particular from the ``space-time localisation'' estimate~\cite{MW20} stated in Appendix~\ref{sec:spacetime_localization}.
\end{proof}

\subsection{Infinite volume dynamic}

We now state our main result concerning the construction of the $\Phi^4_3$ dynamic on $\R^3$. While a solution theory for the infinite-volume $\Phi^4_3$ equation was developed in~\cite{GH19}, it was established under highly restrictive assumptions on the initial data. (It needs to be a Hölder continuous
perturbation of the stationary solution to the massive stochastic heat equation.) In particular, 
it does not provide a solution map that defines a Feller Markov process on a natural state space, such as 
a weighted H{\"o}lder–Besov space. One of the key contribution of the present work is to establish that the 
$\Phi^4_3$ dynamic on $\R^3$ indeed defines a Markov process with the Feller property on the space 
$\cC^{-\frac12-\kappa}(w)$.

\begin{thm}\label{thm:global_solution}
Fix arbitrary $\lambda>0$. There exists a continuous random map
\begin{equs}[e:solMap_inf]
	\cC^{-\frac12-\kappa}(w) \ni \phi
\mapsto \Phi(\phi;\bigcdot)\in
	C(\R_\geq,\cC^{-\frac12-\kappa}(w^{4}))
	\cap
	C(\R_>,\cC^{-\frac12-\frac{\kappa}{2}}(w^{\frac12}))
\end{equs} 
and a random variable $R\geq 0$ with finite moments of all orders such that
\begin{equ}[e:bound]
	t^{\frac12}\, \|\Phi(\phi;t,\bigcdot)\|_{\cC^{-\frac12-\frac\kappa2}(w^{\frac12})} \leq R
\end{equ}
for all $t\in(0,1]$, $\phi\in \cC^{-\frac12-\kappa}(w)$ and such that
\begin{equ}
\label{e:Phi_convergence} 
\lim_{\ell\to\infty}\lim_{\eps\searrow0}\|\Phi_{\eps,\ell}(\phi_{\eps,\ell};t,\bigcdot)-\Phi(\phi;t,\bigcdot)\|_{\cC^{-\frac12-\frac\kappa2}(w^{\frac12})} =0
\end{equ}
for all $t>0$, $\phi\in\cC^{-\frac12-\kappa}(w)$ and $\phi_{\eps,\ell}\in C(\T_\ell^3)$ such that $\lim_{\ell\to\infty}\lim_{\eps\searrow0}\phi_{\eps,\ell}=\phi$ in $\cC^{-\frac12-\kappa}(w)$,
with convergence taking place in probability. 
Moreover, $\Phi(\phi;\bigcdot)=\rR U$, where $U$ is the unique singular modelled distribution solving
\begin{equ}\label{eq:dynamic_modelled}
	U = \kK (\one_>  U^3 +{\color{blue}\Xi}) + K(\phi-\ou(0))
\end{equ}
on $\R_>\times\R^3$. Here $\rR$ and  $\kK$ are the reconstruction and abstract integration operators, ${\color{blue}\Xi}$ is the symbol representing the noise, $K\phi$ denotes the unique solution of 
the massive heat equation that coincides with $\phi$ at time zero, and $\ou$ is the stationary solution of the massive stochastic heat equation.
We also have
\begin{equ}
\label{e:Euc_invariance}
	 \Phi(\varrho\cdot\phi;t,\bigcdot)\stackrel{\mathrm{law}}{=}\varrho\cdot\Phi(\phi;t,\bigcdot)
\end{equ}
for all $t>0$, $\phi\in \cC^{-\frac12-\kappa}(w)$ and all elements $\varrho$ of the Euclidean group $\R^3 \rtimes O(3)$, where $\varrho\cdot f$ denotes the standard action of $\varrho$ on $f\in\dD'(\R^3)$.
\end{thm}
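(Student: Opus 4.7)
The plan is to solve the fixed point equation~\eqref{eq:dynamic_modelled} directly in a suitable space of singular modelled distributions carrying polynomial weights, with the BPHZ lift of space-time white noise on $\R\times\R^3$ serving as the underlying model. First I would establish that the BPHZ model of~\cite{BHZ19,HS24} admits, uniformly in the mollification parameter $\eps$ and in the period $\ell$, bounds of the form $\|\bPi \tau\|_{w^{a_\tau}} \lesssim 1$ in appropriate weighted H\"older--Besov norms for every symbol $\tau$, with $a_\tau>0$ tuned to $\tau$. The uniformity in $\ell$ is the key new ingredient: it follows by combining Kolmogorov-type arguments with the space-time localisation estimate from~\cite{MW20} recalled in Appendix~\ref{sec:spacetime_localization}, applied to each stochastic tree rather than to the full solution.

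With the model in hand, I would solve~\eqref{eq:dynamic_modelled} locally in time by the usual contraction argument in a weighted modelled-distribution space, where the weight has to be inflated to $w^\alpha$ for some $\alpha>1$ in order to absorb the powers generated by the cubic term; this is the origin of the $w^4$ appearing in the statement. To pass from the local solution to a global one, and to obtain the key estimate~\eqref{e:bound}, I would apply the coming-down-from-infinity bound (Lemma~\ref{le:psi}) to the remainder $\Phi - \ou - K(\phi - \ou(0))$. This produces the improvement from $\cC^{-1/2-\kappa}(w^4)$ to $\cC^{-1/2-\kappa/2}(w^{1/2})$ with the blow-up rate $t^{-1/2}$ and random prefactor $R$ with moments of all orders, after which local existence can be iterated over successive time intervals to yield the global solution on $\R_>\times\R^3$.

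The convergence~\eqref{e:Phi_convergence} of the finite-volume approximations would then follow from continuous dependence of the solution map on both the model and the initial condition in the weighted topology: since the BPHZ model on $\T^3_\ell$ converges to the infinite-volume BPHZ model in the weighted sense as $\eps\searrow 0$ and then $\ell\to\infty$, and $\phi_{\eps,\ell}\to\phi$ in $\cC^{-1/2-\kappa}(w)$, stability of the fixed point gives convergence of the reconstructions. Finally, Euclidean invariance~\eqref{e:Euc_invariance} is inherited from the $\R^3\rtimes O(3)$-invariance in law of space-time white noise on $\R\times\R^3$ together with the manifest covariance of~\eqref{eq:dynamic_modelled} under such transformations; although the periodic approximants break rotation invariance, this is repaired in the limit $\ell\to\infty$.

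The main obstacle I expect is the interplay between the polynomial weight and the cubic nonlinearity. A naive multiplication of three objects of weight $w$ produces weight $w^3$, and without further input the weight would deteriorate indefinitely along the iteration, making the fixed point argument impossible to close in a uniform weighted space. This is precisely where the uniform-in-$\ell$ bounds on the model and, crucially, the coming-down-from-infinity estimate~\eqref{e:bound} become indispensable: they allow one to restart the contraction after an arbitrarily short time in a space with a better weight, thereby preventing the weight from drifting and closing the global argument.
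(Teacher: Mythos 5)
Your overall architecture \dash a direct Picard contraction for \eqref{eq:dynamic_modelled} in a polynomially weighted space of singular modelled distributions \dash is not the paper's route, and it contains a gap that your own closing paragraph identifies but does not repair. The weight loss under the cubic map is incurred per application of the Picard map, not per unit of time, so ``restarting the contraction after an arbitrarily short time'' cannot recover it. Concretely, if $V$ lies in a ball of $\dD^{\gamma,\eta}_{T,\weight}$ with $\weight=\langle\cdot\rangle^{-a}$, $a>0$, then $\kK\one_>(\bou+V)^3$ is controlled only in $\dD^{\gamma,\eta}_{T,\weight^3\weight_\Pi^{c}}$ (Lemmas~\ref{lem:multiplication} and~\ref{lem:integration_K}); since $\weight^3\weight_\Pi^{c}\leq\weight$, the corresponding norm is \emph{weaker} (Lemma~\ref{lem:monotone}), so the map does not send the ball back into itself, for any time horizon $T$ and any nontrivial decaying polynomial weight. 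The coming-down-from-infinity bound of Lemma~\ref{le:psi} controls the \emph{size} of the remainder in $L^\infty(w^3)$ \dash itself a worse weight \dash and offers no mechanism to improve the weight exponent, so it cannot serve as the ``restart in a better weight'' you invoke. The time-dependent exponential weights of \cite{HL15,HL18}, which the paper does use, close this loop only for the \emph{linear} difference equation, because there the weight carried by the coefficient $(\bou+V)^2+\cdots$ can be absorbed into the weight transfer \eqref{eq:W-1}--\eqref{eq:W-5} of the Schauder estimate; the same device does not apply to the cubic map itself.

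The paper therefore obtains existence not by contraction but by compactness: it derives an improved a~priori $L^\infty$ bound on the finite-volume remainders by absorbing the initial-data contribution into the stochastic trees and extending the resulting equation to negative times, so that the space-time localisation estimate of \cite{MW20} applies without blow-up at $t=0$ (Lemmas~\ref{le:apriori_bound_Linfty} and~\ref{lem:convergence_model_phi_deterministic}); it upgrades this to bounds on the modelled distributions $V_{\eps,\ell}$ (Proposition~\ref{pr:apriori_bound_modelled_distribution}); it extracts subsequential limits by Lemma~\ref{le:compactness_modelled_distribution}; and it identifies the limit uniquely by solving the linear equation for the difference of two solutions with the exponential weights (Lemma~\ref{le:linearise_abstract_equ}), which is what yields \eqref{e:Phi_convergence} for the full family rather than a subsequence. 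Your proposal contains none of these steps, and without them the existence and convergence parts of the theorem are not established. Two smaller inaccuracies: the uniform-in-$\ell$ model bounds come from spatial stationarity, Wiener-chaos estimates and the decay of the polynomial weight (Lemma~\ref{lem:convergence_stationary_model}), not from applying the localisation estimate tree by tree; and the exponent $4$ in $w^4$ in \eqref{e:solMap_inf} arises from the a~priori bounds on $v^\star$, not from inflating the weight to absorb the cubic term inside a contraction.
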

\begin{rmk}
	Unfortunately, we are not able to establish continuity of the map $\R_\geq\ni t\mapsto \Phi(\phi;t,\bigcdot)\in \cC^{-\frac12-\kappa}(w)$ at $t=0$, a common requirement in the theory of random dynamical systems. The reason for this is that, for initial data in $\cC^{-\frac12-\kappa}(w)$, we are only
	able to obtain uniform bounds on the solution near $t=0$ in the larger space 
	$\cC^{-\frac12-\kappa}(w^3)$.
\end{rmk}

To prove Theorem~\ref{thm:global_solution}, our approach builds on the space-time localisation bounds for solutions of the $\Phi^4_3$ model established in~\cite{MW20}, which impose no constraints on the initial condition but yield a singularity of order $t^{-\frac{1}{2}}$ at the initial time $t=0$. While this result provides a bound on the cubic nonlinearity of the solution, it does so with a~non-integrable blow-up at the initial time hypersurface, making it unsuitable for directly constructing a~mild solution in the space of modelled distributions. 

Therefore, the main difficulty we have to overcome is to obtain improved control of the behaviour of the solution near the initial time. Our strategy is quite similar to the strategy used in~\cite{MW17a,RolandHendrik} to establish a solution theory for the dynamical $\Phi^4$ model on $\R_\geq \times \R^2$, though the extension to three dimensions presents many challenges due to the more singular nature of the equation.

We apply the space-time localisation estimate to the solution with the initial data contribution subtracted, and incorporate this contribution into the definitions of the trees that appear in the estimate. The estimates for such trees are presented in Appendix~\ref{sec:stochastic_estimates}, which might be of independent interest. Since the resulting equation has zero initial condition, it can be extended to negative times, yielding a bound without blow-up at time zero. After reintroducing the initial data, we obtain an a priori bound with an improved blow-up rate \dash from $t^{-\frac{1}{2}}$ to $t^{-\frac{1}{4} - \frac{\kappa}{2}}$ (see Lemma~\ref{le:apriori_bound_Linfty}). This ensures that the cubic nonlinearity remains integrable in time. 

As a result, every possible subsequential limit can be identified as a (singular) modelled distribution solving the abstract $\Phi^4_3$ equation in infinite volume. To establish uniqueness of the limit, we observe that the difference of two solutions satisfies an equation of the same form as the Parabolic Anderson Model. Uniqueness then follows by adapting the argument from~\cite{HL18}. Consequently, the finite-volume equations converge to a unique limit. The proof of Theorem~\ref{thm:global_solution} is given in Section~\ref{sec:proof_markov_process_construction}.

\begin{defn}
	For $\lambda>0$, we write $(\pP_t)_{t\in\R_\geq}$ for the Markov semigroup on $\cC^{-\frac12-\kappa}(w)$ associated to the dynamical $\Phi^4_3$ model on $\R^3$ constructed in the above theorem.
\end{defn}

\begin{rmk}
	The  continuity of the solution map \eqref{e:solMap_inf} ensures that $\pP_t$ satisfies the Feller property. 
	By combining \eqref{e:Phi_convergence} with Theorem~\ref{thm:tightness}, we deduce that all subsequential limits of $(\mu_\ell)_{\ell \in \N_+}$ are invariant under $\pP_t$.
	Furthermore, it follows from the intrinsic characterisation \eqref{eq:dynamic_modelled}
	that the Markov semigroup $\pP_t$ is covariant under the action of the Euclidean isometry group.
\end{rmk}

\subsection{Linearised equation}

To prove uniqueness of the invariant measure, it is natural to consider the difference between two solutions of \eqref{e:phi4_mod} started from (potentially different) invariant measures. As we will see, controlling this difference reduces to understanding the long‑time behaviour of the linearisation of \eqref{e:phi4_mod}.
\begin{defn}
	Suppose that $S\in C_{\mathrm{b}}(\R\times\T_\ell^3)$ is a bounded, adapted in time stochastic process and $\Phi_{\eps,\ell}$ solves
\begin{equ}\label{eq:phi4_S} 
	\lL \Phi_{\eps,\ell} =
	\xi_{\eps,\ell}
	+ S
	- \lambda \Phi_{\eps,\ell}^3
	+ C_{\eps, \ell}(\lambda) \Phi_{\eps,\ell}\;, 
	\qquad 
	\Phi_{\eps,\ell}(0) = \phi \in C(\T_\ell^3)
	\;.
\end{equ}
Given a solution $\Phi_{\eps,\ell}$ of the above equation, and for any $0 \leq s \leq t < \infty$, we define a~random operator
\begin{equ} 
	J_{\eps,\ell}(s, t)\equiv J_{\eps,\ell}[\Phi_{\eps,\ell}](s, t)\,:\,D(s)\mapsto D(t) \;,
\end{equ}
where $D$ solves the linearised equation
\begin{equ}
\label{e:linearised}
	\bigl(\lL + 3\lambda\Phi_{\eps,\ell}^2 -C_{\eps, \ell}(\lambda) \bigr)D = 0
\end{equ}
in the time interval $[s,t]$, with $D(s) \in C(\T_\ell^3)$.
\end{defn}
\begin{rmk}\label{rmk:phi4_S_H}
	Observe that if $\Phi_{\eps,\ell}$ is a solution to~\eqref{e:phi4_mod} and $S = \mathrm{D} H(\Phi_{\eps,\ell})$, then $\Phi_{\eps,\ell}$ satisfies~\eqref{eq:phi4_S} as well.
\end{rmk}

\begin{defn}
	We write $L^p$ and $L^p(\T_\ell^3)$ for the standard $L^p$ spaces over $\R^3$ and $\T_\ell^3$. Given a non-negative weight $w$, the weighted $L^p(w)$-norm of a function $f$ over $\R^3$ coincides with the standard $L^p$-norm of $w f$. We define $\rho\eqdef\langle \bigcdot\rangle^{-4}\in C(\R^3)$. Let $\chi\in C^\infty(\R^3)$ 
	be a positive function such that $\chi=1$ on $[-1/3,1/3]^3$, $\supp\chi\subset[-1,1]^3$ 
	and the periodisation of $\chi$ with period $1$ coincides with the constant function $1$. We denote by $\langle \bigcdot\rangle_\ell\in C^2(\T_\ell^3)$ the periodisation with period $\ell$ of the function $\langle\bigcdot\rangle\chi(\bigcdot/\ell)$. We note that $\langle x\rangle_\ell \geq |x|$ for all $x\in\R^3$ such that $|x|\leq\ell/3$, where $|x|$ denotes the supremum norm. Moreover, $|\nabla\langle\bigcdot\rangle_\ell|,|\Delta\langle\bigcdot\rangle_\ell|\lesssim 1$ uniformly in $\ell\in\N_+$.
\end{defn}
\begin{defn}\label{def:filtration}
	For $t\in\R$ let $\fF_t$ be the $\sigma$-algebra generated by
	\begin{equ}
		\{\xi(f)\,|\, f\in L^2(\R^{1+3}),\,\supp f\subset (-\infty,t]\times\R^3\}
	\end{equ}
	augmented with the events of probability zero.
\end{defn}

To establish both uniqueness and exponential decay of correlations for the invariant measure, our key ingredient is the following bound on the solution map $J_{\varepsilon, \ell}$ associated with the linearised equation.
\begin{thm}\label{thm:linearised} 
Fix $p\ge1$ and let $\rho_{\ell,\gamma,\fc}=\langle\fc\bigcdot\rangle^{-4} \exp(\gamma\langle \bigcdot\rangle_\ell)\in C_0(\R^3)$. Then, there exists $\lambda_\star> 0$ such that
\begin{equ}[e:linearised_1]
	\EE\|J_{\eps,\ell}(s,t)v\|^p_{L^p(\rho_{\ell,\gamma,\fc})} 
	\lesssim \exp(-p\,(t-s)/3)\,
	\EE\|v\|^p_{L^p(\rho_{\ell,\gamma,\fc})}
\end{equ}
uniformly over $\lambda\in[0,\lambda_\star]$, $\eps\in(0,1]$, $\ell\in\N_+$, $0\leq s\leq t <\infty$, $\fc\in(0,1]$, $\gamma\in[0,\lambda_\star]$, $\fF_s$-measurable $v\in C(\T_\ell^3)$ and $\Phi_{\eps,\ell}$ solving~\eqref{eq:phi4_S} with an adapted and continuous $S$ in a unit ball of $L^\infty(\R_\geq\times\T_\ell^3)$ and arbitrary initial data.
\end{thm}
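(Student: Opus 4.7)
The plan is to run a weighted $L^p$ energy estimate for the linearised equation~\eqref{e:linearised} at finite regularisation $\eps>0$, with the main difficulty being the control of the renormalised potential $3\lambda\Phi_{\eps,\ell}^2 - C_{\eps,\ell}(\lambda)$ uniformly in $\eps,\ell,s,t$ so as to produce exponential decay at rate $p/3$. First I would test~\eqref{e:linearised} against $p\rho^p|D|^{p-2}D$, integrate over $\R^3$ (viewing $D$ as its $\ell$-periodic extension), and integrate the Laplacian term by parts. Since $|\nabla\rho|/\rho\lesssim \fc+\gamma \le 2\lambda_\star$, the commutator between $\Delta$ and the weight is absorbed into a small fraction of the good gradient term $p(p-1)\int\rho^p|D|^{p-2}|\nabla D|^2\,dx$ plus $C\lambda_\star^2\|D\|_{L^p(\rho)}^p$. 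Using $\Phi_{\eps,\ell}^2 = \Wick{\Phi_{\eps,\ell}^2} + C_\eps^{(1)}$ and $C_{\eps,\ell}(\lambda) = 3\lambda C_\eps^{(1)} - 9\lambda^2 C_\eps^{(2)} + O(1)$, the resulting differential inequality takes the schematic form
\begin{equ}
\partial_t \|D\|_{L^p(\rho)}^p \le -p(1 - O(\lambda_\star))\|D\|_{L^p(\rho)}^p - 3p\lambda\int\rho^p|D|^p\Wick{\Phi_{\eps,\ell}^2}\,dx - 9p\lambda^2 C_\eps^{(2)}\|D\|_{L^p(\rho)}^p.
\end{equ}

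Second, I would interpret the last two terms jointly as a single BPHZ-renormalised quantity. Exploiting the paracontrolled structure of $D$ as a solution to a PAM-type equation, the product $|D|^p\cdot\Wick{\Phi_{\eps,\ell}^2}$ decomposes into paraproducts and a resonant piece whose divergent expectation cancels $9\lambda^2 C_\eps^{(2)}|D|^p$, yielding
\begin{equ}
\Bigl|3\lambda\int\rho^p|D|^p\Wick{\Phi_{\eps,\ell}^2}\,dx + 9\lambda^2 C_\eps^{(2)}\|D\|_{L^p(\rho)}^p\Bigr| \le C\lambda\,\mathcal{N}_{\eps,\ell}(t)\,\|D\|_{L^p(\rho)}^p + \eta\int\rho^p|D|^{p-2}|\nabla D|^2\,dx,
\end{equ}
for any $\eta>0$, where $\mathcal{N}_{\eps,\ell}(t)$ is a suitable functional of the enhanced-noise norms on a unit parabolic neighbourhood of time $t$, weighted against a slightly heavier variant of $\rho_{\ell,\gamma,\fc}$. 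By the stochastic estimates of Appendix~\ref{sec:stochastic_estimates} together with the spatial stationarity of the BPHZ lift of $\xi_{\eps,\ell}$, $\mathcal{N}_{\eps,\ell}(t)$ has finite moments of all orders with bounds uniform in $\eps,\ell,t$. The gradient piece on the right is absorbed into the good term from the energy identity.

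Third, to convert this pathwise bound into an expectation-level exponential decay despite potentially large excursions of $\mathcal{N}_{\eps,\ell}$, I would adopt the stopping-time strategy of~\cite{KT22}. Choosing $K$ with $CK\lambda_\star \le 2/3$ and setting $\tau_K \eqdef \inf\{t\ge s : \mathcal{N}_{\eps,\ell}(t) > K\}$, the pathwise inequality on the event $\{t<\tau_K\}$ becomes $\partial_t\|D\|_{L^p(\rho)}^p \le -(p/3)\|D\|_{L^p(\rho)}^p$, yielding the desired rate by Gronwall. On $\{t\ge\tau_K\}$, a crude energy bound coupled with the coming-down-from-infinity estimate of~\cite{MW20} (extended to accommodate the bounded drift $S$) controls $\|D\|_{L^p(\rho)}^p$ by a polynomially growing function of $t-s$. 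Gaussian hypercontractivity of the enhanced-noise polynomials then produces $\PP(\tau_K\le t)\lesssim t\exp(-cK^\alpha)$, which for $K$ sufficiently large makes the contribution of this event exponentially smaller than the main term. Taking expectations conditional on $\fF_s$ and using the $\fF_s$-measurability of $v$ closes~\eqref{e:linearised_1}.

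The main obstacle is the renormalised potential bound in the \emph{weighted} setting: the non-constant weight $\rho_{\ell,\gamma,\fc}$, and in particular the exponential factor $\exp(\gamma\langle\cdot\rangle_\ell)$, interferes with standard paraproduct commutator estimates, and constants must be tracked carefully so that they remain uniform in $\ell$ while the $(\fc,\gamma)$-dependence stays small enough to be absorbed into the good energy terms. A secondary technical point is to verify that the extra drift $S$, which appears in~\eqref{eq:phi4_S} with only $\|S\|_\infty\le 1$, does not destroy the coming-down-from-infinity bound used on $\{t\ge\tau_K\}$; this should follow by a perturbative extension of the local estimates of~\cite{MW20}, since a bounded drift only adds a bounded contribution to the a priori bound on $\Phi_{\eps,\ell}$.
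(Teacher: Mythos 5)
There is a genuine gap in your second step, and it is exactly the obstruction that forces the paper onto a different route. You claim that the resonant part of $|D|^p\cdot\Wick{\Phi_{\eps,\ell}^2}$ carries a divergent expectation cancelling $9\lambda^2 C^{(2)}_{\eps,\ell}|D|^p$. Track the constants: the linearised equation gives $D$ the paracontrolled form $D=-3\lambda D\para\oudz+D^\sharp$, so the chain rule for paracontrolled distributions assigns to $|D|^p$ the Gubinelli derivative $-3\lambda p\,|D|^p$, whence $3\lambda\,|D|^p\odot\Wick{\Phi^2}\approx -9\lambda^2 p\,C^{(2)}_{\eps,\ell}|D|^p+(\text{finite})$. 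In the energy identity this contributes $+9p^2\lambda^2 C^{(2)}_{\eps,\ell}\int\rho^p|D|^p$, while the explicit counterterm contributes $-9p\lambda^2 C^{(2)}_{\eps,\ell}\int\rho^p|D|^p$; the divergences cancel only for $p=1$, and for $p\neq1$ you are left with $9p(p-1)\lambda^2 C^{(2)}_{\eps,\ell}\int\rho^p|D|^p\sim|\log\eps|$. (In two dimensions, where the energy method of \cite{KT22} does close, $\Wick{\Phi^2}$ has regularity $0^-$ and there is no second renormalisation, which is why this issue never arises there.) On top of this, making sense of the resonant product requires the remainder of $|D|^p$ to have regularity above $1+2\kappa$, whereas your good gradient term only yields $|D|^{p/2}\in H^1$, i.e.\ regularity exactly $1$; supplying the missing regularity requires a second-order expansion, which is the entire difficulty in $d=3$. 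The paper sidesteps both problems: the exponential transform $\hat D_s=\exp((t-s)+3\lambda\oudz_s+\gamma\langle\bigcdot\rangle_\ell)D$ of Lemma~\ref{lem:exp_transform_mod} removes the paracontrolled structure and the $C^{(2)}$ renormalisation in one stroke, the Feynman--Kac comparison of Lemma~\ref{lem:comparison_mod} discards the positive but non-integrable potential $V^{(2)}_s$, and the remaining terms are handled by a Duhamel fixed point with time-dependent weights (Proposition~\ref{pr:fixed_point_mod}) rather than by an energy estimate.

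Your third step also does not close as written. On the event $\{\tau_K\le t\}$ the only bound available for $\|D(t)\|$ is $\|D(s)\|$ times the operator norm of $J_{\eps,\ell}(s,t)$, which is exponential in enhanced-noise norms living in Wiener chaoses of order up to five; such variables have only stretched-exponential tails, so pairing a crude exponential bound with $\PP(\tau_K\le t)\lesssim t\exp(-cK^\alpha)$ need not produce a finite expectation, let alone an exponentially small contribution. This is the ``poor probabilistic integrability'' the paper explicitly flags. The cure is not a tail estimate but a restart: at the stopping time one reapplies the deterministic bound of Proposition~\ref{pr:deterministic_main} on a fresh interval, the number of restarts needed to reach time one has geometric tails by independence of the increments, and one then averages the resulting estimate over the centre $\zzz$ of the sub-exponential weights to recover the polynomial weight $\rho_{\ell,\gamma,\fc}$ on both sides --- a step your scheme also omits, though it would be moot if the energy estimate worked, since that would keep the same weight throughout.
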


\begin{rmk}\label{rmk:lambda_p}
	The constant $\lambda_\star$ in the above theorem cannot be fixed independently of $p\geq1$ (although we do of course believe this to be the case). The same applies to  all results stated below.
\end{rmk}
\begin{rmk}\label{rmk:torus_linearisation}
	For every $p\geq 1$ there exists $C>0$ such that
	\begin{equ}
		C^{-1}\,\bigl\|\exp(\gamma\langle \bigcdot\rangle_\ell)v\bigr\|_{L^p(\T_\ell^3)}
		\leq
		\|v\|_{L^p(\rho_{\ell,\gamma,1/\ell})}
		\leq 
		C\,\bigl\|\exp(\gamma\langle \bigcdot\rangle_\ell)v\bigr\|_{L^p(\T_\ell^3)}
	\end{equ}
	for all $v\in L^p(\T_\ell^3)$ and $\ell\in\N_+$. In particular, applying the above estimate with $\gamma=0$ we conclude that $\EE\|J_{\eps,\ell}(s,t)v\|^p_{L^p(\T_\ell^3)} \lesssim \exp(-p\,(t-s)/3)\,\EE\|v\|^p_{L^p(\T_\ell^3)}$ uniformly over $\ell \ge 1$.
\end{rmk}
\begin{rmk}
	\label{rmk:difference_linearisation}
	Let $0\leq s\leq t<\infty$. If $\Phi^{(j)}_{\eps,\ell}$, $j\in\{0,1\}$, solve~\eqref{eq:phi4_S} on the time interval $[s,t]$ with $S=0$ and respective initial data $\phi^{(j)}_{\eps,\ell}$ at time $s$, then
	\begin{equ}
		(\Phi^{(1)}_{\eps,\ell}-\Phi^{(0)}_{\eps,\ell})(t)
		=
		\int_0^1 J^{(u)}_{\eps,\ell}(s, t)\, (\Phi^{(1)}_{\eps,\ell}-\Phi^{(0)}_{\eps,\ell})(s)\, \md u\,,
	\end{equ}
	where $J^{(u)}_{\eps,\ell}=J_{\eps,\ell}[\Phi^{(u)}_{\eps,\ell}]$ and $\Phi^{(u)}_{\eps,\ell}$ denotes the solution to~\eqref{eq:phi4_S} on $[s,t]$ with $S=0$ and initial data $\Phi_{\eps,\ell}^{(u)}(s)=u\phi^{(1)}+(1-u)\phi^{(0)}$. Hence,
	\begin{equ}
		\EE\|\Phi^{(1)}_{\eps,\ell}(t)-\Phi^{(0)}_{\eps,\ell}(t)\|^p_{L^p(\rho)}
		\lesssim
		\exp(-p\,(t-s)/3)\,\EE\|\Phi^{(1)}_{\eps,\ell}(s)-\Phi^{(0)}_{\eps,\ell}(s)\|^p_{L^p(\rho)}\,.
	\end{equ}
	Using the space-time localisation bound~\cite{MW20} (see Lemma~\ref{le:MW20}) and the decay property of $\rho$ we obtain immediately that
	\begin{equ}
		\EE\|\Phi^{(1)}_{\eps,\ell}(1)-\Phi^{(0)}_{\eps,\ell}(1)\|^p_{L^p(\rho)} \lesssim 1
	\end{equ}
	uniformly over the initial conditions. Combining the above estimates we arrive at
	\begin{equ}
		\EE\|\Phi^{(1)}_{\eps,\ell}(t)-\Phi^{(0)}_{\eps,\ell}(t)\|^p_{L^p(\rho)}
		\lesssim
		\exp(-p\,t/3)
	\end{equ}
	uniformly over $t\geq 1$ and all initial data, which almost immediately implies uniqueness of the invariant measure.
\end{rmk}
\begin{rmk}\label{rmk:decay_linearisation}
	Let $\Phi^{(j)}_{\eps,\ell}$, $j\in\{0,1\}$, be solutions of~\eqref{eq:phi4_S} with vanishing initial data, where in the case $j = 1$ we take an arbitrary source term $S$ that is adapted, continuous and satisfies $\sup_{t \ge 0} \sup_{x \in \mathbb R^3} |S(t,x)| \le 1$ and $\bigcup_{t \geq 0} \supp S(t) \subset [-K,K]^3$, while for $j = 0$ we take $S \equiv 0$. Then
	\begin{equ}
		(\Phi^{(1)}_{\eps,\ell}-\Phi^{(0)}_{\eps,\ell})(t) = \int_0^1\int_0^t J^{(u)}_{\eps,\ell}(s, t)\, S(s) \, \md s \md u\,,
	\end{equ}
	where $J^{(u)}_{\eps,\ell}=J_{\eps,\ell}[\Phi^{(u)}_{\eps,\ell}]$ and $\Phi^{(u)}_{\eps,\ell}$ denotes the solution to~\eqref{eq:phi4_S} on $[0,t]$ with source $uS$ and zero initial data. Hence, by Remarks~\ref{rmk:phi4_S_H} and~\ref{rmk:torus_linearisation} and the Minkowski inequality we have
	\begin{equ}
		\EE\bigl\|\exp(\gamma\langle \bigcdot\rangle_\ell)\,(\Phi^{(H)}_{\eps,\ell}-\Phi_{\eps,\ell})(t)\bigr\|^p_{L^p(\T_\ell^3)}
		\lesssim
		1
	\end{equ}
	uniformly over $t\geq 0$, where $\PHI,\PHImod$ solve~\eqref{e:phi4_mod} with zero and some fixed nonzero cylindrical functional $H$ such that $\|\mathrm D H(\phi)\|_{L^\infty}\leq1$ for all $\phi$, respectively. This will be instrumental in proving exponential decay of correlation of the invariant measure. 
\end{rmk}

We end this section by outlining the main ideas behind the proof of Theorem~\ref{thm:linearised}.
The starting point is the now-standard Da Prato--Debussche decomposition \cite{DPD03,CC18}, namely
$\Phi = \ou - \lambda \outz + \Psi,$
which allows us to cancel the most irregular terms (see Definition~\ref{def:enhanced_noise} for the stochastic objects $\ou,\outz$).
The control of the remainder $\Psi$ is based on a stopping time argument inspired by \cite{KT22} combined with a ``coming down from infinity'' estimate (Lemma~\ref{le:psi}).
While Theorem~\ref{thm:linearised} can be viewed as an extension of the results of \cite{KT22} from $\T^2$ to $\R^3$, the low regularity in three dimensions prevents a direct adaptation of their energy estimates.
Moreover, working in infinite volume requires us to handle the growth of the noise at spatial infinity.

To address these issues, we apply an exponential transformation (Lemma~\ref{lem:exp_transform_mod}) and use a fixed-point argument with time-dependent (stretched) exponential weights, originally introduced in \cite{HL15}. 
After the transformation, 
a key obstacle is the term $V_s^{(2)}(t)$ (see Lemma~\ref{lem:exp_transform_mod}), whose norm admits only a bound of order $(t-s)^{-1}$, non-integrable as $t\searrow s$.
This issue is resolved through a comparison argument (Lemma~\ref{lem:comparison_mod}) that takes advantage of the positivity of $V_s^{(2)}(t)$.
With this difficulty removed, the approach of \cite{HL15} can be applied, leading to Proposition~\ref{pr:deterministic_main}, which yields an estimate that, however, involves different weights on the two sides of the inequality.

We then follow the idea in \cite{KT22} to iterate the estimate up to time one using the strong Markov property, with a control on the number of iterations, and then take expectations.
By spatial stationarity of the enhanced noise, the same argument works when centred at any point $z \in \R^3$, giving an analogous bound around $z$.
Averaging over $z$ allows us to obtain an estimate with identical weights on both sides.
Finally, one iterates the bound valid up to time one and uses the Markov property to obtain the desired long-time estimate.
The proof of Theorem~\ref{thm:linearised} is given in Section~\ref{sec:proof_ergodicity}.

\subsection{Applications}
With Remark~\ref{rmk:difference_linearisation} and Theorem~\ref{thm:linearised}, it is not hard to show that the Markov semigroup $(\pP_t)_{t \in \R_{\geq}}$ has a unique invariant measure $\mu$ when $\lambda>0$ is small enough. Various properties of $\mu$ can also be derived using the dynamic. In particular, we prove that $\mu$ satisfies all of the Osterwalder--Schrader axioms~\cite{OS75},~\cite[Section~6.1]{GJ87}.
\begin{thm}\label{thm:uniqueness}
There exists $\lambda_\star\in(0,1]$ such that for all $\lambda\in(0,\lambda_\star]$ the Markov semigroup $(\pP_t)_{t\in\R_\geq}$ admits a unique invariant measure $\mu$ and one has $\mu = \lim_{\ell\to \infty}\mu_\ell$, where $\mu_\ell$ is the invariant measure of the dynamic on $\T_\ell^3$. Furthermore, $\mu$ has the following properties:
\begin{enumerate}
	\item $\mu$ invariant under Euclidean isometries. 
	\item $\mu$ is reflection positive.
	\item For every $f\in C^\infty_{\mathrm c}(\R^3)$ there exists $\beta>0$ such that
	\begin{equ}
	\label{e:subgaussian_tail}
		\int \exp\bigl(\beta\langle\phi,f\rangle^4\bigr)\,\mu(\md\phi)<\infty \;.
	\end{equ}
	\item For all $F,G\in C^2_{\mathrm{b}}(\R)$, $f,g\in C^\infty_{\mathrm{c}}(\R^3)$ we have 
	\begin{equ}
	\label{e:correlation_decay}
		\Cov_\mu\big(F(\langle\bigcdot,f\rangle),G(\langle\bigcdot,g_L\rangle)\big)\lesssim \exp(-\gamma|L|)
	\end{equ}
	uniformly over $L \in \R^3$, where $g_L\eqdef g(\bigcdot-L)$. 
\end{enumerate}

\end{thm}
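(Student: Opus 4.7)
I establish the five assertions in turn, treating the exponential decay of correlations as the substantive point. For uniqueness, given $\phi^{(0)},\phi^{(1)}\in\cC^{-\frac12-\kappa}(w)$, approximate each by coupled data converging in the sense of \eqref{e:Phi_convergence}, and combine this with Fatou's lemma to transfer the exponential contraction of Remark~\ref{rmk:difference_linearisation} to the infinite-volume dynamics, yielding $\EE\|\Phi^{(1)}(t)-\Phi^{(0)}(t)\|_{L^p(\rho)}^p\lesssim e^{-pt/3}$. Coupling two candidate invariant measures via the common noise and letting $t\to\infty$ then forces them to coincide, giving uniqueness. Tightness from Theorem~\ref{thm:tightness} combined with the fact (recorded right after Theorem~\ref{thm:global_solution}) that every subsequential limit of $(\mu_\ell)$ is $\pP_t$-invariant yields $\mu_\ell\to\mu$. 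Invariance of $\mu$ under the Euclidean group is then immediate from \eqref{e:Euc_invariance}, since $\varrho_*\mu$ is also invariant and hence equals $\mu$ by uniqueness.

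\textbf{Reflection positivity and subgaussian tails.} Reflection positivity of each $\mu_\ell$ is classical \cite[Ch.~6]{GJ87}: the density in \eqref{e:muepsell} depends only on $|\Phi|^2$ and $|\Phi|^4$, and the Gaussian part $\P_{\eps,\ell}$ with covariance $(\id-\Delta_\eps)^{-1}$ is reflection positive on the lattice torus. Reflection positivity is preserved under weak limits, so $\mu$ inherits it. For the quartic exponential moment \eqref{e:subgaussian_tail}, the space-time localisation bounds of \cite{MW20} recalled in Appendix~\ref{sec:spacetime_localization} applied to the stationary solution yield $\int\exp(\beta\langle\phi,f\rangle^4)\,\mu_\ell(\md\phi)\le C$ uniformly in $\ell$, and Fatou applied to the truncation $\exp(\beta\langle\phi,f\rangle^4)\wedge N$ transfers the bound to $\mu$.

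\textbf{Correlation decay.} For $F,G\in C^2_{\mathrm b}(\R)$, $f,g\in C^\infty_{\mathrm c}(\R^3)$ and $\eta>0$ small, set $H_\eta(\phi)\eqdef\eta G(\langle\phi,g_L\rangle)$, so that $\mathrm{D}H_\eta$ has support in a fixed neighbourhood of $L$ and $|\mathrm{D}H_\eta|\lesssim\eta$. By \eqref{eq:tildemu},
\begin{equ}
\mu_\ell^{(H_\eta)}(F(\langle\bigcdot,f\rangle))-\mu_\ell(F(\langle\bigcdot,f\rangle))=2\eta\,\Cov_{\mu_\ell}(F(\langle\bigcdot,f\rangle),G(\langle\bigcdot,g_L\rangle))+O(\eta^2)\,,
\end{equ}
with the $O(\eta^2)$ error uniform in $L$. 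On the other hand, couple $\Phi_{\eps,\ell}$ and $\Phi^{(H_\eta)}_{\eps,\ell}$ by the common noise with zero initial data; by ergodicity (Theorem~\ref{thm:dynamic_on_torus}) their laws at time $t$ converge to $\mu_\ell$ and $\mu_\ell^{(H_\eta)}$ as $t\to\infty$. Translation invariance of the enhanced noise lets one apply Remark~\ref{rmk:decay_linearisation} with source recentred at $L$, and linearity in $S$ of the Duhamel-type representation there upgrades the bound to
\begin{equ}
\EE\bigl\|e^{\gamma\langle\bigcdot-L\rangle_\ell}(\Phi^{(H_\eta)}_{\eps,\ell}(t)-\Phi_{\eps,\ell}(t))\bigr\|_{L^p(\T_\ell^3)}^p\lesssim\eta^p\,,
\end{equ}
uniformly in $t\ge0$ and in $\eps,\ell,L$ (with $|L|\leq\ell/3$). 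Testing against $f$ by Hölder with conjugate weight $e^{-\gamma\langle\bigcdot-L\rangle_\ell}$ and using $\|fe^{-\gamma\langle\bigcdot-L\rangle_\ell}\|_{L^q}\lesssim e^{-\gamma|L|}$, then sending $t\to\infty$ by ergodicity, produces $|\mu_\ell^{(H_\eta)}(F(\langle\bigcdot,f\rangle))-\mu_\ell(F(\langle\bigcdot,f\rangle))|\lesssim\eta\,e^{-\gamma|L|}$. Dividing by $\eta$, letting $\eta\searrow0$, and finally $\ell\to\infty$ using $\mu_\ell\to\mu$, yields \eqref{e:correlation_decay}.

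\textbf{Main obstacle.} The delicate part is the uniform-in-$\eta$ linear scaling in the coupled difference and the interchange of the limits $\eta\to0$ and $t\to\infty$. Both reduce to verifying that the weighted bound of Theorem~\ref{thm:linearised} remains valid for the dynamic \eqref{e:phi4_mod} with bounded drift $\mathrm{D}H_\eta$ uniformly over $\eta\in[0,1]$, which should follow from a careful rerun of the stopping-time / exponential-transformation argument sketched after Theorem~\ref{thm:linearised}; all remaining items are routine applications of Fatou, weak convergence, and the already-established ergodicity.
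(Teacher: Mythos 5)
Your overall skeleton matches the paper's: uniqueness via the contraction of Remark~\ref{rmk:difference_linearisation}, identification of $\mu$ as $\lim_\ell\mu_\ell$ via tightness plus invariance of subsequential limits, Euclidean invariance from \eqref{e:Euc_invariance} plus uniqueness, and reflection positivity from the lattice approximation (the paper packages this as Lemma~\ref{lem:measure_torus}) together with weak convergence. For the quartic tail the paper simply cites \cite[Theorem~1.1]{HS22}; your sketch of rederiving it from the localisation bounds is plausible in spirit but is not a proof as written \dash that derivation is the content of an entire separate argument, so you should either cite it or accept that this item needs real work.

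The one genuinely different route is your proof of correlation decay. The paper avoids any linearisation: it takes $H(\phi)=F(\phi(f))$ (normalised so that $\|\mathrm DH\|_{L^\infty}\le1$), uses the \emph{exact} identity \eqref{eq:tildemu} to write $\Cov_{\mu_\ell}(e^{2F(\langle\bigcdot,f\rangle)},e^{2G(\langle\bigcdot,g_L\rangle)})$ as $\int e^{2F}\md\mu_\ell\cdot\int e^{2G}(\md\mu_\ell^{(H)}-\md\mu_\ell)$, and then tests the coupled difference $\Phi^{(H)}-\Phi$ against the far-away observable $g_{L+N}$ using the weight $e^{\gamma\langle\bigcdot\rangle_\ell}$ centred at the origin; decay for general $F,G$ follows since any bounded covariance can be rewritten as a covariance of exponentials. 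Your version perturbs instead by $H_\eta=\eta G(\langle\bigcdot,g_L\rangle)$ at location $L$, expands $\mu_\ell^{(H_\eta)}(F)-\mu_\ell(F)=2\eta\Cov+O(\eta^2)$, and divides by $\eta$. This works, and buys you the covariance of $F$ and $G$ directly rather than of their exponentials, at the cost of a linearisation error and a recentred weight (justified by stationarity). Two corrections to your closing paragraph: first, the ``main obstacle'' you flag is already resolved by the paper's setup \dash Theorem~\ref{thm:linearised} is stated uniformly over adapted continuous $S$ in the unit ball of $L^\infty(\R_\geq\times\T_\ell^3)$, and Remark~\ref{rmk:phi4_S_H} identifies $\Phi^{(H_\eta)}$ as a solution of \eqref{eq:phi4_S} with such an $S$, so no rerun of the stopping-time argument is needed. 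Second, no interchange of the limits $\eta\to0$ and $t\to\infty$ is required: for each fixed $\eta$ you take $t\to\infty$ to identify the invariant measures, obtain $2\eta|\Cov|\le C\eta e^{-\gamma|L|}+C'\eta^2$ with constants uniform in $\eta,\ell,L$, divide by $\eta$ and only then let $\eta\searrow0$.
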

\begin{proof}
We start by showing that if $\mu$ is any accumulation point of the $\mu_\ell$, then
it is invariant under $(\pP_t)_{t\in\R_\geq}$. Let $(\ell_n)_{n\in\N_+}$ be such that $\lim_{n\to\infty}\mu_{\ell_n}=\mu$ in the sense of weak convergence of measures on $\cC^{-\f12-\kappa}(w)$. Skorokhod's representation theorem yields a probability space and random variables $\xi$, $\phi$, $(\phi_n)_{n\in\N_+}$ such that: 
\begin{enumerate}
\item the law of $\phi$ is $\mu$ and the law of $\phi_n$ is $\mu_{\ell_n}$ for all $n\in\N_+$,
\item $(\phi_n)_{n\in\N_+}$ converges almost surely to $\phi$ in $\cC^{-\f12-\kappa}(w)$,
\item $\xi$ is a space-time white noise independent of $\phi$ and $(\phi_n)_{n\in\N_+}$.
\end{enumerate}
Let $F$ be a bounded continuous functional on $\cC^{-\f12-\kappa}(w)\supset\cC^{-\f12-\kappa}(\T_\ell^3)$. By the definition of $\pP_t$ and Theorem~\ref{thm:global_solution} we have
\begin{equ}
\mu(\pP_tF)
=
\EE\big(F(\Phi(\phi;t,\bigcdot))\big)
=
\lim_{n\to\infty}
\EE\big(F(\Phi_{\ell_n}(\phi_n;t,\bigcdot))\big)
\end{equ}
for all $t\geq0$. Since $\mu_\ell$ is invariant under $\pP^{(\ell)}_t$, the process $t\mapsto \Phi_\ell(\phi_\ell;t,\bigcdot)$ is stationary. Hence, $\EE\big(F(\Phi_{\ell_n}(\phi_n;t,\bigcdot))\big)=\EE\big(F(\Phi_{\ell_n}(\phi_n;0,\bigcdot))\big)$ and $\mu(\pP_tF)=\mu(F)$ for all $t\geq0$, 
showing that $\mu$ is indeed invariant.

To prove that the invariant measure is unique, assume that $\mu,\tilde\mu$ are both invariant
and let $F$ be a bounded Lipschitz continuous functional on $\bB_{2,2}^{-\frac12-2\kappa}(\rho)\supset \cC^{-\f12-\kappa}(w)$. Then, by Theorems~\ref{thm:global_solution} and~\ref{thm:linearised} 
(see in particular Remark~\ref{rmk:difference_linearisation}), we obtain $|\mu(F)-\tilde\mu(F)|\lesssim\exp(-t/3)$ uniformly over $t\ge 1$, yielding $\mu=\tilde\mu$.

For the properties of $\mu$, since $(\pP_t)_{t\in\R_\geq}$ is covariant under Euclidean isometries, $\mu$ is invariant under these transformations. The bound~\eqref{e:subgaussian_tail} was proved in~\cite[Theorem~1.1]{HS22}. By Lemma~\ref{lem:measure_torus} and the convergence of $\mu_\ell$ to $\mu$, $\mu$ is also reflection positive. It remains to prove \eqref{e:correlation_decay}. It suffices to show that
\begin{equ}
	\lvert\Cov_{\mu_{\ell}}\big(e^{2F(\langle\bigcdot,f\rangle)},e^{2G(\langle\bigcdot,g_L\rangle)}\big)\rvert
	\lesssim\exp(-\gamma|L|)
\end{equ}
uniformly over $\ell\in\N_+$ and $L\in\R$ for arbitrary fixed $F,G\in C^2_{\mathrm{b}}(\R)$, $f,g\in C^\infty_{\mathrm{c}}(\R^3)$ such that $\|F'\|_{L^\infty} \|f_\ell\|_{L^\infty}\leq1$, 
where $f_\ell$ denotes the periodisation of $f$.
Here, with a~slight abuse of notation, we write $g_L := g_{(L,0,0)}$. Let $H(\phi)=F(\phi(f))=F(\langle\phi,f\rangle)$. Then
$\mathrm{D}H(\phi)=F'(\phi(f))f_\ell$
and $\|\mathrm D H(\phi)\|_{L^\infty}\leq1$ for all $\phi$. 
Using the identity~\eqref{eq:tildemu} we obtain
\begin{equs}
	\Cov_{\mu_\ell}&\big(e^{2F(\langle\bigcdot,f\rangle)},e^{2G(\langle\bigcdot,g_L\rangle)}\big)
	\\
	&=
	\int e^{2F(\phi(f))}e^{2G(\phi(g))}\,\mu_\ell(\md \phi)
	- \int e^{2F(\phi(f))}\,\mu_\ell(\md \phi)
	\int e^{2G(\phi(g_L))}\,\mu_\ell(\md \phi)
	\\
	&=
	\int e^{2F(\phi(f))}\,\mu_\ell(\md \phi)\,
	\int e^{2G(\phi(g_L))}\,\Big(\mu^{(H)}_\ell(\md\phi)-\mu_\ell(\md \phi)\Big)
	\\
	&=
	\lim_{t\to\infty}
	\EE(e^{2F(\Phi_\ell(t,f))})\,
	\EE(e^{2G(\Phi^{(H)}_\ell(t,g_L))}-e^{2G(\Phi_\ell(t,g_L))})
	\\
	&=
	\lim_{t\to\infty}\lim_{\eps\searrow0}
	\EE(e^{2F(\PHI(t,f))})\,
	\EE(e^{2G(\PHImod(t,g_L))}-e^{2G(\PHI(t,g_L))})\;.
\end{equs}
The penultimate equality follows from ergodicity in finite volume and the last one follows from the fact that the dynamic $\Phi_\ell,\Phi^{(H)}_\ell$ can be approximated by $\PHI,\PHImod$ solving~\eqref{e:phi4_mod} with zero and nonzero $H$, respectively. Now suppose that $\PHI$ and $\PHImod$ vanish at time zero and choose $N\in\N_+$ such that $\supp f,g\subset (-N/2,N/2)^3$. Then
\begin{equs}
	\lvert\Cov&_{\mu_{\ell}}\big(e^{2F(\langle\bigcdot,f\rangle)},e^{2G(\langle\bigcdot,g_{L+N}\rangle)}\big)\rvert
	\\
	&\lesssim
	\sup_{t\geq 0}\sup_{\eps\in(0,1]}
	\EE|(\PHImod-\PHI)(t,g_{L+N})|
	\\
	&\lesssim
	\sup_{t\geq 0}\sup_{\eps\in(0,1]}
	\lVert\exp(-\gamma\langle\bigcdot\rangle_\ell)\,g_{L+N}\rVert_{L^2(\R^3)}\,
	\EE\lVert\exp(\gamma\langle\bigcdot\rangle_\ell)\,(\PHImod-\PHI)(t)\rVert_{L^2(\T_\ell^3)}
	\\
	&\lesssim
	\exp(-\gamma|L|)
	\sup_{t\geq 0}\sup_{\eps\in(0,1]}
	\EE\lVert\exp(\gamma\langle\bigcdot\rangle_\ell)\,(\PHImod-\PHI)(t)\rVert_{L^2(\T_\ell^3)}
	\\
	&\lesssim\exp(-\gamma|L|)
\end{equs}
uniformly in $L\in[1,\ell/3]$ and $\ell \geq N$, where the last step is a consequence of Remark~\ref{rmk:decay_linearisation}.
\end{proof}

\begin{lem}\label{lem:measure_torus}
Let $\lambda>0$, $\ell\in\N_+$ and $\eps = 2^{-n}$ for some $n \in \N_+$. We introduce a map $\iota_{\epsilon,\ell}\,:\, \R^{\T^3_{\epsilon,\ell}} \to \dD'(\T_\ell^3)$ by setting
\begin{equ}
 \langle \iota_\epsilon f, \varphi \rangle \eqdef \sum_{y \in \T^3_{\epsilon,\ell}} f(y)\int_{\square_\epsilon(y)} \varphi(z)\, \md z\;,
 \qquad
 \varphi\in C^\infty(\T_\ell^3)\;,
\end{equ}
where $\square_\epsilon(y) \eqdef \{z \in \R^3\,|\, \|z-y\|_\infty \leq \epsilon/2\}$.
Recall the definition~\eqref{e:muepsell} of the $\Phi^4_3$ measure $\hat\mu_{\epsilon,\ell}$ on $\R^{\T^3_{\epsilon,\ell}}$.
The sequence of measures $(\iota_\epsilon\sharp \hat\mu_{\epsilon,\ell})$ converges weakly on  $\cC^{-\f12-\kappa}(\T_\ell^3)$ as $\varepsilon\searrow0$ to the invariant measure $\mu_\ell$ of the dynamic on $\T_\ell^3$. Moreover, $\mu_\ell$ is reflection positive in the sense of~\cite[Section~6.1]{GJ87}.
\end{lem}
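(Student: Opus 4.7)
The plan rests on the stochastic quantisation principle: $\hat\mu_{\eps,\ell}$ is the invariant measure of the lattice dynamic \eqref{e:Phi4approx}, which is known to converge to the continuum dynamic $\Phi_\ell$ whose unique invariant measure $\mu_\ell$ is identified in Theorem~\ref{thm:dynamic_on_torus}. I therefore plan to identify any subsequential limit of $(\iota_{\eps}\sharp\hat\mu_{\eps,\ell})_{\eps=2^{-n}}$ as invariant under the continuum Markov semigroup $\pP^{(\ell)}_t$ and conclude by uniqueness of the invariant measure. Reflection positivity of $\mu_\ell$ will then be inherited from the classical lattice case by a standard closure argument.

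For the convergence claim, I first establish tightness of $(\iota_{\eps}\sharp\hat\mu_{\eps,\ell})_{\eps=2^{-n}}$ in $\cC^{-\frac12-\kappa}(\T_\ell^3)$. This follows from uniform Besov moment bounds that can be obtained either via the stochastic quantisation route (running the lattice dynamic from a deterministic initial datum, applying the coming-down estimate of Lemma~\ref{le:psi}, and then using stationarity), or directly from the space--time localisation bounds reviewed in Appendix~\ref{sec:spacetime_localization}. Given a subsequential limit $\mu^\star$ along some $\eps_n\searrow 0$, I apply Skorokhod's representation theorem to realise, on a single probability space, random variables $\phi_n$ with law $\iota_{\eps_n}\sharp\hat\mu_{\eps_n,\ell}$ and $\phi$ with law $\mu^\star$ such that $\phi_n\to\phi$ almost surely in $\cC^{-\frac12-\kappa}(\T_\ell^3)$, together with an independent space--time white noise $\xi$ on $\R\times\T_\ell^3$. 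Driving the lattice dynamic from $\phi_n$ by the natural restriction of $\xi$ to the lattice and the continuum dynamic $\Phi_\ell(\phi;\bigcdot)$ from $\phi$ by $\xi$, the known convergence of the lattice $\Phi^4_3$ flow to the continuum one (cf.\ \cite{MW17b}) yields that the embedded lattice flow at fixed time $t>0$ converges in probability to $\Phi_\ell(\phi;t,\bigcdot)$ in $\cC^{-\frac12-\kappa}(\T_\ell^3)$. Since $\hat\mu_{\eps_n,\ell}$ is stationary, the law of the left-hand side is $\iota_{\eps_n}\sharp\hat\mu_{\eps_n,\ell}$ for every $n$; hence the law of $\Phi_\ell(\phi;t,\bigcdot)$ equals $\mu^\star$. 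Thus $\mu^\star$ is invariant under $\pP^{(\ell)}_t$, and Theorem~\ref{thm:dynamic_on_torus} forces $\mu^\star=\mu_\ell$.

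Reflection positivity of $\hat\mu_{\eps,\ell}$ with respect to any coordinate hyperplane through lattice sites (or midway between them) is the classical Osterwalder--Seiler argument recorded in \cite[Section~6.1]{GJ87}: the discrete quadratic form $\langle\Phi,(\id-\Delta_\eps)\Phi\rangle$ splits into two pieces supported in each closed half-torus plus a single bilinear reflected coupling of the correct sign, while the renormalised quartic density factorises across the reflection plane. Since reflection positivity can be formulated as the family of positive-semidefiniteness constraints
\begin{equ}
 \sum_{i,j}c_i\bar c_j\int F_i(\phi)\,\overline{F_j(\theta\phi)}\,\mu(\md\phi)\geq 0,
\end{equ}
ranging over finite tuples of bounded continuous cylindrical functionals $F_i$ whose test functions are supported in the positive half-torus, it is stable under weak convergence, and $\mu_\ell=\lim_n\iota_{\eps_n}\sharp\hat\mu_{\eps_n,\ell}$ inherits the property.

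The main technical point is the lattice-to-continuum convergence invoked in step 2: Theorem~\ref{thm:dynamic_on_torus} is formulated for the continuum equation driven by mollified noise, whereas the stationary dynamic here is the lattice equation \eqref{e:Phi4approx}, so one needs the compatibility between the BPHZ renormalisation constants built into $\hat\mu_{\eps,\ell}$ and those appearing in the continuum equation. This match is well documented in the stochastic quantisation literature (in particular \cite{MW17b,HM18b}) and is therefore not in doubt.
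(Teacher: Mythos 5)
Your proposal is correct in substance, but be aware that the paper itself proves this lemma purely by citation: the convergence is attributed to \cite[Proposition~7.8]{HM18} (or \cite[Theorem~2.2]{HS22}), and reflection positivity to the argument of \cite[Proposition~5.3]{GH21}. What you have written is essentially an unpacking of what those references contain, so the strategy agrees; the useful comparison concerns where the real work sits. Your step 2 hinges on convergence of the genuinely \emph{discrete} dynamic \eqref{e:Phi4approx} (spatial lattice, discrete Laplacian) to the continuum flow $\Phi_\ell$. Nothing in the present paper provides this: Theorem~\ref{thm:dynamic_on_torus} and Lemma~\ref{le:psi} concern the continuum equation driven by spatially mollified noise, so you cannot invoke them for the lattice flow, and the tightness/coming-down inputs you mention would likewise need discrete analogues. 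That lattice-to-continuum convergence, including the matching of the lattice renormalisation constants in \eqref{e:muepsell} with the continuum ones up to a finite mass shift, is precisely the content of \cite{HM18} (discretisations of regularity structures); your attribution to \cite{MW17b,HM18b} points at the wrong results (continuum torus well-posedness and the strong Feller property, respectively). With that citation corrected, the Skorokhod/stationarity identification of subsequential limits and the uniqueness conclusion are fine. Your reflection-positivity argument --- Osterwalder--Seiler positivity on the lattice, formulated as a family of positive-semidefiniteness constraints against bounded continuous cylindrical functionals, hence stable under weak convergence --- is exactly the argument of \cite[Proposition~5.3]{GH21} that the paper invokes, with the minor caveat that one should check the reflected coupling term in $\langle\Phi,(\id-\Delta_\eps)\Phi\rangle$ has the required sign for the chosen reflection hyperplane, which is the classical computation you allude to.
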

\begin{proof}
The claim about the convergence follows from~\cite[Proposition~7.8]{HM18} or~\cite[Theorem~2.2]{HS22},
but has been known since \cite{MR418721}. To show that $\mu_\ell$ is reflection positive one uses the argument from the proof of Proposition~5.3 in~\cite{GH21}.
\end{proof}

By standard arguments, see~\cite{KT22} for example, Theorem~\ref{thm:linearised} implies the spectral gap inequality for the $\Phi^4_3$ measure stated in the corollary below. Note that this result is not new, as it follows from the log-Sobolev inequality for the $\Phi^4_3$ measure proved in \cite{RolandLogSob}.
\begin{cor}\label{thm:spectral_gap}
There exist $\lambda_\star\in(0,1]$ and $C>0$ such that for all $\lambda\in(0,\lambda_\star]$ and all cylindrical functions $F$ on $\dD'(\R^3)$, the $\Phi^4_3$ measure $\mu$ on $\R^3$ satisfies the following spectral gap inequality 
\begin{equ}
(\mu(F^2)) - \big(\mu(F)\big)^2 \leq C\, \mu\big(\|\mathrm{D}F \|_{L^2(\R^3)}^2\big)\;,
\end{equ}
where $\mathrm{D}F$ denotes the $L^2(\R^3)$ gradient of $F$.
\end{cor}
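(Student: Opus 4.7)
My plan is to first establish the spectral gap for the finite-volume measures $\mu_\ell$ with a constant independent of $\ell$, and then to pass to the infinite-volume limit using the weak convergence $\mu_\ell\to\mu$ from Theorem~\ref{thm:uniqueness}. The finite-volume step will follow the standard dynamical route from exponential decay of the linearisation to a Poincaré inequality, exactly as implemented for $\Phi^4_2$ in~\cite{KT22}.

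For the finite-volume piece, I will first use that at the regularised level $\eps>0$ the equation~\eqref{e:Phi4approx} is a smooth gradient-type SDE with reversible measure $\hat\mu_{\eps,\ell}$, so the corresponding semigroup is symmetric on $L^2(\hat\mu_{\eps,\ell})$. Taking $\eps\searrow 0$ via Theorem~\ref{thm:dynamic_on_torus} transfers this to $\pP_t^{(\ell)}$ on $L^2(\mu_\ell)$ and, combined with ergodicity, yields the variance identity
\begin{equ}
	\mathrm{Var}_{\mu_\ell}(F) = \int_0^\infty \mu_\ell\bigl(\|\mathrm{D}\pP_t^{(\ell)}F\|_{L^2(\T_\ell^3)}^2\bigr)\,\md t
\end{equ}
for cylindrical $F$. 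The chain rule, again justified at $\eps>0$ and then passed to the limit using~\eqref{e:linearised_1}, will identify $\d_v\pP_t^{(\ell)}F(\phi) = \EE\bigl[\bigl\langle \mathrm{D}F(\Phi_\ell(\phi;t)),\, J_\ell(0,t)\, v\bigr\rangle_{L^2(\T_\ell^3)}\bigr]$ for every deterministic direction $v\in L^2(\T_\ell^3)$. Applying Cauchy--Schwarz first in $L^2(\T_\ell^3)$ pointwise and then in probability gives
\begin{equ}
	\bigl|\d_v \pP_t^{(\ell)} F(\phi)\bigr|^2 \leq \EE\|\mathrm{D}F(\Phi_\ell(\phi;t))\|_{L^2(\T_\ell^3)}^2\cdot \EE\|J_\ell(0,t) v\|_{L^2(\T_\ell^3)}^2\;,
\end{equ}
and the second factor is $\lesssim e^{-2t/3}\|v\|_{L^2}^2$ uniformly in $\ell$ by Theorem~\ref{thm:linearised} with $p=2$, $\gamma=0$, $\fc=1/\ell$ in the form of Remark~\ref{rmk:torus_linearisation}. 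Taking the supremum over unit $v$, integrating against $\mu_\ell$ and using stationarity will produce $\mu_\ell(\|\mathrm{D}\pP_t^{(\ell)}F\|^2)\lesssim e^{-2t/3}\,\mu_\ell(\|\mathrm{D}F\|^2)$, and integrating in $t$ then yields the finite-volume spectral gap with an $\ell$-uniform constant $\tfrac{3C}{2}$.

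For the passage to infinite volume, by density it suffices to treat cylindrical $F(\phi) = h(\phi(f_1),\ldots,\phi(f_k))$ with $f_j\in C^\infty_{\mathrm c}(\R^3)$. For $\ell$ large enough that $\bigcup_j \supp f_j\subset(-\ell/2,\ell/2)^3$, the periodisations $f_{j,\ell}$ differ from $f_j$ by translates with mutually disjoint supports, so that $\|\mathrm{D}F(\phi)\|_{L^2(\T_\ell^3)}^2 = \|\mathrm{D}F(\phi)\|_{L^2(\R^3)}^2$ identically in $\phi$. Since $F$, $F^2$ and $\phi\mapsto\|\mathrm{D}F(\phi)\|_{L^2(\R^3)}^2$ are all bounded continuous functions on $\cC^{-\frac12-\kappa}(w)$, the weak convergence $\mu_\ell\to\mu$ will allow me to take $\ell\to\infty$ in the finite-volume inequality and conclude with the same constant.

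The hard part will be the rigorous justification of the inputs of the second paragraph -- symmetry of $\pP_t^{(\ell)}$ on $L^2(\mu_\ell)$, the variance--Dirichlet form identity, and the chain-rule formula $\mathrm{D}\pP_t^{(\ell)}F = \EE[J_\ell(0,t)^*\mathrm{D}F(\Phi_\ell(\cdot;t))]$ -- in the singular three-dimensional setting. This is the same obstruction as in~\cite{KT22} and I will resolve it in the same way: all of these identities are classical at the regularised level $\eps>0$, where the dynamic is a smooth finite-dimensional reversible SDE, and one then passes to the $\eps\searrow 0$ limit using the uniform-in-$\eps$ nature of Theorems~\ref{thm:dynamic_on_torus} and~\ref{thm:linearised}.
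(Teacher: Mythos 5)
Your proposal is correct and follows essentially the same route as the paper: reduce to a finite-volume spectral gap for $\mu_\ell$ with an $\ell$-uniform constant, using Remark~\ref{rmk:torus_linearisation} (Theorem~\ref{thm:linearised} with $p=2$, $\gamma=0$) and the argument of \cite[Section~3]{KT22} to get $\|\mathrm{D}\pP_t^{(\ell)}F\|^2_{L^2(\T_\ell^3)}\lesssim e^{-2t/3}\,\pP_t^{(\ell)}\|\mathrm{D}F\|^2_{L^2(\T_\ell^3)}$, and then pass to infinite volume by the weak convergence $\mu_\ell\to\mu$. The only divergence is your choice of variance identity: you invoke reversibility of $\pP_t^{(\ell)}$ on $L^2(\mu_\ell)$ (justified at $\eps>0$ and passed to the limit) to write $\mathrm{Var}_{\mu_\ell}(F)=2\int_0^\infty\mu_\ell\bigl(\|\mathrm{D}\pP_t^{(\ell)}F\|^2\bigr)\,\md t$, whereas the paper uses the pointwise interpolation identity $\pP_t^{(\ell)}(F^2)-(\pP_t^{(\ell)}F)^2=2\int_0^t\pP_{t-s}^{(\ell)}\|\mathrm{D}\pP_s^{(\ell)}F\|^2\,\md s$ of \cite[Proposition~4.2]{KT22} together with finite-volume ergodicity, which yields the same conclusion without needing symmetry of the semigroup at all.
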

\begin{proof}
It suffices to show that
\begin{equ}
	(\mu_\ell(F^2)) - \big(\mu_\ell(F)\big)^2 \leq C\, \mu_\ell\big(\|\mathrm{D}F \|_{L^2(\T_\ell^3)}^2\big)\;,
\end{equ}
where $\mu_\ell$ is the invariant measure of the dynamic on $\T_\ell^3$, and use the weak convergence $\mu=\lim_{\ell\to\infty}\mu_\ell$. By Remark~\ref{rmk:torus_linearisation} the solution map of the linearised equation~\eqref{e:linearised} satisfies
\begin{equ}
\EE \|J_{\eps,\ell}(0,t)v\|^2_{L^2(\T^3_\ell)} 
\leq \frac C3 \exp(-2t/3)\,
\|v\|^2_{L^2(\T^3_\ell)}
\end{equ}
for deterministic initial conditions $v$. Consequently, by the argument presented in~\cite[Section~3]{KT22} we have
\begin{equ}
 \|{\rm D}(\pP^{(\ell)}_t F(\phi))\|^2_{L^2(\T^3_\ell)}
 \leq
 \frac C3 \exp(-2t/3)\,
 \pP^{(\ell)}_t\|({\rm D} F)(\phi)\|^2_{L^2(\T^3_\ell)}
\end{equ}
for all cylindrical functions $F$ on $\T^3_\ell$ and $\phi\in \cC^{-\f12-\kappa}(\T^3_\ell)$. Here $\pP^{(\ell)}_t $ is defined to be the Markov semigroup corresponding to $\Phi_{\ell}$. By Proposition 4.2 in~\cite{KT22} we have
\begin{equ}
 \pP^{(\ell)}_t(F^2)-(\pP^{(\ell)}_tF)^2
 =
 2\int_0^t \pP^{(\ell)}_{t-s}
 \|{\rm D}(\pP^{(\ell)}_s F)\|^2_{L^2(\T^3_\ell)}\,.
\end{equ}
Hence,
\begin{equ}
 \pP^{(\ell)}_t(F^2)-(\pP^{(\ell)}_tF)^2
 \leq
 C\,\pP^{(\ell)}_{t}\|{\rm D}F\|^2_{L^2(\T^3_\ell)}\,.
\end{equ}
The statement follows now from ergodicity of the $\Phi^4_3$ dynamic on $\T^3_\ell$.
\end{proof}

Theorem~\ref{thm:linearised} also implies synchronisation for the infinite-volume $\Phi^4_3$ dynamic, extending the finite-volume result established in~\cite{GT20}. The proof relies critically on the order-preserving property of the scalar $\Phi^4_3$ dynamic. Among our results, this is the only one that does not generalise to the vector-valued $\Phi^4_3$ model.

\begin{defn}
	We say that $\phi\in\sS'(\R^3)$ is non-negative if $\langle \phi,f\rangle\geq 0$ for all non-negative $f\in\sS(\R^3)$. For $\phi_1,\phi_2\in\sS'(\R^3)$ we write $\phi_1\preceq \phi_2$ if $\phi_2-\phi_1$ is non-negative.
\end{defn}
\begin{rmk}\label{rmk:preceq_dist}
	For $\alpha<0$ we have $\|\phi_1\|_{\cC^\alpha(w)}\lesssim \|\phi_2\|_{\cC^\alpha(w)}$ uniformly over $0\preceq \phi_1\preceq \phi_2$. The last fact is a consequence of the positivity of the heat kernel and the equivalence of the norm $\|\bigcdot\|_{\cC^\alpha(w)}$ to the norm $\phi\mapsto\sup_{r\in(0,1]}r^{-\frac\alpha2}\|e^{r\Delta}\phi\|_{L^\infty(w)}$, which can be proved along the lines of~\cite[Theorem~2.34]{BCD11} with the use of~\cite[Lemmas~2 and 3 and Section~4.1]{MW17a}.
\end{rmk}

\begin{thm}\label{thm:synchronisation}
	Fix $p\in[1,\infty)$. There exist $\lambda_\star\in(0,1]$ and $C,c>0$ such that
	\begin{equ}
		\EE\Bigl(\sup_{\phi_1,\phi_2\in \cC^{-\frac12-\kappa}(w)} \|\Phi(\phi_1;t,\bigcdot)-\Phi(\phi_2;t,\bigcdot)\|^p_{\cC^{-\frac12-\kappa}(w)}\Bigr)
		\leq 
		C\exp(-c\,t)
	\end{equ}
	for all $\lambda\in(0,\lambda_\star]$ and $t\geq 1$.
\end{thm}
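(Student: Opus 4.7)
My strategy combines three ingredients: the order-preserving property of the scalar $\Phi^4_3$ dynamic, the coming-down-from-infinity estimate of Lemma~\ref{le:psi}, and the linearised bound of Theorem~\ref{thm:linearised}. First, I would verify that the approximating dynamics $\PHI$ satisfy the comparison principle $\PHI(\phi_1;t)\leq \PHI(\phi_2;t)$ whenever $\phi_1\leq\phi_2$ pointwise on $\T_\ell^3$. This is a classical application of the parabolic maximum principle, since the difference solves a linear parabolic equation whose zeroth order coefficient is locally bounded once the Da Prato--Debussche background is subtracted. Passing to the limit via~\eqref{e:Phi_convergence} gives that $\phi_1\preceq\phi_2$ implies $\Phi(\phi_1;t)\preceq\Phi(\phi_2;t)$ for all $t>0$.

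Using the decomposition $\Phi=\ou-\lambda\outz+\Psi$, the coming-down-from-infinity estimate produces a noise-measurable random function $M\geq 0$, independent of the initial data, with $\EE\|M\|_{L^p(\rho)}^p<\infty$ for every $p\geq 1$, and such that $|\Psi(\phi;1,x)|\leq M(x)$ pointwise, uniformly in $\phi\in\cC^{-\frac12-\kappa}(w)$. Let $\Phi^\pm$ denote the infinite-volume $\Phi^4_3$ solutions on $[1,\infty)$ with initial data $(\ou-\lambda\outz)(1)\pm M$ at time $1$. Monotonicity then provides the sandwich $\Phi^-(t)\preceq\Phi(\phi;t)\preceq\Phi^+(t)$ for all $t\geq 1$ and every $\phi\in\cC^{-\frac12-\kappa}(w)$. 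The mean-value representation of Remark~\ref{rmk:difference_linearisation} expresses $\Phi^+(t)-\Phi^-(t)$ in terms of the linearised operators $J^{(u)}$ acting on $2M$, so Theorem~\ref{thm:linearised} yields, for $\lambda\in(0,\lambda_\star]$ and $t\geq 1$,
\begin{equ}
\EE\|\Phi^+(t)-\Phi^-(t)\|^p_{L^p(\rho)}\lesssim e^{-p(t-1)/3}\,\EE\|M\|^p_{L^p(\rho)}\lesssim e^{-pt/3}.
\end{equ}
From the sandwich one has $0\preceq\Phi^+(t)-\Phi(\phi_i;t)\preceq\Phi^+(t)-\Phi^-(t)$, so Remark~\ref{rmk:preceq_dist} together with the triangle inequality implies
\begin{equ}
\sup_{\phi_1,\phi_2\in\cC^{-\frac12-\kappa}(w)}\|\Phi(\phi_1;t)-\Phi(\phi_2;t)\|_{\cC^{-\frac12-\kappa}(w)}\lesssim\|\Phi^+(t)-\Phi^-(t)\|_{\cC^{-\frac12-\kappa}(w)}.
\end{equ}

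It remains to trade the $L^p(\rho)$ bound for a $\cC^{-\frac12-\kappa}(w)$ bound on $\Phi^+(t)-\Phi^-(t)$. For this I would run the previous argument on the window $[t-1,t]$ rather than $[1,t]$: the difference $D=\Phi^+-\Phi^-$ solves a linear parabolic equation with (renormalised) coefficient $-\lambda((\Phi^+)^2+\Phi^+\Phi^-+(\Phi^-)^2)+C_\eps(\lambda)$, and combining its mild form with parabolic smoothing, the MW20 a priori bounds on $\Phi^\pm$, and a sufficiently large choice of $p$ to absorb the mismatch between the weights $\rho,w$ and the Besov exponents, should yield
\begin{equ}
\EE\|\Phi^+(t)-\Phi^-(t)\|^p_{\cC^{-\frac12-\kappa}(w)}\lesssim e^{-ct}
\end{equ}
for some $c>0$, from which the claim follows. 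The principal obstacle is this final norm upgrade: the coefficient $3\lambda\Phi^2-C_\eps(\lambda)$ is a renormalised product of negative regularity, so naive Schauder theory is insufficient and the upgrade must instead rely on the paracontrolled / regularity-structures machinery already underpinning Theorem~\ref{thm:linearised}, now applied to the unit-time propagation of the difference near $t$.
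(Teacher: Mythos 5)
Your overall architecture \dash order preservation of the scalar dynamic, a sandwich $\Phi^-(t)\preceq\Phi(\phi;t)\preceq\Phi^+(t)$ obtained from the coming-down-from-infinity bound, exponential decay of $\Phi^+-\Phi^-$ via the linearised equation, and the reduction of the supremum over initial data to $\Phi^+-\Phi^-$ via Remark~\ref{rmk:preceq_dist} \dash is exactly the paper's. (The paper builds $\Phi^\pm$ by following Step~1 of the proof of \cite[Theorem~2.5]{GT20} as monotone limits of genuine solutions rather than by perturbing $(\ou-\lambda\outz)(1)$ by a dominating function $M$; note in passing that your $M$ only lies in $L^\infty(w^3)$, so it may grow like $\langle\bigcdot\rangle^{3\kappa}$ and the data $(\ou-\lambda\outz)(1)\pm M$ need not belong to the state space $\cC^{-\frac12-\kappa}(w)$ on which the solution map of Theorem~\ref{thm:global_solution} is defined \dash one reason to prefer the monotone-limit construction.)

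The genuine gap is the final norm upgrade, which you rightly flag as the principal obstacle but then propose to resolve in a way that cannot work. The difficulty is not the regularity exponent: for $p\geq 6$ the Besov embedding already gives $L^p(\rho)\subset\cC^{-\frac12-\kappa}(\rho)$ essentially for free, so no paracontrolled analysis of the difference equation on $[t-1,t]$ is required. The difficulty is the \emph{weight}: the linearised bound decays only in $L^p(\rho)$ with $\rho=\langle\bigcdot\rangle^{-4}$, whereas the claim is stated with the much weaker weight $w=\langle\bigcdot\rangle^{-\kappa}$, and no choice of $p$ converts control with weight $\rho$ into control with weight $w$ (a bump located near $|x|\sim R$ of height $R^{\kappa}$ has vanishingly small $L^p(\rho)$ norm as $R\to\infty$ but order-one $\cC^{-\frac12-\kappa}(w)$ norm, for every $p$). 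Parabolic smoothing over $[t-1,t]$ improves regularity, not spatial weights, so the display you aim for cannot follow from the ingredients you list. The missing idea is interpolation: the a priori bound \eqref{e:bound} of Theorem~\ref{thm:global_solution} together with the Markov property gives a uniform-in-$\phi$ moment bound on $\|\Phi(\phi;t,\bigcdot)\|_{\cC^{-\frac12-\frac\kappa2}(w^{1/2})}$, and since $w=\rho^{\theta}\,(w^{1/2})^{1-\theta}$ for some $\theta>0$, interpolating the exponentially decaying $\cC^{-\frac12-\kappa}(\rho)$ bound against this uniformly bounded $\cC^{-\frac12-\kappa}(w^{1/2})$ bound yields exponential decay (with a smaller rate $c$) in $\cC^{-\frac12-\kappa}(w)$. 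With that step in place, the rest of your argument goes through.
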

\begin{proof}
	Without loss of generality we assume that $p\geq6$. By Remark~\ref{rmk:difference_linearisation} and Theorem~\ref{thm:global_solution}, we obtain
	\begin{equ}
		\sup_{\phi_1,\phi_2\in \cC^{-\frac12-\kappa}(w)}\EE \|\Phi(\phi_1;t,\bigcdot)-\Phi(\phi_2;t,\bigcdot)\|^p_{\cC^{-\frac12-\kappa}(w)}
		\leq 
		C\exp(-c\,t)\,,
	\end{equ} 
	where we used the continuous embeddings $L^p(\rho)\subset \cC^{-\frac12-\kappa}(\rho)$ and $\cC^{-\frac12-\frac\kappa2}(w^{\frac12})\subset \cC^{-\frac12-\kappa}(w^{\frac12})$ (see \cite[Section~6.4.1]{Tri06}), along with interpolation between $\cC^{-\frac12-\kappa}(\rho)$ and $\cC^{-\frac12-\kappa}(w^{\frac12})$. Recall also that the embedding $\cC^{-\frac12-\frac\kappa2}(w^{\frac12})\subset\cC^{-\frac12-\kappa}(w)$ is compact by \cite[Theorem~6.31]{Tri06}. Using Theorem~\ref{thm:global_solution}, Lemma~\ref{lem:map_T} and repeating the argument from Step~1 of the proof of~\cite[Theorem~2.5]{GT20} we construct random functions $\Phi^\pm\in C([1,\infty),\cC^{-\frac12-\kappa}(w))$ such that $\Phi^-(t,\bigcdot)\preceq\Phi(\phi;t,\bigcdot)\preceq\Phi^+(t,\bigcdot)$ for all $t\geq 1$ and $\phi\in \cC^{-\frac12-\kappa}(w)$ and
	\begin{equ}
		\EE \|\Phi^+(t,\bigcdot)-\Phi^-(t,\bigcdot)\|^p_{\cC^{-\frac12-\kappa}(w)}
		\leq 
		C\exp(-c\,t)\,.
	\end{equ}
	The statement  now follows from the bound
	\begin{equs}
		\|\Phi&(\phi_1;t,\bigcdot)-\Phi(\phi_2;t,\bigcdot)\|_{\cC^{-\frac12-\kappa}(w)}
		\\
		&\leq
		\|\Phi(\phi_1;t,\bigcdot)-\Phi^-(t,\bigcdot)\|_{\cC^{-\frac12-\kappa}(w)}
		+
		\|\Phi(\phi_2;t,\bigcdot)-\Phi^-(t,\bigcdot)\|_{\cC^{-\frac12-\kappa}(w)}
		\\
		&\lesssim
		\|\Phi^+(t,\bigcdot)-\Phi^-(t,\bigcdot)\|_{\cC^{-\frac12-\kappa}(w)}\,,
	\end{equs}
	where in the last bound we used $0\preceq\Phi(\phi_j;t,\bigcdot)-\Phi^-(t,\bigcdot)\preceq\Phi^+(t,\bigcdot)-\Phi^-(t,\bigcdot)$ and Remark~\ref{rmk:preceq_dist}.
\end{proof}

\section{Ergodicity in infinite volume}
\label{sec:ergodicity}

The main contribution of this section is to prove Theorem~\ref{thm:linearised} which gives an estimate for the linearised equation \eqref{e:linearised} and serves as a main ingredient for the proof of ergodicity of $\Phi^4_3$ measure in infinite volume. In Section~\ref{sec:weighted} we present definitions of weighted Besov spaces and their properties. The definitions of stochastic objects (see Definitions~\ref{def:smoothed_noise} and \ref{def:enhanced_noise}) and the Da Prato--Debussche trick to rewrite the $\Phi^4_3$ equation (see \eqref{eq:phi_ansatz}--\eqref{e:linearisation1_mod}) are introduced in Section~\ref{sec:stochastic_objects}. In Section~\ref{sec:proof_ergodicity}, we present the proof of Theorem~\ref{thm:linearised}. We then prove Proposition~\ref{pr:deterministic_main} in Section~\ref{sec:proof_ergodicty_aux}, which is the key deterministic bound for the linearised equation used for the proof of Theorem~\ref{thm:linearised}.

\subsection{Weighted space}\label{sec:weighted}

Given any weight $\rmw: \R^3 \rightarrow \R_>$ we use the notation 
\begin{equ}[e:Lebesgue]
\|f\|_{L^p(\rmw)} \eqdef \|f \rmw\|_{L^p}
\end{equ}
for the corresponding weighted Lebesgue spaces. Note that this corresponds to the usual $L^p$ space with respect to the 
measure $\rmw^p(x)\,dx$. The reason for the convention \eqref{e:Lebesgue} is that it is still useful when $p=\infty$. Throughout this article, we will work with the following sub-exponential and polynomial weights.

\begin{defn}
	\label{defn:weights}
	For $\delta\in\R$, $\zzz\in\R^3$ and $\fa,\fb\in(0,1]$ we define
	\begin{equ}
		e_\delta\eqdef e^{-\delta\bracket{\bigcdot}^{1/2}}\,,
		\quad
		e_{\delta,\zzz} \eqdef e_{\fa\delta+\fb}(\bigcdot-\zzz),
		\quad
		\rho\eqdef\bracket{\bigcdot}^{-4}\,,
		\quad
		w\eqdef\bracket{\bigcdot}^{-\kappa}\,,
		\quad
		w_{\zzz} \eqdef w(\bigcdot -\zzz)\,.
	\end{equ}
\end{defn}

\begin{lem}\label{lem:weights}
	Given $p\in[1,\infty)$ there exists a choice of parameters $\fa,\fb\in(0,1]$ such that
	\begin{equ}\label{eq:heat_almost_contraction}
		\|e^{t\Laplace}f\|^p_{L^p(e_{\delta,\zzz})}
		\leq
		e^{1/6}\,\|f\|^p_{L^p(e_{\delta,\zzz})}
	\end{equ}
	for all $t\in(0,1]$, $\delta\in[0,4]$, $\zzz\in\R^3$ and $f\in L^\infty$ and
	\begin{equ}\label{eq:bound_weight_e}
		e^{-1/12}\,\frac{\int_{\R^3}\rho_\fc(\zzz)\,\|f\|^p_{L^p(e_{0,\zzz})}\,\md\zzz}{\int_{\R^3} e_{0,\zzz}(x)^p\,\md\zzz}
		\leq
		\|f\|^p_{L^p(\rho_\fc)}
		\leq
		e^{1/12}\,\frac{\int_{\R^3} \rho_\fc(\zzz)\,\|f\|^p_{L^p(e_{\delta,\zzz})}\,\md\zzz}{\int_{\R^3} e_{0,\zzz}(x)^p\,\md\zzz}
	\end{equ}
	for all $\delta\in[0,2]$, $\fc\in(0,\fa^2]$ and $f\in L^\infty$, where $\rho_\fc\eqdef\rho(\fc\, \bigcdot)$.
\end{lem}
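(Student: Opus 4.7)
The proof has two parts, which can be carried out largely independently; the choice of $\fa,\fb$ is driven mainly by part (1), with $\fc$ chosen afterwards to handle part (2).

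For \eqref{eq:heat_almost_contraction}, the plan is a weighted Young convolution inequality. Writing $e^{t\Delta}$ as convolution with the heat kernel $p_t$, we have
\begin{equ}
	(e^{t\Delta}f)(x)\,e_{\delta,\zzz}(x) = \int_{\R^3} p_t(x-y)\,\frac{e_{\delta,\zzz}(x)}{e_{\delta,\zzz}(y)}\,(f\,e_{\delta,\zzz})(y)\,\md y\,.
\end{equ}
The ratio of weights is controlled by the Lipschitz estimate $|\bracket{a}^{1/2} - \bracket{b}^{1/2}| \leq \frac{1}{2}|a-b|$, which follows from the explicit bound $|\nabla\bracket{\bigcdot}^{1/2}| = |\bigcdot|/(2\bracket{\bigcdot}^{3/2}) \leq 1/2$. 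This yields $e_{\delta,\zzz}(x)/e_{\delta,\zzz}(y) \leq e^{(\fa\delta+\fb)|x-y|/2}$, and Young's inequality $L^p\ast L^1 \to L^p$ then gives $\|e^{t\Delta}f\|_{L^p(e_{\delta,\zzz})} \leq C(t,\fa\delta+\fb)\,\|f\|_{L^p(e_{\delta,\zzz})}$ with $C(t,\gamma) = \int_{\R^3} p_t(y)\,e^{\gamma|y|/2}\,\md y$. Since $C(0,\gamma) = C(t,0) = 1$ and $C$ depends continuously on $(t,\gamma)$, one may choose $\fa,\fb$ small (depending on $p$) so that $C(t,\gamma)^p \leq e^{1/6}$ uniformly over $t\in(0,1]$ and $\gamma\in[0,4\fa+\fb]$.

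For \eqref{eq:bound_weight_e}, I would apply Fubini to the outer integrals and reduce everything to a pointwise weight comparison. By translation invariance, $\int_{\R^3} e_{0,\zzz}(x)^p\,\md\zzz$ equals the constant $A_{p,\fb} \eqdef \int_{\R^3} e^{-p\fb\bracket{u}^{1/2}}\,\md u$. Fubini then converts the double integrals into single integrals over $x$ against the kernel
\begin{equ}
	h_\delta(x) \eqdef \int_{\R^3} \rho_\fc(\zzz)\,e^{-p(\fa\delta+\fb)\bracket{x-\zzz}^{1/2}}\,\md\zzz = \int_{\R^3} \rho_\fc(x+u)\,e^{-p(\fa\delta+\fb)\bracket{u}^{1/2}}\,\md u\,,
\end{equ}
so that both inequalities reduce to the pointwise comparison $h_\delta(x) \approx A_{p,\fb}\,\rho_\fc(x)^p$ with multiplicative error in $[e^{-1/12},e^{1/12}]$ (with $\delta=0$ for one direction and $\delta\in[0,2]$ for the other).

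The key input for this comparison is a scale-separation argument. The function $\log\rho_\fc$ is globally Lipschitz with constant $\leq 2\fc$ (since $|\nabla\log\bracket{\fc\bigcdot}^{-4}| = 4\fc^2|\bigcdot|/\bracket{\fc\bigcdot}^{2} \leq 2\fc$), whereas the measure $e^{-p\fb\bracket{u}^{1/2}}\,\md u$ is essentially supported on a region of diameter $\lesssim 1/(p\fb)^2$. On this bulk region one uses $\rho_\fc(x+u)/\rho_\fc(x)\in[e^{-2\fc|u|},e^{2\fc|u|}]$ to keep the ratio close to $1$; on the complementary tail one uses instead the cruder polynomial bound $\rho_\fc(x+u) \leq 4\rho_\fc(x)\bracket{\fc u}^{4}$, which follows from $\bracket{a+b}\leq \sqrt 2\,\bracket a\,\bracket b$, and whose contribution is negligible against the sub-exponential weight. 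The parameters are then chosen in order: first $\fb$ for part (1); then $\fa\ll\fb$ so that $A_{p,\fa\delta+\fb}/A_{p,\fb} \in [e^{-1/24},e^{1/24}]$ uniformly for $\delta\in[0,2]$; and finally $\fc\leq\fa^2$, which makes $e^{2p\fc|u|}$ uniformly close to $1$ on the bulk. The main obstacle is the tension between the bulk Lipschitz bound and the polynomial tail bound: $e^{2p\fc|u|}$ is not integrable against $e^{-p\gamma\bracket{u}^{1/2}}$ over all of $\R^3$, so one has to split at a truncation scale $K\gg 1/(p\fb)^2$ large enough that the tail is exponentially small in $K^{1/2}$ yet small enough that the bulk factor $e^{2p\fc K}$ stays near $1$; this forces the hierarchy $\fc \leq \fa^2 \ll \fb^2 \lesssim 1/K$ that is reflected in the condition $\fc\in(0,\fa^2]$ in the statement.
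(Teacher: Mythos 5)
Your proposal is correct and follows essentially the same route as the paper: part (1) is the weighted Young inequality for convolution with the heat kernel (the paper controls the weight ratio via $e_{\delta,\zzz}(x)\leq e_{\delta,\zzz}(y)/e_{\delta,0}(x-y)$ instead of your Lipschitz bound, which changes nothing), and part (2) is the same Fubini reduction to a pointwise comparison of $\int_{\R^3}\rho_\fc(\zzz)\,e_{\delta,\zzz}(x)^p\,\md\zzz$ with $\rho_\fc(x)\int_{\R^3}e_{0,\zzz}(x)^p\,\md\zzz$.

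Two remarks on part (2). First, for the pointwise comparison the paper avoids your bulk/tail splitting entirely: it uses only the two-sided polynomial bound $(1+\fc|x-\zzz|_2)^{-4}\leq\rho_\fc(\zzz)/\rho_\fc(x)\leq(1+\fc|x-\zzz|_2)^{4}$ (exploiting $\fc\leq\fa^2$) and then chooses $\fa$ by dominated convergence; your splitting also works, but your final hierarchy ``$\fb^2\lesssim 1/K$'' is written backwards --- you need $K\gtrsim 1/\fb^2$ so that the tail of the sub-exponential weight beyond $K$ is negligible, and then $\fc\lesssim 1/K$, so $1/K$ sits between $\fc$ and $\fb^2$ rather than above $\fb^2$. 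Second, your stated target $h_\delta(x)\approx A_{p,\fb}\,\rho_\fc(x)^p$ is not what your Lipschitz/Peetre bounds (nor the paper's) deliver: both produce the comparison with $\rho_\fc(x)$ to the \emph{first} power. The $p$-th power is only forced if one insists on the multiplier convention \eqref{e:Lebesgue} for $\|f\|_{L^p(\rho_\fc)}$; this is a bookkeeping wrinkle already present in the paper's own statement and proof, and is repaired either by reading the middle term as $\int|f|^p\rho_\fc$ or by running the identical comparison for $\rho_\fc^p=\bracket{\fc\,\bigcdot}^{-4p}$ with the exponent $4$ replaced by $4p$ throughout, so it does not affect the viability of your argument.
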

\begin{rmk}
	The remaining results of this section are true for generic $p\in[1,\infty]$ and $\fa,\fb\in(0,1]$. In Section~\ref{sec:proof_ergodicity} and~\ref{sec:proof_ergodicty_aux} the exponent $p\in[1,\infty)$ is fixed as in Theorem~\ref{thm:linearised} (see also Remark~\ref{rmk:lambda_p}) and the parameters $\fa,\fb\in(0,1]$ are fixed so that the bounds stated this lemma are true.
\end{rmk}
\begin{proof} Let $P(t,\bigcdot)$ be the kernel of the heat semigroup $\exp(t\Laplace)$. We choose the parameter $\fb \in(0,1]$ 
small enough so that for all $t\in (0,1]$ we have
	\begin{equ}
		\int_{\R^3} P(t,x)\,e^{3\fb\bracket{x}^{1/2}}\,\md x\leq e^{1/(6p)}\,.
	\end{equ}
	Then
	\begin{equ}
		\int_{\R^3}  \frac{P(t,x)}{e_{\delta,0}(x)}\,\md x
		\in[1,e^{1/(6p)}]
	\end{equ}
	for all $t\in[0,1]$, $\delta\in[0,2]$ and $\fa\in(0,\fb]$. The bound~\eqref{eq:heat_almost_contraction} follows now from the estimate $e_{\delta,\zzz}(x)\leq e_{\delta,\zzz}(y)/e_{\delta,0}(x-y)$ and the Young inequality for convolutions. 
	
	We now prove \eqref{eq:bound_weight_e}. We choose the parameter $\fa\in(0,\fb]$ so that 
	\begin{equ}
		\int_{\R^3} (1+\fa^2|\zzz|_2)^{-4}\,e^{-p(2\fa+\fb)\bracket{\zzz}^{1/2}}\,\md\zzz
		\geq
		e^{-1/12}\,\int_{\R^3} e^{-p\fb\bracket{\zzz}^{1/2}}\,\md\zzz
	\end{equ}
	and
	\begin{equ}
		\int_{\R^3} (1+\fa^2|\zzz|_2)^4\,e^{-p\fb\bracket{\zzz}^{1/2}}\,\md\zzz
		\leq
		e^{1/12}\,\int_{\R^3} e^{-p\fb\bracket{\zzz}^{1/2}}\,\md\zzz\,.
	\end{equ}
	Observe that
	\begin{equ}
		(1+\fc|x-\zzz|_2)^{-4} \leq \rho_\fc(\zzz)/\rho_\fc(x)\leq (1+\fc|x-\zzz|_2)^4
	\end{equ}
	for all $\fc\in(0,1]$ and $x,\zzz\in\R^3$. Thus,
	\begin{equ}
		\frac{\int_{\R^3} \rho_\fc(\zzz)\,e_{\delta,\zzz}(x)^p\,\md\zzz}{\rho_\fc(x)\int_{\R^3} e_{0,\zzz}(x)^p\,\md\zzz}
		\in[e^{-1/12},e^{1/12}]
	\end{equ}
	for all $x\in\R^3$, $\delta\in[0,2]$ and $\fc\in(0,\fa^2]$. The bound~\eqref{eq:bound_weight_e} follows now from the Fubini theorem.
\end{proof}

We have the following basic property for the weights.
\begin{lem}
	\label{le:weights_time}
	For all $N\geq0$ there exists a constant $C>0$ such that
	\begin{equ}
		e_{t,\zzz}(x) \leq C\, (t-s)^{-2N\kappa}\,w_z(x)^N\, e_{s,\zzz}(x)
	\end{equ}
	for all $-\infty<s < t<\infty$ and $x,\zzz \in \R^3$.
\end{lem}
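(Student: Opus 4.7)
The plan is to unwind the definitions and reduce the inequality to the elementary fact that $u \mapsto u^a e^{-u}$ is bounded on $[0,\infty)$ for any $a \geq 0$. Writing out the weights from Definition~\ref{defn:weights}, the claimed bound becomes
\begin{equ}
\exp\bigl(-(\fa t+\fb)\langle x-\zzz\rangle^{1/2}\bigr) \leq C\,(t-s)^{-2N\kappa}\,\langle x-\zzz\rangle^{-N\kappa}\,\exp\bigl(-(\fa s+\fb)\langle x-\zzz\rangle^{1/2}\bigr),
\end{equ}
and dividing the exponentials this is equivalent to
\begin{equ}
\langle x-\zzz\rangle^{N\kappa}\,\exp\bigl(-\fa(t-s)\langle x-\zzz\rangle^{1/2}\bigr) \leq C\,(t-s)^{-2N\kappa}
\end{equ}
uniformly in $s<t$ and $x,\zzz\in\R^3$.

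To prove this last inequality I would introduce the change of variable $u = \fa(t-s)\langle x-\zzz\rangle^{1/2}$, so that $\langle x-\zzz\rangle^{N\kappa} = (\fa(t-s))^{-2N\kappa} u^{2N\kappa}$. The left-hand side then equals $(\fa(t-s))^{-2N\kappa}\,u^{2N\kappa}e^{-u}$, and since $\sup_{u\geq 0} u^{2N\kappa} e^{-u} = C_0 < \infty$ (depending only on $N$ and $\kappa$), one obtains the desired bound with $C = C_0\,\fa^{-2N\kappa}$. There is no real obstacle here; the only point to notice is that the parameter $t$ on the left enters the exponential of $e_{t,\zzz}$ as an extra factor $\exp(-\fa(t-s)\langle x-\zzz\rangle^{1/2})$ relative to $e_{s,\zzz}$, and that this Gaussian-type decay in $\langle x-\zzz\rangle^{1/2}$ absorbs any polynomial in $\langle x-\zzz\rangle$ at the cost of the stated inverse power of $t-s$.
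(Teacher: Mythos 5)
Your proof is correct and complete: after dividing out the common exponential the bound reduces to $\sup_{u\ge 0}u^{2N\kappa}e^{-u}<\infty$ exactly as you say, and the resulting constant $C=C_0\,\fa^{-2N\kappa}$ is admissible since $\fa$ is fixed once and for all. The paper states this lemma as a "basic property" without proof, and your argument is the standard one that was intended.
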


Now we define the weighted Besov spaces.
\begin{defn}\label{defn:weighted_Besov}
	Let $(\chi_j)_{j\geq-1}$ be the smooth dyadic decomposition of unity belonging to the Gevrey class of index $\frac32$ defined in~\cite[Section~3.1]{MW17a}. The Littlewood--Paley blocks $(\delta_j)_{j\geq-1}$ are defined by the formula $\delta_jf\eqdef\mathscr{F}^{-1}(\chi_j \mathscr{F}f)$ for $f\in\sS'(\R^3)$, where $\mathscr{F}$ denotes the Fourier transform. Given a weight $\rmw: \R^3 \rightarrow \R_>$ and parameters $\alpha \in \R$, $p, q \in [1, \infty]$, we define the weighted Besov norm of a distribution $f\in\sS'(\R^3)$ to be
	\begin{equ}
		\|f \|_{\Besov{\alpha}{p, q}(\rmw)} \eqdef \Big(\sum_{j \geq -1} \|\delta_j f\|_{L^p(\rmw)}^q 2^{j \alpha q} \Big)^{\frac{1}{q}} \;,
	\end{equ} 
	where the case $q=\infty$ is interpreted as a supremum. The weighted Besov space $\Besov{\alpha}{p, q}(\rmw)$ is defined as the completion of $C_c^\infty(\R^3)$ with respect to the above norm. We use the notation $\cC^{\alpha}(\rmw)=\Besov{\alpha}{\infty, \infty}(\rmw)$.
\end{defn}

\begin{remark}
	The Besov spaces $\Besov{\alpha}{p, q}(\rmw)$ defined in this way are separable, even when $p$ and \slash or $q$ are infinite. We only consider Besov norms $\|\bigcdot\|_{\Besov{\alpha}{p, q}(\rmw)}$ with weights $\rmw$ of the form $e_{t, \zzz} w_{\zzz}^{N}$. For such weights $\rmw$, elements of $\Besov{\alpha}{p, q}(\rmw)$ can be interpreted as distributions. Moreover, using Lemma~\ref{lem:PL_block_est} one shows, along the lines of the proof of~\cite[Lemma~2.23]{BCD11}, that the weighted Besov norms corresponding to different choices of the dyadic decomposition of unity $(\chi_j)_{j\geq-1}$ belonging to the Gevrey class of index $\frac32$ are equivalent, see \cite[Remark~14]{MW17a} for more details.
\end{remark}

\begin{lem}\label{lem:PL_block_est}
	Let $\gamma\in[0,2/3)$. In the setting of the above definition we have $\|\delta_j f\|_{L^p(\rmw)}\lesssim C\,\|f\|_{L^p(\rmw)}$ uniformly over $j\geq -1$, $f\in L^p(\rmw)$ and $\rmw: \R^3 \rightarrow \R_>$ such that $\rmw(x)/\rmw(y)\leq C \exp(|x-y|^\gamma)$ with $C>0$ for all $x,y\in\R^3$.
\end{lem}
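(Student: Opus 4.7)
\medskip
\noindent
\textbf{Proof proposal for Lemma~\ref{lem:PL_block_est}.}

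The plan is to bound $\delta_j = \check\chi_j \ast \bigcdot$ pointwise against the weight growth condition and then close the estimate with Young's convolution inequality. The only nontrivial input is that, because $(\chi_j)_{j\geq -1}$ belongs to the Gevrey class of index $3/2$ and has compact Fourier support, its inverse Fourier transforms $\check\chi_j$ decay like $\exp(-c|x|^{2/3})$ at infinity. This is exactly the rate that matches the hypothesis $\gamma < 2/3$.

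\textbf{Step 1 (reduction to a kernel bound).} Write $(\delta_j f)(x) = \int_{\R^3} \check\chi_j(x-y)\,f(y)\,\md y$. Using the assumption $\rmw(x)\leq C\exp(|x-y|^\gamma)\,\rmw(y)$,
\begin{equ}
\rmw(x)\,|(\delta_j f)(x)|
\leq C\int_{\R^3} |\check\chi_j(x-y)|\,\exp(|x-y|^\gamma)\,\rmw(y)\,|f(y)|\,\md y\,.
\end{equ}
By Young's convolution inequality this gives
\begin{equ}\label{eq:PL_proposal_key}
\|\delta_j f\|_{L^p(\rmw)}
\leq C\, K_j\, \|f\|_{L^p(\rmw)},
\qquad
K_j \eqdef \int_{\R^3} |\check\chi_j(x)|\,\exp(|x|^\gamma)\,\md x\,,
\end{equ}
so the claim reduces to the uniform bound $\sup_{j\geq -1} K_j < \infty$.

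\textbf{Step 2 (Gevrey decay of $\check\chi_j$).} For $j\geq 0$, the standard dyadic construction gives $\chi_j(\xi) = \chi_0(2^{-j}\xi)$ and hence $\check\chi_j(x) = 2^{3j}\check\chi_0(2^j x)$; the block $\check\chi_{-1}$ is a fixed function. Since $\chi_0$ and $\chi_{-1}$ are compactly supported and lie in the Gevrey class of index $3/2$, the standard Paley--Wiener estimate for Gevrey ultradifferentiable functions (see e.g.\ the references in~\cite[Section~3.1]{MW17a}) yields constants $C_0,c_0>0$ such that
\begin{equ}
|\check\chi_k(x)|\leq C_0\,\exp(-c_0|x|^{2/3})\qquad (k\in\{-1,0\},\ x\in\R^3)\,.
\end{equ}

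\textbf{Step 3 (uniform integrability).} For $j\geq 0$, the change of variables $y = 2^j x$ gives
\begin{equ}
K_j = \int_{\R^3} |\check\chi_0(y)|\,\exp\bigl(2^{-j\gamma}|y|^\gamma\bigr)\,\md y
\leq \int_{\R^3} C_0\,\exp\bigl(-c_0|y|^{2/3} + |y|^\gamma\bigr)\,\md y\,,
\end{equ}
which is finite because $\gamma < 2/3$ ensures that $-c_0|y|^{2/3} + |y|^\gamma \to -\infty$ as $|y|\to\infty$. The case $j=-1$ is handled identically using the Gevrey bound on $\check\chi_{-1}$. Hence $\sup_{j\geq -1}K_j<\infty$, and~\eqref{eq:PL_proposal_key} concludes the proof.

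\textbf{Main obstacle.} The only nontrivial ingredient is Step~2: verifying that membership of $\chi_j$ in Gevrey class $3/2$ together with compact Fourier support yields the pointwise decay $|\check\chi_j(x)|\lesssim \exp(-c|x|^{2/3})$ at the right rate. Once this is in place, the rest of the argument is a routine application of Young's inequality, and the numerical condition $\gamma<2/3$ enters purely through the integrability of $\exp(-c_0|y|^{2/3}+|y|^\gamma)$.
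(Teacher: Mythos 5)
Your proof is correct and follows essentially the same route as the paper, which cites exactly these three ingredients: the scaling identity $\chi_j=\chi_0(\bigcdot/2^j)$, the stretched-exponential decay $\exp(-c|x|^{2/3})$ of the inverse Fourier transform of a compactly supported Gevrey-$\frac32$ function, and the weighted Young inequality (your Step 1 is just that inequality written out by hand). The details you supply, in particular the change of variables showing $\sup_{j\ge-1}K_j<\infty$ precisely because $\gamma<2/3$, are the intended ones.
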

\begin{proof}
	The result follows from the identity $\chi_i=\chi_0(\bigcdot/2^i)$ for $i\geq0$, the decay property of the Fourier transform of a function in the Gevrey class~\cite[Proposition~1]{MW17a} and the weighted Young inequality~\cite[Theorem~2.1]{MW17a}.
\end{proof}

By definition of the Besov norm we immediately get the following properties.
\begin{lem}[Monotonicity]\label{lem:monotone}
	If $\rmw_1 \leq \rmw_2$, then $\|f\|_{\Besov{\alpha}{p, q}(\rmw_1)} \leq \|f\|_{\Besov{\alpha}{p, q}(\rmw_2)}$.
\end{lem}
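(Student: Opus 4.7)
The plan is to reduce the claim to pointwise monotonicity of the integrand in each Littlewood--Paley block, exploiting the fact that all weights involved are strictly positive. Concretely, since $\rmw_1,\rmw_2\colon\R^3\to\R_{>0}$ satisfy $\rmw_1\leq\rmw_2$ pointwise, for any measurable $g$ one has $|g|\rmw_1\leq|g|\rmw_2$ pointwise, and then monotonicity of the (unweighted) $L^p$ norm with respect to non-negative integrands immediately yields
\begin{equ}
\|g\|_{L^p(\rmw_1)}=\|g\rmw_1\|_{L^p}\leq\|g\rmw_2\|_{L^p}=\|g\|_{L^p(\rmw_2)}\,.
\end{equ}
This covers the case $p=\infty$ as well (taking essential suprema).

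First I would apply this pointwise weight comparison to $g=\delta_j f$ for each dyadic index $j\geq -1$, which gives $\|\delta_j f\|_{L^p(\rmw_1)}\leq \|\delta_j f\|_{L^p(\rmw_2)}$. Then I would raise to the power $q$, multiply by the non-negative factor $2^{j\alpha q}$, sum over $j$, and take the $q$-th root; since all operations preserve the inequality term by term, the conclusion $\|f\|_{\Besov{\alpha}{p,q}(\rmw_1)}\leq\|f\|_{\Besov{\alpha}{p,q}(\rmw_2)}$ follows, with the case $q=\infty$ handled by replacing the sum with a supremum. There is no genuine obstacle here: the statement is structural, relying only on the definition of $\Besov{\alpha}{p,q}(\rmw)$ as a dyadically-weighted $\ell^q$ aggregate of weighted $L^p$ norms together with the positivity of $\rmw_1,\rmw_2$ and of the dyadic weights $2^{j\alpha q}$.
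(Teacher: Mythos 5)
Your proof is correct and matches the paper's intent; the paper simply omits the argument, noting only that the lemma is immediate from the definition of the Besov norm, and your write-up spells out exactly that one-line observation (pointwise weight comparison of $\|\delta_j f\|_{L^p(\rmw)}$ followed by monotone $\ell^q$ aggregation).
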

\begin{lem}[Translation invariance]\label{lem:Besov_trans}
	Let $\tau_z f$ denote the translation of a function or a~distribution $f$ in space by $z$. We have $\|f\|_{\Besov{\alpha}{p, q}(\tau_z\rmw)}=\|\tau_{-z} f\|_{\Besov{\alpha}{p, q}(\rmw)}$.
\end{lem}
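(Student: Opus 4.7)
The plan is to unwind the definition of the weighted Besov norm and use two elementary facts: that each Littlewood--Paley block $\delta_j$ is a Fourier multiplier and hence commutes with spatial translations, and that the weighted $L^p$-norm transforms in the obvious way under a change of variables. More precisely, I would argue as follows.

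First, I would observe that $\delta_j$ commutes with translation. Since $\delta_j f = \mathscr{F}^{-1}(\chi_j \mathscr{F} f)$ and translation by $-z$ corresponds on the Fourier side to multiplication by the character $e^{i z \cdot \xi}$, which commutes with multiplication by $\chi_j(\xi)$, we get
\begin{equ}
\delta_j(\tau_{-z} f) = \tau_{-z}(\delta_j f)
\end{equ}
for every $j \ge -1$ and every $f \in \sS'(\R^3)$.

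Next, I would unwind the weighted $L^p$-norm. By the definition \eqref{e:Lebesgue} and the substitution $x \mapsto x+z$, for any measurable $g$ one has
\begin{equ}
\|g\|_{L^p(\tau_z \rmw)}^p
= \int_{\R^3} |g(x)|^p\, \rmw(x-z)^p\,\md x
= \int_{\R^3} |g(y+z)|^p\, \rmw(y)^p\,\md y
= \|\tau_{-z} g\|_{L^p(\rmw)}^p\;,
\end{equ}
with the obvious modification when $p = \infty$. Applying this with $g = \delta_j f$ and combining with the commutation relation above yields
\begin{equ}
\|\delta_j f\|_{L^p(\tau_z \rmw)} = \|\tau_{-z}(\delta_j f)\|_{L^p(\rmw)} = \|\delta_j(\tau_{-z} f)\|_{L^p(\rmw)}\;.
\end{equ}

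Finally, raising to the $q$-th power, multiplying by $2^{j\alpha q}$, summing over $j \ge -1$ (or taking the supremum if $q = \infty$), and taking the $q$-th root gives
\begin{equ}
\|f\|_{\Besov{\alpha}{p, q}(\tau_z \rmw)} = \|\tau_{-z} f\|_{\Besov{\alpha}{p, q}(\rmw)}\;,
\end{equ}
which is the claimed identity. There is no real obstacle here; the only point worth noting is that this uses the translation-invariance of the Fourier multipliers $\chi_j$, so the same proof works for any dyadic partition of unity.
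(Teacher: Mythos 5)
Your argument is correct and is exactly the content of the paper's justification, which simply states that the identity follows ``by definition of the Besov norm'': the Littlewood--Paley blocks are Fourier multipliers and hence commute with translations, and the weighted $L^p$-norm transforms by the change of variables you wrote down. Nothing further is needed.
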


Now we discuss some estimates for the weighted Besov norms that we will frequently use.

\begin{lem}\label{le:besov_embedding} 
	Let $p\in [1,\infty]$ and $\alpha>0$. Then we have 
  	\begin{equ}
       \| f \|_{\Besov{-\alpha}{p,p}(e_{\delta,\zzz})} \lesssim \| f \|_{L^p(e_{\delta,\zzz})} \lesssim \| f \|_{\Besov{\alpha}{p,p}(e_{\delta,\zzz})}
  	\end{equ}	
  	uniformly over $\delta$ in a compact subset of $\R_\geq$, $\zzz \in \R^3$ and $f\in C_c^\infty(\R^3)$.
\end{lem}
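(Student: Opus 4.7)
The plan is to prove both inequalities via the standard Littlewood–Paley toolbox, adapted to the weighted setting through Lemma~\ref{lem:PL_block_est}. The main input is verifying that the weight $e_{\delta,\zzz}$ satisfies the ratio condition required by that lemma. Since
\begin{equ}
\bigl|\bracket{x-\zzz}^{1/2} - \bracket{y-\zzz}^{1/2}\bigr| \lesssim |x-y|^{1/2}
\end{equ}
uniformly in $\zzz$, one gets $e_{\delta,\zzz}(x)/e_{\delta,\zzz}(y) \lesssim \exp(C|x-y|^{1/2})$ uniformly over $\zzz\in\R^3$ and $\delta$ in a compact subset of $\R_\geq$, with $C$ depending only on $\fa,\fb$ and the compact set. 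Since $1/2<2/3$, Lemma~\ref{lem:PL_block_est} yields
\begin{equ}
\|\delta_j f\|_{L^p(e_{\delta,\zzz})} \lesssim \|f\|_{L^p(e_{\delta,\zzz})}
\end{equ}
uniformly in $j\geq -1$, $\zzz\in\R^3$ and $\delta$ in the compact set.

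For the first inequality, I would simply raise both sides to the $p$-th power (for $p<\infty$), multiply by $2^{-j\alpha p}$ and sum in $j$, using that $\sum_{j\geq -1} 2^{-j\alpha p}<\infty$ because $\alpha>0$. The case $p=\infty$ is identical with a supremum.

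For the second inequality, since $f\in C^\infty_{\mathrm c}(\R^3)$ the Littlewood–Paley decomposition $f=\sum_{j\geq -1}\delta_j f$ converges in $L^p(e_{\delta,\zzz})$, so the triangle inequality gives $\|f\|_{L^p(e_{\delta,\zzz})} \leq \sum_{j\geq -1}\|\delta_j f\|_{L^p(e_{\delta,\zzz})}$. Writing the summand as $2^{-j\alpha}\cdot 2^{j\alpha}\|\delta_j f\|_{L^p(e_{\delta,\zzz})}$ and applying Hölder's inequality in $j$ (with exponents $p$ and $p'$ conjugate) produces the factor $\bigl(\sum_{j\geq -1} 2^{-j\alpha p'}\bigr)^{1/p'}$, finite because $\alpha>0$, multiplied by $\|f\|_{\Besov{\alpha}{p,p}(e_{\delta,\zzz})}$. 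For $p=\infty$ one pulls out $\sup_j 2^{j\alpha}\|\delta_j f\|_{L^\infty(e_{\delta,\zzz})}=\|f\|_{\cC^\alpha(e_{\delta,\zzz})}$ and uses $\sum_{j\geq -1}2^{-j\alpha}<\infty$.

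There is no real obstacle here beyond verifying the weight ratio bound; both uniformity statements are automatic since the weight depends on $\zzz$ only through a translation (cf.\ Lemma~\ref{lem:Besov_trans}) and through a constant $\fa\delta+\fb$ bounded above and below over the relevant range of $\delta$.
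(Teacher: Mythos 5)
Your proposal is correct and is exactly the argument the paper intends: its proof is the one-line remark that the lemma ``follows from the definition of the Besov norm and Lemma~\ref{lem:PL_block_est}'', and you have simply written out that argument in full, including the only nontrivial verification, namely that $e_{\delta,\zzz}(x)/e_{\delta,\zzz}(y)\lesssim \exp(C|x-y|^{1/2})$ uniformly in $\zzz$ and in $\delta$ ranging over a compact set, which (after absorbing the constant $C$ in the exponent into a slightly larger power still below $2/3$) puts the weight in the admissible class of Lemma~\ref{lem:PL_block_est}.
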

\begin{proof} 
	The result follows from the definition of the Besov norm and Lemma~\ref{lem:PL_block_est}.
\end{proof}

\begin{lem}\label{le:besov_derivative}
	For any $\alpha \in \R$, $p, q \in [1, \infty]$, we have
	\begin{equ}
		\label{e:derivative_Besov}
		\|\grad f \|_{\bB^{\alpha}_{p, q}(e_{\delta,\zzz})} \lesssim \|f\|_{\bB^{\alpha + 1}_{p,q}(e_{\delta,\zzz})}
	\end{equ}
	uniformly over $\delta$ in a compact subset of $\R_\geq$, $\zzz \in \R^d$ and $f\in C_c^\infty(\R^3)$.
\end{lem}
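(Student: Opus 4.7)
The plan is to reduce the weighted derivative estimate to the standard dyadic decomposition paired with a weighted Bernstein inequality, with the weight handled by a sub-exponential variant of the Young inequality.

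First, by Lemma~\ref{lem:Besov_trans} it suffices to treat the case $\zzz = 0$, since both sides of \eqref{e:derivative_Besov} transform covariantly under spatial translation. Next, because $\delta_j$ is a Fourier multiplier with smooth symbol, the identity $\delta_j(\grad f) = \grad(\delta_j f)$ holds for every $j \geq -1$ and every $f \in C_{\mathrm c}^\infty(\R^3)$, so that
\begin{equ}
\|\grad f\|_{\bB^{\alpha}_{p,q}(e_{\delta,0})}^q = \sum_{j \ge -1} 2^{j\alpha q}\,\|\grad \delta_j f\|_{L^p(e_{\delta,0})}^q\;.
\end{equ}
It therefore suffices to prove the weighted Bernstein estimate
\begin{equ}[e:weighted_Bernstein]
\|\grad \delta_j f\|_{L^p(e_{\delta,0})} \lesssim 2^j\,\|\delta_j f\|_{L^p(e_{\delta,0})}
\end{equ}
uniformly over $j \geq -1$, $f \in C_{\mathrm c}^\infty(\R^3)$, and $\delta$ ranging over a compact subset of $\R_\geq$, since substituting \eqref{e:weighted_Bernstein} into the displayed identity immediately yields \eqref{e:derivative_Besov}.

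To prove \eqref{e:weighted_Bernstein}, choose a Gevrey-$\tfrac32$ bump $\tilde\chi_j$ with $\tilde\chi_j \chi_j = \chi_j$ and $\tilde\chi_j = \tilde\chi_0(\bigcdot/2^j)$ for $j \ge 0$. Then $\delta_j f = \delta_j f \ast \check{\tilde\chi}_j$, so that $\grad \delta_j f = \delta_j f \ast \grad \check{\tilde\chi}_j$. A scaling computation gives $\grad \check{\tilde\chi}_j(x) = 2^{j(3+1)}(\grad \check{\tilde\chi}_0)(2^j x)$, hence
\begin{equ}
\|\grad \check{\tilde\chi}_j\|_{L^1(\rmw)} = 2^j \int_{\R^3} |\grad \check{\tilde\chi}_0(y)|\,\rmw(y/2^j)\,\md y
\end{equ}
for any weight $\rmw$. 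Writing $\rmw(x) = C\exp((\fa\delta+\fb)|x|^{1/2})$, the sub-additivity $|x|^{1/2} \le (|x|/2^j)^{1/2}\cdot 2^{j/2}$ combined with the super-exponential decay of the Gevrey function $\grad \check{\tilde\chi}_0$ (\cite[Proposition~1]{MW17a}) gives a uniform bound $\|\grad \check{\tilde\chi}_j\|_{L^1(\rmw)} \lesssim 2^j$, uniformly in $j \ge 0$ and in $\delta$ on compact sets. The case $j = -1$ is identical but without the factor $2^j$ (which can be absorbed). Using the pointwise estimate $e_{\delta,0}(x)/e_{\delta,0}(y) \le \exp((\fa\delta+\fb)|x-y|^{1/2})$ (a consequence of $|\bracket{x}^{1/2} - \bracket{y}^{1/2}| \le |x-y|^{1/2}$), the weighted Young inequality~\cite[Theorem~2.1]{MW17a} yields
\begin{equ}
\|\delta_j f \ast \grad \check{\tilde\chi}_j\|_{L^p(e_{\delta,0})} \lesssim \|\delta_j f\|_{L^p(e_{\delta,0})}\,\|\grad \check{\tilde\chi}_j\|_{L^1(\rmw)} \lesssim 2^j\,\|\delta_j f\|_{L^p(e_{\delta,0})}\;,
\end{equ}
which establishes \eqref{e:weighted_Bernstein} and completes the proof. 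The only mildly delicate point is ensuring that the sub-exponential growth of the weight is compatible with the scale $2^j$ of the mollifier; this is precisely why the Gevrey-$\tfrac32$ regularity of the dyadic partition (rather than merely Schwartz) is used in the setup of Definition~\ref{defn:weighted_Besov}.
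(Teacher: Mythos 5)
Your proof is correct and follows essentially the same route as the paper, which simply invokes translation invariance (Lemma~\ref{lem:Besov_trans}) to reduce to $\zzz=0$ and then cites \cite[Proposition~3]{MW17a} for the weighted Bernstein-type derivative estimate. You have merely unpacked that citation, proving the weighted Bernstein inequality directly via the fattened Gevrey cutoff and the weighted Young inequality \cite[Theorem~2.1]{MW17a} — the same ingredients underlying the cited result.
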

\begin{proof}
	The result follows from Lemma~\ref{lem:Besov_trans} and \cite[Proposition~3]{MW17a}.
\end{proof}

\begin{lem}[Smoothing effect of heat flow]\label{le:smoothing_heat}
	Let $\alpha \geq \beta$ and $p, q \in [1, \infty]$. We have
	\begin{equ}
		\label{e:heat_flow1}
		\| e^{t \Delta} f \|_{\Besov{\alpha}{p, q}(e_{\delta,\zzz})}
		\lesssim
		t^{\frac{\beta - \alpha}{2}} \|f\|_{\Besov{\beta}{p, q}(e_{\delta,\zzz})}
	\end{equ}
	uniformly over $\delta$ in a compact subset of $\R_\geq$, $\zzz \in \R^d$, $t\in(0,1]$ and $f\in C_c^\infty(\R^3)$.
\end{lem}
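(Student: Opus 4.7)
The plan is to follow the standard Littlewood--Paley approach, relying on the Gevrey regularity of the partition of unity from Definition~\ref{defn:weighted_Besov} to transfer the estimates to the weighted setting. By translation invariance (Lemma~\ref{lem:Besov_trans}) I reduce to $\zzz = 0$ and write $e_\delta = e_{\delta,0}$. Since $e^{t\Delta}$ is a Fourier multiplier, it commutes with each Littlewood--Paley block, so $\delta_j(e^{t\Delta}f) = e^{t\Delta}\delta_j f$, and the proof reduces to the block-level estimate
\begin{equ}
\|e^{t\Delta}\delta_j f\|_{L^p(e_\delta)} \lesssim e^{-c\,t\,2^{2j}}\,\|\delta_j f\|_{L^p(e_\delta)}
\end{equ}
for $j\geq 0$, together with a bounded-operator bound for $j = -1$, both uniform over $t\in(0,1]$ and over $\delta$ in any compact subset of $\R_\geq$.

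To establish the block estimate, I introduce a Gevrey-$\tfrac{3}{2}$ cut-off $\tilde\chi$ supported in the annulus $\{\tfrac14\leq|\xi|\leq 4\}$ and equal to $1$ on the support of $\chi_0$, set $\tilde\chi_j(\xi) = \tilde\chi(\xi/2^j)$, and write $e^{t\Delta}\delta_j f = K_{t,j} * \delta_j f$ with $K_{t,j} = \mathscr{F}^{-1}(\tilde\chi_j(\xi)\,e^{-t|\xi|^2})$. A change of variables $\xi = 2^j\eta$ gives $K_{t,j}(x) = 2^{3j}\,\Psi_{t\,2^{2j}}(2^j x)$, where $\Psi_s = \mathscr{F}^{-1}(\tilde\chi(\bigcdot)\,e^{-s|\bigcdot|^2})$. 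The Gevrey regularity of $\tilde\chi$ together with~\cite[Proposition~1]{MW17a} ensures that $\Psi_s$ decays faster than $\exp(-c\,|x|^{2/3})$, while the annular support of $\tilde\chi$ produces the exponential factor $e^{-c\,s} = e^{-c\,t\,2^{2j}}$ in the corresponding weighted $L^1$ norm of $K_{t,j}$. Since the weight ratio satisfies $e_\delta(x)/e_\delta(y)\leq C\exp(C\,|x-y|^{1/2})$ with $\tfrac12 < \tfrac23$, the decay of $K_{t,j}$ dominates this growth, and the block estimate follows from the weighted Young inequality~\cite[Theorem~2.1]{MW17a}.

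Summing the block estimates and noting that $\sup_{s\geq 0} s^{(\alpha-\beta)/2}\,e^{-c\,s} < \infty$ when $\alpha\geq\beta$ (the $j=-1$ block being handled separately using $t\leq 1$), a direct rearrangement of the Besov norm yields $\|e^{t\Delta}f\|_{\Besov{\alpha}{p,q}(e_\delta)} \lesssim t^{(\beta-\alpha)/2}\|f\|_{\Besov{\beta}{p,q}(e_\delta)}$. The main obstacle is precisely the block-level estimate: mere smoothness of the dyadic partition would yield only polynomial decay of $K_{t,j}$, insufficient to dominate the sub-exponential growth of the weight ratio. It is essential that Definition~\ref{defn:weighted_Besov} uses a Gevrey-$\tfrac{3}{2}$ partition, producing Fourier inverses with decay $\exp(-c\,|x|^{2/3})$, strictly faster than the weight ratio's growth $\exp(C\,|x|^{1/2})$, which exactly matches the threshold $\gamma < \tfrac23$ already exploited in Lemma~\ref{lem:PL_block_est}.
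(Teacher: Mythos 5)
Your proposal is correct and follows essentially the same route as the paper: the paper reduces to $\zzz=0$ via Lemma~\ref{lem:Besov_trans} and then cites \cite[Proposition~5]{MW17a}, whose proof is precisely the block-level argument you write out (Gevrey-$\tfrac32$ cut-offs giving kernel decay $\exp(-c|x|^{2/3})$ that dominates the weight ratio's growth $\exp(C|x-y|^{1/2})$, combined with the weighted Young inequality). You have simply unpacked the cited proposition rather than invoking it.
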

\begin{proof}
	The result follows from Lemma~\ref{lem:Besov_trans} and \cite[Proposition~5]{MW17a}.
\end{proof}

\begin{lem}\label{lem:heat_smoothing_L_inf}
    For all $T>0$, $\alpha<0$ and $k\in\N_0^3$ we have
\begin{equ}\label{eq:heat_smoothing}
    \|\partial^k e^{t\Delta}f\|_{L^\infty(w)}
    \lesssim
    t^{\frac{\alpha-|k|}{2}} \|f\|_{\cC^\alpha(w)}
\end{equ}
uniformly over $t\in(0,T]$ and $f\in \cC^\alpha(w)$.
\end{lem}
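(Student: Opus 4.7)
The plan is a standard Littlewood--Paley computation, adapted to the polynomial weight $w$. First, I would decompose $f = \sum_{j\geq -1}\delta_j f$ and observe that, by the very definition of $\cC^\alpha(w)=\Besov{\alpha}{\infty,\infty}(w)$, one has
\begin{equ}
\|\delta_j f\|_{L^\infty(w)}\leq 2^{-j\alpha}\,\|f\|_{\cC^\alpha(w)}
\end{equ}
uniformly in $j\geq -1$. The whole proof then reduces to obtaining, for each dyadic block, the weighted Bernstein-type bound
\begin{equ}[e:weighted_bernstein]
\|\partial^k e^{t\Delta}\delta_j f\|_{L^\infty(w)}\lesssim 2^{j|k|}\,e^{-c\, t\,2^{2j}}\,\|\delta_j f\|_{L^\infty(w)}
\end{equ}
uniformly in $j\geq 0$ and $t\in(0,T]$ (with the trivial analogue for $j=-1$, where the exponential factor is absent).

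To prove \eqref{e:weighted_bernstein}, I would write $\partial^k e^{t\Delta}\delta_j f = K^{(k)}_{t,j}*\delta_j f$, where $K^{(k)}_{t,j}(x)\eqdef \mathscr{F}^{-1}(\xi^k e^{-t|\xi|^2}\tilde\chi_j(\xi))(x)$ and $\tilde\chi_j$ is a suitable smooth function equal to $1$ on the support of $\chi_j$ and belonging to the Gevrey class of index $\frac32$. Scaling shows that $K^{(k)}_{t,j}$ has $L^1$ norm at most $C\,2^{j|k|}e^{-c\,t\,2^{2j}}$, while the Gevrey decay of its Fourier transform gives sub-exponential spatial decay as in~\cite[Proposition~1, Proposition~5]{MW17a}. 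Since the polynomial weight $w=\bracket{\bigcdot}^{-\kappa}$ trivially satisfies the growth condition required by Lemma~\ref{lem:PL_block_est} (indeed $w(x)/w(y)\lesssim\bracket{x-y}^\kappa$), the weighted Young inequality of~\cite[Theorem~2.1]{MW17a} then gives \eqref{e:weighted_bernstein}.

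Summing over $j$ yields
\begin{equ}
\|\partial^k e^{t\Delta}f\|_{L^\infty(w)}
\lesssim \|f\|_{\cC^\alpha(w)}\sum_{j\geq -1} 2^{j(|k|-\alpha)}e^{-c\, t\,2^{2j}}\;.
\end{equ}
Splitting the sum at $j_\star$ defined by $2^{j_\star}\sim t^{-1/2}$, the low-frequency part is a geometric sum bounded by $C\,2^{j_\star(|k|-\alpha)}\lesssim t^{(\alpha-|k|)/2}$ (using that $|k|-\alpha>0$), while the high-frequency part is dominated by the same quantity thanks to the rapid decay of $e^{-c\,t\,2^{2j}}$. This gives the claimed bound. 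I do not expect any real obstacle here; the only mildly technical point is the weighted Bernstein estimate \eqref{e:weighted_bernstein}, but this is essentially a repetition of the argument already used to establish Lemma~\ref{le:smoothing_heat} for the weights $e_{\delta,\zzz}$, with $w$ replacing the sub-exponential weight throughout.
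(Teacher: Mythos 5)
Your proposal is correct and follows essentially the same route as the paper: a Littlewood--Paley decomposition, a weighted Bernstein/heat-smoothing estimate on each dyadic block (justified, as you note, by the fact that $w(x)/w(y)$ grows at most polynomially in $|x-y|$, so the weighted Young inequality of \cite[Theorem~2.1]{MW17a} applies), and then a summation over $j$ exploiting $|k|-\alpha>0$. The only cosmetic difference is that the paper routes the argument through the $\Besov{|k|}{\infty,1}(w)$ norm and invokes \cite[Lemma~2.35]{BCD11} for the final summation, whereas you carry out the split at $2^{j_\star}\sim t^{-1/2}$ by hand; these are the same computation.
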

\begin{proof} First note that the Bernstein inequality in \cite[Lemma~2]{MW17a} and the smoothing of the heat flow in \cite[Lemma~3]{MW17a} for $L^\infty(w)$ norm also hold due to the fact that
\begin{equ}
    w(x+y) \lesssim w(x)/w(y)
\end{equ}
holds uniformly for all $x,y\in \R^3$. By \cite[Lemma~2]{MW17a}  for the $L^\infty(w)$ norm, we have 
    \begin{equ}
        \|\partial^k g \|_{L^\infty(w)} \leq \sum_{j\geq -1} \| \partial^k \delta_j g \|_{L^\infty(w)} \lesssim \sum_{j\geq -1} 2^{|k| j} \| \delta_j g \|_{L^\infty(w)} = \| g \|_{\Besov{|k|}{\infty,1}(w)} \;,
    \end{equ} 
    so that it suffices to bound $\| e^{t\Delta }f\|_{\Besov{|k|}{\infty,1}(w)} $. By applying \cite[Lemma~3]{MW17a}, there exists $c>0$ such that for all $t\in(0,1]$
    \begin{equs}
       t^{-\frac{\alpha-|k|}{2}}\| e^{t\Delta }f\|_{\Besov{|k|}{\infty,1}(w)} &\leq \sum_{j\geq -1 }  t^{-\frac{\alpha-|k|}{2}} 2^{|k|j}\big\| \delta_j e^{t\Delta} f \big\|_{L^\infty(w)} \\
       &\lesssim \sum_{j\geq -1}  t^{-\frac{\alpha-|k|}{2}} 2^{-(\alpha-|k|)j} 2^{\alpha j} e^{-ct 2^{2j}} \| \delta_j f\|_{L^\infty(w)}\;.
    \end{equs} Then by \cite[Lemma~2.35]{BCD11} we can bound 
    \begin{equ}
        \sum_{j\geq -1}  t^{-\frac{\alpha-|k|}{2}} 2^{-(\alpha-|k|)j} 2^{\alpha j} e^{-ct 2^{2j}} \| \delta_j f\|_{L^\infty(w)} \lesssim \sup_{j\geq -1} 2^{\alpha j}\|\delta_j f \|_{L^\infty(w)} 
        = \| f \|_{\cC^\alpha(w)}\;,
    \end{equ}
concluding the proof.
\end{proof}

\begin{lem}[Paraproduct estimates]\label{le:paraproduct} 
	Let $\alpha,\beta\in \R$, $p\in[1,\infty]$, $N,M\geq 0$, $\rmw_1=e_{\delta,\zzz}w_z^N$, $\rmw_2=w_z^M$ and $\weight=\weight_1\weight_2$. The bounds
	\begin{equs}
		\label{e:para1}
		\| f \para g \|_{\Besov{\alpha}{p,p}(\rmw)}
		&\lesssim
		\|f\|_{\Besov{-\beta/2}{p,p}(\rmw_1)}\, \|g\|_{\cC^{\alpha+\beta}(\rmw_2)}\;,
		\ \ & &  \text{if} \ \ \beta>0 \;,
		\\
		\label{e:para2}
		\| f \rpara g \|_{\Besov{\alpha+ \beta}{p,p}(\rmw)}
		&\lesssim
		\|f\|_{\Besov{\alpha}{p,p}(\rmw_1)}\,\|g\|_{\cC^{\beta}(\rmw_2 )} \;,
		\ \ & &  \text{if} \ \ \beta  <0 \;,
		\\
		\label{e:reso}
		\| f \reso g \|_{\Besov{\alpha+ \beta}{p,p}(\rmw)}
		&\lesssim
		\|f\|_{\Besov{\alpha}{p,p}(\rmw_1)} \|g\|_{\cC^{\beta}(\rmw_2)}\,\;,\ \ \ & & \text{if} \ \ \alpha+\beta > 0 \;,
	\end{equs}
	and
	\begin{equs}
		\| f \para g \|_{\Besov{\alpha}{p,p}(\rmw)}
		&\lesssim
		\| f\|_{L^p(\rmw_1)}\, \|g\|_{\cC^{\alpha+\beta}(\rmw_2)} \;,
		\ \ & &  \text{if} \ \ \beta>0 \;,
		\\
		\| f \rpara g \|_{\Besov{\alpha}{p,p}(\rmw)}
		&\lesssim
		\|f\|_{\Besov{\alpha+\beta}{p,p}(\rmw_1)}\,\|g\|_{L^\infty(\rmw_2)}  \;,
		\ \ & &  \text{if} \ \ \beta>0 \;,
		\\
		\| f \reso g \|_{\Besov{\alpha}{p,p}(\rmw)}
		&\lesssim
		\| f\|_{L^p(\rmw_1)}\, \|g\|_{\cC^{\alpha+\beta}(\rmw_2)} \,\;,\ \ \ & & \text{if} \ \ \alpha,\beta> 0 \;,
	\end{equs} 
	hold uniformly over $\delta$ in a compact subset of $\R_\geq$, $z\in\R^3$ and $f, g\in C_c^\infty(\R^3)$.
\end{lem}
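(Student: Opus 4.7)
The proof follows the classical Bony paraproduct analysis adapted to the weighted setting. The plan is to write
\begin{equ}
f\para g = \sum_{j}S_{j-1}f\,\delta_j g\;,\qquad f\rpara g=\sum_j\delta_j f\,S_{j-1}g\;,\qquad f\reso g=\sum_{|j-k|\leq 1}\delta_j f\,\delta_k g\;,
\end{equ}
and exploit two inputs: (i) the spectrum of each summand is localised, so $\delta_j$ applied to these sums collapses to a sum over $O(1)$ indices (for $\para$ and $\rpara$) or a tail $k\geq j-C$ (for $\reso$); (ii) since $\rmw=\rmw_1\rmw_2$ pointwise, H\"older gives
$\|\delta_i f\,\delta_k g\|_{L^p(\rmw)}\leq \|\delta_i f\|_{L^p(\rmw_1)}\,\|\delta_k g\|_{L^\infty(\rmw_2)}$. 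The remaining ingredient is that Lemma~\ref{lem:PL_block_est} applies uniformly to all of the weights that appear: $e_{\delta,\zzz}$ has sub-exponential growth of order $1/2<2/3$, while $w_\zzz^N$ is polynomial and so satisfies the required Gevrey-type bound a fortiori. Hence the Littlewood--Paley blocks may be freely inserted or removed inside the weighted $L^p$ and $L^\infty$ norms without losing factors of $j$.

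For \eqref{e:para1}, $\delta_j(S_{k-1}f\,\delta_k g)$ vanishes unless $|k-j|\leq C$. Bounding $\|\delta_j g\|_{L^\infty(\rmw_2)}\lesssim 2^{-j(\alpha+\beta)}\|g\|_{\cC^{\alpha+\beta}(\rmw_2)}$ by definition of the weighted H\"older norm, together with discrete H\"older applied to $S_{j-1}f=\sum_{i\leq j-2}\delta_i f$, yields
\begin{equ}
\|S_{j-1}f\|_{L^p(\rmw_1)}\lesssim \Bigl(\sum_{i\leq j-2}2^{i\beta p'/2}\Bigr)^{1/p'}\|f\|_{\Besov{-\beta/2}{p,p}(\rmw_1)}\lesssim 2^{j\beta/2}\|f\|_{\Besov{-\beta/2}{p,p}(\rmw_1)}\;,
\end{equ}
where the partial geometric sum is finite precisely because $\beta>0$. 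Combining and taking $\ell^p_j$ of $2^{j\alpha}\|\delta_j(f\para g)\|_{L^p(\rmw)}$ produces the convergent geometric series $\sum_j 2^{-j\beta p/2}$, giving \eqref{e:para1}. Estimate \eqref{e:para2} is entirely symmetric, with $\|S_{j-1}g\|_{L^\infty(\rmw_2)}\lesssim\|g\|_{\cC^\beta(\rmw_2)}$ requiring $\beta<0$ to control the low-frequency contributions. For \eqref{e:reso}, each $\delta_i f\,\delta_k g$ with $|i-k|\leq 1$ has Fourier support in a ball of radius $\sim 2^k$, so $\delta_j(f\reso g)\ne 0$ forces $k\geq j-C$; the resulting bound
\begin{equ}
2^{j(\alpha+\beta)}\|\delta_j(f\reso g)\|_{L^p(\rmw)}\lesssim \|g\|_{\cC^\beta(\rmw_2)}\sum_{k\geq j-C}2^{-(k-j)(\alpha+\beta)}\bigl(2^{k\alpha}\|\delta_k f\|_{L^p(\rmw_1)}\bigr)
\end{equ}
is a discrete convolution to which Young's inequality applies, its kernel lying in $\ell^1$ exactly when $\alpha+\beta>0$.

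The three remaining estimates follow from minor modifications: when $f$ enters via $L^p$ rather than a Besov norm, one uses $\|S_{j-1}f\|_{L^p(\rmw_1)}\lesssim\|f\|_{L^p(\rmw_1)}$ or $\|\delta_k f\|_{L^p(\rmw_1)}\lesssim\|f\|_{L^p(\rmw_1)}$ uniformly in $j$ and $k$, and similarly for $g$ in $L^\infty$. The geometric-series summabilities then correspond exactly to the sign conditions on $\alpha$ and $\beta$ stated in the lemma. The main --- and indeed only --- obstacle compared to the unweighted case is that the $\delta_j$ do not localise in physical space; this is what forces the restriction to weights of the form $e_{\delta,\zzz}w_\zzz^N$ and the systematic appeal to Lemma~\ref{lem:PL_block_est} to move the weights past the Littlewood--Paley projections.
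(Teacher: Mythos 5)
Your argument is correct and follows essentially the same route as the paper's proof: insert the Littlewood--Paley projections through the weighted norms via Lemma~\ref{lem:PL_block_est}, apply the weighted H\"older inequality $\|\delta_i f\,\delta_k g\|_{L^p(\rmw)}\leq\|\delta_i f\|_{L^p(\rmw_1)}\|\delta_k g\|_{L^\infty(\rmw_2)}$ blockwise, use discrete H\"older on $S_{k-1}f$ for \eqref{e:para1}, and Young's inequality for the resonant tail; the paper obtains the second block of estimates from \eqref{e:para1}--\eqref{e:reso} together with Lemma~\ref{le:besov_embedding} rather than by redoing the block bounds, but your direct route is equally valid. One slip in your treatment of \eqref{e:para2}: for $\beta<0$ the inequality $\|S_{j-1}g\|_{L^\infty(\rmw_2)}\lesssim\|g\|_{\cC^\beta(\rmw_2)}$ as written is false; the correct bound is $\|S_{j-1}g\|_{L^\infty(\rmw_2)}\lesssim 2^{-\beta j}\|g\|_{\cC^\beta(\rmw_2)}$ (the sum over low frequencies is dominated by its top term precisely because $\beta<0$), and it is this factor $2^{-\beta j}$ that cancels the $2^{\beta j}$ in the prefactor of the target $\Besov{\alpha+\beta}{p,p}(\rmw)$ norm so that one lands on $\|f\|_{\Besov{\alpha}{p,p}(\rmw_1)}$ rather than $\|f\|_{\Besov{\alpha+\beta}{p,p}(\rmw_1)}$.
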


\begin{proof}
	The proof is almost the same as the proof of \cite[Theorem~3.1]{MW17a} and is presented for the sake of completeness. Writing $S_{k}f = \sum_{j<k} \delta_j f $, we first note that, as a consequence of Lemma~\ref{lem:PL_block_est}, one has
	\begin{equ}
		\| f \para g \|_{\Besov{\alpha}{p,p}(\rmw)} \lesssim \Bigl( \sum_{k\geq 0} 2^{\alpha k p } \| S_{k-1}f \delta_{k} g \|_{L^p(\rmw)}^p\Bigr)^{1/p}\;.
	\end{equ}
	We have that for all $k\in \N_0$ and $\beta>0$
	\begin{equ}
		\| S_{k-1}f \delta_{k} g \|_{L^p(\rmw)} \leq  \| S_{k-1}f \|_{L^p(\rmw_1)} \| g \|_{\cC^{\alpha+\beta}(\rmw_2)} 2^{-(\alpha+\beta) k }  \;,
	\end{equ} which also implies that 
	\begin{equ}
		\Bigl(\sum_{k\geq 0} 2^{\alpha k p } \| S_{k-1}f \delta_{k} g \|_{L^p(\rmw)}^p \Bigr)^{1/p}\lesssim \Bigl( \sum_{k\geq 0 } 2^{-\beta  kp} \| S_{k-1}f \|_{L^p(\rmw_1)}^p \Bigr)^{1/p}   \| g \|_{\cC^{\alpha+\beta }(\rmw_2)}\;.
	\end{equ} On the other hand, we have from H\"older's inequality and Lemma~\ref{lem:PL_block_est} that 
	\begin{equs}
		\| S_{k-1}f \|_{L^p(\rmw_1)} \leq \sum_{j=-1}^{k-2} \| \delta_j f \|_{L^p(\rmw_1)} &\leq \Bigl( \sum_{j=-1}^{k-2} 2^{-\beta j p/2} \| \delta_ j f \|_{L^{p}(\rmw_1)}^p\Bigr)^{1/p} \Bigl( \sum_{j=-1}^{k-2} 2^{\beta j q/2} \Bigr)^{1/q}\\&\lesssim 2^{\beta k/2} \| f \|_{\Besov{-\beta/2}{p,p}(\rmw_1)} \;,
	\end{equs} 
	where $q=p/(p-1)$. Combining the estimates above, we get 
	\begin{equ}
		\Bigl(\sum_{k\geq 0} 2^{\alpha k p } \| S_{k-1}f \delta_{k} g \|_{L^p(\rmw)}^p \Bigr)^{1/p}
		\lesssim
		\| f \|_{\Besov{-\beta/2}{p,p}(\rmw_1)} \| g \|_{\cC^{\alpha+\beta}(\rmw_2)}\,.
	\end{equ} For the second estimate, we bound 
  \begin{equ}
    \| f \rpara g \|_{\Besov{\alpha+\beta }{p,p}(\rmw)} \lesssim \Bigl( \sum_{k\geq -1} 2^{(\alpha +\beta)  k p } \| S_{k-1} g  \delta_k f  \|_{L^p(\rmw)}^p\Bigr)^{1/p}\;.
  \end{equ} We have that for all $k\in\N_0$ and  $\beta<0$
  \begin{equ}
    \| S_{k-1} g \|_{L^\infty(\rmw_2)} \leq \sum_{j=-1}^{k-2} \| \delta_j g \|_{L^{\infty}(\rmw_2)}\lesssim \| g \|_{\cC^{\beta}(\rmw_2)} \sum_{j<k-1} 2^{-\beta j }  \lesssim \| g \|_{\cC^{\beta}(\rmw_2)}  2^{-\beta k}\;.
  \end{equ} Therefore, we have 
  \begin{equs}
    \Bigl (\sum_{k\geq -1} 2^{(\alpha +\beta)  k p } \| S_{k-1} g  \delta_k f  \|_{L^p(\rmw)}^p \Bigr)^{1/p}&\lesssim  \Bigl(\sum_{k\geq -1} 2^{\alpha k p } \| \delta_k f \|_{L^p(\rmw_1)}^p \Bigr)^{1/p} \| g \|_{\cC^{\beta}(\rmw_2)} \\
    & = \| f \|_{\Besov{\alpha}{p,p}(\rmw_1)} \| g \|_{\cC^{\beta}(\rmw_2)}\;.
  \end{equs} 
	The third inequality is proven in the same way, so we skip it. The second part of the statement follows from \eqref{e:para1}--\eqref{e:reso} and Lemma~\ref{le:besov_embedding}. 
\end{proof}

We also record an embedding result used in the proof of Theorem~\ref{thm:uniqueness}. 
\begin{lem}
	For $\alpha\in\R$ we have 
	\begin{equ}
		\| f \|_{B_{2,2}^{\alpha-\kappa}(\rho)} \lesssim \| f \|_{\cC^\alpha(w)}\;.
	\end{equ}
\end{lem}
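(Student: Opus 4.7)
The plan is to argue blockwise, using the pointwise ratio $\rho/w = \bracket{\bigcdot}^{-4+\kappa}$ to exchange the $L^\infty(w)$ norm for an $L^2(\rho)$ norm on each Littlewood--Paley block, at the cost of a factor $2^{-\kappa j}$ per block. The summability of the resulting geometric series in $j$ is what consumes the $-\kappa$ in the Besov regularity.

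More concretely, the first step is to write, for each $j\geq -1$,
\begin{equ}
  \|\delta_j f\|_{L^2(\rho)} = \|(\delta_j f)\, w \cdot (\rho/w)\|_{L^2}
  \leq \|(\delta_j f)\, w\|_{L^\infty}\, \|\rho/w\|_{L^2}
  = \|\rho/w\|_{L^2}\, \|\delta_j f\|_{L^\infty(w)}\,,
\end{equ}
by H\"older's inequality. Next I would check that $\|\rho/w\|_{L^2}$ is finite: since $\rho/w = \bracket{\bigcdot}^{-4+\kappa}$ and $\kappa = \bar\kappa^4 = 10^{-4}$, we have $2(-4+\kappa) < -3$, so
\begin{equ}
  \|\rho/w\|_{L^2}^2 = \int_{\R^3}\bracket{x}^{-8+2\kappa}\,\md x < \infty\,,
\end{equ}
which gives a uniform constant in the blockwise bound above.

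Combining this with the definitions, I would then estimate
\begin{equs}
  \|f\|_{\Besov{\alpha-\kappa}{2,2}(\rho)}^2
  &= \sum_{j\geq -1} 2^{2(\alpha-\kappa)j}\, \|\delta_j f\|_{L^2(\rho)}^2
  \lesssim \sum_{j\geq -1} 2^{2(\alpha-\kappa)j}\, \|\delta_j f\|_{L^\infty(w)}^2
  \\
  &= \sum_{j\geq -1} 2^{-2\kappa j}\, \bigl(2^{\alpha j}\,\|\delta_j f\|_{L^\infty(w)}\bigr)^2
  \leq \Bigl(\sum_{j\geq -1} 2^{-2\kappa j}\Bigr)\,\|f\|_{\cC^\alpha(w)}^2\,,
\end{equs}
and the geometric series converges since $\kappa > 0$, which concludes the argument. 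No step is genuinely difficult here; the only thing one has to be slightly careful about is the convention \eqref{e:Lebesgue} for weighted $L^p$ norms (namely that $\|g\|_{L^p(\rmw)} = \|g\rmw\|_{L^p}$), which makes the pointwise factorisation $\rho = w\cdot(\rho/w)$ inside the norm produce exactly the desired decomposition.
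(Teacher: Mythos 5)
Your proof is correct and follows essentially the same route as the paper: a blockwise H\"older estimate trading $L^\infty(w)$ for $L^2(\rho)$ via the integrability of $(\rho/w)^2$, followed by summing the geometric series $\sum_j 2^{-2\kappa j}$, which is exactly what the $-\kappa$ loss of regularity is for. (Your bookkeeping of the weight ratio, $\|(\rho/w)^2\|_{L^1}=\int\bracket{x}^{-8+2\kappa}\,\md x$, is in fact slightly more careful than the paper's, which writes $\|\rho^2 w^{-1}\|_{L^1}$; both are finite, so nothing changes.)
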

\begin{proof}
	Since $\rho^2 w^{-1}$ is integrable, it follows that
	\begin{equs}
		\| f \|^2_{B_{2,2}^{\alpha-\kappa}(\rho)}
		&= 
		\sum_{j\geq -1} \| \delta_{j} f\|^2_{L^2(\rho)} 2^{2(\alpha-\kappa)j} 
		\\
		&\leq \left(\sup_{j\geq -1} \| \delta_j f\|^2_{L^\infty(w)} \,2^{2\alpha j}\right) \left( \sum_{j\geq -1} 2^{-2\kappa j}\right) \|\rho^2 w^{-1}\|_{L^1}
		\lesssim
		\|f \|^2_{\cC^\alpha(w)}\;,
	\end{equs}
	which completes the proof.
\end{proof}

\subsection{Stochastic objects}
\label{sec:stochastic_objects}

\begin{defn}\label{def:smoothed_noise}
Let $\xi$ denote space-time white noise on $\R^{1+3}$, let $Q_\ell = [-\frac{\ell}{2},\frac{\ell}{2})^3$, and let $\xi_{\ell}$ be the spatial periodisation of $\one_{\R \times Q_\ell}\,\xi$ with period $\ell\in\N_+$. Furthermore, for $\eps\in(0,1]$ and $\ell\in\N_+$ we define $\xi_{\eps, \ell} \eqdef M_\eps\star\xi_\ell$, where $\star$ denotes the convolution over $\R^3$ and the mollifier $M_\eps\in C^\infty(\R^3)$ is defined by $M_\eps(x)=\eps^{-3}M(\frac{x}{\eps})$ for $M\in C^\infty(\R^3)$ supported in the unit ball such that $\int M(x)\,\md x=1$. Setting
\begin{equ}
\big(\lL^{-1} \phi \big) (t, \bigcdot) \eqdef \int_{-\infty}^{t} e^{(t-s)(\Laplace - 1)} \phi(s,\bigcdot)\, \md s\;,
\end{equ}
we define $\ou_{\eps,\ell} \eqdef \lL^{-1} \xi_{\eps,\ell}$ and $\Cone \eqdef \EE |\ou_{\eps,\ell}(t, x)|^2$. We
further define
\begin{equ}
\oud_{\eps,\ell} \eqdef \ou_{\eps,\ell}^2 - \Cone\;, \qquad \oudz_{\eps,\ell} \eqdef \lL^{-1} \oud_{\eps,\ell} \;, \qquad \Ctwo \eqdef \EE \oudz_{\eps,\ell}(t, x) \oud_{\eps,\ell}(t, x) \;.
\end{equ}
Note that, by stationarity, $\Cone$ and $\Ctwo$ are constants over space-time. Finally, we set $C_{\eps, \ell}(\lambda)$ in \eqref{e:phi4_mod} to be $C_{\eps, \ell}(\lambda) \eqdef 3 \lambda C_{\eps, \ell}^{(1)} - 9\lambda^2 C_{\eps, \ell}^{(2)}$.
\end{defn}	

We will also make use of the following stochastic objects, all starting from zero initial data.
\begin{defn}\label{def:enhanced_noise}
We define the renormalisation functions
\begin{equ}
\Cones(t)\eqdef \EE(\ou_{\eps,\ell,s}(t))^2\;,\qquad \Ctwos(t)\eqdef \EE(\nabla\oudz(t))^2\;.
\end{equ}
	We define $\ou_{\eps,\ell,s},\oud_{\eps,\ell,s},\out_{\eps,\ell,s},\oudz_{\eps,\ell,s},\outz_{\eps,\ell,s}\in C(\R\times\T_\ell^3)$ by the following equations
	\begin{equs}
		\label{e:ou_eps_s}
		\lL \ou_{\eps,\ell,s}(t) 	&\eqdef\xi_{\eps,\ell}(t)\;,&\quad \ou_{\eps,\ell,s}(s) &\eqdef 0\;,
		\\
		\label{eq:oud_def}
		\oud_{\eps,\ell,s}(t) &\eqdef 	(\ou_{\eps,\ell,s}(t))^2 - \Cones(t) \;, 
		\\
		\label{eq:out_def}
		\out_{\eps,\ell,s}(t) &\eqdef 	(\ou_{\eps,\ell,s}(t))^3 - 3\Cones(t) \ou_{\eps,\ell,s}(t) \;,
		\\
		\label{eq:outz_def}
		\lL \oudz_{\eps,\ell,s}(t) &\eqdef 	\oud_{\eps,\ell,s}(t) \;, &\qquad \oudz_{\eps,\ell,s} (s) &\eqdef 0\;,
		\\
		\label{eq:oudz_def}
		\lL \outz_{\eps,\ell,s}(t) 	&\eqdef\out_{\eps,\ell,s}(t)\;, &\qquad \outz_{\eps,\ell,s}(s) &\eqdef 0\;,
	\end{equs}
	for $t>s$. It is understood that the above functions are identically zero on $(-\infty,s)\times\R^3$ and that~\eqref{e:ou_eps_s}, \eqref{eq:outz_def} and~\eqref{eq:oudz_def} are interpreted in the mild form. We write $\tilde\oud_{\eps,\ell,s}$ and $\tilde\out_{\eps,\ell,s}$ for analogues of $\oud_{\eps,\ell,s}$ and $\out_{\eps,\ell,s}$ defined by~\eqref{eq:oud_def} and~\eqref{eq:out_def} with $\Cones(t)$ replaced by $\one_{(s,\infty)}\Cone$. We set $\tilde\oudz_{\eps,\ell,s}\eqdef K^+\ast\tilde\oud_{\eps,\ell,s}$ and $\tilde\outz_{\eps,\ell,s}\eqdef K^+\ast\tilde\out_{\eps,\ell,s}$, where $K^+$ is the truncation of the heat kernel from Lemma~\ref{lem:kernel_decomposition}. 
\end{defn}

Now we are ready to decompose the solution $\PHI$ using the following Da Prato--Debussche trick.  For $0\leq s \leq t $, we write the solutions to \eqref{eq:phi4_S} as
\begin{equ}\label{eq:phi_ansatz}
  \Phi_{\eps,\ell}(t)
  =
  \ou_{\eps,\ell,s}(t) + \PSImod(t)
  =
  \ou_{\eps,\ell,s}(t) -\lambda\outz_{\eps,\ell,s}(t) + \PSI(t)\;.
\end{equ}
Then we can rewrite~\eqref{eq:phi4_S} and~\eqref{e:linearised} as
\begin{equs}[e:psi_mod]
  \lL \PSImod
  =
  S&- \lambda \PSImod^3
  - 3\lambda \PSImod^2 \ou_{\eps,\ell,s}
  - 3\lambda \PSImod \tilde\oud_{\eps,\ell,s}
  \\
  &- 3\lambda \tilde\out_{\eps,\ell,s}
  -9\lambda^2\Ctwo\,(\ou_{\eps,\ell,s} + \PSImod)
\end{equs} and
\begin{equs}[e:linearisation1_mod]
  \lL D_{\eps,\ell}
  =
  -\Big(&3\lambda\PSImod^2-6\lambda\PSI\ou_{\eps,\ell,s}
  + 3\lambda\oud_{\eps,\ell,s}
  \\
  &+ 6\lambda^2 \ou_{\eps,\ell,s} \outz_{\eps,\ell,s}
  - 3\lambda\big(\Cone - \Cones\big) + 9\lambda^2 C^{(2)}_{\eps,\ell} \Big) D_{\eps,\ell}\;.
\end{equs}
Note that the above equations make sense for $t\geq s$. In Lemma~\ref{le:psi}, we obtain an estimate for the process $\PSI = \PSImod +\lambda\outz_{\eps,\ell,s}$, which (in the limit $\eps\searrow0$) has much better regularity than $\PHI$. To state this estimate, we need the following definition.

\begin{defn}\label{def:size_enhanced_noise}
We define
\begin{equs}
\hat\fX(\ou,\oudz,\outz,C,w,t)
\eqdef& \|\ou(t)\|_{\cC^{-\f12-\kappa}(w)}
\vee
\|\oudz(t)\|_{\cC^{1-2\kappa}(w)}
\vee
\|\outz(t)\|_{\cC^{1/2-3\kappa}(w)}
\\
&
\vee
\|\ou(t)\reso\outz(t)\|_{\cC^{-4\kappa}(w)}
\vee
\|(\nabla\oudz(t))^2-C(t)\|_{\cC^{-4\kappa}(w)}\,.
\end{equs}
For $s<t$, we then set
\begin{equs}
\fX_{\eps,\ell,s,t,\zzz}
\eqdef
1&\vee\sup_{u\in[s,t]}\hat\fX(\ou_{\eps,\ell,s},\oudz_{\eps,\ell,s},\outz_{\eps,\ell,s},\Ctwos,w_\zzz,u)
\\
&\vee\tilde\fX(\ou_{\eps,\ell,s},\tilde\oud_{\eps,\ell,s},\tilde\out_{\eps,\ell,s},\tilde\oudz_{\eps,\ell,s},\tilde\outz_{\eps,\ell,s},\one_{(s,\infty)}\Ctwo,[s,t]\times\R^3,w_\zzz)\,,
\end{equs}
where $\tilde\fX$ is introduced in Definition~\ref{def:enhanced_noise_spacetime}.
\end{defn}

\begin{lem}\label{le:stochastic}
There exists $C>0$ such that
\begin{equ}
	\EE \fX_{\eps,\ell,s, s+1,\zzz} \leq C
\end{equ}
for all $\eps\in(0,1]$, $\ell\in\N_+$, $s\in\R$, $\zzz\in\R^3$.
\end{lem}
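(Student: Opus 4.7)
The plan is to reduce the problem to standard tree-by-tree stochastic estimates by exploiting the stationarity of the driving noise in time and the $\ell$-periodicity of the trees in space. For the former, note that the joint law of the shifted tuple $(\ou_{\eps,\ell,s}(s+\bigcdot), \oudz_{\eps,\ell,s}(s+\bigcdot), \outz_{\eps,\ell,s}(s+\bigcdot), \ldots)$ does not depend on $s$, by time-translation invariance of $\xi_{\eps,\ell}$; hence it suffices to prove the bound at $s = 0$. For the latter, since every tree appearing in the definitions of $\hat\fX$ and $\tilde\fX$ is spatially $\ell$-periodic and the weight satisfies $w_\zzz \leq 1$, one has $\|\delta_j \tau\|_{L^\infty(w_\zzz)} \leq \|\delta_j \tau\|_{L^\infty(\R^3)}$ for every Littlewood--Paley block $\delta_j$, and for periodic $\tau$ the right-hand side is uniformly controlled in $\zzz$ by the corresponding norm on the torus $\T_\ell^3$. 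The problem therefore reduces to bounding each component of the enhanced noise in an unweighted H\"older--Besov norm on $\T_\ell^3$, uniformly in $\eps \in (0,1]$ and $\ell \in \N_+$.

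For the reduced problem I would apply the classical stochastic-estimate pipeline for Gaussian polynomial functionals of the noise. Each tree lies in a fixed inhomogeneous Wiener chaos: $\ou_{\eps,\ell,0}$ in the first chaos, $\oudz_{\eps,\ell,0}$ and $(\nabla\oudz_{\eps,\ell,0})^2 - \Ctwos$ in the second, $\outz_{\eps,\ell,0}$ in the third, and $\ou_{\eps,\ell,0} \reso \outz_{\eps,\ell,0}$ (after the BPHZ-style cancellation) as a second-chaos object. For each such tree $\tau$ and each Littlewood--Paley block $\delta_j$, evaluating the corresponding Feynman diagram at a fixed space-time point yields a second-moment bound of the form $\EE|\delta_j\tau(u,x)|^2 \lesssim 2^{-2j\alpha}$ uniformly in $\eps$, $\ell$, $u \in [0,1]$ and $x$, where $\alpha$ is the expected regularity of the tree as prescribed in the definition of $\hat\fX$. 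Nelson's hypercontractivity upgrades these second-moment bounds to $L^p$ bounds for any $p \geq 2$ with only a constant loss, and a Kolmogorov-type continuity argument in space-time, using parabolic scaling to handle the $u$-increments, then produces the required control on $\EE \sup_{u \in [0,1]} \|\tau(u)\|_{\cC^\alpha(\T_\ell^3)}$. The space-time quantity $\tilde\fX$ is treated by an analogous diagram evaluation followed by a space-time Besov embedding, as set up in Definition~\ref{def:enhanced_noise_spacetime} and carried out in Appendix~\ref{sec:stochastic_estimates}.

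The main technical point is that the second-moment bounds above must be uniform in both $\eps$ and $\ell$. Uniformity in $\ell$ is immediate because the periodic heat kernel on $\T_\ell^3$ satisfies the same short-distance estimates as its whole-space counterpart, with the long-range periodic corrections bounded on the unit scale relevant to these computations. Uniformity in $\eps$ is the whole point of the chosen renormalisation: the time-dependent constants $\Cones(u)$ and $\Ctwos(u)$ of Definition~\ref{def:enhanced_noise} are designed to cancel precisely the short-distance divergences produced by the Wick diagrams for $\oud_{\eps,\ell,s}$, $\out_{\eps,\ell,s}$ and $(\nabla\oudz_{\eps,\ell,s})^2$. The only subtle point, which distinguishes these trees from their stationary counterparts appearing in $\tilde\fX$, is a transient boundary layer near the initial hypersurface $\{s\}\times\R^3$; this is absorbed into the $u$-dependence of $\Cones$ and $\Ctwos$ and does not affect the qualitative order of the kernel bounds. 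All these estimates are worked out in Appendix~\ref{sec:stochastic_estimates}, which therefore supplies the actual content behind the present statement.
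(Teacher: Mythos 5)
Your reduction in time (to $s=0$) matches the paper, but your spatial reduction breaks the argument. After observing that $w_\zzz \le 1$ and that the trees are $\ell$-periodic, you conclude that it suffices to bound the \emph{unweighted} H\"older--Besov norms on $\T_\ell^3$ uniformly in $\ell$. That reduced statement is false: already for the first-chaos object $\ou_{\eps,\ell,s}$, the quantity $\EE\,\|\ou_{\eps,\ell,s}(t)\|_{\cC^{-1/2-\kappa}(\T_\ell^3)}$ grows like $\sqrt{\log\ell}$, since it involves the supremum of a spatially stationary Gaussian field over a region of volume $\ell^3$. The polynomial weight is not a convenience that can be discarded; it is exactly what absorbs this growth. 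The paper instead uses spatial stationarity to reduce to $\zzz=0$ while \emph{keeping} the weight, and then invokes a weighted Kolmogorov-type criterion (Lemma~\ref{lem:kolmogorov}, from \cite{MWX17}) whose covariance hypothesis carries a factor $\ell^d$ that is compensated precisely by the weight $\hat w$ appearing in its conclusion. Your pipeline (second moments of Feynman diagrams, hypercontractivity, Kolmogorov continuity) is the right machinery, but applied to the unweighted target it necessarily produces constants that degrade as $\ell\to\infty$.

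A second, lesser issue: your treatment of the $\tilde\fX$ part as ``an analogous diagram evaluation'' glosses over the actual difficulty. The quantities entering $\tilde\fX$ (e.g.\ $[\oudd]$ and $[\outd]$) are built from non-stationary trees with zero initial data at time $s$ and carry the stationary renormalisation constant $\one_{(s,\infty)}\Ctwo$ rather than the time-dependent one, so they are not simply Wick powers of a stationary Gaussian field, and direct diagram computations become delicate near the initial-time hypersurface. The paper avoids this entirely by a deterministic reduction (Lemma~\ref{lem:convergence_model_phi_deterministic}): these trees are rewritten as reconstructions of singular modelled distributions with respect to the \emph{stationary} BPHZ model, and $\tilde\fX$ is then controlled by the weighted model norm (bounded in $L^p$ by Lemma~\ref{lem:convergence_stationary_model}) together with a bound on the source term from Lemma~\ref{lem:S_phi_estimate}. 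To complete your route you would need either this reduction or a genuinely new diagrammatic argument for the non-stationary trees; neither is supplied by ``analogous''.
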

\begin{proof}
	By translational invariance without loss of generality we may assume that $\zzz=0$. The result then follows immediate from Lemmas~\ref{lem:stochastic_paracontrolled} and~\ref{lem:stochastic_spacetime}.
\end{proof}

The ``coming down from infinity'' estimate stated below is proved in Appendix~\ref{sec:spacetime_localization}.
\begin{lem}\label{le:psi}
There exists $C>0$ such that
\begin{equs}
	\lambda^{1/2}\|\PSI(t)\|_{L^\infty(w_{\zzz}^{3})}
	&\leq C\,(t-s)^{-1/2}\vee C\, \fX_{\eps,\ell,s,t,\zzz}^{2/(1-2\kappa)},
	\\
	\lambda^{1/2}\|\PSI(t)\|_{\cC^{1/2 + 4\kappa}(w_\zzz^{4})}
	&\leq C\,(t-s)^{-3/4-2\kappa}\vee C\, \fX_{\eps,\ell,s,t,\zzz}^{(3+8\kappa)/(1-2\kappa)}\;,
\end{equs}
for all $\lambda\in(0,1]$, $\eps\in(0,1]$, $\ell\in\N_+$, $s\geq 0$, $t\in(s,s+1]$ and all $\PSImod=\PSI-\lambda\outz_{\eps,\ell,s}$ solving~\eqref{e:psi_mod} in the domain $[s,t]\times\R^3$ with an adapted and continuous $S$ in a unit ball of $L^\infty(\R_\geq\times\T_\ell^3)$ and arbitrary initial data.
\end{lem}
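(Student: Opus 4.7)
The plan is to reduce both bounds to the space-time localisation estimate of Moinat--Weber (Lemma~\ref{le:MW20}) for the full solution $\Phi_{\eps,\ell} = \ou_{\eps,\ell,s} - \lambda\outz_{\eps,\ell,s} + \tilde\Psi_{\eps,\ell,s}$, and then subtract off the known stochastic contributions. The key point is that, while the individual stochastic objects $\ou_{\eps,\ell,s}$, $\tilde\oud_{\eps,\ell,s}$, $\tilde\out_{\eps,\ell,s}$ are very singular, their sizes are all encoded in $\fX_{\eps,\ell,s,t,\zzz}$ with weight $w_\zzz$, and the cubic damping term $-\lambda\tilde\Psi^3$ in~\eqref{e:psi_mod} provides the ``coming down from infinity'' regularisation that absorbs the arbitrary initial data.

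First I would apply the Moinat--Weber bound localised at $\zzz$ to get
\begin{equ}
\lambda^{1/2}\|\Phi_{\eps,\ell}(t)\|_{L^\infty(w_\zzz^{3})}
\lesssim (t-s)^{-1/2}\vee \fX_{\eps,\ell,s,t,\zzz}^{2/(1-2\kappa)}\;,
\end{equ}
uniformly in the initial data, on the interval $(s,s+1]$. The polynomial weight $w_\zzz^{3}=\bracket{\bigcdot-\zzz}^{-3\kappa}$ enters through the space-time localisation argument: after one covers $\R^3$ by balls around $\zzz$, one pays a weight coming from how the cubic damping controls the solution against the slow polynomial growth permitted by $\fX$. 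The coefficient $S$ is uniformly bounded by $1$ and can be absorbed into the ``coming down'' term just like the stochastic forcing, since its contribution to the a priori bound is dominated by $(t-s)^{-1/2}$. Subtracting the stochastic contributions using the bounds
\begin{equ}
\|\ou_{\eps,\ell,s}(t)\|_{L^\infty(w_\zzz)} + \lambda\|\outz_{\eps,\ell,s}(t)\|_{L^\infty(w_\zzz)} \lesssim \fX_{\eps,\ell,s,t,\zzz}\;,
\end{equ}
and comparing $w_\zzz \geq w_\zzz^{3}$ then yields the first inequality.

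For the higher-regularity bound, I would rewrite~\eqref{e:psi_mod} in mild form on a short interval $[t/2+s/2,t]$ and use Schauder estimates in the weighted H\"older--Besov scale from Lemma~\ref{le:smoothing_heat}. Each term on the right-hand side is estimated using the paraproduct bounds of Lemma~\ref{le:paraproduct}: the cubic $\lambda\tilde\Psi^3$ is controlled by the $L^\infty$ bound cubed (picking up an extra factor of $w_\zzz$ per factor, hence the jump from $w_\zzz^{3}$ to $w_\zzz^{4}$ when we also include the stochastic products); the mixed terms $\tilde\Psi^2\ou$, $\tilde\Psi\,\tilde\oud$, $\tilde\out$ each contain a stochastic factor bounded in $\cC^\alpha(w_\zzz)$ with $\alpha>-1/2-\kappa$, times a polynomial factor of $\tilde\Psi$ bounded in $L^\infty(w_\zzz^{3})$; and the resonant products are controlled via the paracontrolled decomposition encoded in $\fX$. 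Schauder then converts these bounds on the right-hand side (each with a singularity of order $(t-s)^{-1/2}$ or $\fX^{O(1)}$) into the claimed $\cC^{1/2+4\kappa}$ estimate at time $t$, picking up the extra $(t-s)^{-1/4-2\kappa}$ factor from the heat smoothing.

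The main obstacle will be bookkeeping the weights so that the cubic damping genuinely dominates all the error terms on the correct weight $w_\zzz^{3}$ (rather than on a lighter weight we cannot close the estimate with), and ensuring that the Moinat--Weber localisation argument can be carried out with a weight centred at an arbitrary $\zzz\in\R^3$ uniformly in $\ell$; translation invariance of the equation together with the spatial stationarity of the enhanced noise and Lemma~\ref{lem:Besov_trans} reduces this to the case $\zzz=0$, so the main content lies in making the space-time localisation of~\cite{MW20} compatible with the polynomial weight, which is precisely what Appendix~\ref{sec:spacetime_localization} is designed to do.
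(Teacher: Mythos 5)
There is a genuine gap at the first step. You propose to apply the Moinat--Weber bound to the \emph{full} solution, obtaining $\lambda^{1/2}\|\Phi_{\eps,\ell}(t)\|_{L^\infty(w_\zzz^3)}\lesssim(t-s)^{-1/2}\vee\fX_{\eps,\ell,s,t,\zzz}^{2/(1-2\kappa)}$, and then to subtract $\ou_{\eps,\ell,s}$ and $\lambda\outz_{\eps,\ell,s}$ using $L^\infty(w_\zzz)$ bounds on these objects. Neither half of this works: the space-time localisation estimate (Lemma~\ref{le:MW20}, Theorem~\ref{thm:spacetime_localization}) bounds the Da Prato--Debussche \emph{remainder}, not $\Phi_{\eps,\ell}$ itself, and $\Phi_{\eps,\ell}$ admits no $L^\infty$ bound uniform in $\eps$ since it converges to a distribution of regularity $-\f12-\kappa$. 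Correspondingly, $\fX_{\eps,\ell,s,t,\zzz}$ only controls $\|\ou_{\eps,\ell,s}(t)\|_{\cC^{-1/2-\kappa}(w_\zzz)}$, so the inequality $\|\ou_{\eps,\ell,s}(t)\|_{L^\infty(w_\zzz)}\lesssim\fX_{\eps,\ell,s,t,\zzz}$ that you invoke is false uniformly in $\eps$. The correct move is to apply Theorem~\ref{thm:spacetime_localization} \emph{directly} to $v=\lambda^{1/2}\PSImod$: equation~\eqref{e:psi_mod} is exactly of the form~\eqref{e:deterministic_phi4} with $\ou=\lambda^{1/2}\ou_{\eps,\ell,s}$, $\oud=\lambda\tilde\oud_{\eps,\ell,s}$, $\out=\lambda^{3/2}\tilde\out_{\eps,\ell,s}$, $h_3=S$, on $\fK=[s,t]\times B(x,2)$. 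One then recovers $\PSI=\PSImod+\lambda\tilde\outz_{\eps,\ell,s}$ (this correction is harmless because $\outz$ has positive regularity and is controlled by $\fX$), and converts the local bounds into weighted ones via $\tilde\fX(\fK)\lesssim w_\zzz(x)^{-1}\fX_{\eps,\ell,s,t,\zzz}$ together with $2/(1-2\kappa)\leq 3$. Note also that you must work with the zero-initial-data trees started at time $s$, not the stationary trees of Lemma~\ref{le:MW20}, since the former are what $\fX_{\eps,\ell,s,t,\zzz}$ measures.

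The second step has a related gap: a Duhamel/Schauder fixed point for $\PSImod$ at regularity $\cC^{1/2+4\kappa}$ cannot be closed by paraproduct estimates alone, because the resonant product $\PSImod\reso\tilde\oud_{\eps,\ell,s}$ is not controlled by $\|\PSImod\|_{\cC^{1/2+4\kappa}}\|\tilde\oud_{\eps,\ell,s}\|_{\cC^{-1-2\kappa}}$ (the regularities sum to a negative number); handling it requires the paracontrolled structure encoded in the $\ou\reso\outz$ and $(\nabla\oudz)^2-C^{(2)}$ components of $\fX$, which your sketch mentions but does not carry out. The paper avoids redoing this analysis: Theorem~\ref{thm:spacetime_localization} already outputs the H\"older seminorm bound~\eqref{eq:spacetime_localization_holder2} of order $1-2\kappa$ for $v+\outz$ (its proof contains the paracontrolled bookkeeping via $\bar U$ and $v^\sharp$), and the claimed $\cC^{1/2+4\kappa}$ estimate with blow-up $(t-s)^{-3/4-2\kappa}$ follows by interpolating this against the $L^\infty$ bound: $-3/4-2\kappa$ is precisely the convex combination of $-1/2$ and $-1+\kappa$ at parameter $(1/2+4\kappa)/(1-2\kappa)$, an exponent your ``extra factor from heat smoothing'' does not reproduce.
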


The following fact follows directly from Definitions~\ref{def:filtration} and~\ref{def:size_enhanced_noise}.
\begin{lem}
The random variable $\fX_{\eps,\ell,s,t,\zzz}$ is measurable with respect to $\fF_t$ and independent of $\fF_s$.
\end{lem}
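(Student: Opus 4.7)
The plan is to trace through the definition of $\fX_{\eps,\ell,s,t,\zzz}$ and verify that every ingredient is a measurable functional of the restriction of the noise $\xi$ to the slab $[s,t]\times \R^3$; independence of $\fF_s$ and $\fF_t$-measurability then follow immediately from the white noise property together with Definition~\ref{def:filtration}.

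First, I would observe that the spatial operations used to build $\xi_{\eps,\ell}$ from $\xi$, namely restriction to $\R\times Q_\ell$, periodisation with period $\ell$, and convolution with the spatial mollifier $M_\eps$, are all pointwise in time, so $\xi_{\eps,\ell}(u,\bigcdot)$ is measurable with respect to $\sigma(\xi(u',\bigcdot)\,:\,u'\leq u)$. Next, rewriting the defining equations~\eqref{e:ou_eps_s}, \eqref{eq:outz_def}, \eqref{eq:oudz_def} in mild form, for $u\in[s,t]$ one has
\begin{equ}
	\ou_{\eps,\ell,s}(u) = \int_s^u e^{(u-v)(\Laplace-1)}\xi_{\eps,\ell}(v)\,\md v\;,
\end{equ}
which is a measurable functional of $\xi|_{[s,u]\times\R^3}$. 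Since $\oud_{\eps,\ell,s}$, $\out_{\eps,\ell,s}$, $\tilde\oud_{\eps,\ell,s}$, $\tilde\out_{\eps,\ell,s}$ are pointwise polynomials in $\ou_{\eps,\ell,s}$ (with deterministic Wick constants), and $\oudz_{\eps,\ell,s}$, $\outz_{\eps,\ell,s}$, $\tilde\oudz_{\eps,\ell,s}$, $\tilde\outz_{\eps,\ell,s}$ are in turn (possibly truncated) mild solutions of linear equations with zero initial data at time $s$ driven by the former, all of these quantities evaluated at time $u\in[s,t]$ are measurable functionals of $\xi|_{[s,u]\times\R^3}\subset \xi|_{[s,t]\times\R^3}$.

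Having established pointwise-in-time measurability, I would then upgrade to measurability of the quantity $\hat\fX(\bigcdot)$ defined in Definition~\ref{def:size_enhanced_noise}. The operations involved, resonant products, paraproducts, gradients, and Besov norms, are all Borel measurable on the relevant (separable) function spaces, and the continuity of the stochastic objects in time (on $[s,t]$) allows the supremum $\sup_{u\in[s,t]}\hat\fX(\ldots,u)$ to be taken over a countable dense set, so it remains $\sigma(\xi|_{[s,t]\times \R^3})$-measurable. The space-time quantity $\tilde\fX$ from Definition~\ref{def:enhanced_noise_spacetime} is defined from the very same stochastic objects on the slab $[s,t]\times\R^3$ (the time window appears explicitly in the definition), so an identical argument applies.

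Finally, combining these observations, $\fX_{\eps,\ell,s,t,\zzz}$ lies in $\sigma(\xi|_{[s,t]\times\R^3})$. By Definition~\ref{def:filtration}, $\sigma(\xi|_{[s,t]\times\R^3})\subset \fF_t$, giving the measurability claim. Moreover, for space-time white noise on $\R\times\R^3$, the restrictions $\xi|_{(-\infty,s]\times\R^3}$ and $\xi|_{[s,t]\times\R^3}$ are generated by the noise tested against functions with disjoint supports, hence are independent; since $\fF_s$ is (up to null sets) generated by $\xi|_{(-\infty,s]\times\R^3}$, the random variable $\fX_{\eps,\ell,s,t,\zzz}$ is independent of $\fF_s$. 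No step in this argument is genuinely delicate, the only bookkeeping worth being careful about is checking that every Besov/paraproduct operation entering Definition~\ref{def:size_enhanced_noise} is jointly measurable and that the truncated objects $\tilde\oudz_{\eps,\ell,s},\tilde\outz_{\eps,\ell,s}$ (obtained via $K^+$) also depend only on the noise restricted to $[s,t]\times\R^3$.
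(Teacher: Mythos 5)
Your argument is correct and is exactly the reasoning the paper intends: it states that the lemma ``follows directly from Definitions~\ref{def:filtration} and~\ref{def:size_enhanced_noise}'' without further proof, and your write-up simply spells out why — all objects entering $\fX_{\eps,\ell,s,t,\zzz}$ have zero data at time $s$ and are causal functionals of $\xi$ restricted to $[s,t]\times\R^3$, which is independent of $\fF_s$ and contained in $\fF_t$. The only bookkeeping points (continuity in $u$ to handle the supremum, and the backward-in-time support of the kernels $K^+$ and of the test functions from \cite{MW20} entering the local Besov norms in $\tilde\fX$) are ones you already flag, so nothing is missing.
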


\begin{defn}\label{def:stopping_time_T}
For $\eta\geq 1$, $\eps\in(0,1]$, $\ell\in\N_+$, $s\geq 0$, $\zzz\in\R^3$ define the stopping time
\begin{equ}
\label{e:stopping_time}
T_{\eps,\ell,s,\zzz} \eqdef \inf \{ t \geq s: \fX_{\eps,\ell,s, t,\zzz} \geq \eta\}\wedge (s+1) \;.
\end{equ}
\end{defn}

\begin{rmk}
It is easy to see that $t\mapsto \fX_{\eps,\ell,s, t,\zzz}$ is a.s.\ continuous for arbitrary fixed $\eps,\ell$. This implies that $T_{\eps,\ell,s,\zzz}>s$ a.s.
\end{rmk}

\begin{lem}
There exists $\eta\geq1$ such that for all $\eps\in(0,1]$, $\ell\in\N_+$ and $\zzz\in\R^3$ we have
\begin{equ}\label{e:stopping_time_lower_bound}
	\PP\big(T_{\eps,\ell,0,\zzz} < 1 \big)
	=
	1-\PP\big(T_{\eps,\ell,0,\zzz} =1 \big)
	\leq
	\frac{1}{100} \;.
\end{equ}
\end{lem}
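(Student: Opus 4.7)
The plan is to exploit the monotonicity of $t \mapsto \fX_{\eps,\ell,0,t,\zzz}$ (immediate from the supremum appearing in Definition~\ref{def:size_enhanced_noise}) together with the uniform moment estimate supplied by Lemma~\ref{le:stochastic}, and then apply Markov's inequality.

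\medskip
\noindent\textbf{Step 1: Reduce to a tail bound on }$\fX_{\eps,\ell,0,1,\zzz}$. Since the map $t\mapsto \fX_{\eps,\ell,0,t,\zzz}$ is non-decreasing and (almost surely) continuous for any fixed $\eps,\ell$, we have the chain of implications
\begin{equ}
\{T_{\eps,\ell,0,\zzz} < 1\}
\subseteq
\bigl\{\exists\, t\in[0,1)\colon \fX_{\eps,\ell,0,t,\zzz} \geq \eta\bigr\}
\subseteq
\bigl\{\fX_{\eps,\ell,0,1,\zzz} \geq \eta\bigr\}\,,
\end{equ}
so it suffices to bound $\PP\bigl(\fX_{\eps,\ell,0,1,\zzz}\geq\eta\bigr)$ uniformly in $\eps,\ell,\zzz$.

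\medskip
\noindent\textbf{Step 2: Apply Markov's inequality using Lemma~\ref{le:stochastic}.} Lemma~\ref{le:stochastic} gives a constant $C>0$, independent of $\eps\in(0,1]$, $\ell\in\N_+$, and $\zzz\in\R^3$, such that $\EE\,\fX_{\eps,\ell,0,1,\zzz} \leq C$. Markov's inequality then yields
\begin{equ}
\PP\bigl(\fX_{\eps,\ell,0,1,\zzz} \geq \eta\bigr)
\leq
\frac{\EE\,\fX_{\eps,\ell,0,1,\zzz}}{\eta}
\leq
\frac{C}{\eta}\,.
\end{equ}
Choosing any $\eta \geq \max\{1, 100\,C\}$ gives the bound $\PP(T_{\eps,\ell,0,\zzz} < 1) \leq 1/100$, uniformly in the parameters.

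\medskip
There is no real obstacle: all the probabilistic content has already been packaged into Lemma~\ref{le:stochastic}, and the monotonicity of $\fX_{\eps,\ell,0,t,\zzz}$ in $t$ reduces the stopping-time event to a single-time tail event. The only thing to be slightly careful about is that the stopping time $T$ is defined with the first inequality being non-strict, but this is exactly compatible with the closed-event reduction above and causes no issue.
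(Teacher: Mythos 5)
Your proof is correct and follows exactly the paper's argument: on the event $\{T_{\eps,\ell,0,\zzz}<1\}$ one has $\fX_{\eps,\ell,0,1,\zzz}\geq\eta$ by monotonicity, and Markov's inequality combined with the uniform first-moment bound of Lemma~\ref{le:stochastic} gives $\PP(T_{\eps,\ell,0,\zzz}<1)\leq C/\eta$, so it suffices to take $\eta\geq 100\,C$.
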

\begin{proof}
Note that on the event $\{T_{\eps,\ell,0,\zzz} < 1\}$ we have $\fX_{\eps,\ell,0,1,\zzz}\geq \eta$. Hence, by Lemma~\ref{le:stochastic} we obtain
\begin{equ}
\PP\big(T_{\eps,\ell,0,\zzz} < 1 \big)
\leq
\EE\fX_{\eps,\ell,0,1,\zzz} /\eta
\leq
C/\eta\;,
\end{equ}
so it remains to choose $\eta$ large enough.
\end{proof}

\subsection{Proof of Theorem~\ref{thm:linearised}}\label{sec:proof_ergodicity}

In what follows, $\eta\geq 1$ is fixed so that~\eqref{e:stopping_time_lower_bound} holds and the parameters $\fa,\fb\in(0,1]$ of the weights introduced in Definition~\ref{defn:weights} are fixed as in Lemma~\ref{lem:weights}.
Our main result is the following deterministic bound for \eqref{e:linearisation1_mod} (equivalently, \eqref{e:linearised}), which employs a time-dependent weight inspired by~\cite{HL15}.

\begin{prop}
\label{pr:deterministic_main}
Fix $p\ge1$. Suppose that $D_{\eps,\ell}$ solves~\eqref{e:linearised} in the time interval $[s,T_{\eps,\ell,s,\zzz}]$. Then, there exists $\lambda_\star> 0$ such that
\begin{equ}[e:deterministic_main]
\lVert\exp(\gamma\langle\bigcdot\rangle_\ell)\DDD(t)\rVert^p_{L^p(e_{2t,\zzz})}
\leq
\exp\big(1/3-p\,(t-s)\big)\,
\lVert\exp(\gamma\langle\bigcdot\rangle_\ell)\DDD(s)\rVert^p_{L^p(e_{2s,\zzz})}
\end{equ}
for all $\lambda\in(0,\lambda_\star]$, $\eps\in(0,1]$, $\ell\in\N_+$, $0\leq s\leq t\leq T_{\eps,\ell,s,\zzz}$, $\gamma\in[0,\lambda_\star]$ and $\Phi_{\eps,\ell}$  solving~\eqref{eq:phi4_S} with an adapted and continuous $S$ in a unit ball of $L^\infty(\R_\geq\times\T_\ell^3)$ and arbitrary initial data.
\end{prop}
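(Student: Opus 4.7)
My plan is to follow the strategy outlined just before this proposition: combine the exponential transformation of Lemma~\ref{lem:exp_transform_mod}, the comparison argument of Lemma~\ref{lem:comparison_mod}, and a weighted $L^p$ energy estimate with the stretched exponential weight $e_{2t,\zzz}$ in the spirit of~\cite{HL15}. The role of the stopping time $T_{\eps,\ell,s,\zzz}$ is to keep all stochastic objects entering the potential of~\eqref{e:linearisation1_mod} bounded by $\eta$, and to let Lemma~\ref{le:psi} control $\PSI$ away from $t=s$, so that on $[s,T_{\eps,\ell,s,\zzz}]$ the estimate becomes essentially deterministic.

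As a first step I would introduce $\psi_{\eps,\ell}\eqdef \exp(\gamma\langle\bigcdot\rangle_\ell)\,\DDD$ and rewrite~\eqref{e:linearisation1_mod} for $\psi_{\eps,\ell}$. Since $|\nabla\langle\bigcdot\rangle_\ell|,|\Delta\langle\bigcdot\rangle_\ell|\lesssim 1$ uniformly in $\ell$, this conjugation introduces only first- and zeroth-order perturbations of size $O(\gamma)$ which, for $\gamma\leq\lambda_\star$ small, can be absorbed later on. Next I would apply Lemma~\ref{lem:exp_transform_mod} to pull out of $\psi_{\eps,\ell}$ the singular stochastic piece of the potential (built from $\oud_{\eps,\ell,s}$ and $\ou_{\eps,\ell,s}\outz_{\eps,\ell,s}$ together with their $\Cones$ and $\Ctwo$ counterterms), arriving at a transformed field $\DDDmods$ satisfying $\lL\DDDmods = -(\VVmodoneA+\VVmodoneB+\VVmodoneC+\VVmodtwo)\DDDmods$, where $\VVmodtwo(t)\geq 0$ pointwise but has only a bound of order $(t-s)^{-1}$, while the $\VVmodone$-pieces are well-behaved and bounded in appropriate weighted Besov norms by $\lambda$ times a polynomial in $\fX_{\eps,\ell,s,t,\zzz}$ and $(t-s)^{-\alpha}$ for some $\alpha<1$, using Lemma~\ref{le:psi} for the pieces containing $\PSI$ and Definition~\ref{def:size_enhanced_noise} for the others.

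The second step is to invoke Lemma~\ref{lem:comparison_mod}: since $\VVmodtwo\geq 0$, the $L^p(e_{2t,\zzz})$-norm of $\DDDmods$ is dominated by that of the solution of the same equation with $\VVmodtwo$ dropped, up to a multiplicative constant that arises when reverting the exponential transformation. This constant, together with the $O(\gamma)$ error from the weight conjugation and the Laplacian-weight interaction of Lemma~\ref{lem:weights}, will eventually produce the prefactor $e^{1/3}$ in~\eqref{e:deterministic_main}. With $\VVmodtwo$ discarded, the third step is a classical $L^p(e_{2t,\zzz})$ energy estimate: differentiating in time, the mass $+1$ in $\lL$ produces the sharp rate $-p\|\DDDmods(t)\|^p_{L^p(e_{2t,\zzz})}$, the Laplacian contributes a non-positive Dirichlet-form term plus weight-derivative errors that by Lemma~\ref{lem:weights} amount to at most a $1/6$-unit increase per unit time, $\partial_t(e_{2t,\zzz}^p)=-2p\fa\langle\bigcdot-\zzz\rangle^{1/2}e_{2t,\zzz}^p$ is non-positive and can be discarded, and the remaining $\VVmodone$-contributions are bounded by $\lambda\,\eta^{O(1)}\|\DDDmods(t)\|^p_{L^p(e_{2t,\zzz})}$ using Lemma~\ref{le:paraproduct} together with Lemma~\ref{le:psi}, hence absorbable into the mass term for $\lambda\leq\lambda_\star$ sufficiently small. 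A Gronwall step then yields~\eqref{e:deterministic_main}.

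The main obstacle I expect is closing the weighted Besov estimates in a single $L^p$ norm with the single weight $e_{2t,\zzz}$: the natural paraproduct bounds from Lemma~\ref{le:paraproduct} produce norms weighted by $e_{\delta,\zzz}w_\zzz^N$ for varying $N$, and one has to bookkeep carefully using Lemma~\ref{le:weights_time} to convert those polynomial $w_\zzz$-factors into small powers of $(t-s)$ that remain integrable in time after being paired with the bounds on $\PSI$. The fact that the bound on $\PSI$ in Lemma~\ref{le:psi} is genuinely singular at $t=s$ (of order $(t-s)^{-1/2}$ in $L^\infty(w_\zzz^3)$) is what forces pairing the energy estimate with the comparison argument rather than trying to absorb $\VVmodtwo$ directly; this is the feature that distinguishes the three-dimensional setting from the two-dimensional one treated in~\cite{KT22}.
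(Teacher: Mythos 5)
Your first two steps match the paper exactly: the exponential transformation of Lemma~\ref{lem:exp_transform_mod} followed by the Feynman--Kac comparison of Lemma~\ref{lem:comparison_mod} to discard the non-negative but non-integrable term $\VVmodtwo$. (Minor point: the comparison gives the pointwise domination $|\hat D_s(t,x)|\leq \DDDmodone(t,x)$ with no constant; the multiplicative losses come only from reverting $\exp(3\lambda\oudz_s)$ at the very end, via \eqref{e:oudz_bound}--\eqref{e:oudz_bound2}.) The gap is in your third step. You propose a classical $L^p(e_{2t,\zzz})$ energy estimate and claim that ``the remaining $\VVmodone$-contributions are bounded by $\lambda\,\eta^{O(1)}\|\DDDmodone(t)\|^p_{L^p(e_{2t,\zzz})}$ \dots hence absorbable into the mass term''. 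This is not true as stated: $\VVmodoneA=6\lambda\ou_s\Psi_s$ and $\VVmodoneB$ are distributions of regularity $-\f12-\kappa$ (see \eqref{e:boundVlambda}), not bounded functions, so their pairing against $|\DDDmodone|^p e^p$ cannot be controlled by the $L^p$ norm of $\DDDmodone$ alone; it requires roughly $\f12+\kappa$ derivatives of $|\DDDmodone|^p$, which in an energy framework must be extracted from the Dirichlet form by interpolation. Worse, the transport term produced by the exponential transform gives, after integration by parts, a pairing of $\div U_s=6\lambda\Delta\oudz_s$ (regularity $-1-2\kappa$) against $|\DDDmodone|^p e^p$, which would require strictly more than one full derivative of $|\DDDmodone|^p$ in $L^1$ --- more than the Dirichlet form $\int|\nabla|\DDDmodone|^{p/2}|^2e^p$ can supply. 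This is precisely the obstruction the paper points to when it says that the low regularity in three dimensions prevents a direct adaptation of the energy estimates of \cite{KT22} (where the potential has regularity $-\kappa$ and there is no transport term).

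The paper instead proves the key bound (Proposition~\ref{pr:fixed_point_mod}) by a Duhamel fixed-point argument in the space $\xX(s,T_{s,\zzz})$, whose norm tracks $\|\DDDmodone(t)\|_{\Besovt{\alpha}}$ for $\alpha$ up to $\alpha_\star=3/2-19\kappa$ with the singular prefactor $(t-s)^{\alpha/2+\kappa}$: the positive regularity needed to multiply $\DDDmodone$ against the rough potentials is borrowed from the smoothing of the heat semigroup rather than from a Dirichlet form, and the time singularities (including the $(t-s)^{-1/2}$ and $(t-s)^{-3/4-2\kappa}$ blow-ups of $\Psi_s$ from Lemma~\ref{le:psi}) are checked to remain integrable term by term. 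If you want to keep your outline, you must replace the energy-estimate-plus-Gronwall step by this mild-formulation contraction argument (or supply a genuinely new commutator/paracontrolled analysis that makes the energy method close in $d=3$, which is a substantial undertaking and not something your sketch addresses). Your identification of the weight bookkeeping via Lemma~\ref{le:weights_time} and of the role of the stopping time is correct, but those are the routine parts; the core analytic mechanism you propose is the one the authors explicitly rule out.
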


Note that in this proposition, we allow our weight to be centred at any point $\zzz\in\R^3$. We will exploit this fact, together with the stationarity of $\zzz \mapsto T_{\eps,\ell,s,\zzz}$, by averaging \eqref{e:deterministic_main} over $\zzz$ to get an estimate with \textit{the same} weight on both sides. This leads to the final proof of Theorem~\ref{thm:linearised}.

\begin{proof}[Proof of Theorem~\ref{thm:linearised}]
Throughout the proof we omit the dependence on $\eps,\ell$ and set $T_{s,z} = T_{\eps,\ell,s,z}$ for the stopping time introduced in Definition~\ref{def:stopping_time_T}. 

Fix an arbitrary point $\zzz \in \R^3$. With Proposition~\ref{pr:deterministic_main} established, we proceed by closely following the approach in \cite[Section~3.2]{KT22}. We define a family of stopping times $(\tau(i,\zzz))_{i\in\N_0}$ by $\tau(0,\zzz)=0$ and for $i\in\N_+$
\begin{equ}
\tau(i,\zzz) \eqdef T_{\tau(i-1,\zzz),\zzz}\;.
\end{equ}
By definition, for any $s\geq 0$, $T_{s,\zzz}-s$ is 
independent of $\fF_s$ and its law coincides with that of $T_{0,\zzz}$. The same remains true if $s$ is a stopping time. Thus, 
$\tau(i,\zzz)- \tau(i-1,\zzz)$ is independent of 
$\fF_{\tau(i-1,\zzz)}$ and its law coincides with $T_{0,\zzz}$. 
Consequently, by the bound~\eqref{e:stopping_time_lower_bound}
\begin{equs}
\PP\big(\tau(i,z) < 1|\fF_{\tau(i-1,\zzz)} \big)
&\leq
\PP\big(\tau(i,\zzz)-\tau(i-1,\zzz)< 1|\fF_{\tau(i-1,\zzz)} \big)
\\
&=
\PP\big(\tau(i,\zzz)-\tau(i-1,\zzz)< 1\big)
\\
&=
\PP\big(T_{0,\zzz} < 1 \big)
\leq
\frac{1}{100} \;.
\end{equs}
As a result, we get
\begin{equs}
\PP(\tau(i,\zzz) < 1)
&=
\PP\big(\tau(i,\zzz) < 1|\tau(i-1,\zzz) < 1 \big)
\PP\big(\tau(i-1,\zzz) < 1\big) \\
&\leq \frac{1}{100}\,\PP(\tau(i-1,\zzz) < 1) \leq \frac{1}{100^i} \;.
\end{equs}
Let
\begin{equ}
N_{z}\eqdef\inf\{i\in\N_+\,|\,\tau(i,\zzz)\geq 1\}\;.
\end{equ}
Then we have
\begin{equ}
\PP(N_{z} \geq i) \leq \PP(\tau(i-1,\zzz) < 1) \leq \frac{1}{100^{i-1}} \;.
\end{equ}
Iterating the bound stated in Proposition~\ref{pr:deterministic_main} $N_{z}$ times, we get
\begin{equ}
\lVert\exp(\gamma\langle\bigcdot\rangle_\ell)D(t)\rVert^p_{L^p(e_{2t,\zzz})}
\leq
\exp\big(N_{z}/3-p\,t\big)\,
\lVert\exp(\gamma\langle\bigcdot\rangle_\ell)D(0)\rVert_{L^p(e_{0,\zzz})}^p
\end{equ}
for all $t\in[0,1]$. Next, we note that for $c\in(0,100)$ we have
\begin{equ}
	\EE c^{N_{z}}
	=
	\sum_{i = 1}^{\infty} c^i\, \PP(N_{z} = i)
	\leq
	\sum_{i = 1}^{\infty} \frac{c^i}{100^{i-1}} = \frac{c}{1-c/100} \;.
\end{equ}
Applying the above estimate with $c=\exp(1/3)$ and noting that $\frac{c}{1-c/100}\leq \exp(1/2)$ gives
\begin{equ}
\label{e:exp_decay2_weight}
\EE\lVert\exp(\gamma\langle\bigcdot\rangle_\ell)D(t)\rVert^p_{L^p(e_{2t,\zzz})}
\leq
\exp(1/2-p\,t)\,
\lVert\exp(\gamma\langle\bigcdot\rangle_\ell)D(0)\rVert_{L^p(e_{0,\zzz})}^p
\end{equ}
for all $t\in[0,1]$.

Now, we exploit the fact that~\eqref{e:exp_decay2_weight} holds true for all $\zzz\in\R^3$ to derive a similar estimate with a polynomial weight. To this end, we use Fubini's theorem together with~\eqref{eq:bound_weight_e} and obtain
\begin{equ}
\EE\lVert\exp(\gamma\langle\bigcdot\rangle_\ell)D(t)\rVert^p_{L^p(\rho_\fc)}
\leq
\exp(2/3-p\,t)\,
\lVert\exp(\gamma\langle\bigcdot\rangle_\ell)D(0)\rVert_{L^p(\rho_\fc)}^p
\end{equ}
for all $t\in [0, 1]$ and $\fc\in(0,\fa^2]$. Exploiting the Markov property we iterate the above bound and arrive at
\begin{equ}
\EE\lVert\exp(\gamma\langle\bigcdot\rangle_\ell)D(t)\rVert^p_{L^p(\rho_\fc)}
\leq
\exp(2/3-p\,t/3)\,
\lVert\exp(\gamma\langle\bigcdot\rangle_\ell)D(0)\rVert_{L^p(\rho_\fc)}^p
\label{e:exp_decay_general}
\end{equ}
for all $t\in \R_\geq$ and $\fc\in(0,\fa^2]$.
\end{proof}

\subsection{Proof of Proposition~\ref{pr:deterministic_main}}\label{sec:proof_ergodicty_aux}

To complete the proof of Theorem~\ref{thm:linearised}, it remains to establish Proposition~\ref{pr:deterministic_main}.  We begin by addressing the second renormalisation constant $C^{(2)} =\Ctwo$ using an exponential transform trick, in the spirit of \cite{HL15, JP23}, as formulated in the following lemma.
Throughout this section, we omit the dependence on $\eps\in(0,1]$ and $\ell\in \N_+$ in subscripts to lighten the notation. However, we continue specifying the uniformity with respect to $\eps,\ell$ in the statements of the results.

\begin{lem}
\label{lem:exp_transform_mod}
Suppose that $D$ solves~\eqref{e:linearisation1_mod}. Then for $0\leq s \leq t $,
\begin{equ}
 \hat{D}_s(t) = \exp\bigl((t-s)+3\lambda\oudz_{s}(t)+\gamma\langle\bigcdot\rangle_\ell\bigr) D(t)
\end{equ}
solves  
\begin{equ}\label{e:D1_mod}
\bigl(\d_t  - \Laplace \bigr)\hat{D}_s =
- \bigl(V_s^{(1)}+V_s^{(2)} \bigr) \hat{D}_s
- U_{s}  \cdot \grad \hat{D}_s
\;,
\end{equ}
where $V_s^{(1)}\eqdef \VVmodoneA + \VVmodoneB + \VVmodoneC$ with
\begin{equs}
\VVmodoneA &\eqdef
 6\lambda \ou_{ s} \Psi_s\;,
\\
\VVmodoneB &\eqdef
3\lambda\oudz_{s}
+ 6\lambda^2 \ou_{ s} \outz_{ s}
- 9\lambda^2(|\grad \oudz_{s} |^{2}  - C^{(2)}_s)\;,
\\
\VVmodoneC &\eqdef
-3\lambda\big(C^{(1)}-C^{(1)}_s \big)
+ 9\lambda^2 (C^{(2)} - C^{(2)}_s)
+\gamma\Delta\langle\bigcdot\rangle_\ell-\gamma^2(\nabla\langle\bigcdot\rangle_\ell)^2
\;,
\\
\VVmodtwo &\eqdef 3\lambda\PSImod^2\;,
\\
U_{s}&\eqdef
6\lambda \grad \oudz_{s}+2\gamma\nabla\langle\bigcdot\rangle_\ell\;.
\end{equs}
\end{lem}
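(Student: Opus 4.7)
\noindent\emph{Plan.} This is a direct computation via the product and chain rules. Setting $F(t,x) \eqdef (t-s) + 3\lambda\,\oudz_s(t,x) + \gamma\langle x\rangle_\ell$ so that $\hat D_s = e^F D$, I would begin with the elementary identity
\begin{equ}
	(\d_t - \Delta)(e^F D) = e^F\bigl[(\d_t - \Delta)D + (\d_t F - \Delta F - |\nabla F|^2)\, D - 2\,\nabla F \cdot \nabla D\bigr]
\end{equ}
and then substitute~\eqref{e:linearisation1_mod} in the equivalent form $(\d_t - \Delta)D = \lL D - D$. Expanding the right-hand side of the putative equation \eqref{e:D1_mod} by the product rule as $e^F\bigl[-(V^{(1)}+V^{(2)})D - (U_s\cdot\nabla F)\,D - U_s\cdot\nabla D\bigr]$ reduces the lemma to a term-by-term algebraic identification.

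The only stochastic input needed is $\lL \oudz_s = \oud_s$ from~\eqref{eq:outz_def}, which yields $\d_t F - \Delta F = 1 + 3\lambda(\oud_s - \oudz_s) - \gamma\Delta\langle\bigcdot\rangle_\ell$. Matching $\nabla D$ coefficients on both sides immediately forces $U_s = 2\nabla F = 6\lambda\nabla\oudz_s + 2\gamma\nabla\langle\bigcdot\rangle_\ell$, exactly the transport drift stated in the lemma. The central cancellation that motivates this choice of exponential gauge is that the $+3\lambda\oud_s$ produced by $\d_t F - \Delta F$ cancels exactly against the $-3\lambda\oud_s$ arising from the Wick square $\ou_s^2 = \oud_s + \Cones$ in the linearised coefficient $3\lambda\Phi^2$ of $\lL D$. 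This is precisely the point of conjugating by $e^{3\lambda\oudz_s}$: it removes the most singular Wick square from the zeroth-order coefficient of the equation for $\hat D_s$, at the cost of the smoother term $|\nabla F|^2$ whose dominant stochastic piece $9\lambda^2|\nabla\oudz_s|^2$ admits an independent Wick renormalisation.

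What remains is purely bookkeeping. The $+1$ in $\d_t F$ cancels the $-D$ from $\lL D - D$; the contribution $6\lambda\ou_s\PSImod$ coming from $3\lambda\Phi^2$, once rewritten via $\PSImod = \PSI - \lambda\outz_s$, splits into $V^{(1\mathrm a)} = 6\lambda\ou_s\PSI$ and the $\ou_s\outz_s$ piece of $V^{(1\mathrm b)}$; the combination $-9\lambda^2|\nabla\oudz_s|^2 + 9\lambda^2\Ctwo$ from $-|\nabla F|^2$ together with the renormalisation counterterm in \eqref{e:linearisation1_mod} is reorganised by adding and subtracting $9\lambda^2\Ctwos$, producing the Wick combination $-9\lambda^2(|\nabla\oudz_s|^2 - \Ctwos)$ inside $V^{(1\mathrm b)}$ plus the deterministic defect $9\lambda^2(\Ctwo - \Ctwos)$ inside $V^{(1\mathrm c)}$; the constant $-3\lambda\Cone$ combines with the $3\lambda\Cones$ generated by the Wick identity $\ou_s^2 = \oud_s + \Cones$ to give $-3\lambda(\Cone-\Cones)$ in $V^{(1\mathrm c)}$; the purely deterministic weight contributions $\gamma\Delta\langle\bigcdot\rangle_\ell$ and $-\gamma^2(\nabla\langle\bigcdot\rangle_\ell)^2$ that survive in $\d_t F - \Delta F - |\nabla F|^2$ collect into the remaining pieces of $V^{(1\mathrm c)}$; and $3\lambda\PSImod^2$ is trivially $V^{(2)}$. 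Since no inequality or approximation is used and only polynomial algebra enters, I anticipate no genuine obstacle here; the lemma is essentially a compact record of the specific gauge transformation needed to drive the a priori bound of Proposition~\ref{pr:deterministic_main}.
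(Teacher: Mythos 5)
Your plan is the right one and is exactly what the paper intends (its entire proof reads ``This is a straightforward calculation''): conjugate by $e^F$ with $F=(t-s)+3\lambda\oudz_{s}+\gamma\langle\bigcdot\rangle_\ell$, use $\lL\oudz_{s}=\oud_{s}$, and match the coefficients of $D$ and $\nabla D$. Your identity for $(\d_t-\Delta)(e^FD)$, the identification $U_{s}=2\nabla F$, the cancellation of $+3\lambda\oud_{s}$ against the Wick square coming from $3\lambda\Phi_{\eps,\ell}^2$, and the reshuffling of the renormalisation constants via $\pm9\lambda^2\Ctwos$ and $\pm3\lambda\Cones$ are all correct.

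The ``term-by-term algebraic identification'' you promise does not, however, close for the statement exactly as written, and your enumeration skips precisely the spot where it fails. The square $|\nabla F|^2=9\lambda^2|\nabla\oudz_{s}|^2+6\lambda\gamma\,\nabla\oudz_{s}\cdot\nabla\langle\bigcdot\rangle_\ell+\gamma^2|\nabla\langle\bigcdot\rangle_\ell|^2$ contains a cross term: you account for the first summand (inside $\VVmodoneB$) and the third (inside $\VVmodoneC$), but the term $6\lambda\gamma\,\nabla\oudz_{s}\cdot\nabla\langle\bigcdot\rangle_\ell$ appears nowhere in the stated $V^{(1)}_s$, nor in your bookkeeping. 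Carrying your own computation to the end forces an extra summand $-6\lambda\gamma\,\nabla\oudz_{s}\cdot\nabla\langle\bigcdot\rangle_\ell$ in $V^{(1)}_s$; so either you must flag this as a correction to the lemma, or your proof asserts an identity that is false whenever $\lambda\gamma>0$. (The omission is harmless downstream: the extra term is of order $\lambda\gamma$ in $\cC^{-2\kappa}(w_\zzz)$ on the stopping-time event and is absorbed exactly like the $\VVmodoneB$ and $U_{s}\cdot\nabla$ contributions in Proposition~\ref{pr:fixed_point_mod} --- but a proof has to say this.) A second, smaller point: your own expansion $6\lambda\ou_{s}\PSImod=6\lambda\ou_{s}\PSI-6\lambda^2\ou_{s}\outz_{s}$ yields the $\ou_{s}\outz_{s}$ term with the opposite sign to the one displayed in $\VVmodoneB$ (and \eqref{e:linearisation1_mod} is itself inconsistent with the direct expansion of $3\lambda\Phi_{\eps,\ell}^2$ on the $\PSI\ou_{s}$ and $\ou_{s}\outz_{s}$ terms), yet you assert agreement without comment; you should record the sign your calculation actually produces. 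None of this threatens the later estimates, which only use these terms in absolute value and the positivity of $\VVmodtwo$, but as a verification of the stated identity your write-up is incomplete at exactly the two places where the statement needs correcting.
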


\begin{proof}
This is a straightforward calculation.
\end{proof}

We turn to the analysis of~\eqref{e:D1_mod}. A naïve use of the coming down from infinity 
property for $\Psi_s$ would lead to an estimate of $\Psi_s^2$ of order $(t-s)^{-1}$, which
 is non-integrable at time $s$. This term however has ``the right sign'', 
 so it can easily be cured by the following simple comparison argument using the 
 Feynman--Kac formula.

\begin{lem}\label{lem:comparison_mod} Let $s\geq 0 $. Suppose that $\hat{D}_s$ solves~\eqref{e:D1_mod} and $\DDDmodone$ solves
\begin{equ}\label{e:simplified_equ_mod}
\big(\d_t  - \Laplace \big) \DDDmodone =
-\VVmodone\DDDmodone
- U_{s} \cdot \grad \DDDmodone
\;,
\end{equ} 
with the initial condition $\DDDmodone(s,x)=|\hat{D}_s(s,x)|$. Then, we have 
$ |\hat{D}_s(t,x)| \leq \DDDmodone(t,x)$
for all $t\geq s$, $\eps\in (0,1]$, $\ell\in \N_+$, $\lambda,\gamma\geq 0 $ and $x \in \T^3_\ell$.
\end{lem}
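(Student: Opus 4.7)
The plan is to apply the Feynman--Kac representation to both~\eqref{e:D1_mod} and~\eqref{e:simplified_equ_mod}, noting that they share the same first-order operator $-U_{s}\cdot\grad$ and the same zeroth-order potential $\VVmodone$, and differ only in that~\eqref{e:D1_mod} carries the additional multiplier $\VVmodtwo = 3\lambda\PSImod^2 \geq 0$. Once both solutions are written as expectations of the same functional of the same diffusion, this sign will immediately deliver the desired inequality. All of this is done pathwise in the driving noise: at fixed $\eps>0$ and $\ell<\infty$ the coefficients $U_{s}$, $\VVmodone$, $\VVmodtwo$ and the initial datum $\hat D_s(s,\bigcdot)$ are almost surely bounded $C^\infty$ functions on $[s,t]\times\T_\ell^3$, so the classical theory for smooth-coefficient parabolic equations on a compact manifold applies without issue.

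Concretely, I would first introduce, on an auxiliary probability space, a standard Brownian motion $B$ on $\T_\ell^3$ independent of $\xi_{\eps,\ell}$, and, for each $x\in\T_\ell^3$, the unique strong solution $Y^x$ of
\begin{equ}
\md Y^x_r = -U_{s}(s+r,Y^x_r)\,\md r + \sqrt 2\,\md B_r,\qquad Y^x_0 = x,\quad r\in[0,t-s],
\end{equ}
whose generator is $\Delta - U_{s}\cdot\grad$. Applying the time-dependent Feynman--Kac formula to both~\eqref{e:D1_mod} and~\eqref{e:simplified_equ_mod} with the \emph{same} diffusion $Y^x$, one obtains
\begin{equs}
\hat D_s(t,x)
&= \EE\Bigl[\hat D_s(s,Y^x_{t-s})\,\exp\Bigl(-\!\int_0^{t-s}(\VVmodone+\VVmodtwo)(s+r,Y^x_r)\,\md r\Bigr)\Bigr],\\
\DDDmodone(t,x)
&= \EE\Bigl[|\hat D_s(s,Y^x_{t-s})|\,\exp\Bigl(-\!\int_0^{t-s}\VVmodone(s+r,Y^x_r)\,\md r\Bigr)\Bigr],
\end{equs}
where $\EE$ is taken only over $B$, with the noise frozen. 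Taking absolute values inside the first expectation and using $\VVmodtwo\geq 0$, so that $\exp(-\!\int_0^{t-s}\VVmodtwo(s+r,Y^x_r)\,\md r)\leq 1$, immediately gives the pointwise bound $|\hat D_s(t,x)|\leq \DDDmodone(t,x)$.

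I do not expect a serious obstacle. The only point requiring a brief check is the justification of the Feynman--Kac representation, but this is entirely standard given the boundedness and smoothness of all coefficients at fixed $\eps,\ell$: both $\hat D_s$ and $\DDDmodone$ are the unique bounded classical solutions of their respective equations on $[s,t]\times\T_\ell^3$ and therefore admit the above probabilistic representations. The argument is manifestly uniform in $\eps\in(0,1]$, $\ell\in\N_+$ and in $\lambda,\gamma\geq 0$, as required by the statement, and the boundary case $t=s$ is trivial since $\DDDmodone(s,\bigcdot)=|\hat D_s(s,\bigcdot)|$ by construction.
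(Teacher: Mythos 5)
Your argument is correct and is essentially the paper's own proof: both apply the Feynman--Kac formula with the diffusion generated by $\Delta - U_{s}\cdot\grad$ to the two equations and exploit the pointwise non-negativity of $\VVmodtwo = 3\lambda\PSImod^2$ to discard the extra factor $\exp(-\int \VVmodtwo)\leq 1$ after taking absolute values. The only (immaterial) difference is the time-parametrisation convention used in writing the Feynman--Kac representation for the time-dependent coefficients.
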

\begin{proof}
Without loss of generality, we assume $s=0$ and drop the dependence on $s$. By the Feynman--Kac formula, for any $t>0$ and $x\in \T^3_\ell$ we have 
\begin{equ}
	\hat{D}(t,x)
	=
	\EE_x\Biggl(\exp \Big(- \int_0^t (V^{(1)}+V^{(2)})(t-u, X_u)\,\md u \Big) \hat D(0, X_t)\Biggr)\;,
	\end{equ} 
where the expectation $\EE_x$ is taken with respect to the law of a stochastic process $(X_r)_{r\geq 0}$ starting at $X_0 =x $ and satisfying
\begin{equ}
	\md X_r  = - U(r,X_r)\, \md r + \sqrt{2}\,\md W_r \;,
\end{equ} 
for a Brownian motion $(W_r)_{r\geq 0 }$ with $W_0=0$. Since $V^{(2)}$ is non-negative, we have
\begin{equ}
	|\hat{D}(t,x)|
	\leq
	\EE_x\Biggl(\exp\Big(- \int_0^t  V^{(1)}(t-u, X_u) \,\md u\Big)|\hat D(0, X_t)|\Biggr)
	=
	\hat{D}^{(1)}(t,x)\;, 
\end{equ} 
where we used the Feynman--Kac formula again in the equality.
\end{proof}

\begin{defn}
For $\delta\geq0$, $p\geq1$, $\zzz\in\R^3$ and $0\leq s< u<\infty$ define the norm
\begin{equ}
\|D\|_{\xX(s,u)} \eqdef
\sup_{t\in (s,u]} \|D(t)\|_{L^p(e_{\delta + t,\zzz})}
\vee
\lambda_\star^{1/4} \sup_{\substack{t\in (s,u]\\\alpha\in[0,\alpha_\star]}} (t-s)^{\alpha/2+\kappa}
\|D(t)\|_{\Besovt{\alpha}} \;,
\end{equ}
where $\alpha_\star=3/2-19\kappa$.
\end{defn}

The main step of our analysis can be formulated as the following proposition, which is a 
modification of the argument in \cite{HL15}. Recall that the stopping time 
$T_{s,\zzz} = T_{\eps,\ell,s,\zzz}$ was defined in \eqref{e:stopping_time}.

\begin{prop}
\label{pr:fixed_point_mod}
Fix $p\geq1$. Suppose that $\DDDmodone$ solves~\eqref{e:simplified_equ_mod} in the time interval $[s,T_{s,\zzz}]$. Then there exists $\lambda_\star\in(0,1]$ such that
\begin{equ}
\|\DDDmodone\|_{\xX(s,T_{s,\zzz})}
\leq
\exp(1/(4p))\, \|\DDDmodone(s)\|_{L^p(e_{\delta+s,\zzz})} \;
\label{e:fixed_point1_mod}
\end{equ}
for all $\lambda\in(0,\lambda_\star]$, $\eps\in(0,1]$, $\ell\in\N_+$, $s\geq 0$, $\gamma\in[0,\lambda_\star]$ and $\Phi_{\eps,\ell}$  solving~\eqref{eq:phi4_S} with an adapted and continuous $S$ in a unit ball of $L^\infty(\R_\geq\times\T_\ell^3)$ and arbitrary initial data.
\end{prop}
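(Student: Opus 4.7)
The plan is to estimate $\DDDmodone$ directly from its mild representation
\begin{equ}
	\DDDmodone(t) = e^{(t-s)\Delta}\DDDmodone(s) - \int_s^t e^{(t-r)\Delta}\bigl(\VVmodone(r)\,\DDDmodone(r) + U_s(r)\cdot\nabla\DDDmodone(r)\bigr)\,\md r\;,
\end{equ}
measuring the right-hand side in the norm $\|\bigcdot\|_{\xX(s,u)}$ for arbitrary $u\in(s,T_{s,\zzz}]$. For the free term, the pointwise inequality $e_{\delta+t,\zzz}\leq e_{\delta+s,\zzz}$ combined with the almost-contraction bound of Lemma~\ref{lem:weights} yields $\|e^{(t-s)\Delta}\DDDmodone(s)\|_{L^p(e_{\delta+t,\zzz})}\leq e^{1/(6p)}\|\DDDmodone(s)\|_{L^p(e_{\delta+s,\zzz})}$, while the Besov component of $\xX$ is controlled by the weighted smoothing of Lemma~\ref{le:smoothing_heat}; thanks to the $\lambda_\star^{1/4}$ prefactor built into the definition of $\xX$, the Besov contribution to the initial-data bound is automatically of size $O(\lambda_\star^{1/4})$.

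For the integral term I would split $\VVmodone=\VVmodoneA+\VVmodoneB+\VVmodoneC$ and estimate each factor in an appropriate negative-regularity weighted Besov space before smoothing back to $L^p(e_{\delta+t,\zzz})$ via Lemma~\ref{le:smoothing_heat}. The stochastic piece $\VVmodoneB$ involves $\oudz_s$, the resonance $\ou_s\reso\outz_s$, and the renormalised square $|\nabla\oudz_s|^2-\Ctwos$; by the definition of $T_{s,\zzz}$ and of $\fX_{\eps,\ell,s,t,\zzz}$ each of them is bounded in the relevant weighted H\"older--Besov space by $\eta$, so $\|\VVmodoneB(r)\|_{\cC^{-4\kappa}(w_\zzz)}\lesssim\lambda\eta$ and, via Lemma~\ref{le:besov_derivative}, $\|U_s(r)\|_{\cC^{-2\kappa}(w_\zzz)}\lesssim\lambda\eta+\gamma$. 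Multiplication with $\DDDmodone(r)$ (respectively $\nabla\DDDmodone(r)$) is then handled by the paraproduct estimates of Lemma~\ref{le:paraproduct}, using the Besov regularity $\alpha\in[0,\alpha_\star]$ supplied by the $\xX$-norm and the fact that $\alpha_\star>4\kappa$, so that the relevant resonances are well-defined. The deterministic piece $\VVmodoneC$ is bounded in $L^\infty$ by $C(\lambda+\gamma)$ uniformly in $\ell$, using $|\nabla\langle\bigcdot\rangle_\ell|,|\Delta\langle\bigcdot\rangle_\ell|\lesssim 1$ and the uniform boundedness of $|\Cone-\Cones(r)|$ and $|\Ctwo-\Ctwos(r)|$.

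The delicate contribution is $\VVmodoneA=6\lambda\ou_s\PSI$, which inherits the coming-down-from-infinity singularity of $\PSI$ near $r=s$. Decomposing $\ou_s\PSI=\PSI\para\ou_s+\PSI\rpara\ou_s+\PSI\reso\ou_s$, together with $\|\ou_s(r)\|_{\cC^{-1/2-\kappa}(w_\zzz)}\leq\eta$ and the two bounds of Lemma~\ref{le:psi}, each of the three pieces is controlled in a suitably weighted $\Besov{-1/2-\kappa}{p,p}$-type space (the resonance exploits $-\tfrac12-\kappa+(\tfrac12+4\kappa)=3\kappa>0$). Crucially, the $\lambda$ prefactor combines with the $\lambda^{-1/2}$ factor produced by Lemma~\ref{le:psi} to yield a net $\sqrt\lambda$-gain. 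After forming the product with $\DDDmodone(r)\in\Besov{\alpha}{p,p}(e_{\delta+r,\zzz})$ (second component of $\xX$) and applying heat-kernel smoothing, one obtains a Beta-function type time integral of the form $\int_s^t(t-r)^{-a}(r-s)^{-b}\,\md r$ with $a,b<1$ and $a+b<1$ for a judicious choice of $\alpha\in[0,\alpha_\star]$, giving a bound of order $\lambda^{1/2}\,\|\DDDmodone\|_{\xX(s,u)}$.

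Collecting all contributions yields an estimate of the form
\begin{equ}
	\|\DDDmodone\|_{\xX(s,u)} \leq e^{1/(6p)}\,\|\DDDmodone(s)\|_{L^p(e_{\delta+s,\zzz})} + C_\eta\bigl(\lambda^{1/2}+\gamma+\lambda_\star^{1/4}\bigr)\,\|\DDDmodone\|_{\xX(s,u)}\;,
\end{equ}
valid on every $[s,u]\subset[s,T_{s,\zzz}]$. Choosing $\lambda_\star$ small enough that $C_\eta(\lambda^{1/2}+\gamma+\lambda_\star^{1/4})\leq 1-e^{-1/(12p)}$, a short continuation argument in $u$ (with base case $u$ close to $s$, guaranteed by heat-kernel smoothing of the initial data) yields the desired bound~\eqref{e:fixed_point1_mod}. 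The main obstacle is precisely the control of $\VVmodoneA$: its $(r-s)^{-1/2}$ blow-up in $L^\infty$ (and $(r-s)^{-3/4-2\kappa}$ in $\cC^{1/2+4\kappa}$) is on the borderline of integrability, and one must exploit both the hidden $\sqrt\lambda$-gain from the coming-down estimate and the flexibility of the Besov regularity index $\alpha\in[0,\alpha_\star]$ inside $\xX$ to make the final time integral converge.
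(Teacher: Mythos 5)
Your proposal follows the same architecture as the paper's proof: Duhamel's formula for \eqref{e:simplified_equ_mod}, term-by-term estimation of the five contributions in the norm $\|\cdot\|_{\xX(s,T_{s,\zzz})}$ via the weighted paraproduct and heat-smoothing lemmas together with the coming-down-from-infinity bounds on $\Psi_s$, and absorption of the resulting $O(\lambda_\star^{1/4})$ prefactor. The one genuine difference is the treatment of $\VVmodoneA$: you first form the distribution $\ou_{s}\Psi_s=\Psi_s\para\ou_{s}+\Psi_s\rpara\ou_{s}+\Psi_s\reso\ou_{s}$ and only then multiply by $\DDDmodone$, whereas the paper groups $\Psi_s\DDDmodone$ first and decomposes the outer product as $\ou_{s}\rpara(\Psi_s\DDDmodone)+\ou_{s}\pareq(\Psi_s\DDDmodone)$. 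Both close: the paper's grouping has the advantage that the piece carrying the full negative regularity of $\ou_{s}$ needs $\Psi_s\DDDmodone$ only in $L^p$, so it costs just $(r-s)^{-1/2}$ with all the loss pushed onto the heat kernel; in your grouping the resonance of $\Psi_s\para\ou_{s}\in\cC^{-1/2-\kappa}$ against $\DDDmodone$ additionally requires $\DDDmodone(r)\in\Besov{\alpha'}{p,p}$ with $\alpha'>\tfrac12+\kappa$, i.e.\ an extra factor $(r-s)^{-1/4-O(\kappa)}$, but the total $(r-s)^{-3/4-O(\kappa)}$ is still integrable.

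Two small corrections. First, the requirement ``$a+b<1$ for a judicious choice of $\alpha$'' is neither needed nor achievable: the $\xX$-norm is a supremum over \emph{all} $\alpha\in[0,\alpha_\star]$, and for $\alpha$ near $\alpha_\star=3/2-19\kappa$ the heat-smoothing exponent alone is close to $1$, so $a+b>1$ there. What is actually needed is only $a<1$ and $b<1$; the resulting negative power $(t-s)^{1-a-b}$ is then absorbed by the prefactor $(t-s)^{\alpha/2+\kappa}$ built into the definition of $\xX$. Second, no continuation argument in $u$ is needed: for fixed $\eps>0$ the coefficients of the linear equation are continuous on $[s,T_{s,\zzz}]$, so $\|\DDDmodone\|_{\xX(s,T_{s,\zzz})}$ is finite a priori and the contraction inequality can be absorbed directly, though your continuation in $u$ is a legitimate alternative.
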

\begin{proof}
Duhamel's formula yields
\begin{equs}\label{e:Duhamel_mod}
\DDDmodone(t) &= e^{(t -s)\Laplace}\DDDmodone(s)
\\
&\quad -
\int_s^t e^{(t - r)\Laplace} \big((\VVmodoneA + \VVmodoneB + \VVmodoneC)  \DDDmodone
+
U_{s}\cdot \grad \DDDmodone \big)(r)\,\md r \;.
\end{equs}
We shall bound separately each of the five terms  appearing in the right-hand side of this expression 
and prove that there exist a universal constant $c>0$, depending only on $p$ and the constant 
$\eta$ appearing in \eqref{e:stopping_time}, such that
\begin{equs}\label{eq:contraction_proof_main}
\|\DDDmodone\|_{\xX(s,T_{s,\zzz})}
&\leq
c\,\lambda_\star^{1/4}\,\|\DDDmodone\|_{\xX(s,T_{s,\zzz})}
\\
&\quad +
\bigl(\exp(1/(6p))\vee c\,\lambda_\star^{1/4}\bigr)\,\|\DDDmodone(s)\|_{L^p(e_{\delta +s,\zzz})}\;.
\end{equs}
The claim then follows  by choosing $\lambda_\star\in(0,1]$ small enough.

\step{Initial data contribution:}
By Lemmas~\ref{le:smoothing_heat} and~\ref{le:besov_embedding} we have 
\begin{equs}
\|e^{(t -s)\Laplace} \DDDmodone(s)\|_{\Besovt{\alpha}}
&\lesssim
(t -s)^{-(\alpha+\kappa)/2} \|\DDDmodone(s)\|_{\Besovt[e_{\delta +s,\zzz}]{-\kappa}} \\
&\lesssim
(t -s)^{-(\alpha+\kappa)/2} \|\DDDmodone(s)\|_{L^p(e_{\delta +s,\zzz})} \;.
\end{equs}
By the estimate~\eqref{eq:heat_almost_contraction} on the heat kernel, we furthermore have
\begin{equs}
\|e^{(t -s)\Laplace} \DDDmodone(s)\|_{L^p(e_{\delta + t,\zzz})}
&\leq
\exp(1/(6p))\,\|\DDDmodone(s)\|_{L^p(e_{\delta +t,\zzz})}\\
&\leq
\exp(1/(6p))\,\|\DDDmodone(s)\|_{L^p(e_{\delta +s,\zzz})}\;.
\end{equs}
Combining these bounds, we conclude that
\begin{equ}
\|e^{(\bigcdot -s)\Laplace} \DDDmodone(s)\|_{\xX(s,T_{s,\zzz})}
\leq
\bigl(\exp(1/(6p))\vee c\,\lambda_\star^{1/4}\bigr)\,\|\DDDmodone(s)\|_{L^p(e_{\delta +s,\zzz})}\;.
\end{equ}

\step{Term involving \TitleEquation{\VVmodoneA}{V1a}:}\label{step:V1a}
First, observe that this term, namely
\begin{equ}
\int_s^t e^{(t-r)\Laplace} \VVmodoneA(r) \DDDmodone(r)\,\md r\;,
\end{equ}
can be written as
\begin{equ}\label{eq:proof_A}
6\lambda\int_s^t e^{(t-r)\Laplace}\big(\ou_{s} \rpara (\Psi_s \DDDmodone) \big)(r)\,\md r
+
6\lambda\int_s^t e^{(t-r)\Laplace}\big(\ou_{s}\pareq (\Psi_s \DDDmodone) \big)(r)\,\md r\,.
\end{equ}
For the first term in~\eqref{eq:proof_A} we get
\begin{equs}
\biggl\|\int_s^t &e^{(t-r)\Laplace}\big(\ou_{s} \rpara (\Psi_s \DDDmodone) \big)(r)\,\md r\biggr\|_{\Besovt{\alpha}} \label{e:boundWeight}
\\
&\lesssim
\int_s^t (t-r)^{- 1/4 -\alpha/2 - \kappa} \|\big(\ou_{s} \rpara (\Psi_s \DDDmodone) \big)(r) \|_{\Besov{-1/2-2\kappa}{p, p}(e_{\delta + t,\zzz})}\,\md r
\\
&\lesssim
\int_s^t (t-r)^{- 1/4 -\alpha/2 - 9\kappa} \|\big(\ou_{s} \rpara (\Psi_s \DDDmodone) \big)(r) \|_{\Besov{ - 1/2 - 2\kappa}{p, p}(w^4_{\zzz} e_{\delta + r,\zzz})}\,\md r\;,
\end{equs} 
where we have used Lemma~\ref{le:smoothing_heat} and then Lemmas~\ref{le:weights_time} and~\ref{lem:monotone}.
Note that for $r\in[s,T_{s,z}]$ we have $\|\ou_{s}(r)\|_{\cC^{-\f12-\kappa}(w_{\zzz})} \lesssim1$ and from Lemma~\ref{le:psi}
\begin{equs}
	\|\Psi_s(r)\|_{L^\infty(w_{\zzz}^3)} &\lesssim \lambda^{-1/2}\,(r - s)^{-1/2}\;,
	\\
	\|\Psi_s(r)\|_{\cC^{1/2 + 4\kappa}(w^4_z)} &\lesssim \lambda^{-1/2}\,(r - s)^{-3/4 - 2\kappa}\;.
\end{equs}
Hence, by Lemma~\ref{le:paraproduct} we have 
\begin{equs}
\|\big(\ou_{s} \rpara (\Psi_s\DDDmodone) \big)(r) \|_{\Besov{ - 1/2 - 2\kappa}{p, p}(w^4_{\zzz} e_{\delta + r,\zzz})}
&\lesssim
\|\ou_{s}\|_{\cC^{-1/2 - \kappa}(w_{\zzz})} \|\Psi_s\|_{L^\infty(w^3_{\zzz})} \|\DDDmodone(r) \|_{L^p(e_{\delta + r,\zzz})}
\\
&\lesssim \lambda^{-1/2}\,(r - s)^{-1/2} \|\DDDmodone\|_{\xX(s,T_{s,\zzz})}\;.
\end{equs}
Using the fact that $-1/4 -\alpha/2 - 9\kappa>-1$ and
\begin{equ}
	\int_s^t (t-r)^{-1/4 -\alpha/2 - 9\kappa}\,(r - s)^{-1/2}\lesssim (t-s)^{1/4-\alpha/2-9\kappa}\,,
\end{equ}
we conclude that
\begin{equ}
	\biggl\|\int_s^t e^{(t-r)\Laplace}\big(\ou_{s} \rpara (\Psi_s \DDDmodone) \big)(r)\,\md r\biggr\|_{\Besovt{\alpha}}
	\lesssim\lambda^{-\f12}\,(t-s)^{\f14-\f\alpha2-9\kappa}\,\|\DDDmodone\|_{\xX(s,T_{s,\zzz})}
\end{equ}
for all $\alpha\in[0,\alpha_\star]$. Since by Lemma~\ref{le:besov_embedding} the embedding $\Besov{\kappa}{p, p}(e_{\delta + t,\zzz}) \hookrightarrow L^p(e_{\delta + t,\zzz})$ is continuous, it follows that
\begin{equ}
\lambda\,\left\|\int_s^{\bigcdot} e^{(\bigcdot-r)\Laplace}\big(\ou_{s} \rpara (\Psi_s \DDDmodone) \big)(r)\,\md r\right\|_{\xX(s,T_{s,\zzz})}
\lesssim
\lambda^{1/4}\,\|\DDDmodone\|_{\xX(s,T_{s,\zzz})}\;.
\end{equ}
For the second term, following a similar procedure we get
\begin{equs}
\biggl\|\int_s^t &e^{(t-r)\Laplace}\big(\ou_{s} \pareq (\Psi_s \DDDmodone) \big)(r)\,\md r\biggr\|_{\Besovt{\alpha}}\label{e:boundWeight2}
\\
&\lesssim
\int_s^t (t-r)^{-\alpha/2-\kappa} \|\big(\ou_{s} \pareq (\Psi_s \DDDmodone) \big)(r) \|_{\Besov{2\kappa}{p, p}(e_{\delta + t,\zzz})}\,\md r
\\
&\lesssim
\int_s^t (t-r)^{ - \alpha/2- 11\kappa} \|\big(\ou_{s} \pareq (\Psi_s \DDDmodone) \big)(r) \|_{\Besov{2\kappa}{p, p}(w^5_{\zzz} e_{\delta + r,\zzz})}\,\md r\;.
\end{equs}
By Lemma~\ref{le:paraproduct} we have 
\begin{equs}
\|\big(\ou_{s} &\pareq (\Psi_s \DDDmodone) \big)(r) \|_{\Besov{2\kappa}{p, p}(w^5_{\zzz} e_{\delta + r,\zzz})}
\lesssim
\|\ou_{s}(r) \|_{\cC^{-\f12 - \kappa}(w_{\zzz})} \|(\Psi_s \DDDmodone)(r) \|_{\Besovt[w^4_{\zzz} e_{\delta + r,\zzz}]{1/2 + 3\kappa}}
\\
&\lesssim
\|(\Psi_s \DDDmodone)(r) \|_{\Besovt[w^4_{\zzz} e_{\delta + r,\zzz}]{1/2 + 3\kappa}}\;.
\end{equs}
We use Lemma~\ref{le:paraproduct} again to get
\begin{equs}
\,&\|(\Psi_s \DDDmodone)(r) \|_{\Besovt[w^4_{\zzz} e_{\delta + r,\zzz}]{1/2 + 3\kappa}}
\\
&\leq
\|(\Psi_s \para \DDDmodone)(r) \|_{\Besovt[w^4_{\zzz} e_{\delta + r,\zzz}]{1/2 + 3\kappa}} + \|(\Psi_s \rpareq \DDDmodone)(r) \|_{\Besovt[w^4_{\zzz} e_{\delta + r,\zzz}]{1/2 + 3\kappa}}
\\
&\leq
\|\Psi_s(r)\|_{L^\infty(w^4_z)} \|\DDDmodone(r) \|_{\Besovt[e_{\delta + r,\zzz}]{1/2 + 4\kappa}}
+
\|\Psi_s(r)\|_{\cC^{1/2 + 4\kappa}(w^4_z)} \|\DDDmodone(r) \|_{L^p(e_{\delta + r,\zzz})}
\end{equs}
Therefore,
\begin{equs}
	\,&\|(\Psi_s \DDDmodone)(r) \|_{\Besovt[w^4_{\zzz} e_{\delta + r,\zzz}]{1/2 + 3\kappa}}
	\\
	&\lesssim
	\lambda^{-1/2}\,
	(r -s)^{-1/2}\, \|\DDDmodone(r)\|_{\Besovt[e_{\delta + r,\zzz}]{1/2 + 4\kappa}}
	+
	\lambda^{-1/2}\,
	(r -s)^{-3/4 - 2\kappa}\, \|\DDDmodone(r)\|_{L^p(e_{\delta + r,\zzz})}
	\\
	&\lesssim
	\lambda^{-1/2}\,\lambda_\star^{-1/4}\,(r -s)^{-3/4-3\kappa}
	\|\DDDmodone\|_{\xX(s,T_{s,\zzz})}\,.
\end{equs} 
Using the fact that $-\alpha/2-11\kappa>-1$ and $-3/4-3\kappa>-1$ we conclude that
\begin{equs}
\biggl\|\int_s^t &e^{(t-r)\Laplace}\big(\ou_{s} \pareq (\Psi_s \DDDmodone) \big)(r)\,\md r\biggr\|_{\Besovt{\alpha}}
\\
&\lesssim  
\lambda^{-1/2}\,\lambda_\star^{-1/4}\,(t -s)^{1/4-\alpha/2-14\kappa}
\|\DDDmodone\|_{\xX(s,T_{s,\zzz})}\,.
\end{equs}
Combining it with the embedding $\Besov{\kappa}{p, p}(e_{\delta + t,\zzz}) \hookrightarrow L^p(e_{\delta + t,\zzz})$, it follows that
\begin{equ}
\lambda\left\|\int_s^{\bigcdot}  e^{(\bigcdot-r)\Laplace}\big(\ou_{s} \pareq (\Psi_s \DDDmodone) \big)(r)\,\md r\right\|_{\xX(s,T_{s,\zzz})}
\lesssim
\lambda^{1/4}\,\|\DDDmodone\|_{\xX(s,T_{s,\zzz})} \;,
\end{equ} whence we conclude that
\begin{equ}
\left \| \int_s^{\bigcdot} e^{(\bigcdot-r)\Laplace} \VVmodoneA(r) \DDDmodone(r)\,\md r \right\|_{\xX(s,T_{s,\zzz})} \lesssim \lambda_\star^{1/4} \|\DDDmodone\|_{\xX(s,T_{s,\zzz})} \;.
\end{equ}

\step{Term involving \TitleEquation{\VVoneB}{V1b}:} The bound of the term involving $\VVoneB$ is obtained similarly to the argument in \ref{step:V1a}. Recall that 
\begin{equ}
\VVmodoneB \eqdef 3\lambda\oudz_{s}
+ 6\lambda^2 \ou_{s} \outz_{s}
- 9\lambda^2(|\grad \oudz_{s} |^{2}  - C^{(2)}_s)\;.
\end{equ} Therefore, for any $r\in[s,T_{s,z}]$ we have 
\begin{equ}[e:boundVlambda]
    \| \VVmodoneB (r) \|_{\cC^{-\f12-\kappa}(w_z^2)} \lesssim \lambda\;. 
\end{equ} 
Then, proceeding as in \eqref{e:boundWeight}, we obtain
\begin{equs}
    \biggl\|\int_s^t &e^{(t-r)\Laplace} \big(\VVmodoneB(r)  \rpara \DDDmodone(r)\big) \,\md r\biggr\|_{\Besovt{\alpha}}\\
    &\lesssim \int_s^t (t-r)^{-1/4 - \alpha/2 - 5\kappa }   \| \VVmodoneB(r)  \rpara \DDDmodone(r) \|_{\bB^{-1/2 - 2\kappa}_{p,p}(w_ze_{\delta + r,z })} \, \md r\\
    &\lesssim \int_s^t \lambda (t-r)^{-1/4 - \alpha/2 - 5\kappa } \| \DDDmodone(r)\|_{L^p(e_{\delta +r,z})} \,\md r \lesssim \lambda \|\DDDmodone\|_{\xX(s,T_{s,\zzz})}\;. 
\end{equs} 
On the other hand, we have similarly to \eqref{e:boundWeight2}
and using again \eqref{e:boundVlambda}
\begin{equs}
\biggl\|\int_s^t &e^{(t-r)\Laplace}\big(\VVmodoneB(r) \pareq \DDDmodone (r)\big)\,\md r\biggr\|_{\Besovt{\alpha}}
\\
&\lesssim
\lambda\int_s^t (t-r)^{ - \alpha/2- 9\kappa/2} \|\DDDmodone (r) \|_{\Besov{1/2+2\kappa}{p, p}(e_{\delta + r,\zzz})}\,\md r
\\
&\lesssim 
\lambda^{3/4}\|\DDDmodone (r) \|_{\xX(s,T_{s,\zzz})}\;.
\end{equs} Combining these estimates, we obtain the bound 
\begin{equ}
    \left\|\int_s^{\bigcdot} e^{(\bigcdot-r)\Laplace}\VVmodoneB(r)  \DDDmodone(r)\,\md r\right\|_{\xX(s,T_{s,\zzz})} \lesssim \lambda_\star^{1/4} \| \DDDmodone\|_{\xX(s,T_{s,\zzz})}\;.
\end{equ}

\step{Term involving \TitleEquation{\VVoneC}{V1c}:}  By Lemmas~\ref{le:smoothing_heat}, \ref{le:besov_embedding} and~\ref{le:renormalisation_diff}, we have 
\begin{equs}
    \biggl\|\int_s^t &e^{(t-r)\Laplace}\VVmodoneC(r)  \DDDmodone(r)\,\md r\biggr\|_{\Besovt{\alpha}}
    \\
    &\lesssim \int_s^t  (t-r)^{-(\alpha+\kappa)/2}\| \VVmodoneC(r) \|_{L^\infty} \| \DDDmodone(r) \|_{L^p(e_{\delta+t,z})} \, \md r 
    \\
    &\lesssim \int_s^t (\lambda(r-s)^{-1/2} + \lambda^2(r-s)^{-\kappa} + \gamma )(t-r)^{-(\alpha+\kappa)/2}  \| \DDDmodone(r) \|_{L^p(e_{\delta+r,z})} \, \md r 
    \\
    & \lesssim  \lambda_\star \| \DDDmodone\|_{\xX(s,T_{s,\zzz})} \;,
\end{equs} where we have used the fact that $|\nabla\langle\bigcdot\rangle_\ell|,|\Delta\langle\bigcdot\rangle_\ell|\lesssim 1$ uniformly for $\ell \in \N_+$ and $\lambda,\gamma\leq \lambda_\star \leq 1 $. By the embedding  $\Besov{\kappa}{p, p}(e_{\delta + t,\zzz}) \hookrightarrow L^p(e_{\delta + t,\zzz})$, we have 
\begin{equ}
    \left\|\int_s^{\bigcdot} e^{(\bigcdot-r)\Laplace}\VVmodoneC(r)  \DDDmodone(r)\,\md r\right\|_{\xX(s,T_{s,\zzz})} \lesssim \lambda_\star^{1/4} \| \DDDmodone\|_{\xX(s,T_{s,\zzz})}\;.
\end{equ}

\step{Term involving \TitleEquation{U_{s}}{U}:} Similarly to before, we have
\begin{equs}
	\biggl\| \int_s^t &e^{(t - r)\Laplace} \big(U_{s}\cdot \grad\DDDmodone \big)(r)\,\md r \biggr\|_{\bB^{\alpha}_{p, p}(e_{\delta + t,\zzz})}
	\\
	&\lesssim  
	\int_s^t (t - r)^{-3\kappa/2-\alpha/2}\,
	\|(U_{s} \cdot \grad\DDDmodone)(r)\|_{\bB^{-3\kappa}_{p, p}(e_{\delta + t,\zzz})}\,\md r
	\\
	&\lesssim
	\int_s^t (t - r)^{-5\kappa/2-\alpha/2}\,
	\|(U_{s} \cdot \grad\DDDmodone)(r) \|_{\bB^{-3\kappa}_{p, p}(w_{\zzz}e_{\delta + r,\zzz})}\,\md r\;.
\end{equs} 
By Lemmas~\ref{le:besov_embedding}, \ref{le:besov_derivative} and~\ref{le:paraproduct},
\begin{equs}
    \|(U_{s}\cdot \grad\DDDmodone)(r)\|_{\bB^{-3\kappa}_{p, p}(w_{\zzz}e_{\delta + r,\zzz})}  
    &\lesssim 
    \| U_{s}(r)\|_{\cC^{-2\kappa}(w_z)} 
    \|(\grad\DDDmodone)(r)\|_{\bB^{3\kappa}_{p,p}(e_{\delta + r,\zzz})}
    \\
    &\lesssim
    (\lambda \|  \oudz_{s}(r) \|_{\cC^{1-2\kappa}(w_z) }+ \gamma)\, 
    \|\DDDmodone(r) \|_{\bB^{1+3\kappa}_{p,p}(e_{\delta + r,\zzz})}
    \\
    &\lesssim (\lambda + \gamma )\, \|  \DDDmodone(r) \|_{\bB^{1+3\kappa}_{p,p}(e_{\delta + r,\zzz})}
    \\
    &\lesssim \lambda_\star^{3/4}\, (r-s)^{-1/2-5\kappa/2} \| \DDDmodone\|_{\xX(s,T_{s,\zzz})}\;,
\end{equs} 
where we have used the fact that $|\nabla\langle\bigcdot\rangle_\ell|\lesssim 1$ uniformly over $\ell\in\N_+$ and $\lambda,\gamma \leq \lambda_\star\leq 1 $. Thus,
\begin{equs}
\biggl\|\int_s^t &e^{(t - r)\Laplace} \big(U_{s}\cdot \grad\DDDmodone \big)(r)\,\md r \biggr\|_{\bB^{\alpha}_{p, p}(e_{\delta + t,\zzz})}
\\
&\lesssim \lambda_\star^{3/4}\,(t-s)^{1/2-\alpha/2-5\kappa}\,\|\DDDmodone\|_{\xX(s,T_{s,\zzz})}\,.
\end{equs} 
This bound yields,
\begin{equ}
\left\|\int_s^{\bigcdot} e^{(\bigcdot - r)\Laplace} \big(U_{s}\cdot \grad\DDDmodone \big)(r)\,\md r\right\|_{\xX(s,T_{s,\zzz})}
\lesssim
\lambda_\star^{1/4}\, \|\DDDmodone\|_{\xX(s,T_{s,\zzz})} \;.
\end{equ}
Collecting all these bounds gives~\eqref{eq:contraction_proof_main} 
and completes the proof of the proposition.
\end{proof}

Now we are ready to prove Proposition~\ref{pr:deterministic_main}.
\begin{proof}[Proof of Proposition~\ref{pr:deterministic_main}]
By Lemma~\ref{lem:exp_transform_mod} for $0\leq s\leq t $,
\begin{equ}
\hat{D}_s(t) = \exp((t-s)+3\lambda\oudz_{s}(t)+\gamma\langle\bigcdot\rangle_\ell) D(t)
\end{equ}
solves~\eqref{e:D1_mod}. By Lemma~\ref{lem:comparison_mod} we have
\begin{equ}
\lvert\exp((t-s)+3\lambda\oudz_{s}(t,x)+\gamma\langle x\rangle_\ell) D(t,x)\rvert
\leq
|\DDDmodone(t, x)| \;,
\end{equ}
where $\DDDmodone$ solves \eqref{e:simplified_equ_mod} with $\DDDmodone(s) = D(s)$. Therefore, for all $s\in[0,1]$ and $t\in[s,T_{s,\zzz}]$ we have
\begin{equ}
\label{e:comparison2}
\lVert\exp(3\lambda\oudz_{s}(t)+\gamma\langle \bigcdot\rangle_\ell) D(t)\rVert_{L^p(e_{s+t,\zzz})}
\leq
e^{-(t-s)}\,\|\DDDmodone(t)\|_{L^p(e_{s+t,\zzz})}
\end{equ}
as well as
\begin{equ}
\|\DDDmodone(t)\|_{L^p(e_{s+t,\zzz})}
\leq
\exp(1/(4p))\, \lVert\exp(\gamma\langle\bigcdot\rangle_\ell)D(s)\rVert_{L^p(e_{2s,\zzz})} \;,
\end{equ}
by Proposition~\ref{pr:fixed_point_mod} applied with $\delta=s$. Consequently,
\begin{equ}
\|\exp(3\lambda\oudz_{s}(t)+\gamma\langle\bigcdot\rangle_\ell) D(t) \|_{L^p(e_{s+t,\zzz})}
\leq \exp\bigl(1/(4p)-(t-s)\bigr)\,\lVert\exp(\gamma\langle\bigcdot\rangle_\ell)D(s)\rVert_{L^p(e_{2s,\zzz})} \,.
\end{equ}
Then, by H\"older's inequality we get that
\begin{equs}
\,&\lVert\exp(\gamma\langle\bigcdot\rangle_\ell)D(t)\rVert_{L^p(e_{2t,\zzz})}
\\
&\leq
\lVert\exp(-3\lambda\oudz_{s}(t))\, e^{-\fa(t-s)\bracket{\bigcdot - \zzz}^{1/2}}\rVert_{L^{\infty}}\,
\lVert\exp(3\lambda\oudz_{s}(t)+\gamma\langle\bigcdot\rangle_\ell)D(t)\rVert_{L^p(e_{s+t,\zzz})}\;.
\end{equs}
Note that for $s \leq t \leq T_{s,\zzz}$ by the heat flow estimate we have
\begin{equs}
\|\oudz_{s}(t)\|_{L^\infty(w_{\zzz})}
&\lesssim
\int_s^t \|e^{(\Laplace - 1)(t - r)}\, \oud_{s}(r)\|_{\cC^{-2\kappa}(w_{\zzz})}\,\md r
\\
&\lesssim
\int_s^t (t - r)^{-1/2-2\kappa} \|\oud_{s}(r)\|_{\cC^{-1-2\kappa}(w_{\zzz})}\,\md r \\
&\lesssim \label{e:oudz_bound}
(t-s)^{1/2-2\kappa}
\lesssim
(t-s)^{1/3}\;.
\end{equs}
By Young's inequality and the fact that $1 \leq \bracket{x}$ there exists a constant $C>0$ such that 
\begin{equs}
3|\oudz_{s}(t,x)|
&\leq
3\bracket{x-\zzz}^\kappa
\|\oudz_{s}(t)\|_{L^\infty(w_{\zzz})}
\\
&\leq
C^{2/3}\, (t-s)^{1/3} \bracket{x-\zzz}^\kappa
\leq C + \fa\,(t-s)\bracket{x-\zzz}^{1/2} \;.
\end{equs}
As a result, provided that $\lambda_\star $ is sufficiently small, for all $\lambda \in (0, \lambda_\star ]$ 
\begin{equ}\label{e:oudz_bound2}
\lVert\exp(3\lambda\oudz_{s}(t))\, e^{-\fa(t-s)\bracket{\bigcdot - \zzz}^{1/2}}\rVert_{L^{\infty}}
\leq
\exp(\lambda\,C)
\leq
\exp(1/(12p))\,.
\end{equ}
Therefore, we get
\begin{equ}
\lVert\exp(\gamma\langle\bigcdot\rangle_\ell)D(t)\rVert_{L^p(e_{2t,\zzz})}
\leq
\exp\bigl(1/(3p)-(t-s)\bigr)\,
\lVert\exp(\gamma\langle\bigcdot\rangle_\ell)D(s)\rVert_{L^p(e_{2s,\zzz})}\;.
\end{equ}
This finishes the proof.
\end{proof}

\section{Solution theory in infinite volume}\label{sec:global_solution_theory}

The aim of this section is to construct a solution to the dynamical $\Phi^4_3$ model on $\R_\geq \times \R^3$ for arbitrary initial data in $\cC^{-\frac{1}{2} - \kappa}(w)$, prove its uniqueness and to demonstrate that it satisfies all the properties stated in Theorem~\ref{thm:global_solution}. 

The core of our argument is presented in Sections~\ref{sec:a_priori_solution} and~\ref{sec:proof_markov_process_construction}. In Sections~\ref{sec:reg_structure}–\ref{sec:init_data}, we collect the necessary preliminary results: we construct a suitable regularity structure and extend the results of~\cite{HL18}. The main technicality here is that our equation for the difference between two solutions satisfies the Parabolic Anderson Model with a non-trivial ``noise" which is a modelled distribution instead of a symbol in the regularity structures. Consequently, estimating this modelled distribution requires the use of weighted norms, whereas no such weights are needed if the noise is merely a symbol.

We also use some auxiliary results from Appendices~\ref{sec:spacetime_localization} and~\ref{sec:stochastic_estimates}.  To obtain the improved a priori bounds stated in Lemma~\ref{le:apriori_bound_Linfty}, we use a generalisation of the space-time localisation estimate originally derived in~\cite{MW20}, which is formulated as Theorem~\ref{thm:spacetime_localization}. As mentioned below the statement of Theorem~\ref{thm:global_solution}, we apply this result using trees that incorporate contributions from the initial data. These trees are shown to be bounded in Lemma~\ref{lem:convergence_model_phi_deterministic}, where they are interpreted as (singular) modelled distributions with respect to the stationary model.

\subsection{Regularity structure}
\label{sec:reg_structure}

Let $(\bar\aA,\bar\tT, G)$ be the truncated regularity structure for the $\Phi^4_3$ equation constructed following the procedure in~\cite[Section~8.1]{Hai14}. We denote by
\begin{equ}
\bar\tT^\circ\eqdef\{{\color{blue}\Xi},\bout,\boud,\boutd,\bou,\boudd,\boutu,\boud\bXXX,\oneb,\boutz,\boudz,\bXXX,\ldots\}
\end{equ}
the set of linearly independent elements of $\bar\tT$ such that $\bar\tT = \Span\bar\tT^\circ$, where $\bXXX=(\bXXX^0,\bXXX^1,\bXXX^2,\bXXX^3)$. We use blue trees to denote the trees as abstract symbols appearing in the regularity structure, while the black trees denote the corresponding concrete functions \slash distributions. The elements of $\bar\tT^\circ$ are generated from $\{{\color{blue}\Xi},\oneb,\bXXX\}$ with the use of abstract integration $\tau\mapsto\iI(\tau)$ and multiplication $(\tau,\bar\tau)\mapsto\tau\bar\tau$ and we adopt the usual graphical notation of representing the integration by drawing an edge downward from the root and represent the multiplication by concatenation of trees at the root. The grading $|\bigcdot|\,:\, \bar\tT\to \bar\aA$ is a surjective map defined by the conditions 
\begin{equ}
	|{\color{blue}\Xi}|=-\frac52-\kappa\,,
	\quad
	|\oneb|=0\,,
	\quad
	|\bXXX|=1\,,
	\quad
	\iI(\tau)=|\tau|+2\,,
	\quad
	|\tau\bar\tau|=|\tau|+|\bar\tau|\,.
\end{equ}
In what follows, we work with a modified regularity structure with the noise symbol ${\color{blue}\Xi}$ removed and define  $\tT^\circ\eqdef\bar\tT^\circ\setminus\{{\color{blue}\Xi}\}$, $\aA=\bar\aA\setminus\{|{\color{blue}\Xi}|\}$ and $\tT=\Span\tT^\circ$. Given $\beta\in\aA$ we denote by $\tT_\beta$ the subset of $\tT$ consisting of $\tau$ such that $|\tau|=\beta$. For $I\subset\R$ we set $\tT_{I}\eqdef \oplus_{\beta\in\aA\cap I}\tT_\beta$. We denote by $\qQ_\beta: \tT \rightarrow \tT_\beta$ the projection onto $\tT_\beta$. Let $\|\bigcdot\|$ be a norm on $\tT$. Since $\tT$ is finite dimensional vector space, the choice of the norm does not affect the topology. We denote by $\|\tau\|_\beta$ the norm of $\qQ_\beta\tau$. We truncate the regularity structure so that $\aA\subset[|\bout|,3)$. Note that the choice of a truncation affects the conditions for the weights formulated in Assumption~\ref{ass:weights} below.

\subsection{Weights}

In this section, we present the set of assumptions that will constrain the weights used in our construction. We then demonstrate that it is possible to choose weights satisfying all these assumptions.

\begin{assumption}\label{ass:weights}
	The maps
	\begin{equ}
		w,\weight_\Pi, \weight_{\mathrm{S}}\in C(\R^3,(0,1])\,,
		\qquad
		\weight_{\mathrm{L}},\weight_{\mathrm{R}}\,:\,\{1,2\}\times \aA\to C(\R^{1+3},(0,1])\,,
	\end{equ}
	called weights, satisfy the conditions:
	\begin{itemize}		
		\item The weights $\weight_{\mathrm{L}},\weight_{\mathrm{R}}$ are decreasing functions of time and at time zero are bounded by $\weight_{\mathrm S}$. Moreover, there exists $C>0$ such that we have
		\begin{equ}[eq:W-0]\tag{W-0}
			\frac{1}{C}<\sup_{\substack{x,y \in\R^3\\|x-y| \leq 1}} \frac{\weight(x)}{\weight(y)}<C
		\end{equ}
		for $\weight\in\{w,\weight_\Pi, \weight_{\mathrm{S}}\}\cup\{\weight^{(i)}_{\mathrm{L}/\mathrm{R}}(\beta;t,\bigcdot) \,|\,i\in\{1,2\},\beta\in\aA,t\in[0,1]\}$\,. Here $\weight_{\mathrm{L}/\mathrm{R}}$ refers to either $\weight_{\mathrm{L}}$ or $\weight_{\mathrm{R}}$.
		\item There exists $\fc>0$ such that we have
		\begin{equ}[eq:W-1]\tag{W-1}
			\sup_{0\leq s<t\leq 1} \sup_{x\in \R^3} 
			\frac{(t-s)^{\fc/2}\,\overline\weight_{\mathrm{L}}(t,x)}{\weight_\Pi(x)^2\underline\weight_{\mathrm{R}}(s,x)} <\infty \,,
		\end{equ}
		where
		\begin{equ}
			\overline\weight_{\mathrm{L}}(t,x)\eqdef\sup_{i\in\{1,2\}}\sup_{\beta\in\aA}\weight^{(i)}_{\mathrm{L}}(\beta;t,x)\,,
			\qquad
			\underline\weight_{\mathrm{R}}(t,x)\eqdef\inf_{i\in\{1,2\}}\inf_{\beta\in\aA}\weight^{(i)}_{\mathrm{R}}(\beta;t,x)\,.
		\end{equ}
		\item For all $i\in\{1,2\}$, $\tau,\bar\tau\in\tT^\circ$, $t\in[0,1]$, $x\in\R^3$ and $k\in\N_0^{1+3}$ such that $|k|\leq 2$ we have
		\begin{equs}
			\label{eq:W-2}\tag{W-2}
			\weight^{(i)}_{\mathrm{L}}(|\mathcal{I}(\tau)|;t,x) &\leq \weight^{(i)}_{\mathrm{R}}(|\tau|;t,x)&\quad\text{if}&\quad\mathcal{I}(\tau)\neq0\,,
			\\
			\label{eq:W-3}\tag{W-3}
			\weight^{(i)}_{\mathrm{L}}(|\bXXX^k|;t,x) &\leq \weight_\Pi(x) 
			\weight^{(i)}_{\mathrm{R}}(|\tau|;t,x)&\quad\text{if}&\quad|\tau| \leq |k|-2\,,
			\\
			\label{eq:W-4}\tag{W-4}
			\weight^{(2)}_{\mathrm{L}}(|\bXXX^k|;t,x) &\leq \weight_\Pi(x) \weight^{(1)}_{\mathrm{R}}(|\tau|;t,x)\,,
			\\
			\label{eq:W-5}\tag{W-5}
			\weight^{(i)}_{\mathrm{R}}(|\tau\bar\tau|;t,x) &\leq \weight_{\mathrm{S}}(x)^2\weight^4_\Pi(x) 
			\weight^{(i)}_{\mathrm{L}}(|\tau|;t,x)\,.
		\end{equs}
		\item We have
		\begin{equ}\label{eq:W-6}\tag{W-6}
			\weight(x)/\weight(y) \lesssim \exp(|x-y|^2/8)
		\end{equ}
		uniformly over $\weight\in\{w,\weight_{\mathrm{S}}\}\cup\{\weight^{(i)}(\beta;t,\bigcdot) \,|\,i\in\{1,2\},\beta\in\aA,t\in[0,1]\}$ and $x,y\in\R^3$.
		\item With the same $\fc>0$ as above we have
		\begin{equ}[eq:W-7]\tag{W-7}
			\sup_{t\in[0,1]} \sup_{x\in \R^3}\frac{\overline\weight^{(2)}_{\mathrm{L}}(t,x)}{\weight_\Pi(x)\underline\weight^{(1)}_{\mathrm{L}}(t,x)}
			\vee
			\sup_{0\leq s<t\leq 1} \sup_{x\in \R^3} 
			\frac{(t-s)^{\fc/2}\,\overline\weight^{(1)}_{\mathrm{L}}(t,x)}{\weight_\Pi(x)\underline\weight^{(1)}_{\mathrm{L}}(s,x)} <\infty ,
		\end{equ}
		where
		\begin{equ}
			\overline\weight_\mathrm{L}^{(i)}(t,x)\eqdef\sup_{\beta\in\aA}\weight_\mathrm{L}^{(i)}(\beta;t,x)\,,
			\qquad
			\underline\weight_\mathrm{L}^{(i)}(t,x)\eqdef\inf_{\beta\in\aA}\weight_\mathrm{L}^{(i)}(\beta;t,x)\,.
		\end{equ}
	\end{itemize}	
\end{assumption}
\begin{rmk}
	The results stated in Section~\ref{sec:modelled_distributions} are true for all weights satisfying the above assumption, with the necessary conditions detailed in each theorem and lemma. In remaining part of Section~\ref{sec:global_solution_theory} and in Appendix~\ref{sec:stochastic_estimates} we work with weights fixed as in Lemma~\ref{lem:weights_solution} below.
\end{rmk}
\begin{rmk}
	We consider the initial data $\Phi(0)$ in the space $\cC^\eta(w)$ with $\eta=-\f12-\kappa$. We shall show that for every $t > 0$, the Da Prato--Debussche remainder $v(t)=\Phi(t)-\ou(t)$ has a finite $L^\infty(w^{1/2})$ norm, which, however, diverges at $t = 0$ at the rate $t^{-1/2}$. We will also prove that $L^\infty(\weight_{\mathrm{S}})$ norm of the remainder remains finite and blows up at the slower rate $\eta/2>-1/2$ at $t=0$. Thus, the temporal behaviour can be improved at the cost of employing a more rapidly decaying weight. We use the weights $\weight_{\mathrm{L}},\weight_{\mathrm{R}}$ in proving the uniqueness of solutions and their continuous dependence on the initial data. They appear in the norms that control the left- and right-hand sides of the equation governing the difference between two solutions. The weight $\weight_\Pi$ will be used to introduce a topology in the space of models.
\end{rmk}
\begin{rmk}
	Our weights are inverses of the weights that appear in~\cite{HL18}. The conditions~\eqref{eq:W-0}-\eqref{eq:W-5} are analogs of the conditions (W-0)-(W-5) therein.
\end{rmk}
\begin{rmk}
	When defining a seminorm in a~function space over $\R^{1+3}$ involving a~weight $\weight$ it is usual to demand that $\weight(t,x)/\weight(s,y)$ is bounded from below and above uniformly over $(t,x),(s,y)\in\R^{1+3}$. In the case of time-independent weights $w,\weight_\Pi, \weight_{\mathrm{S}}$, this condition is implied by~\eqref{eq:W-0}. Since it is not possible to satisfy this condition together with~\eqref{eq:W-1}, in the case of the time-dependent weights $\weight_{\mathrm{L}},\weight_{\mathrm{R}}$ we impose only the weaker condition~\eqref{eq:W-0} in addition demanding that these weights decrease in time. Note that~\eqref{eq:W-0} is essential for all the results stated in this section.
\end{rmk}
\begin{rmk}	
	The condition~\eqref{eq:W-1} plays a similar role to the estimate stated in Lemma~\ref{le:weights_time}. The conditions~\eqref{eq:W-2}-\eqref{eq:W-4} are needed to prove bounds for the integration operator $\kK^\pm$ stated in Theorem~\ref{thm:integration_K_exp}. We use~\eqref{eq:W-5} in the proof of the estimate for the product stated in Lemma~\ref{lem:multiplication_exp}. The condition~\eqref{eq:W-6} ensures that for times in the interval $[0,1]$ the weights are compatible with the decay property of the heat kernel and is used in Lemma~\ref{lem:integration_K} and Theorem~\ref{thm:integration_K_exp} about the integration operator $\kK^\pm$. We need~\eqref{eq:W-7} in the estimate for the projection $\qQ_{<\gamma}$ in Lemma~\ref{lem:embedding_modelled_distributions}.
\end{rmk}
\begin{lem}\label{lem:weights_solution}
	Recall that $\bar\kappa=\frac{1}{10}$, $\kappa=\bar\kappa^4$ and $w\eqdef\bracket{\bigcdot}^{-\bar\kappa^4}$. Let
	\begin{equ}
		b^{(1)}_{\mathrm{L}}=2+\bar\kappa^3,
		\quad
		b^{(1)}_{\mathrm{R}}=4,
		\quad
		b^{(2)}_{\mathrm{L}}=9+\bar\kappa^3,
		\quad
		b^{(2)}_{\mathrm{R}}=11\,.
	\end{equ}
	The weights
	\begin{equs}
		\weight_\Pi(\bigcdot) &\eqdef  \bracket{\bigcdot}^{-\bar\kappa^5},
		\quad
		\weight_{\mathrm{S}}(\bigcdot) \eqdef  \bracket{\bigcdot}^{-\bar\kappa^3},
		\quad
		\weight^{(i)}_{\mathrm{L/R}}(\beta;t,\bigcdot) &\eqdef  \bracket{\bigcdot}^{-\bar\kappa^2(\beta+b^{(i)}_{\mathrm{L/R}})}\, {\rm e}^{-t\bracket{\bigcdot}},
	\end{equs}
	satisfy Assumption~\ref{ass:weights} with $\fc=2\bar\kappa$.
\end{lem}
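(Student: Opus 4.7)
The plan is a direct, case-by-case verification of the seven items of Assumption~\ref{ass:weights}, since each weight is of the form $\bracket{\bigcdot}^\alpha$ (for $w$, $\weight_\Pi$, $\weight_{\mathrm S}$) or $\bracket{\bigcdot}^\alpha e^{-t\bracket{\bigcdot}}$ (for $\weight^{(i)}_{\mathrm{L/R}}$) with some $\alpha \in \R$ that is at most $O(\bar\kappa^2)$ in absolute value. I would organise the proof in three batches.

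First I would dispose of the local comparisons~\eqref{eq:W-0} and~\eqref{eq:W-6}, which only involve a single weight evaluated at nearby points. The algebraic factors are handled using $\bracket{x} \leq \bracket{y}(1+|x-y|)$, while the time-dependent exponential factor is controlled via $t|\bracket{x}-\bracket{y}| \leq |x-y|$ (valid for $t \in [0,1]$), together with the elementary bound $|x-y| \leq 2 + |x-y|^2/8$ to upgrade the linear tail to the Gaussian tail required by~\eqref{eq:W-6}.

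Second I would handle the purely algebraic comparisons~\eqref{eq:W-2}--\eqref{eq:W-5}. After cancelling the common exponential factor $e^{-t\bracket{x}}$ and using $|\mathcal{I}(\tau)|=|\tau|+2$, $|\bXXX^k|=|k|$, and $|\tau\bar\tau|=|\tau|+|\bar\tau|$, each reduces to a numerical inequality between the constants $b^{(i)}_{\mathrm{L/R}}$: namely $b^{(i)}_{\mathrm R} \leq b^{(i)}_{\mathrm L}+2$ for~\eqref{eq:W-2}; $b^{(i)}_{\mathrm R}+\bar\kappa^3 \leq b^{(i)}_{\mathrm L}+2$ for~\eqref{eq:W-3}; $b^{(1)}_{\mathrm R}+\bar\kappa^3+\sup\aA \leq b^{(2)}_{\mathrm L}$ for~\eqref{eq:W-4}; and $|\bar\tau| \geq b^{(i)}_{\mathrm L}-b^{(i)}_{\mathrm R}+2\bar\kappa+4\bar\kappa^3$ for all $\bar\tau \in \tT^\circ$ for~\eqref{eq:W-5}. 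With the chosen values of $b^{(i)}_{\mathrm{L/R}}$, items \eqref{eq:W-2} and \eqref{eq:W-3} are saturated up to $\bar\kappa^3$, \eqref{eq:W-4} is comfortable since $\sup\aA < 3$ and $b^{(2)}_{\mathrm L}-b^{(1)}_{\mathrm R} = 5+\bar\kappa^3$, and \eqref{eq:W-5} amounts to $|\bar\tau| \geq -2+2\bar\kappa+5\bar\kappa^3$, which holds since $|\bar\tau| \geq |\bout| = -\tfrac{3}{2}-3\kappa$ and $\bar\kappa = \tfrac{1}{10}$.

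The main obstacle is the time-singular conditions~\eqref{eq:W-1} and~\eqref{eq:W-7}. Here the exponential factors do not cancel but combine into $e^{-(t-s)\bracket{x}}$, which must dominate the polynomial growth in $\bracket{x}$. The key elementary estimate is
\begin{equ}
\sup_{r \geq 1} r^{v}\, e^{-h r} \leq (v/e)^{v}\, h^{-v}, \qquad v,h > 0,
\end{equ}
obtained from the maximum of $u \mapsto u^{v}e^{-u}$. Writing the integrand of~\eqref{eq:W-1} as $\bracket{x}^{v}\, e^{-(t-s)\bracket{x}}$ with
\begin{equ}
v \;=\; \bar\kappa^{2}\Bigl[\sup_{i,\beta\in\aA}(\beta+b^{(i)}_{\mathrm R}) - \inf_{i,\beta\in\aA}(\beta+b^{(i)}_{\mathrm L})\Bigr] + 2\bar\kappa^{5},
\end{equ}
the claim reduces to checking $v \leq \fc/2 = \bar\kappa$, an arithmetic inequality involving $\sup\aA$, $|\bout|$ and the chosen $b^{(i)}_{\mathrm{L/R}}$. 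For~\eqref{eq:W-7} the analogous reduction holds with only $i=1$ appearing in both $\overline\weight$ and $\underline\weight$, producing a strictly smaller $v$ and hence more slack; its first, time-independent inequality is controlled by the same polynomial bookkeeping as in \eqref{eq:W-2}--\eqref{eq:W-5}. I expect the arithmetic check underlying~\eqref{eq:W-1} to be the delicate step, since it is precisely this inequality that binds $\sup\aA$ against $1/\bar\kappa = 10$ and against the truncation $\aA \subset [|\bout|,3)$.
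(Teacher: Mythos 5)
Your proof follows the same route as the paper's own: \eqref{eq:W-0} and \eqref{eq:W-6} by local comparison, \eqref{eq:W-2}--\eqref{eq:W-5} by cancelling the factor $e^{-t\bracket{x}}$ and reducing to linear inequalities among the $b^{(i)}_{\mathrm{L/R}}$ (your target inequalities and their verification agree with the paper's, including the observation that \eqref{eq:W-2}, \eqref{eq:W-3} are saturated up to $\bar\kappa^3$ and that \eqref{eq:W-5} reduces to $\inf\aA\geq-2+2\bar\kappa+5\bar\kappa^3$), and \eqref{eq:W-1}, \eqref{eq:W-7} via $\sup_{r\geq1}r^{v}e^{-hr}\lesssim h^{-v}$. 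For \eqref{eq:W-7} there is genuine slack: the relevant exponent is $\bar\kappa^2(\sup\aA-\inf\aA)+\bar\kappa^5\leq\bar\kappa^2(5+\bar\kappa^3)<\bar\kappa=\fc/2$, and the time-independent part of \eqref{eq:W-7} reduces to $\sup\aA-\inf\aA\leq b^{(2)}_{\mathrm L}-b^{(1)}_{\mathrm L}=7$, which holds.

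The gap is that the one check you defer is exactly the one that does not close. Your own formula for \eqref{eq:W-1} gives
\begin{equ}
v=\bar\kappa^{2}\bigl(\sup\aA-\inf\aA+b^{(2)}_{\mathrm R}-b^{(1)}_{\mathrm L}\bigr)+2\bar\kappa^{5}
=\bar\kappa^{2}\bigl(\sup\aA-\inf\aA+9+\bar\kappa^{3}\bigr)\,,
\end{equ}
and since $\tT^\circ$ contains both $\bout$ and $\bXXX^j$ while $\aA\subset[-\tfrac32-3\kappa,3)$, one has $\sup\aA-\inf\aA\in[\tfrac52,5]$, hence $v\in[0.115,0.141]$, whereas $\fc/2=\bar\kappa=0.1$. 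The estimate $\sup_{r\geq1}r^{v}e^{-hr}\asymp h^{-v}$ is sharp (take $\bracket{x}\approx(t-s)^{-1}$), so this is not an artefact of the method: with the stated constants the quantity in \eqref{eq:W-1} behaves like $(t-s)^{\fc/2-v}$ and blows up. The obstruction is moreover structural rather than a matter of tuning: \eqref{eq:W-2}, \eqref{eq:W-4} and \eqref{eq:W-5} already force $b^{(2)}_{\mathrm R}-b^{(1)}_{\mathrm L}\geq9-\bar\kappa^3$, so no other admissible choice of the $b$'s lowers $v$ below $\fc/2$; one needs $\fc\gtrsim2\bar\kappa^{2}(\sup\aA-\inf\aA+9)$, i.e.\ roughly $\fc\geq\tfrac{27}{10}\bar\kappa$ rather than $2\bar\kappa$. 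The paper's proof asserts the identical inequality $\bar\kappa^{2}(5-b_{\mathrm L}+b_{\mathrm R}+2\bar\kappa^{3})\leq\fc/2$ at this point (numerically $0.14\leq0.1$), so the difficulty is inherited rather than introduced by you; but since the entire content of the lemma is that these specific numerical constants are compatible, a proof that postpones precisely this arithmetic, and would find it false upon execution, is not complete: you must either exhibit a sharper bound on $v$ (none is available with these definitions) or record that the value of $\fc$ (equivalently of $b^{(2)}_{\mathrm R}$ relative to $\fc$) has to be adjusted.
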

\begin{proof}
	It is evident that~\eqref{eq:W-0} and~\eqref{eq:W-6} hold true. Set $b_{\mathrm{L}}\eqdef b^{(1)}_{\mathrm{L}}\wedge b^{(2)}_{\mathrm{L}}$ and $b_{\mathrm{R}}\eqdef b^{(1)}_{\mathrm{R}}\vee b^{(2)}_{\mathrm{R}}$. Using the fact that $\sup\aA-\inf\aA\leq5$ we get
	\begin{equ}
		\frac{\overline\weight_{\mathrm{L}}(t,x)}{\weight_\Pi(x)^2\underline\weight_{\mathrm{R}}(s,x)}\lesssim 
		(t-s)^{-\bar\kappa^2(5-b_{\mathrm{L}}+b_{\mathrm{R}}+2\bar\kappa^3)}\,,
		\qquad
		\frac{\overline\weight_{\mathrm{L}}^{(1)}(t,x)}{\weight_\Pi(x)^2\underline\weight_{\mathrm{L}}^{(1)}(s,x)}\lesssim 
		(t-s)^{-\bar\kappa^2(5+2\bar\kappa^3)}\,.
	\end{equ}
	This implies \eqref{eq:W-1} and \eqref{eq:W-7} since
	\begin{equ}
		\bar\kappa^2(5-b_{\mathrm{L}}+b_{\mathrm{R}}+2\bar\kappa^3)
		\leq\fc/2\,,
		\qquad
		\bar\kappa^2(5+2\bar\kappa^3)\leq\fc/2\,.
	\end{equ}
	We observe that the conditions~\eqref{eq:W-2}-\eqref{eq:W-5} are satisfied if for all $\beta,\bar\beta\in\aA$ we have
	\begin{equs}
		\weight^{(i)}_{\mathrm{L}}(\beta;t,x) &\leq \weight_\Pi(x) 
		\weight^{(i)}_{\mathrm{R}}(\bar\beta;t,x)\quad\text{if}\quad \bar\beta\leq\beta-2\,,
		\\
		\weight^{(2)}_{\mathrm{L}}(\beta;t,x) &\leq \weight_\Pi(x) \weight^{(1)}_{\mathrm{R}}(\bar\beta;t,x)\,,
		\\
		\weight^{(i)}_{\mathrm{R}}(\bar\beta;t,x) &\leq \weight_{\mathrm{S}}(x)^2\weight^4_\Pi(x) 
		\weight^{(i)}_{\mathrm{L}}(\beta;t,x)\quad\text{if}\quad \bar\beta\geq\beta+\inf\aA\,.
	\end{equs}
	The above bounds are implied by
	\begin{equs}
		(\beta+b^{(i)}_{\mathrm{L}}) &\geq (\bar\beta+b^{(i)}_{\mathrm{R}})+\bar\kappa^3
		\quad\text{if}\quad \bar\beta\leq\beta-2\,,
		\\
		(\beta+b^{(2)}_{\mathrm{L}}) &\geq (\bar\beta+b^{(1)}_{\mathrm{R}})+\bar\kappa^3\,,	
		\\
		(\bar\beta+b^{(i)}_{\mathrm{R}}) &\geq (\beta+b^{(i)}_{\mathrm{L}})+2\bar\kappa+4\bar\kappa^3
		\quad\text{if}\quad \bar\beta\geq\beta+\inf\aA\,,
	\end{equs}
	which are certainly true if
	\begin{equs}
		(\beta+b^{(i)}_{\mathrm{L}}) &\geq (\beta-2+b^{(i)}_{\mathrm{R}})+\bar\kappa^3\,,
		\\
		(\beta+b^{(2)}_{\mathrm{L}}) &\geq (\beta+5+b^{(1)}_{\mathrm{R}})+\bar\kappa^3\,,	
		\\
		(\beta+\inf\aA+b^{(i)}_{\mathrm{R}}) &\geq (\beta+b^{(i)}_{\mathrm{L}})+2\bar\kappa+4\bar\kappa^3\,.
	\end{equs}
	The above bounds are satisfied since $2\bar\kappa+5\bar\kappa^3\leq 1/4 \leq 2+\inf\aA$.
\end{proof}

\subsection{Singular modelled distributions}\label{sec:modelled_distributions}

Given a point $x\in\R^3$, we write $|x|$ for the supremum norm, 
and denote by $B(x,r)$ the open ball centred at $z$ of radius $r>0$.
Given a space-time point $z=(t,x)\in\R^{1+3}$, we write $|z|\eqdef \max\{t^{1/2},|x|\}$ for the parabolic distance, 
and denote by $B(z,r)$ the open parabolic ball centred at $z$ of radius $r>0$.
For $k\in\N_0^{1+3}$ we write $|k|\eqdef 2k_0+k_1+k_2+k_3$.
We denote by $\bB$ the set of functions over space-time $\R^{1+3}$ supported in the unit parabolic ball centred 
at the origin with the $\alpha$-H{\"o}lder norm bounded by $1$ for some fixed $\alpha>3/2-3\kappa$. We denote by 
$\mathcal{B}_-$ the subset of $\mathcal{B}$ consisting of functions supported in the half-space $\{(t,x)\,|\,t \leq 0\}$.
For $\psi\in C(\R^{1+3})$, $(t,x)\in\R^{1+3}$ and $r>0$ we define $\psi^r_{t,x}\in C(\R^{1+3})$ by $\psi^r_{t,x}(s,y)\eqdef\frac{1}{r^5}\psi(\frac{s-t}{r^2},\frac{y-x}{r})$. 
We note the following result about the kernel $K$ of the heat semigroup with unit mass $t\mapsto \exp(t(\Laplace -1))$.

\begin{lem}\label{lem:kernel_decomposition}
	The heat kernel $K$ with unit mass is regularizing of order $2$, that is
	\begin{equ}
		K=K^++K^-=\sum_{n\geq0} K_n + K^-\,,
	\end{equ}
	where the kernels $K^\pm$, $(K_n)_{n\in\N_0}$ satisfy Assumptions 5.1 and 5.4 from \cite{Hai14} and for all $t\in\R$ the function $x\mapsto K^\pm(t,x)$ depends only on $|x|$.
\end{lem}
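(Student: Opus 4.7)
The plan is to carry out the standard kernel decomposition from \cite[Section~5]{Hai14} for the massive three-dimensional heat kernel, chosen so that spatial radial symmetry is manifest. Explicitly, $K(t,x) = \one_{t>0}(4\pi t)^{-3/2}\exp(-|x|_2^2/(4t)-t)$, which is smooth on $\R^{1+3}\setminus\{(0,0)\}$, depends on $x$ only through $|x|_2$, vanishes to all orders across $\{t=0\}\setminus\{(0,0)\}$, and in a parabolic neighbourhood of the origin satisfies $|D^k K(z)| \lesssim |z|^{-3-|k|}$ with $|z|=\max(t^{1/2},|x|_2)$ and $|k|$ the parabolic norm of a space-time multi-index $k\in\N_0^{1+3}$.

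First I would fix a smooth cutoff $\varphi$ depending only on $|z|$ with $\varphi \equiv 1$ on $\{|z|\leq 1/2\}$ and $\varphi \equiv 0$ on $\{|z|\geq 1\}$, and set $K^+ \eqdef \varphi K$ and $K^- \eqdef (1-\varphi)K$. Since $\varphi$ and $K$ both depend on $x$ only through $|x|_2$, so do $K^\pm(t,\bigcdot)$; and $K^+$ is supported in the unit parabolic ball about the origin. The complementary piece $K^-$ vanishes in a neighbourhood of $(0,0)$, so it is smooth on all of $\R^{1+3}$, and by the $e^{-t}$ factor together with the Gaussian tail it (and all of its derivatives) decay faster than any polynomial, which yields Assumption~5.4 of \cite{Hai14}.

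For the dyadic part I would pick a smooth partition $\varphi = \sum_{n \geq 0}\psi_n$ on $\{|z|<1\}$ with each $\psi_n$ depending only on $|z|$ and supported in the parabolic annulus $\{2^{-n-1}\leq |z| < 2^{-n}\}$ (absorbing the boundary term at $|z|\sim 1$ into $\psi_0$), and set $K_n \eqdef \psi_n K$. On the support of $K_n$ one has $|z|\sim 2^{-n}$, so $|D^k K_n(z)| \lesssim 2^{(3+|k|)n}$ follows immediately from the pointwise bounds on $K$ and its derivatives. This is exactly the scaling bound from Assumption~5.1 with order of regularisation $\beta=2$ in parabolic scaling $|\mathfrak{s}|=5$, and radial symmetry in $x$ of each $K_n(t,\bigcdot)$ is preserved by construction.

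The only real subtlety will be verifying the polynomial annihilation conditions $\int K_n(z)\, z^k\,\md z = 0$ for $|k|$ below the truncation order of the regularity structure, which fail for the raw truncations $\psi_n K$. I would handle this as in \cite[Lemma~5.5]{Hai14}: correct each $K_n$ by subtracting a smooth function supported at scale $2^{-n}$ and radial in $x$ whose relevant moments match those of $\psi_n K$, then check that the sum over $n$ of these corrections is smooth and of rapid decay so that it can be absorbed into $K^-$ without destroying its properties. This step is the main obstacle, in the sense that one must keep the corrections radial in $x$ and control the absorbed remainder, but no new ideas beyond \cite{Hai14} are required; the resulting decomposition $K = K^+ + K^- = \sum_{n\geq 0} K_n + K^-$ then satisfies both Assumptions~5.1 and~5.4 with $K^\pm(t,\bigcdot)$ radial, which is what is claimed.
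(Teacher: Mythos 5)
Your route is genuinely different from the paper's. The paper does not redo the dyadic decomposition for the massive kernel from scratch: it takes the massless decomposition $\hat K=\sum_n\hat K_n+\hat K^-$ already provided by \cite[Lemma~5.5]{Hai14} (rotationally averaged to make it isotropic), writes $K=K\ast\delta=K\ast(\d_t-\Delta)\hat K$, and convolves a naive truncation of the massive kernel against $(\d_t-\Delta)\hat K_m$. The moment conditions of Assumption~5.4 are then inherited for free, since $\int z^k(G\ast H)(z)\,\md z=0$ whenever $G$ annihilates the monomials up to that order. Your approach instead reproves Hairer's Lemma~5.5 directly for the massive kernel while keeping everything radial; this is more self-contained and avoids the convolution bookkeeping, at the price of having to redo the moment-correction argument by hand. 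The truncation, scaling bounds, radiality, and rapid decay of $K^-$ are all handled correctly in your write-up.

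There is, however, one step that fails as literally described. You propose to subtract from each $K_n=\psi_nK$ a corrector supported at scale $2^{-n}$ whose moments match those of $\psi_nK$, and then to "check that the sum over $n$ of these corrections is smooth" so it can be absorbed into $K^-$. It is not smooth: the $k$-th moment of $\psi_nK$ is of size $2^{-(\beta+|k|_{\mathfrak s})n}$ while a corrector at scale $2^{-n}$ normalised to have unit $k$-th moment has sup norm of order $2^{(|\mathfrak s|+|k|_{\mathfrak s})n}$, so each correction has sup norm of order $2^{(|\mathfrak s|-\beta)n}=2^{3n}$ and the series diverges at the origin. The actual argument in \cite[Lemma~5.5]{Hai14} redistributes the moments telescopically across consecutive scales: setting $b_{n,k}=\sum_{m\geq n}\int z^kK_m(z)\,\md z$ and correcting $K_n$ by $-b_{n,k}\phi_{n,k}+b_{n+1,k}\phi_{n+1,k}$, each corrected $K_n$ annihilates polynomials and still satisfies the Assumption~5.1 bounds (because $|b_{n,k}|\lesssim 2^{-(\beta+|k|_{\mathfrak s})n}$), and the only leftover is the single scale-one function $\sum_k b_{0,k}\phi_{0,k}$, which is smooth, compactly supported, non-anticipative, and can be symmetrised over rotations before being pushed into $K^-$. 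With that replacement your proof goes through; note also that only $K^\pm$, not the individual $K_n$, are required to be radial, which simplifies the choice of the correctors $\phi_{n,k}$.
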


Recall that a model is a pair of maps 
\begin{equs}
	\Pi\,:\,\R^{1+3}\ni z&\mapsto \Pi^{z}\in L(\tT,\sS'(\R^{1+3}))\,,
	\\
	\Gamma\,:\,(\R^{1+3})^2\ni (z,\bar z)&\mapsto\Gamma^{z;\bar z}\in\gG\,,
\end{equs}
satisfying the conditions specified in~\cite[Definition~2.17]{Hai14}. The space of models is equipped with the topology generated by the family of seminorms $\|(\Pi,\Gamma)\|_\fK$ indexed by compact sets $\fK\subset\R^{1+3}$ (see Definition~\ref{defn:norm_model} below).

\begin{defn}\label{def:model}
	A model $(\Pi,\Gamma)$ is continuous if $\Pi^{z}\tau\in C(\R^{1+3})$ for all $z\in\R^{1+3}$ and $\tau\in\tT$. We say that a model $(\Pi,\Gamma)$ is admissible if
	\begin{equs}
		(\Pi^{z} X^k)(\bar z) &= (\bar z - z)^k\,,
		\\
		(\Pi^{z} \mathcal{I} \tau)(\bar z) &= (\Pi^{z} \tau, K^+(\bar z - \bigcdot)) 
		- \sum_{|k| < |\mathcal{I}\tau|} \frac{(\bar z - z)^k}{k!} (\Pi^{z} \tau, \partial^k K^+(z -\bigcdot))
	\end{equs}
	for all $z,\bar z\in\R^{1+3}$ and denote by $\mM$ the set of admissible models.
\end{defn}

\begin{defn}\label{defn:norm_model}	
	Given $\weight_\Pi\in C(\R^3,(0,1])$, a (typically non-compact) closed set $\fK\subset\R^{1+3}$ and a model $(\Pi,\Gamma)$, we define
	its ``weighted norm'' by
	\begin{equs}
		\|\Pi\|_{\fK,\weight_\Pi} &\eqdef
		\sup_{\tau\in\tT^\circ}\sup_{\psi\in\bB}\sup_{r\in(0,1]}\sup_{z\in\fK}r^{-|\tau|}\,\weight_\Pi(x)\,
		|(\Pi^{z} \tau)(\psi^r_{z})|\;,\\
		\|\Gamma\|_{\fK,\weight_\Pi}&\eqdef
		\sup_{\tau\in \tT^\circ}\sup_{\beta< |\tau|}\sup_{\substack{z,\bar z \in \fK\\0<|z-\bar z|\leq 1}}\weight_\Pi(x)\,\frac{\|\Gamma^{z;\bar z} \tau\|_{\beta}}{|z-\bar z|^{|\tau|-\beta}} \;,
	\end{equs}
and we set $\|(\Pi,\Gamma)\|_{\fK,\weight_\Pi}\eqdef\|\Pi\|_{\fK,\weight_\Pi}+\|\Gamma\|_{\fK,\weight_\Pi}$. We omit $\weight_\Pi$ in the notation if $\weight_\Pi=1$.  We denote by $\mM(\weight_\Pi)$ the set of $(\Pi,\Gamma)\in\mM$ such that  $\|(\Pi,\Gamma)\|_{T, \weight_\Pi}\eqdef\|(\Pi,\Gamma)\|_{\bar\oO_T,\weight_\Pi}<\infty$ for all $T>0$, where $\bar\oO_T=[-1,T]\times\R^3$.
\end{defn}

Given $\gamma\in \R$ and a model $(\Pi,\Gamma)$, the space of modelled distributions $\dD^\gamma=\dD^\gamma(\Gamma)$ was defined in~\cite[Definition~3.1]{Hai14}. Recall that $\dD^\gamma$ consists of functions $f\,:\,\R^{1+3}\to\tT_{<\gamma}$ such that $\VERT{f}_{\gamma;\fK}<\infty$ for every compact set $\fK\subset\R^{1+3}$. When comparing $f\in \dD^\gamma(\Gamma)$ and $\bar f\in \dD^\gamma(\bar\Gamma)$ for two different models $(\Pi,\Gamma)$ and $(\bar\Pi,\bar\Gamma)$ we use the quantity $\VERT{f;\bar f}_{\gamma;\fK}$ introduced in~\cite[Remark~3.6]{Hai14}. We denote by $\dD_+^\gamma=\dD_+^\gamma(\Gamma)$ the vector space of functions $f\,:\,\R_>\times\R^3\to\tT_{<\gamma}$ such that $\VERT{f}_{\gamma;\fK}<\infty$ for every compact set $\fK\subset\R_>\times\R^3$. We identify elements of $\dD_+^\gamma$ with functions $f\,:\,\R^{1+3}\to\tT_{<\gamma}$ vanishing on $\R_\leq\times\R^3$. The space of singular modelled distributions $\dD^{\gamma,\eta}=\dD^{\gamma,\eta}(\Gamma)$ consists of $f\in \dD_+^\gamma$ such that $\VERT{f}_{\gamma,\eta;\fK}<\infty$ for every compact set $\fK\subset\R^{1+3}$ with the seminorm introduced in~\cite[Definition~6.2]{Hai14}. Note that elements of $\dD^{\gamma,\eta}$ are allowed to be singular at the time zero hypersurface with the blow-up rate controlled by the parameter $\eta\in\R$. In the following definition we introduce seminorms that allow to control the growth in space of elements of $\dD^{\gamma,\eta}$.

\begin{defn}\label{def:modelled_distributions}
	Let $\gamma,\eta \in \R$, $\fK\subset \R_\geq\times\R^3$ and $\weight\,:\,\{1,2\}\times\aA\to C(\R^{1+3},(0,1])$. Given a model $(\Pi,\Gamma)$ and a map $f: \R_>\times\R^3 \to \tT_{<\gamma}$ we define
	\begin{equ}
		\VERT{f}_{\gamma,\eta;\fK,\weight} \eqdef \|f\|_{\gamma,\eta;\fK,\weight}  
		+ [f]^{\mathrm{time}}_{\gamma,\eta;\fK,\weight}
		+ [f]^{\mathrm{space}}_{\gamma,\eta;\fK,\weight}\,,
	\end{equ}
	where
	\begin{equs}{}
		\|f\|_{\gamma,\eta;\fK,\weight} 
		&\eqdef \sup_{\beta < \gamma}\sup_{(t,x)\in \fK} \weight^{(1)}(\beta;t,x)\frac{\|f(t,x)\|_\beta}{t^{((\eta-\beta)\wedge0)/2}}\,,
		\\{}
		[f]^{\mathrm{time}}_{\gamma,\eta;\fK,\weight}
		&\eqdef \sup_{\beta < \gamma}\sup_{\substack{(t,x),(s,x)\in\fK\\s<t\leq 2s}}  \weight^{(1)}(\beta;t,x) \frac{\|f(t,x) - \Gamma^{t,x;s,x} f(s,x)\|_\beta}{(t-s)^{(\gamma - \beta)/2} s^{(\eta-\gamma)/2}}\,,
		\\{}	
		[f]^{\mathrm{space}}_{\gamma,\eta;\fK,\weight}
		&\eqdef \sup_{\beta < \gamma}\sup_{\substack{(t,x),(t,y)\in\fK\\0<|x-y|^2\leq t}}  \weight^{(2)}(\beta;t,x) \frac{\|f(t,x) - \Gamma^{t,x;t,y} f(t,y)\|_\beta}{|x-y|^{\gamma - \beta} t^{(\eta-\gamma)/2}}\,.
	\end{equs}
	Given models $(\Pi,\Gamma),(\bar\Pi,\bar\Gamma)$ and maps $f,\bar f: \R^{1+3} \rightarrow \tT_{<\gamma}$ we define
	\begin{equ}
		\VERT{f;\bar f}_{\gamma,\eta;\fK,\weight} \eqdef \|f-\bar f\|_{\gamma,\eta;\fK,\weight}  
		+ [f;f]_{\gamma,\eta;\fK,\weight}\,,
	\end{equ}
	where
	\begin{equs}{}
		[f; \bar f]_{\gamma,\eta;\fK,\weight}
		&\eqdef
		\sup_{\beta < \gamma}\sup_{\substack{(t,x),(s,y)\in\fK\\0<|(t,x)-(s,y)|^2\leq s\leq t}} 
		\weight^{(1)}(\beta;t,x)
		\\
		&\times\frac{\|f(t,x) - \bar f(t,x) - \Gamma^{t,x;s,y} f(s,y) + \bar\Gamma^{t,x;s,y} \bar f(s,y)  \|_\beta}
		{ |(t,x)-(s,y)|^{\gamma - \beta}  s^{(\eta-\gamma)/2}}\,.
	\end{equs}
	For $T>0$ we write
	\begin{equ}
		\VERT{f}_{\gamma,\eta;T, \weight}=\VERT{f}_{\gamma,\eta;\oO_T,\weight}\,,
		\qquad
		\VERT{f;\bar f}_{\gamma,\eta;T, \weight}=\VERT{f;\bar f}_{\gamma,\eta;\oO_T,\weight}\,,	
	\end{equ}
	where $\oO_T=[0,T]\times\R^3$. We also use the above notation with $\weight\in C(\R^3,(0,1])$ by identifying it with a constant function $\weight\,:\,\{1,2\}\times\aA\to C(\R^{1+3},(0,1])$. We omit $\weight$ in the notation if $\weight=1$.
	
	Given $\weight\in C(\R^3,(0,1])$ and $T>0$ we define the space of weighted singular modelled distributions $\dD^{\gamma,\eta}_{T, \weight}(\fF,\Gamma)$ as the set of maps $f:(0,T]\times\R^3\to \fF\subset \tT_{<\gamma}$ such that $\VERT{f}_{\gamma,\eta;T, \weight}<\infty$. We omit $\fF$ and $\Gamma$ if they are clear from the context.
\end{defn}

\begin{rmk}
	For a fixed compact region $\fK$, the norm $\VERT{\bigcdot}_{\gamma,\eta;\fK}$ is equivalent to the norm in the space of singular modelled distributions introduced in~\cite[Definition~6.2]{Hai14}. 
\end{rmk}

\begin{rmk}
	All modelled distributions that will appear below belong to $\dD^{\gamma,\eta}_{T, \weight}(\tT_{[\eta,\gamma)})$ with $\weight\in C(\R^3,(0,1])$ of polynomial type. We will use the norms $\VERT{\bigcdot}_{\gamma,\eta;T,\weight}$ with a~general weight $\weight\,:\,\{1,2\}\times\aA\to C(\R^{1+3},(0,1])$ but we will always assume that $T\in(0,1]$. Hence, our assumptions about the weights involve only $t\in[0,1]$. We do not treat separately the increments in time and space in the definition of $[f;\bar f]_{\gamma,\eta;\fK,\weight}$ because when comparing two singular modelled distributions we will always use time-independent weights $\weight\in C(\R^3,(0,1])$.
\end{rmk}

\begin{rmk}\label{rmk:norm_cubes}
	If $\weight\in C(\R^3,(0,1])$ satisfies~\eqref{eq:W-0}, then the norm $\VERT{\bigcdot}_{\gamma,\eta;T, \weight}$ in $\dD^{\gamma,\eta}_{T, \weight}$ is equivalent to the norm
	\begin{equ}
		\sup_{n \in \Z^3} \weight(n) \,\VERT{\bigcdot}_{\gamma,\eta;[0,T]\times B(n,1)}\,.
	\end{equ}
\end{rmk}
	
\begin{rmk}\label{rmk:embedding_modelled_distributions}
	Let $\gamma\in\R$ and $\eta_1\geq\eta_2$. For $\weight\,:\,\{1,2\}\times \aA\to C(\R^{1+3},(0,1])$ and $f \in \dD^{\gamma,\eta_1}_{T, \weight}(\tT_{[\eta_1,\gamma)})$ we have $f\in \dD^{\gamma,\eta_2}_{T, \weight\weight_\Pi}$ and $\VERT{f}_{\gamma,\eta_2;T, \weight\weight_\Pi}\leq T^{\frac{\eta_1-\eta_2}{2}}\, \VERT{f}_{\gamma,\eta_1;T, \weight}$.
\end{rmk}	

We now discuss properties of weighted singular modelled distributions, including embeddings, compactness, product estimates, and Schauder estimates. The first three properties require minimal assumptions on the weights. For the Schauder estimates, we establish two separate results: one for polynomial weights and one for exponential weights. The estimate for polynomial weights will be used to prove existence of solutions, while the estimate for exponential weights will serve to establish uniqueness.

\begin{lem}\label{lem:embedding_modelled_distributions}
	Let $\gamma_2\geq\gamma_1\geq \eta$ and $\weight_\Pi,\weight_{\mathrm{S}}\in C(\R^3,(0,1])$, $\weight_{\mathrm{L}}\,:\,\{1,2\}\times \aA\to C(\R^{1+3},(0,1])$ satisfy~\eqref{eq:W-0} and~\eqref{eq:W-7}. We have
	\begin{equs}
		\VERT{\qQ_{<\gamma_1} f}_{\gamma_1,\eta;T, \weight_{\mathrm{S}}\weight_\Pi}
		\lesssim&
		(1+\|(\Pi,\Gamma)\|_{T, \weight_\Pi})\,\VERT{f}_{\gamma_2,\eta;T, \weight_{\mathrm{S}} }\,,
		\\
		\VERT{\qQ_{<\gamma_1}f}_{\gamma_1,\eta;T,\weight_\mathrm{L}}
		\lesssim&
		(1+\|(\Pi,\Gamma)\|_{T, \weight_\Pi})\,\VERT{f}_{\gamma_2,\eta;T,\weight_\mathrm{L}}
	\end{equs}
	uniformly over $f\in\dD^{\gamma_2,\eta}_{T,\weight_{\mathrm{S}}}$.
\end{lem}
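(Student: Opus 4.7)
I plan to estimate the three pieces of $\VERT{\qQ_{<\gamma_1}f}_{\gamma_1,\eta;T,\weight}$ separately. The pointwise piece is immediate: since $\qQ_\beta\qQ_{<\gamma_1}=\qQ_\beta$ for $\beta<\gamma_1$, one has $\|\qQ_{<\gamma_1}f(t,x)\|_\beta=\|f(t,x)\|_\beta$, and the weight inequality $\weight_\mathrm{S}\weight_\Pi\le\weight_\mathrm{S}$ (first bound) or equal weights on both sides (second bound), together with $\gamma_2\ge\gamma_1$, gives the pointwise estimate with no model norm involved.

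For the time and space increments, the central algebraic identity, obtained from the fact that $\Gamma^{t,x;s,y}$ preserves $\tT_{<\gamma_1}$, is
\begin{equ}
\qQ_{<\gamma_1}f(t,x)-\Gamma^{t,x;s,y}\qQ_{<\gamma_1}f(s,y)=\qQ_{<\gamma_1}\bigl(f(t,x)-\Gamma^{t,x;s,y}f(s,y)\bigr)+\qQ_{<\gamma_1}\Gamma^{t,x;s,y}\qQ_{\geq\gamma_1}f(s,y)\,.
\end{equ}
The first summand is controlled by the corresponding $\gamma_2$-level increment of $f$: rescaling from the $\gamma_2$- to the $\gamma_1$-normalisation costs a factor $\bigl((t-s)/s\bigr)^{(\gamma_2-\gamma_1)/2}\le 1$ for the time increment (using $t-s\le s$) and $\bigl(|x-y|^2/t\bigr)^{(\gamma_2-\gamma_1)/2}\le 1$ for the space increment (using $|x-y|^2\le t$), yielding a bound by $\VERT{f}_{\gamma_2,\eta;T,\weight}$.

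For the second summand I expand $\qQ_{\geq\gamma_1}f=\sum_{\beta'\in\aA\cap[\gamma_1,\gamma_2)}\qQ_{\beta'}f$ and apply the definition of $\|\Gamma\|_{T,\weight_\Pi}$ together with the pointwise part of $\VERT{f}_{\gamma_2,\eta;T,\weight}$ to get
\begin{equ}
\|\Gamma^{t,x;s,y}\qQ_{\beta'}f(s,y)\|_\beta\lesssim\|\Gamma\|_{T,\weight_\Pi}\,\weight_\Pi(x)^{-1}\,\weight^{(1)}(\beta';s,y)^{-1}\,\VERT{f}_{\gamma_2,\eta;T,\weight}\,|(t,x)-(s,y)|^{\beta'-\beta}\,s^{(\eta-\beta')/2}\,.
\end{equ}
After dividing by the appropriate denominator in each increment, the parabolic part collapses to $\bigl((t-s)/s\bigr)^{(\beta'-\gamma_1)/2}\le 1$ (time) or $\bigl(|x-y|^2/t\bigr)^{(\beta'-\gamma_1)/2}\le 1$ (space), since $\beta'\ge\gamma_1$. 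The remaining weight ratio $\weight(\beta;t,x)/\bigl(\weight_\Pi(x)\weight^{(1)}(\beta';s,y)\bigr)$ is, for the first inequality, $\weight_\mathrm{S}(x)/\weight_\mathrm{S}(y)$, bounded by (W-0) in the spatial case (since $|x-y|^2\le t\le T\le 1$) and equal to $1$ in the temporal case; for the second, it is controlled by the two parts of (W-7), its first form for the space increment (combined with (W-0) to translate $\weight_\mathrm{L}^{(1)}(\beta';t,\bigcdot)$ from $y$ to $x$), its second form for the time increment.

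The delicate step I expect is the temporal increment in the second inequality, where (W-7) controls the ratio of time-dependent weights only up to a deterioration $(t-s)^{-\fc/2}$. Closing the bound requires combining this loss with the gain $(t-s)^{(\beta'-\gamma_1)/2}$ produced by the $\Gamma$-bound after matching the natural $(t-s)^{(\gamma_1-\beta)/2}$ of the norm, using $t-s\le s\le 1$, and exploiting the discrete structure of $\aA$ to ensure that $\beta'-\gamma_1$ stays bounded away from zero in the relevant range so that the combined exponent on $(t-s)$ and $s$ remains non-negative.
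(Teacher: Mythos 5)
Your decomposition and your treatment of the pointwise part, of the first summand, and of the spatial increment of the second summand all match the paper's (very terse) proof, which likewise reduces everything to bounding $\Gamma^{t,x;s,y}\qQ_{\beta'}f(s,y)$ for $\beta'\in\aA\cap[\gamma_1,\gamma_2)$ via the weighted model norm and the pointwise piece of $\VERT{f}_{\gamma_2,\eta;T,\weight}$, and then invokes \eqref{eq:W-0} and \eqref{eq:W-7} for the weight ratios. Up to that point your argument is correct and in fact supplies details the paper omits.

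The gap is exactly at the step you flag, and your proposed resolution does not close it. For the temporal increment of the second bound you must control
\begin{equ}
\frac{\weight^{(1)}_{\mathrm L}(\beta;t,x)}{\weight_\Pi(x)\,\weight^{(1)}_{\mathrm L}(\beta';s,x)}\,\Bigl(\frac{t-s}{s}\Bigr)^{(\beta'-\gamma_1)/2},
\end{equ}
and the second part of \eqref{eq:W-7} bounds the weight ratio only by a constant times $(t-s)^{-\fc/2}$ (this loss is genuine for the concrete weights of Lemma~\ref{lem:weights_solution}: $\bracket{x}^{\bar\kappa^2(\beta'-\beta)+\bar\kappa^5}e^{-(t-s)\bracket{x}}$ peaks at $\bracket{x}\sim(t-s)^{-1}$ with value of order $(t-s)^{-\bar\kappa^2(\beta'-\beta)-\bar\kappa^5}$). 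The resulting quantity is $(t-s)^{(\beta'-\gamma_1-\fc)/2}\,s^{(\gamma_1-\beta')/2}$, whose two exponents sum to $-\fc/2<0$; taking $t=2s$ it equals $s^{-\fc/2}$ up to constants, so it is not uniformly bounded, no matter how far $\beta'$ sits above $\gamma_1$. Discreteness of $\aA$ does not rescue this: even under the favourable assumption $\beta'-\gamma_1\geq\fc$ one only gets $s^{-\fc/2}$, and in the actual application ($\gamma_1=\f32-5\kappa$, with $\f32-3\kappa\in\aA$) one has $\beta'-\gamma_1=2\kappa\ll\fc$ anyway. So the claim that "the combined exponent on $(t-s)$ and $s$ remains non-negative" is arithmetically false. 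To be fair, the paper disposes of this point with the single phrase "subsequently use \eqref{eq:W-7}" and does not exhibit the absorption mechanism either; but as written your argument (like that sketch) establishes the second estimate only for the pointwise and spatial-increment seminorms, and the temporal-increment seminorm requires a genuinely different ingredient or a weakening of the statement (for instance a loss of $\fc$ in the exponents, as in Theorem~\ref{thm:integration_K_exp}).
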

\begin{proof}
	To prove the second bound we note that 
	\begin{equs}
		\frac{\|\Gamma^{t,x;s,y}\qQ_{\gamma} f(s,y)\|_\beta}{{|(t,x)-(s,y)|^{\gamma - \beta} s^{(\eta-\gamma)/2}}} 
		\leq
		\frac{\|\Gamma^{t,x;s,y}\qQ_{\gamma} f(s,y)\|_\beta}{{|(t,x)-(s,y)|^{\gamma_1 - \beta} s^{(\eta-\gamma_1)/2}}} 
		\\
		\lesssim 
		\weight_{\mathrm{L}}^{(1)}(\gamma;s,x)^{-1}\,\weight_\Pi(x)^{-1}\,
		\|f\|_{\gamma_2,\eta;T, \weight_{\mathrm{L}}} \|(\Pi,\Gamma)\|_{\mM(\weight_\Pi)}
	\end{equs}
	uniformly over $\gamma\in[\gamma_1,\gamma_2)$, $\beta<\gamma_1$ and $(t,x),(s,y)\in\R^{1+3}$ such that $0<s<t$ and $|(t,x)-(s,y)|^2\leq s$ and subsequently use~\eqref{eq:W-7}. The proof of the first bound is based on an analogous estimate with $\weight_{\mathrm{L}}$ replaced by $\weight_{\mathrm S}$.
\end{proof}

\begin{lem}
\label{le:modelled_distribution_compactness_local}
	Let $0<\bar\gamma<\gamma$ be such that $\tT_{[\bar\gamma,\gamma)}=\emptyset$ and $(\Pi_n, \Gamma_n)_{n\in\N_+}$ be a sequence of models converging to $(\Pi, \Gamma)$. Suppose that
	$f_n \in \dD_+^{\gamma}(\Gamma_n)$ are such that $\VERT{f_n}_{\gamma;\fK}$ is uniformly bounded in $n\in\N_+$ for every compact set $\fK\subset\R_>\times\R^3$. Then there exists a sequence $(n_k)_{k\in\N_+}$ and $f \in \dD_+^\gamma(\Gamma)$ such that $\lim_{k\to\infty}\VERT{f_{n_k};f}_{\bar\gamma;\fK}=0$ for every compact set $\fK\subset\R_>\times\R^3$.
\end{lem}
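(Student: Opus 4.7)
The strategy is a standard Arzelà--Ascoli argument in the components of the modelled distribution, followed by an interpolation step to upgrade pointwise convergence to convergence in the modelled distribution seminorm at the (weaker) level $\bar\gamma$. Since the regularity structure is finite-dimensional and $\tT_{[\bar\gamma,\gamma)}=\emptyset$, the maps $f_n$ take values in the finite-dimensional space $\tT_{<\bar\gamma}$, so we may expand $f_n(z)=\sum_{\tau\in\tT^\circ,\,|\tau|<\bar\gamma}f_n^\tau(z)\,\tau$.

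First I would show that each coordinate function $f_n^\tau$ is uniformly bounded and equicontinuous on every compact $\fK\subset\R_>\times\R^3$. Boundedness is immediate from the uniform bound $\|f_n\|_{\gamma;\fK}\lesssim 1$. For equicontinuity at nearby points $z,\bar z\in\fK$, I would decompose
\begin{equ}
f_n(z)-f_n(\bar z) \;=\; \bigl(f_n(z)-\Gamma_n^{z;\bar z}f_n(\bar z)\bigr) \;+\; \bigl(\Gamma_n^{z;\bar z}-\id\bigr)f_n(\bar z)\,.
\end{equ}
The first term is controlled by $|z-\bar z|^{\gamma-|\tau|}\,\VERT{f_n}_{\gamma;\fK}$, while the second is bounded using the uniform bound on $\|\Gamma_n\|_{\fK}$ (from the convergence $(\Pi_n,\Gamma_n)\to(\Pi,\Gamma)$) together with the uniform bound on $\|f_n(\bar z)\|$. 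Since $\tT^\circ\cap\tT_{<\bar\gamma}$ is finite, a diagonal extraction over a countable exhaustion of $\R_>\times\R^3$ by compacts yields a subsequence (still denoted $f_n$) and functions $f^\tau$ such that $f_n^\tau\to f^\tau$ uniformly on compacts.

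Setting $f(z)\eqdef\sum_\tau f^\tau(z)\,\tau$, the limit $f$ inherits the modelled distribution bound at level $\gamma$ by passing to the limit in
\begin{equ}
\|f_n(z)-\Gamma_n^{z;\bar z}f_n(\bar z)\|_\beta \;\leq\; C\,|z-\bar z|^{\gamma-\beta}\,,
\end{equ}
using uniform convergence of the components together with the convergence $\Gamma_n^{z;\bar z}\to\Gamma^{z;\bar z}$. Hence $f\in \dD_+^\gamma(\Gamma)$.

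Finally, to obtain $\VERT{f_n;f}_{\bar\gamma;\fK}\to 0$, the sup-norm piece $\|f_n-f\|_{\bar\gamma;\fK}$ tends to zero by uniform convergence of the components. For the translation piece, I would use the following interpolation argument, which I expect to be the main (mild) obstacle. On the one hand, the triangle inequality and the uniform bound at level $\gamma$ give
\begin{equ}
\bigl\|(f_n-f)(z) - \Gamma_n^{z;\bar z}f_n(\bar z) + \Gamma^{z;\bar z}f(\bar z)\bigr\|_\beta \;\lesssim\; |z-\bar z|^{\gamma-\beta}\,,
\end{equ}
uniformly in $n$. On the other hand, writing the left-hand side as $(f_n-f)(z)-\Gamma_n^{z;\bar z}(f_n-f)(\bar z) - (\Gamma_n^{z;\bar z}-\Gamma^{z;\bar z})f(\bar z)$, uniform convergence of components and of $\Gamma_n$ show that on any compact it is bounded by some $\eps_n\to 0$. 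Combining the two bounds by splitting according to whether $|z-\bar z|\leq \delta$ or $|z-\bar z|>\delta$, one obtains an upper bound of order $\delta^{\gamma-\bar\gamma}+\eps_n\delta^{\beta-\bar\gamma}$ on the ratio appearing in $[f_n;f]_{\bar\gamma;\fK}$. Sending first $\delta\to 0$ slowly as $n\to\infty$ (e.g.\ $\delta_n=\eps_n^{1/(\gamma-\beta)}$) concludes the proof.
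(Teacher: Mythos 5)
Your proposal is correct and follows essentially the same route as the paper's proof: Arzelà--Ascoli plus diagonal extraction to get uniform convergence on compacts, passing to the limit to see $f\in\dD_+^\gamma(\Gamma)$, and then the same two-case split (near points handled by the uniform $\gamma$-level bound giving $\delta^{\gamma-\bar\gamma}$, far points by uniform convergence of $f_{n_k}$ and of the models giving $\eps_k\,\delta^{\beta-\bar\gamma}$). The only cosmetic differences are that you work componentwise and optimise $\delta=\delta_n$ explicitly, whereas the paper treats $f_n$ as a $\tT$-valued Hölder function and sends $k\to\infty$ before $\delta\to 0$.
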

\begin{proof}
	Fix a compact set $\fK\subset\R_>\times\R^3$. Uniform boundedness of $\VERT{f_n}_{\gamma;\fK}$ implies that $f_n$, viewed as a function $\fK\to\tT$, is uniformly bounded in $n\in\N_+$ in some H{\"o}lder space. Hence, by the Arzela--Ascoli theorem, there exists a sequence $(n_k)_{k\in\N_+}$ and $f\,:\,\fK\to\tT$ such that
	\begin{equ}
	\label{e:compactness1}
		\lim_{k\to\infty}\sup_{\beta\in\aA}\sup_{z\in\fK}\|f_{n_k}(z)-f(z)\|_\beta=0\;.
	\end{equ}
	From this, the convergence of the model and uniform boundedness of $\VERT{f_n}_{\gamma;\fK}$, it immediately follows that $\VERT{f}_{\gamma;\fK}<\infty$. Let us prove that $\lim_{k\to\infty}\VERT{f_{n_k};f}_{\bar\gamma;\fK}=0$. To this end, we have to show that if $\beta\in\aA$ and $\beta<\gamma$, then\footnote{Note that, by the assumption $\tT_{[\bar\gamma,\gamma)}=\emptyset$, we have $\beta<\bar\gamma$.}
	\begin{equ}
		\lim_{k\to\infty}\sup_{\substack{z,\bar z\in\fK\\0<|z-\bar z|\leq1}}
		\frac{\|f_{n_k}(z)-f(z)-\Gamma_{n_k}^{z,\bar z}f_{n_k}(\bar z)+\Gamma^{z,\bar z}f(\bar z)\|_{\beta}}{|z-\bar z|^{\bar\gamma-\beta}}=0\;.
	\end{equ}
	We distinguish between two cases based on whether $|z-\bar z|\leq\delta$ or not. In the first case, we have 
	\begin{equ}
		\sup_{\substack{z,\bar z\in\fK\\0<|z-\bar z|\leq\delta}}\frac{\|f_{n_k}(z)-\Gamma_{n_k}^{z,\bar z}f_{n_k}(\bar z)\|_{\beta}}{|z-\bar z|^{\bar\gamma-\beta}}
		\leq
		\delta^{\gamma-\bar\gamma}\,\VERT{f_{n_k}}_{\gamma;\fK}\;.
	\end{equ}
	The same bound holds with $f_{n_k},\Gamma_{n_k}$replaced by $f,\Gamma$. For the second case, we estimate 
	\begin{equ}
		\sup_{\substack{z,\bar z\in\fK\\\delta<|z-\bar z|\leq1}}
		\frac{\|\Gamma_{n_k}^{z,\bar z}f_{n_k}(\bar z)-\Gamma^{z,\bar z}f(\bar z)\|_{\beta}}{|z-\bar z|^{\bar\gamma-\beta}}
		\leq
		\delta^{-\bar\gamma+\beta}\sup_{\substack{z,\bar z\in\fK\\0<|z-\bar z|\leq1}}\|\Gamma_{n_k}^{z,\bar z}f_{n_k}(\bar z)-\Gamma^{z,\bar z}f(\bar z)\|_{\beta}
	\end{equ}
	and note that the right-hand side converges to $0$ as $k\to\infty$ by \eqref{e:compactness1} and the convergence of the model. We also have a similar estimate for $\|f_{n_k}(z)-f(z)\|_\beta$ and the triangle inequality. This finishes the proof of $\lim_{k\to\infty}\VERT{f_{n_k};f}_{\bar\gamma;\fK}=0$. In order to find a sequence $(n_k)_{k\in\N_+}$ and $f\,:\,\R_>\times\R^3\to\tT$ such that $\lim_{k\to\infty}\VERT{f_{n_k};f}_{\bar\gamma;\fK}=0$ for every compact set $\fK\subset\R_>\times\R^3$ we use a diagonal argument.
\end{proof}

\begin{lem}
\label{le:compactness_modelled_distribution}
	Let $0<\bar\gamma<\gamma$ be such that $\tT_{[\bar\gamma;\gamma)}=\emptyset$, $\bar\eta<\eta$, $T>0$, $\weight_\Pi,\weight,\bar\weight\in C(\R^3,(0,1])$ satisfy~\eqref{eq:W-0} and be such that $\lim_{|x|\to\infty}\frac{\bar\weight(x)}{\weight(x)}=0$,
	$(\Pi_n, \Gamma_n)_{n\in\N_+}$ be a sequence of models in $\mM(\weight_\Pi)$ converging to $(\Pi, \Gamma)$ and $f_n \in \dD^{\gamma,\eta}_{T, \weight}(\Gamma_n)$ be such that $\VERT{f_n}_{\gamma,\eta;T, \weight}$ is uniformly bounded in $n\in\N_+$. Then there exists a sequence $(n_k)_{k\in\N_+}$ and $f \in \dD^{\gamma,\eta}_{T, \weight}(\Gamma)$ such that
	\begin{equ}
		\lim_{k\to\infty}\VERT{f_{n_k}; f}_{\bar\gamma,\bar\eta;T, \bar\weight}=0\,.
	\end{equ}
\end{lem}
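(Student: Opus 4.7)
My plan is to combine the local compactness of Lemma~\ref{le:modelled_distribution_compactness_local} with a diagonal extraction, identify the pointwise limit as an element of $\dD^{\gamma,\eta}_{T,\weight}(\Gamma)$, and then upgrade the local convergence to convergence in the weaker weighted singular norm by exploiting both the spatial decay $\lim_{|x|\to\infty}\bar\weight(x)/\weight(x)=0$ and the strict improvement $\bar\eta<\eta$.

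First I would exhaust $\R_>\times\R^3$ by compact sets $(\fK_j)_{j\in\N_+}$, each staying a positive distance from the hyperplane $\{t=0\}$. The uniform bound on $\VERT{f_n}_{\gamma,\eta;T,\weight}$ translates on each $\fK_j$ into a uniform bound on $\VERT{f_n}_{\gamma;\fK_j}$, since $\weight$ is bounded below and the factor $t^{((\eta-\beta)\wedge 0)/2}$ is bounded on $\fK_j$. Applying Lemma~\ref{le:modelled_distribution_compactness_local} on each $\fK_j$ and extracting diagonally yields a subsequence $(f_{n_k})$ and $f\in\dD_+^\gamma(\Gamma)$ with $\VERT{f_{n_k};f}_{\bar\gamma;\fK_j}\to 0$ for every $j$. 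Pointwise convergence $f_{n_k}(t,x)\to f(t,x)$ together with lower semicontinuity of the suprema in the definition of $\VERT{\cdot}_{\gamma,\eta;T,\weight}$ then gives $\VERT{f}_{\gamma,\eta;T,\weight}\leq\liminf_k\VERT{f_{n_k}}_{\gamma,\eta;T,\weight}<\infty$, so $f\in\dD^{\gamma,\eta}_{T,\weight}(\Gamma)$.

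Next, to prove $\VERT{f_{n_k};f}_{\bar\gamma,\bar\eta;T,\bar\weight}\to 0$, I would decompose $\oO_T$ into three regions according to parameters $R$ and $t_0$. For each component $\beta<\bar\gamma$ of the pointwise part, the key rewrite
\begin{equ}
	\bar\weight(x)\,t^{-((\bar\eta-\beta)\wedge 0)/2}\,\|(f_{n_k}-f)(t,x)\|_\beta
	=\frac{\bar\weight(x)}{\weight(x)}\,t^{e(\beta)/2}\,
	\weight(x)\,t^{-((\eta-\beta)\wedge 0)/2}\|(f_{n_k}-f)(t,x)\|_\beta,
\end{equ}
with $e(\beta)\eqdef((\eta-\beta)\wedge 0)-((\bar\eta-\beta)\wedge 0)\geq 0$, isolates three sources of smallness: the last factor is uniformly bounded by $2M$; the middle factor is $t^{e(\beta)/2}$ with positive exponent whenever $\beta>\bar\eta$; and the first factor vanishes as $|x|\to\infty$. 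On $\{|x|>R\}$ I would use $\sup_{|x|>R}\bar\weight/\weight\to 0$ as $R\to\infty$; on $\{|x|\leq R,\,t\in[t_0,T]\}$, a compact subset of $\R_>\times\R^3$, the local convergence from the first step controls the contribution; and on $\{|x|\leq R,\,t<t_0\}$ with $\beta>\bar\eta$, the gain $t^{e(\beta)/2}$ makes the contribution small for $t_0$ small. The difference-quotient term $[f_{n_k};f]_{\bar\gamma,\bar\eta;T,\bar\weight}$ is treated with the same three-region decomposition.

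The main obstacle will be the region $\{|x|\leq R,\,t<t_0\}$ for $\beta\leq\bar\eta$, where neither the time-scaling nor the spatial-weight gain is available. Here $\|f_{n_k}(t,x)\|_\beta$ is uniformly bounded (since $\beta<\eta$), but pointwise smallness must be promoted to uniform smallness. To close this gap I would establish uniform equicontinuity of the $\beta$-component of $\{f_{n_k}\}$ on $(0,T]\times\bar B(0,R)$: combining the $[\cdot]^{\mathrm{time}}$ bound with the model estimate $\|\Gamma_{n_k}^{t,x;s,x}-\mathrm{id}\|_{\beta,\beta'}\lesssim|t-s|^{(\beta'-\beta)/2}$ (together with uniform boundedness of the models on compacta in $\mM(\weight_\Pi)$) and the a~priori control of $\|f_{n_k}(s,x)\|_{\beta'}$, one shows that for $s\leq t\leq 2s$ the difference $\|f_{n_k}(t,x)-f_{n_k}(s,x)\|_\beta$ is bounded by a modulus vanishing as $s,t\to 0$ uniformly in $n$, where the strict inequality $\beta\leq\bar\eta<\eta$ prevents the potential singularities in $s$ from spoiling the estimate. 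Chaining this adjacent bound yields uniform equicontinuity on all of $(0,T]$; a spatial analogue using $[\cdot]^{\mathrm{space}}$ yields equicontinuity in $x$. Arzelà--Ascoli then promotes pointwise convergence to uniform convergence on $(0,T]\times\bar B(0,R)$, closing the argument.
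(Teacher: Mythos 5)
Your proposal is correct, and its skeleton coincides with the paper's: apply Lemma~\ref{le:modelled_distribution_compactness_local} to get a locally convergent subsequence and a limit $f$, check $f \in \dD^{\gamma,\eta}_{T,\weight}(\Gamma)$, and then estimate the complement of a large compact set $\fK\subset\R_>\times\R^3$ using the decay of $\bar\weight/\weight$ at spatial infinity together with the gain $t^{(\eta-\bar\eta)/2}$ near the $t=0$ hypersurface. The one genuine divergence is how the region $\{t \text{ small},\ \beta\leq\bar\eta\}$ is handled. The paper simply invokes Remark~\ref{rmk:embedding_modelled_distributions}, which presupposes that the modelled distributions take values in the sector $\tT_{[\eta,\gamma)}$ (consistent with the remark after Definition~\ref{def:modelled_distributions} that this is the case for every modelled distribution appearing in the paper, and with the application to $V_{\eps,\ell}$ of the form~\eqref{e:V_form}, whose components all have non-negative degree). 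Under that restriction there are no components of degree $\beta<\eta$, so your ``main obstacle'' is vacuous and the proof closes in a few lines. Your equicontinuity argument \dash chaining the $[\cdot]^{\mathrm{time}}$ and $[\cdot]^{\mathrm{space}}$ bounds with the model estimates to get a uniform modulus near $t=0$ for components with $\beta\leq\bar\eta<\eta$, then upgrading pointwise to uniform convergence \dash is sound (the exponents $(\eta-\beta)/2>0$ do come out positive as you claim) and buys you the statement for a general sector $\fF$, at the cost of considerably more work than the intended proof. Two cosmetic points: Lemma~\ref{le:modelled_distribution_compactness_local} already delivers convergence on \emph{every} compact subset of $\R_>\times\R^3$, so your initial exhaustion-plus-diagonal step is redundant; and Arzel\`a--Ascoli is not literally applicable on the non-compact set $(0,t_0]\times \bar B(0,R)$ \dash the correct (and easier) route is the three-epsilon argument combining your uniform modulus near $t=0$ with uniform convergence on $[t_1,t_0]\times\bar B(0,R)$.
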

\begin{proof}
	By Lemma~\ref{le:modelled_distribution_compactness_local} there exists $f\in \dD_+^\gamma(\Gamma)$ and a sequence $(n_k)_{k\in\N_+}$ such that $\lim_{k\to\infty}\VERT{f_{n_k};f}_{\bar\gamma;\fK}=0$ for every compact set $\fK\subset\R_>\times\R^{3}$. In particular, $\lim_{k\to\infty}\VERT{f_{n_k};f}_{\bar\gamma,\bar\eta;\fK}=0$. From the convergence of the model and uniform boundedness of $\VERT{f_n}_{\gamma,\eta;T, \weight}$, it follows that $f\in \dD^{\gamma,\eta}_{T, \weight}(\Gamma)$. Let $\delta>0$. By uniform boundedness of $\VERT{f_n}_{\gamma,\eta;T, \weight}$ and $\VERT{f}_{\gamma,\eta;T, \weight}$, $\lim_{|x|\to\infty}\frac{\bar\weight(x)}{\weight(x)}=0$ and Remark~\ref{rmk:embedding_modelled_distributions}, there exists a compact set $\fK\subset \R_>\times\R^3$ such that
	\begin{equ}
		\VERT{f_{n}; f}_{\gamma,\bar\eta;\oO_T\setminus\fK,\bar\weight}
		\leq
		\VERT{f_n}_{\gamma,\bar\eta;\oO_T\setminus\fK,\bar\weight}
		+
		\VERT{f}_{\gamma,\bar\eta;\oO_T\setminus\fK,\bar\weight}\leq \delta\,,
	\end{equ}
	where $\oO_T=[0,T]\times\R^3$. Since $\VERT{f_{n_k}; f}_{\bar\gamma,\bar\eta;T, \bar\weight}\leq \VERT{f_{n_k};f}_{\bar\gamma,\bar\eta;\fK}+\VERT{f_{n_k}; f}_{\gamma,\bar\eta;\oO_T\setminus\fK,\bar\weight}$, the proof is complete.
\end{proof}

\begin{lem}\label{lem:multiplication}
	Let $\gamma_1,\gamma_2,\eta_1,\eta_2\in\R$, $\gamma=(\gamma_1+\eta_2)\wedge(\gamma_2 + \eta_1)$, $\eta=\eta_1+\eta_2$ and $\weight_1,\weight_2,\weight_\Pi\in C(\R^3,(0,1])$ satisfy~\eqref{eq:W-0}. Set $\weight=\weight_1\weight_2\weight_\Pi^2$. For $f\in \dD^{\gamma_1,\eta_1}_{T,\weight_1}(\tT_{[\eta_1,\gamma_1)})$ and $g\in \dD^{\gamma_2,\eta_2}_{T,\weight_2}(\tT_{[\eta_2,\gamma_2)})$ we have  $fg\in \dD^{\gamma,\eta}_{T, \weight}(\tT_{[\eta,\gamma)})$ and
		\begin{equ}
			\VERT{fg}_{\gamma,\eta;T, \weight} 
			\lesssim
			(1+\|(\Pi,\Gamma)\|_{T, \weight_\Pi})^2\,
			\VERT{f}_{\gamma_1,\eta_1;T,\weight_1}\,
			\VERT{g}_{\gamma_2,\eta_2;T,\weight_2}
		\end{equ}
		uniformly over $T\in(0,1]$, $(\Pi,\Gamma)\in\mM(\weight_\Pi)$, $f\in \dD^{\gamma_1,\eta_1}_{T,\weight_1}(\tT_{[\eta_1,\gamma_1)})$, $g\in \dD^{\gamma_2,\eta_2}_{T,\weight_2}(\tT_{[\eta_2,\gamma_2)})$. Moreover,
		\begin{equ}
			\VERT{fg;\bar f\bar g}_{\gamma,\eta;T, \weight} 
			\lesssim
			\VERT{f;\bar f}_{\gamma,\eta;T,\weight_1} 
			+
			\VERT{g;\bar g}_{\gamma,\eta;T,\weight_2}
			+
			\|(\Pi,\Gamma)-(\bar\Pi,\bar\Gamma)\|_{T, \weight_\Pi}
		\end{equ}
		uniformly over $T\in(0,1]$ and locally uniformly over $(\Pi,\Gamma),(\bar\Pi,\bar\Gamma)\in\mM(\weight_\Pi)$, $f\in \dD^{\gamma_1,\eta_1}_{T,\weight_1}(\tT_{[\eta_1,\gamma_1)},\Gamma)$, $g\in \dD^{\gamma_2,\eta_2}_{T,\weight_2}(\tT_{[\eta_2,\gamma_2)},\Gamma)$, $\bar f\in \dD^{\gamma_1,\eta_1}_{T,\weight_1}(\tT_{[\eta_1,\gamma_1)},\bar\Gamma)$, $\bar g\in \dD^{\gamma_2,\eta_2}_{T,\weight_2}(\tT_{[\eta_2,\gamma_2)},\bar\Gamma)$.
\end{lem}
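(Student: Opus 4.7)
The plan is to adapt the classical product estimate for singular modelled distributions from~\cite[Proposition~6.12]{Hai14} (and the weighted analogue of the argument appearing in~\cite{HL18}) by controlling each of the three seminorms making up $\VERT{\bigcdot}_{\gamma,\eta;T,\weight}$ separately, then deducing the Lipschitz bound on pairs by a triangle-inequality argument.

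For the pointwise bound, I would expand $f(t,x)\,g(t,x)$ in the basis $\tT^\circ$, project onto $\tT_{<\gamma}$, and decompose the level-$\beta$ component ($\beta<\gamma$) as a sum over pairs $(\beta_1,\beta_2)$ with $\beta_i\in[\eta_i,\gamma_i)$ and $\beta_1+\beta_2=\beta$. The defining estimates for $\VERT{f}$ and $\VERT{g}$ would then give
\begin{equ}
\|f(t,x)\|_{\beta_1}\,\|g(t,x)\|_{\beta_2}
\lesssim
t^{((\eta_1-\beta_1)\wedge 0 +(\eta_2-\beta_2)\wedge 0)/2}\,
\weight_1^{(1)}(\beta_1;t,x)^{-1}\,\weight_2^{(1)}(\beta_2;t,x)^{-1}\,\VERT{f}\,\VERT{g}\,,
\end{equ}
and the elementary inequality $(\eta_1-\beta_1)\wedge 0+(\eta_2-\beta_2)\wedge 0\geq(\eta-\beta)\wedge 0$ yields the required pointwise scaling with weight $\weight_1\weight_2$; no factor of $\weight_\Pi$ is needed at this stage.

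For the time and space increments I would exploit multiplicativity of the structure group, $\Gamma^{z;\bar z}(\tau\bar\tau)=(\Gamma^{z;\bar z}\tau)(\Gamma^{z;\bar z}\bar\tau)$, to obtain (writing $\Gamma\eqdef\Gamma^{t,x;s,x}$) the algebraic identity
\begin{equ}
(fg)(t,x) - \Gamma (fg)(s,x) = \bigl(f(t,x)-\Gamma f(s,x)\bigr)\,g(t,x) + \Gamma f(s,x)\,\bigl(g(t,x)-\Gamma g(s,x)\bigr)\,,
\end{equ}
together with its analogue with $\Gamma^{t,x;t,y}$ for the space direction. Projecting to $\tT_{<\gamma}$ and summing over $\beta_1+\beta_2=\beta$, the first summand is controlled by $[f]^{\mathrm{time}}$ combined with the pointwise bound for $g$; the second requires expanding $\Gamma f(s,x)$ via the model seminorm (which costs a factor $\weight_\Pi(x)^{-1}$ per action of $\Gamma$) and combining with $[g]^{\mathrm{time}}$. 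The key algebraic input is the inequality $\gamma-\beta\leq\gamma_1-\beta_1$, which holds because $\gamma\leq\gamma_1+\eta_2\leq\gamma_1+\beta_2$, together with its symmetric counterpart $\gamma-\beta\leq\gamma_2-\beta_2$; combined with $s\leq t\leq 2s$ these force the correct $(t-s)^{(\gamma-\beta)/2}\,s^{(\eta-\gamma)/2}$ scaling with combined weight $\weight_1^{-1}\weight_2^{-1}\weight_\Pi^{-2}$. The space increment, with $\weight^{(2)}$ in place of $\weight^{(1)}$ and $|x-y|^2\leq t$ in place of $t\leq 2s$, is handled identically.

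Finally, for the Lipschitz bound on pairs I would use $fg-\bar f\bar g=(f-\bar f)g+\bar f(g-\bar g)$ for the pointwise part, and the analogous two-term split of $\Gamma^{t,x;s,y}(fg)(s,y)-\bar\Gamma^{t,x;s,y}(\bar f\bar g)(s,y)$ into $(\Gamma f - \bar\Gamma\bar f)\,\Gamma g$ plus $\bar\Gamma\bar f\,(\Gamma g - \bar\Gamma\bar g)$ for the $\VERT{\,\cdot\,;\,\cdot\,}$ part, using $\|\Gamma-\bar\Gamma\|_{T,\weight_\Pi}$ together with local boundedness of the other model seminorms and modelled distribution norms to extract the $\|(\Pi,\Gamma)-(\bar\Pi,\bar\Gamma)\|_{T,\weight_\Pi}$ contribution. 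The main obstacle throughout is the careful bookkeeping of the weights: every action of $\Gamma$ on a non-polynomial component generates a factor $\weight_\Pi^{-1}$, and one must check that the worst-case estimate produces at most two such factors, consistent with the $\weight_\Pi^2$ in the stated product weight, while~\eqref{eq:W-0} ensures compatibility between $\weight^{(1)}$ and $\weight^{(2)}$ entering the temporal and spatial pieces of the unified bound.
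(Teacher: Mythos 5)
Your argument is correct, but it takes a more laborious route than the paper. The paper's proof is a two-line reduction: since all the weights $\weight_1,\weight_2,\weight_\Pi$ here are time-independent and satisfy~\eqref{eq:W-0} (quasi-constancy on unit balls), Remark~\ref{rmk:norm_cubes} shows that $\VERT{\bigcdot}_{\gamma,\eta;T,\weight}$ is equivalent to $\sup_{n\in\Z^3}\weight(n)\VERT{\bigcdot}_{\gamma,\eta;[0,T]\times B(n,1)}$; one then applies the \emph{unweighted} product estimate~\cite[Proposition~6.12]{Hai14} on each unit cube, where the local model norm costs a factor $\weight_\Pi(n)^{-1}$ per appearance (hence $(1+\|(\Pi,\Gamma)\|_{T,\weight_\Pi})^2\weight_\Pi(n)^{-2}$ overall), and multiplies back by $\weight(n)=\weight_1(n)\weight_2(n)\weight_\Pi(n)^2$. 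You instead re-derive Hairer's product estimate from scratch with the weights threaded through every term. Your exponent arithmetic is right: $(\eta_1-\beta_1)\wedge0+(\eta_2-\beta_2)\wedge0=\eta-\beta\ge(\eta-\beta)\wedge0$ for the pointwise part (using $\beta_i\ge\eta_i$), and $\gamma-\beta\le\gamma_i-\beta_i$ follows from $\gamma\le\gamma_i+\eta_{3-i}\le\gamma_i+\beta_{3-i}$, with the surplus powers of $|z-\bar z|$ absorbed into $s^{(\eta-\gamma)/2}$ via $|z-\bar z|^2\le s\simeq t$. Two small points: the role of~\eqref{eq:W-0} in your direct approach is not "compatibility between $\weight^{(1)}$ and $\weight^{(2)}$" (these coincide for time-independent weights) but rather the ability to trade $\weight_i(y)^{-1}$ for $\weight_i(x)^{-1}$ when the increment connects base points $x$ and $y$ with $|x-y|\le1$; and your Leibniz identity holds only up to the truncation error $\qQ_{<\gamma}\Gamma\qQ_{\ge\gamma}(fg)(\bar z)$, which is controlled by the same power counting as in~\cite[Theorem~4.7]{Hai14} but should be acknowledged. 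What the paper's approach buys is brevity and robustness (no re-verification of the combinatorics); what yours buys is a self-contained proof that makes explicit exactly where each factor of $\weight_\Pi$ enters.
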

\begin{proof}
	The result is a consequence of Remark~\ref{rmk:norm_cubes} and~\cite[Proposition~6.12]{Hai14}.
\end{proof}

\begin{defn}\label{def:integration_K}
	Let $\gamma\in(0,1)$, $\eta>-2$ be such that $\gamma+2,\eta+2\notin\N_0$, $T\in(0,1]$, $\weight_{\mathrm{S}},\weight_\Pi\in C(\R^3,(0,1])$ satisfy~\eqref{eq:W-0} and~\eqref{eq:W-6} and $(\Pi,\Gamma)\in\mM(\weight_\Pi)$. The maps
	\begin{equ}
		\kK^+,\kK^-\,:\,\dD^{\gamma,\eta}_{T,\weight_{\mathrm{S}}}(\tT_{[\eta,\gamma)}) \to \dD^{\gamma+2,\eta+2}_{T,\weight_{\mathrm{S}}\weight_\Pi^2}
	\end{equ}
	are defined by
	\begin{equs}
		(\kK^+ f)(t,x) &\eqdef \qQ_{<\gamma+2}\iI(f(t,x))
		\\
		&\quad + \sum_{\zeta \in \aA} \sum_{|k| < (\zeta+2) \wedge (\gamma+2)} \frac{\bXXX^k}{k!} \langle \Pi^{t,x} \qQ_{\zeta} f(t,x), \partial^k K^+((t,x) - \bigcdot) \rangle
		\\
		&\quad + 
		\sum_{|k| < \gamma+2} \frac{\bXXX^k}{k!} \langle \rR f - \Pi^{t,x} f(t,x), \partial^k K^+((t,x) - \bigcdot) \rangle\,,
		\\
		(\kK^- f)(t,x)&\eqdef
		\sum_{|k| < \gamma+2} \frac{\bXXX^k}{k!} \langle \rR f, \partial^k K^-((t,x) - \bigcdot) \rangle
	\end{equs}
	for $f\in\dD^{\gamma,\eta}_{T,\weight_{\mathrm{S}}}$ and $(t,x)\in\oO_T$, where $\rR$ is the reconstruction operator in \cite[Theorem~3.10]{Hai14} and $\oO_T=[0,T]\times\R^3$. We also set $\kK=\kK^++\kK^-$. The notation $\kK^\pm$ is used to indicate that the statement applies to both $\kK^+$ and $\kK^-$.
\end{defn}

\begin{lem}\label{lem:integration_K}
	The maps $\kK^+,\kK^-$ introduced above	are well defined and satisfy
	\begin{equ}\label{eq:abs_integration_identity}
		\rR\kK^\pm f = K^\pm \ast \rR f\,.
	\end{equ}
	We have
	\begin{equ}
		\VERT{\kK^\pm f}_{\gamma+2,\eta+2;T,\weight_\Pi^2\weight_{\mathrm{S}}} 
		\lesssim
		(1+\|(\Pi,\Gamma)\|_{T, \weight_\Pi})^2\,
		\VERT{f}_{\gamma,\eta;T, \weight_{\mathrm{S}} }\,,
	\end{equ}
	uniformly over $T\in(0,1]$, $(\Pi,\Gamma)\in\mM(\weight_\Pi)$ and $f\in\dD^{\gamma,\eta}_{T,\weight_{\mathrm{S}}}(\tT_{[\eta,\gamma)})$. Moreover,
	\begin{equ}
		\VERT{\kK^\pm f;\bar\kK^\pm \bar f}_{\gamma+2,\eta+2;T,\weight_\Pi^2\weight_{\mathrm{S}}} 
		\lesssim
		\VERT{f;\bar f}_{\gamma,\eta;T, \weight_{\mathrm{S}} }
		+
		\|(\Pi,\Gamma)-(\bar\Pi,\bar\Gamma)\|_{T, \weight_\Pi}\,,
	\end{equ}
	uniformly over $T\in(0,1]$ and locally uniformly over $(\Pi,\Gamma),(\bar\Pi,\bar\Gamma)\in\mM(\weight_\Pi)$, $f\in\dD^{\gamma,\eta}_{T,\weight_{\mathrm{S}}}(\tT_{[\eta,\gamma)},\Gamma)$, and $\bar f\in\dD^{\gamma,\eta}_{T,\weight_{\mathrm{S}}}(\tT_{[\eta,\gamma)},\bar\Gamma)$.
\end{lem}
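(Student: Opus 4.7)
The plan is to adapt the proof of the Schauder estimate for modelled distributions, \cite[Proposition~6.16]{Hai14}, with careful bookkeeping of the spatial weights. A useful simplification here is that both $\weight_{\mathrm{S}}$ and $\weight_\Pi$ are time-independent and uniform over tree elements, so the full weighted machinery of Assumption~\ref{ass:weights} collapses to the two conditions \eqref{eq:W-0} and \eqref{eq:W-6}; in particular, \eqref{eq:W-2}--\eqref{eq:W-5} need not enter. I would first dispose of well-definedness and of the identity $\rR\kK^\pm f = K^\pm * \rR f$: the sums in Definition~\ref{def:integration_K} are finite, the pairings $\langle \Pi^{t,x}\qQ_\zeta f(t,x), \partial^k K^+((t,x)-\bigcdot)\rangle$ are controlled using the dyadic decomposition of $K^+$ given by Lemma~\ref{lem:kernel_decomposition}, and $\rR f$ is realised as a distribution via a weighted adaptation of the reconstruction theorem \cite[Theorem~3.10]{Hai14}, valid since $\weight_{\mathrm{S}}$ is of polynomial type. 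The identity itself then follows by construction, exactly as in \cite[Proposition~6.16]{Hai14}.

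The key step is the weighted bound, for which I would use Remark~\ref{rmk:norm_cubes} to reduce matters to showing
\begin{equs}
\weight_\Pi(n)^2\weight_{\mathrm{S}}(n)&\,\VERT{\kK^+ f}_{\gamma+2,\eta+2;[0,T]\times B(n,1)}\\
&\lesssim (1+\|(\Pi,\Gamma)\|_{T,\weight_\Pi})^2\,\weight_{\mathrm{S}}(n)\,\VERT{f}_{\gamma,\eta;[0,T]\times B(n,2)}
\end{equs}
uniformly in $n\in\Z^3$. Because $K^+$ has bounded parabolic support, the restriction of $\kK^+f$ to $[0,T]\times B(n,1)$ depends only on $f$ and $(\Pi,\Gamma)$ on a bounded neighbourhood of $[0,T]\times B(n,2)$, and on any such neighbourhood \eqref{eq:W-0} forces $\weight_{\mathrm{S}}(\bigcdot)\sim\weight_{\mathrm{S}}(n)$ and $\weight_\Pi(\bigcdot)\sim\weight_\Pi(n)$. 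This reduces the problem to the unweighted Schauder estimate \cite[Proposition~6.16]{Hai14} applied with model norm of order $\weight_\Pi(n)^{-1}\|(\Pi,\Gamma)\|_{T,\weight_\Pi}$ and input modelled-distribution norm of order $\weight_{\mathrm{S}}(n)^{-1}\VERT{f}_{\gamma,\eta;T,\weight_{\mathrm{S}}}$. Since the unweighted bound is quadratic in the model norm (each $\Pi$-evaluation entering $\|\kK^+f(t,x)\|_\beta$ and each $\Gamma$-translation entering the increment seminorms contributes one factor of $\weight_\Pi^{-1}$), one obtains a factor $\weight_\Pi(n)^{-2}$ that exactly matches the prefactor $\weight_\Pi(n)^2$ on the left.

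The contribution of $\kK^-$ requires a separate treatment because $K^-$, although smooth, need not have compact support. There I would bound $\langle\rR f,\partial^k K^-((t,x)-\bigcdot)\rangle$ pointwise, exploiting that the Gaussian-type spatial decay of $K^-$ inherited from the massive heat kernel dominates the maximal growth $\exp(|x-y|^2/8)$ permitted for $\weight_{\mathrm{S}}(y)/\weight_{\mathrm{S}}(x)$ by \eqref{eq:W-6}; multiplication by $\weight_{\mathrm{S}}(x)$ then yields a uniform-in-$x$ bound in terms of a weighted norm of $f$. The Lipschitz version will follow from a telescoping decomposition $\kK^\pm f - \bar\kK^\pm\bar f = \kK^\pm(f-\bar f) + (\kK^\pm-\bar\kK^\pm)\bar f$: the first term is handled by the estimate above applied to $f-\bar f$ (the operator $\kK^\pm$ is linear once the model is fixed), while the second is controlled by the standard stability of the abstract integration in the model, which yields the $\|(\Pi,\Gamma)-(\bar\Pi,\bar\Gamma)\|_{T,\weight_\Pi}$ term.

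The main technical obstacle will be the precise bookkeeping of the two factors of $\weight_\Pi$ coming from the $\Pi$- and $\Gamma$-evaluations, which must combine to give exactly $\weight_\Pi^2$ on the output side rather than a higher power. This amounts to going through each of the cases arising in the unweighted Schauder estimate (short vs.\ long increments, the various ranges of the target regularity $\beta$) and confirming that at most two model-norm evaluations enter in each. The localization enabled by \eqref{eq:W-0} makes each such verification routine, but it has to be carried out for every seminorm listed in Definition~\ref{def:modelled_distributions}.
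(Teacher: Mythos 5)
Your proposal is correct and follows essentially the same route as the paper: for $\kK^+$ the paper likewise localises via Remark~\ref{rmk:norm_cubes} and \eqref{eq:W-0} and then invokes \cite[Proposition~6.16, Theorem~7.1]{Hai14}, with the two factors of $\weight_\Pi$ accounted for exactly as you describe. For $\kK^-$ the paper organises the same mechanism (smoothness and decay of $K^-$ dominating the growth permitted by \eqref{eq:W-6}) slightly more formally, by first establishing the two weighted reconstruction bounds via Lemma~\ref{lem:reconstruction} and \eqref{eq:reconstruction_trivial} and then following the proof of Proposition~4.5 in \cite{HL18}, but this is the same argument in substance.
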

\begin{proof}
	The statement concerning $\kK^+$ follows from Remark~\ref{rmk:norm_cubes} and~\cite[Proposition~6.16, Theorem~7.1]{Hai14}. To prove the estimates for $\kK^-$ we first use Lemma~\ref{lem:reconstruction} and~\eqref{eq:reconstruction_trivial} to show the bound
	\begin{equ}
		(\weight_\Pi^2\weight_{\mathrm{S}})(t,x)\,\langle \mathcal{R} f, \psi_{t,x}^r \rangle 
		\lesssim 
		r^\eta\, (1+\|\Pi,\Gamma\|_{T, \weight_\Pi})^2\,\VERT{f}_{\gamma,\eta;T, \weight_{\mathrm{S}} }
	\end{equ}
	uniform over all $\psi \in \mathcal{B}_-$, $t\in(0,T]$, $x\in\R^3$, $r\in(0,1]$ and $f\in\dD^{\gamma,\eta}_{T,\weight_{\mathrm{S}}}$ and the bound
	\begin{equs}
		(\weight_\Pi^2\weight_{\mathrm{S}})(t,x)\,\langle \mathcal{R} f& - \Pi^{t-r^2,x} f (t - r^2 , x), \psi_{t,x}^r \rangle 
		\\
		&\lesssim 
		r^{\gamma} t^{(\eta - \gamma)/2} (1+\|\Pi,\Gamma\|_{T, \weight_\Pi})^2\,\VERT{f}_{\gamma,\eta;T, \weight_{\mathrm{S}} }
	\end{equs}
	uniform over all $\psi \in \mathcal{B}_-$, $t\in[4r^2,T]$, $x\in\R^3$, $r\in (0,1]$ and $f\in\dD^{\gamma,\eta}_{T,\weight_{\mathrm{S}}}$. The estimates for $\kK^-$ follow now by the argument from the proof of Proposition~4.5 in~\cite{HL18}.
\end{proof}

\begin{lem}\label{lem:multiplication_exp}
	Let $\gamma,\eta\in\R$, $T\in(0,1]$ and $\weight_{\mathrm{S}},\weight_\Pi\in C(\R^3,(0,1]),\weight_{\mathrm{L/R}}\,:\,\{1,2\}\times \aA\to C(\R^{1+3},(0,1])$ satisfy~\eqref{eq:W-0} and \eqref{eq:W-5}.
	We have
	\begin{equs}
		\VERT{fg^2}_{\gamma-2\eta,3\eta;T, \weight_{\mathrm{R}} }
		\lesssim (1+\|(\Pi,\Gamma)\|_{T, \weight_\Pi})^4\,
		\VERT{f}_{\gamma,\eta;T, \weight_{\mathrm{L}} }\,\VERT{g}^2_{\gamma,\eta;T, \weight_{\mathrm{S}} }
	\end{equs}
	uniformly over $T\in(0,1]$, $(\Pi,\Gamma)\in\mM(\weight_\Pi)$ and $f,g\in\dD^{\gamma,\eta}_{T,\weight_{\mathrm{S}}}(\tT_{[\eta,\gamma)})$.
\end{lem}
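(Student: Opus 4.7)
My plan is to apply the product estimate Lemma~\ref{lem:multiplication} twice in succession to the decomposition $fg^2 = f\cdot(g\cdot g)$. The first application, with both factors equal to $g$ at parameters $(\gamma_1,\gamma_2)=(\gamma,\gamma)$, $(\eta_1,\eta_2)=(\eta,\eta)$, $(\weight_1,\weight_2)=(\weight_{\mathrm{S}},\weight_{\mathrm{S}})$, yields $g^2\in\dD^{\gamma+\eta,2\eta}_{T,\weight_{\mathrm{S}}^2\weight_\Pi^2}$ with
\begin{equ}
\VERT{g^2}_{\gamma+\eta,2\eta;T,\weight_{\mathrm{S}}^2\weight_\Pi^2}\lesssim (1+\|(\Pi,\Gamma)\|_{T,\weight_\Pi})^2\,\VERT{g}^2_{\gamma,\eta;T,\weight_{\mathrm{S}}}\,.
\end{equ}
A second application, now multiplying $f\in\dD^{\gamma,\eta}_{T,\weight_{\mathrm{L}}}$ against $g^2$ of type $(\gamma+\eta,2\eta)$, produces $fg^2\in\dD^{\gamma+2\eta,3\eta}_{T,\weight_{\mathrm{L}}\weight_{\mathrm{S}}^2\weight_\Pi^4}$ with two further factors of $(1+\|(\Pi,\Gamma)\|_{T,\weight_\Pi})$, explaining the fourth power in the target bound.

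The next step is to swap the natural composite weight $\weight_{\mathrm{L}}\weight_{\mathrm{S}}^2\weight_\Pi^4$ for the prescribed $\weight_{\mathrm{R}}$, which is exactly what Assumption~\eqref{eq:W-5} delivers: writing any homogeneity $\beta$ appearing in $fg^2$ as $\beta=\beta_f+\beta_{g_1}+\beta_{g_2}$ with each $\beta_\bullet\in[\eta,\gamma)$ and applying \eqref{eq:W-5} with $|\tau|=\beta_f$, $|\bar\tau|=\beta_{g_1}+\beta_{g_2}$ gives the pointwise comparison $\weight_{\mathrm{R}}^{(i)}(\beta;t,x)\leq \weight_{\mathrm{S}}(x)^2\weight_\Pi(x)^4\weight_{\mathrm{L}}^{(i)}(\beta_f;t,x)$. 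Summed over the triplet decompositions of $\qQ_\beta(fg^2)$, this converts each of the three seminorms $\|\cdot\|$, $[\cdot]^{\mathrm{time}}$ and $[\cdot]^{\mathrm{space}}$ from Definition~\ref{def:modelled_distributions} into one expressed through~$\weight_{\mathrm{R}}$.

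A first bookkeeping issue is that the regularity parameter obtained from the double application is $\gamma+2\eta$, whereas the statement writes $\gamma-2\eta$. Since $fg^2$ takes values in the subspace $\tT_{[3\eta,\gamma+2\eta)}$, all components at degrees in the gap $[\gamma+2\eta,\gamma-2\eta)$ vanish identically and contribute nothing to any of the three seminorms; for the relevant range $\beta<\gamma+2\eta$, comparing the temporal and spatial increment denominators $(t-s)^{(\gamma'-\beta)/2}s^{(3\eta-\gamma')/2}$ for the two admissible values of $\gamma'$ under the constraint $0<s\leq t\leq 2s\leq 1$ produces only bounded multiplicative factors that can be absorbed into the implicit constant.

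The main technical obstacle I expect is that Lemma~\ref{lem:multiplication} is formulated for purely spatial weights in $C(\R^3,(0,1])$, whereas $\weight_{\mathrm{L}}$ is genuinely time-dependent. I will therefore revisit its proof (patterned on \cite[Proposition~6.12]{Hai14}) to check that the only property of the factor weights actually used is the doubling condition \eqref{eq:W-0} at each fixed time, imposed uniformly in $t\in[0,1]$, so that the argument extends verbatim to the time-dependent $\weight_{\mathrm{L}}$; Assumption~\ref{ass:weights} guarantees exactly this uniform doubling property together with monotonicity of $\weight_{\mathrm{L}}$ in time.
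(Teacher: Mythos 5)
Your overall strategy (two applications of the product estimate to $f\cdot(g\cdot g)$, collecting the weight $\weight_{\mathrm L}\weight_{\mathrm S}^2\weight_\Pi^4$ and then trading it for $\weight_{\mathrm R}$ via \eqref{eq:W-5} applied to each triple $\beta=\beta_f+\beta_{g_1}+\beta_{g_2}$) is exactly the intended argument — the paper's own proof is just a pointer to \cite[Theorem~4.7, Proposition~6.12]{Hai14} — and your weight bookkeeping, including the need to redo the proof for the time-dependent graded weights $\weight_{\mathrm L},\weight_{\mathrm R}$ using only the uniform-in-$t$ doubling property \eqref{eq:W-0}, is correct. Be aware, though, that this last point is not a side check: it is the entire content of having a separate Lemma~\ref{lem:multiplication_exp} rather than citing Lemma~\ref{lem:multiplication}, so a complete write-up must actually carry it out rather than announce it.

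The genuine gap is in your third paragraph. The double product lands in $\dD^{\gamma+2\eta,3\eta}$, and since $\eta<0$ in every application ($\eta=-\tfrac12-\kappa$), the space $\dD^{\gamma-2\eta,3\eta}$ is \emph{strictly smaller}: raising the regularity index strengthens the increment bounds. Your claimed comparison of denominators goes the wrong way. For the time seminorm, with $\gamma_1'=\gamma+2\eta$ and $\gamma_2'=\gamma-2\eta$ one finds
\begin{equ}
\frac{(t-s)^{(\gamma_2'-\beta)/2}\,s^{(3\eta-\gamma_2')/2}}{(t-s)^{(\gamma_1'-\beta)/2}\,s^{(3\eta-\gamma_1')/2}}
=\Bigl(\frac{t-s}{s}\Bigr)^{-2\eta}\,,
\end{equ}
which tends to $0$ as $t\searrow s$ because $-2\eta>0$; the analogous ratio for the space seminorm is $(|x-y|/\sqrt t)^{-4\eta}$. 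So the $[\,\cdot\,]^{\mathrm{time}}$ and $[\,\cdot\,]^{\mathrm{space}}$ seminorms at level $\gamma-2\eta$ are \emph{not} controlled by those at level $\gamma+2\eta$, and the vanishing of components at degrees in $[\gamma+2\eta,\gamma-2\eta)$ only helps the $\|\cdot\|$ part, not the increments. In fact $\gamma+2\eta$ is sharp for the generic product (the worst increment term is $\Gamma\qQ_\eta f\cdot\Gamma\qQ_\eta g\cdot\Delta\qQ_{\beta_2}g\sim|z-\bar z|^{\gamma+2\eta-\beta}$). The exponent $\gamma-2\eta$ in the statement appears to be a sign slip for $\gamma+2\eta$ — note that Section~\ref{sec:proof_markov_process_construction} verifies precisely the conditions $\gamma+2\eta+2-\fc\notin\N_0$ and $\tT_{[\gamma+2\eta-\fc,\gamma+2\eta)}=\emptyset$, and $\gamma+2\eta$ suffices for the application in Lemma~\ref{le:linearise_abstract_equ}. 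You should prove the $\gamma+2\eta$ version (which your first two paragraphs essentially do) and flag the discrepancy, rather than attempt the upgrade.
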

\begin{proof}
	The statement follows from the proofs of Theorem~4.7 and Proposition~6.12 in~\cite{Hai14}. 
\end{proof}

\begin{thm}\label{thm:integration_K_exp}
	Recall the parameter $\fc>0$ introduced in Assumption~\ref{ass:weights}. Let $\gamma\in(0,1/4)$, $\eta>-2$  be such that $\gamma+2-\fc,\eta+2-\fc\notin\N_0$ and $\tT_{[\gamma+2-\fc,\gamma+2)}=\emptyset$, and $\weight_{\mathrm{S}},\weight_\Pi\in C(\R^3,(0,1]),\weight_{\mathrm{L/R}}\,:\,\{1,2\}\times \aA\to C(\R^{1+3},(0,1])$ satisfy~\eqref{eq:W-0}-\eqref{eq:W-4} and \eqref{eq:W-6}.	
	We have
	\begin{equ}\label{eq:integration_bound_exp}
		\VERT{\kK^\pm f}_{\gamma+2-\fc,\eta+2-\fc;T, \weight_{\mathrm{L}} } \lesssim (1+\|(\Pi,\Gamma)\|_{T, \weight_\Pi})^2\,\VERT{f}_{\gamma,\eta;T, \weight_{\mathrm{R}} }
	\end{equ}
	uniformly over $(\Pi,\Gamma)\in\mM(\weight_\Pi)$, $f\in\dD^{\gamma,\eta}_{T,\weight_{\mathrm{S}}}(\tT_{[\eta,\gamma)})$ and $T\in(0,1]$.
\end{thm}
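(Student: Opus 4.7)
The plan is to adapt the proof of the standard Schauder estimate for $\kK^\pm$ (as in Lemma~\ref{lem:integration_K}, following \cite[Prop.~6.16 and Thm.~5.12]{Hai14}) to the present asymmetric, time-dependent weighted setting. The main new feature is that the bound loses $\fc$ in the regularity gain (so $2-\fc$ rather than $2$), and this loss is produced by the inequality \eqref{eq:W-1}, which governs how $\weight_{\mathrm L}$ at time $t$ compares with $\weight_{\mathrm R}$ at time $s$ across a time lapse of length $t-s$. The conditions \eqref{eq:W-2}--\eqref{eq:W-4} are designed so that the weight on each symbol appearing in the decomposition of $\kK^+f$ can be converted into the weight on the symbols of $f$, with only the ``neutral'' factor $\weight_\Pi$ left over; this factor is absorbed into the model norm $\|(\Pi,\Gamma)\|_{T,\weight_\Pi}$ already present on the right-hand side.

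Concretely, I would split $(\kK^+f)(t,x)$ into its three defining pieces from Definition~\ref{def:integration_K}: the abstract integration term $\qQ_{<\gamma+2}\iI(f(t,x))$, the Taylor-jet of the singular part of the kernel against $\Pi^{t,x}f(t,x)$, and the reconstruction correction against $\rR f-\Pi^{t,x}f(t,x)$. The first lives in the image of $\iI$, and the weight condition \eqref{eq:W-2} exactly converts $\weight_{\mathrm L}^{(1)}(|\iI\tau|;t,x)$ into $\weight_{\mathrm R}^{(1)}(|\tau|;t,x)$; the second and third live in the polynomial sector, and \eqref{eq:W-3}--\eqref{eq:W-4} convert $\weight_{\mathrm L}^{(i)}(|\bXXX^k|;t,x)$ into $\weight_\Pi(x)\,\weight_{\mathrm R}^{(1)}(|\tau|;t,x)$. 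Combined with the classical kernel bounds on $K^+$ and~\eqref{eq:W-0}, this yields the pointwise estimate $\|\kK^+f\|_{\gamma+2-\fc,\eta+2-\fc;T,\weight_{\mathrm L}}$ and, using \eqref{eq:W-6} to swap $\weight(x)$ and $\weight(y)$ on the unit scale (note $|x-y|^2\le t\le 1$), also the spatial increment $[\kK^+f]^{\mathrm{space}}_{\gamma+2-\fc,\eta+2-\fc;T,\weight_{\mathrm L}}$.

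The main obstacle is the time-increment seminorm $[\kK^+f]^{\mathrm{time}}_{\gamma+2-\fc,\eta+2-\fc;T,\weight_{\mathrm L}}$, where one bounds
\begin{equ}
\frac{\weight_{\mathrm L}^{(1)}(\beta;t,x)\,\|(\kK^+f)(t,x)-\Gamma^{t,x;s,x}(\kK^+f)(s,x)\|_\beta}{(t-s)^{(\gamma+2-\fc-\beta)/2}\,s^{(\eta-\gamma)/2}}
\end{equ}
for $s<t\le 2s$. The standard Schauder argument produces, in each of the three pieces above, values of $f$ at both times $s$ and $t$, multiplied by a factor $(t-s)^{(\gamma+2-\beta)/2}$ from the usual kernel decomposition. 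To control the value of $f$ at time $s$, I would invoke the $\weight_{\mathrm R}^{(1)}(\cdot;s,x)$-weighted bound coming from $\VERT{f}_{\gamma,\eta;T,\weight_{\mathrm R}}$; this produces a quotient $\weight_{\mathrm L}^{(1)}(\beta;t,x)/\weight_{\mathrm R}^{(1)}(\cdot;s,x)$, which by \eqref{eq:W-1} is bounded by $\weight_\Pi(x)^2(t-s)^{-\fc/2}$. Absorbing the $\weight_\Pi^2$ into the squared model norm and combining $(t-s)^{-\fc/2}$ with the Schauder factor $(t-s)^{(\gamma+2-\beta)/2}$ produces exactly $(t-s)^{(\gamma+2-\fc-\beta)/2}$, as required by the definition of the $[\,\cdot\,]^{\mathrm{time}}$ seminorm at regularity $\gamma+2-\fc$.

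For $\kK^-$ the argument is simpler: the kernel $K^-$ is smooth and compactly supported, so there is no singular regularity gain to lose. One bounds $\langle\rR f,\partial^k K^-((t,x)-\bigcdot)\rangle$ using the reconstruction estimates already employed in the proof of Lemma~\ref{lem:integration_K}, and converts weights via \eqref{eq:W-3}--\eqref{eq:W-4} exactly as above; time-increment estimates for $\kK^-$ again use \eqref{eq:W-1}. Finally, the hypothesis $\tT_{[\gamma+2-\fc,\gamma+2)}=\emptyset$ ensures that no new symbols arise between the raw regularity $\gamma+2$ and the reduced regularity $\gamma+2-\fc$, so that the decomposition of $\kK^\pm f$ and the identity $\rR\kK^\pm f=K^\pm\ast \rR f$ inherited from Lemma~\ref{lem:integration_K} carry over unchanged.
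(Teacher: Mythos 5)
Your overall architecture (splitting $\kK^{+}f$ into the three pieces of Definition~\ref{def:integration_K}, converting weights on the image of $\iI$ via \eqref{eq:W-2} and on the polynomial sector via \eqref{eq:W-3}--\eqref{eq:W-4}, and tracing the loss of $\fc$ to \eqref{eq:W-1}) is consistent with the paper's route, which adapts \cite[Theorem~4.3]{HL18}. However, there is a genuine gap in \emph{where} you locate the $\fc$-loss, and it breaks the treatment of every term carrying the reconstruction operator.

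You invoke \eqref{eq:W-1} only across the macroscopic increment $t-s$ in the $[\,\bigcdot\,]^{\mathrm{time}}$ seminorm, and claim the pointwise and spatial parts need nothing beyond \eqref{eq:W-0} and \eqref{eq:W-2}--\eqref{eq:W-4}. This cannot work for the third piece of $\kK^{+}f$, nor for $\kK^{-}f$: the terms $\langle \rR f-\Pi^{t,x}f(t,x),\partial^{k}K_{n}((t,x)-\bigcdot)\rangle$ and $\langle\rR f,\partial^{k}K^{-}((t,x)-\bigcdot)\rangle$ test $\rR f$ and $f$ at times $s$ arbitrarily close to $t$. For the weights actually used (Lemma~\ref{lem:weights_solution}) the equal-time ratio $\weight^{(1)}_{\mathrm{L}}(\beta;t,x)/\weight^{(1)}_{\mathrm{R}}(\beta';t,x)$ is \emph{unbounded} in $x$ (it grows like a positive power of $\bracket{x}$ since $b^{(1)}_{\mathrm{L}}<b^{(1)}_{\mathrm{R}}$), and \eqref{eq:W-1} only helps at strictly positive time separation, at the cost $(t-s)^{-\fc/2}$ which blows up as $s\to t$. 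Consequently the ``classical'' reconstruction bound — i.e.\ \eqref{eq:reconstruction_trivial}, which is exactly what the proof of Lemma~\ref{lem:integration_K} uses and what you propose to reuse for $\kK^{-}$ — is insufficient here; the paper says so explicitly in the remark following Lemma~\ref{lem:reconstruction}. The missing idea is the scale-decomposed reconstruction estimate of Lemma~\ref{lem:reconstruction}: the reconstruction error at kernel scale $r$ is resolved into dyadic contributions indexed by $2^{-m}\leq r$, the $m$-th one involving $f$ only on $B^{m}_{r,t,x}$, which stays a time distance $2^{-2m}$ short of the top of the relevant time interval, so that \eqref{eq:W-1} costs only $2^{m\fc}$ there; summing against the factor $(2^{-m}/r)^{\gamma}$ from \eqref{eq:def_C_t_x_r} yields the net loss $r^{-\fc}$ recorded in \eqref{eq:int-exp-bound-1}--\eqref{eq:int-exp-bound-2}. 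This is also why the loss appears in the static exponent $\eta+2-\fc$ (a small-$r$, equal-endpoint effect) and not merely in the $(t-s)$-H\"older exponent, as your mechanism would predict. Without first establishing \eqref{eq:int-exp-bound-1} and \eqref{eq:int-exp-bound-2}, neither the pointwise nor the increment parts of $\VERT{\kK^{\pm}f}_{\gamma+2-\fc,\eta+2-\fc;T,\weight_{\mathrm{L}}}$ can be closed.
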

\begin{proof}
	The proof of the bound for $\kK^+$ is almost identical to the proof of Theorem 4.3 in \cite{HL18} and we only discuss the necessary modifications.
	\begin{enumerate}
		\item We prove a bound for the integration operator $\kK^+$ whereas the bound in Theorem~4.3 in \cite{HL18} is for $\kK^+$ composed with multiplication by a noise $\Xi$, that is, \cite{HL18} proves a bound of the form $\|\kK^+ f\| \lesssim \|\Pi\|\,(1+\|\Gamma\|)\,\|u\|$ for $f=\Xi u$. The inspection of the proof therein reveals that all the estimates are actually written in terms of $f$ with the exception of two estimates for components of $\kK^+ f$ in sectors of non-integer regularity. The latter estimates can be trivially rewritten in terms of $f$ since in sectors of non-integer regularity $\kK^+ f=\iI f=\iI(\Xi u)$ and the operation $\kK^+$ amounts to a mere relabelling of the basis elements (there is no integration involved). 
		
		\item The norms that appear on both sides of our bound~\eqref{eq:integration_bound_exp} involve different weights whereas in Theorem~4.3 in \cite{HL18} the weights are the same. The choice of weights in the norms is determined by Assumption~3.6 and~4.1 therein. Upon replacing the conditions (W-0)--(W-4) formulated there by our conditions \eqref{eq:W-0}--\eqref{eq:W-4} the same proof gives a bound with our choice of weights in the norms. 
		
		\item Theorem~4.3 in \cite{HL18} is stated in the setting of $L^p$-Besov-type singular modelled distributions with finite $p$. In order to adapt the proof therein to our $L^\infty$-setting we have to first establish $L^\infty$-analogues of the estimates (4.4) and (4.5) in \cite{HL18} for the reconstruction operator. We replace (4.4) and (4.5) in~\cite{HL18} respectively by the bound
		\begin{equ}\label{eq:int-exp-bound-1}
			\overline\weight_{\mathrm{L}}(t,x)\,\langle \mathcal{R} f, \psi_{t,x}^r \rangle 
			\lesssim 
			r^{\eta-\fc}\, (1+\|\Pi,\Gamma\|_{T, \weight_\Pi})^2\,\VERT{f}_{\gamma,\eta;T, \weight_{\mathrm{R}} }
		\end{equ}
		uniform over all $\psi \in \mathcal{B}_-$, $t\in(0,T]$, $x\in\R^3$, $r\in(0,1]$ and $f\in\dD^{\gamma,\eta}_{T,\weight_{\mathrm{S}}}$ and the bound
		\begin{equs}\label{eq:int-exp-bound-2}
			\overline\weight_{\mathrm{L}}(t,x)\,\langle \mathcal{R} f& - \Pi^{t-r^2,x} f (t - r^2 , x), \psi_{t,x}^r \rangle 
			\\
			&\lesssim 
			r^{\gamma - \fc} t^{(\eta - \gamma)/2} (1+\|\Pi,\Gamma\|_{T, \weight_\Pi})^2\,\VERT{f}_{\gamma,\eta;T, \weight_{\mathrm{R}} }
		\end{equs}
		uniform over all $\psi \in \mathcal{B}_-$, $t\in[4r^2,T]$, $x\in\R^3$, $r\in (0,1]$ and $f\in\dD^{\gamma,\eta}_{T,\weight_{\mathrm{S}}}$. Assuming these bounds the rest of the proof is the same as the proof of Theorem 4.3 in \cite{HL18} upon replacing everywhere $L^p$-type norms of the  form
		\begin{equ}
			\left\|r^{-d-\gamma}\int_{\R^d} 1_{\{|y-x|\leq r\}} |f(x,y)| \, \md y \right\|_{L^p(\R^d,\md x)}
		\end{equ}
		by H{\"o}lder-type norms of the form
		\begin{equ}
			\sup_{\substack{x,y\in\R^d\\|x-y|<1}}\frac{|f(x,y)|}{|x-y|^\gamma}.
		\end{equ}
		\item The bounds~\eqref{eq:int-exp-bound-1} and \eqref{eq:int-exp-bound-2} are proved using the argument from the proof of Theorem~3.10 in~\cite{HL18} taking as input the bound for the reconstruction operator stated in Lemma~\ref{lem:reconstruction} below. Note that the bounds~\eqref{eq:int-exp-bound-1} and \eqref{eq:int-exp-bound-2} involve different weights than the corresponding bounds (4.4) and (4.5) in~\cite{HL18} but this only reflects our different assumptions about the weights and does not require any further comment. 
	\end{enumerate}
	The proof of the bound for $\kK^-$ is the same as the proof of  Proposition~4.5 in~\cite{HL18} with the exception that one has to use~\eqref{eq:int-exp-bound-1} instead of (3.13) therein.
\end{proof}

To complete the proof of the above theorem, it remains to establish bounds on the reconstruction operator. The following lemma is an $L^\infty$-analogue of Theorem~2.10 in~\cite{HL18} and should be viewed as a refinement of the original proof of the reconstruction theorem~\cite[Theorem 3.10]{Hai14}.

\begin{lem}\label{lem:reconstruction}
	Let $\gamma\in(0,1/4)$. We have
	\begin{equ}\label{eq:bound_reconstrction}
		\sup_{\psi \in \bB} |\langle \rR f - \Pi^{t,x}f(t,x), \psi_{t,x}^r \rangle|  \lesssim r^\gamma\, C_{t,x,r}(\Pi,\Gamma, f)
	\end{equ}
	uniformly over $r\in(0, 1]$, $(t, x)\in\R^{1+3}$, $f\in \dD^\gamma$ and $(\Pi, \Gamma)\in\mM$, where	
	\begin{equ}\label{eq:def_C_t_x_r}
		C_{t,x,r}(\Pi,\Gamma, f) = \sum_{2^{-n} \leq r} \left( \frac{2^{-n}}{r} \right)^{\gamma} \|\Pi\|_{B^n_{r, t, x}} (1 + \|\Gamma\|_{B^n_{r, t, x}}) \VERT{f}_{B^n_{r, t, x}}
	\end{equ}
	with $B^n_{r, t, x} = [t - 2r^2, t + r^2 - 2^{-2n}] \times B(x, 3)\subset\R^{1+3}$.	In particular, 
	\begin{equ}
		\sup_{\psi \in \bB^-} \langle \mathcal{R} f - \Pi^{t-r^2,x} f (t - r^2 , x), \psi_{t,x}^r \rangle 
		\lesssim r^\gamma\, C_{t,x,r}(\Pi,\Gamma, f)
	\end{equ}
	uniformly over $r\in(0, 1]$, $(t, x)\in\R^{1+3}$, $f\in \dD^\gamma$ and $(\Pi, \Gamma)\in\mM$.
\end{lem}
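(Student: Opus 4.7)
The plan is to adapt Hairer's original proof of the reconstruction theorem (\cite[Theorem~3.10]{Hai14}), tracking carefully the \emph{spatial} locality of the data at each dyadic scale, in the spirit of~\cite[Theorem~2.10]{HL18}. I would fix a compactly supported multiresolution analysis $\{\phi_n^y : n \geq 0,\ y \in \Lambda_n\}$ adapted to the parabolic scaling, with regularity strictly greater than $\alpha$ and sufficiently many vanishing moments, so that the test function admits the expansion
\begin{equs}
	\psi_{t,x}^r = \sum_{n \geq N_0} \sum_{y \in \Lambda_n} c_{n,y}\, \phi_n^y,
\end{equs}
where $2^{-N_0} \sim r$, the coefficients obey $|c_{n,y}| \lesssim r^{-5/2}\, 2^{-5n/2}$, and $c_{n,y}$ vanishes unless $y$ is within parabolic distance of order $r$ from $(t,x)$.

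I would then construct $\rR f$ as the limit of the approximations $\rR^{(N)} f \eqdef \sum_{y \in \Lambda_N} (\Pi^y f(y))(\phi_N^y)\, \tilde{\phi}_N^y$, where $\tilde{\phi}_N^y$ is an $L^2$-dual family. Using the two-scale refinement equation for the multiresolution analysis together with the model identity $\Pi^{y} = \Pi^{y'}\Gamma^{y';y}$, each telescoping increment $\rR^{(N+1)}f - \rR^{(N)}f$ is expressible as a sum of pairings of the form $\langle \Pi^{y'}(\Gamma^{y';y}f(y) - f(y')), \phi_{N+1}^z \rangle$. Invoking the modelled distribution property together with the model bounds restricted to a small parabolic ball $B$ of radius of order $2^{-N}$ yields pointwise estimates of order $2^{-N\gamma}\, \|\Pi\|_B (1+\|\Gamma\|_B)\, \VERT{f}_B$ for the increment.

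To establish \eqref{eq:bound_reconstrction}, I would expand $\langle \rR f - \Pi^{t,x} f(t,x), \psi_{t,x}^r\rangle$ against the wavelet basis and control it scale by scale. Only wavelets of scale $2^{-n} \leq r$ whose centres lie within distance of order $r$ from $(t,x)$ contribute, so the local norms of $\Pi,\Gamma,f$ required at scale $2^{-n}$ live on balls contained in the cylindrical region $B^n_{r,t,x}$. Multiplying the per-scale increment bound by the number $\sim (r/2^{-n})^5$ of active wavelets and by $|c_{n,y}|$ produces precisely the geometric factor $(2^{-n}/r)^\gamma$ appearing in~\eqref{eq:def_C_t_x_r}, and summing in $n$ gives the stated estimate. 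The second inequality follows by writing $\Pi^{t-r^2, x} f(t-r^2, x) = \Pi^{t,x} \Gamma^{t,x;\,t-r^2, x} f(t-r^2,x)$ and comparing with $\Pi^{t,x} f(t,x)$; for $\psi \in \mathcal{B}_-$ the reference point $(t-r^2, x)$ lies in the support of $\psi_{t,x}^r$, and the resulting remainder is absorbed by the $n = N_0$ term in $C_{t,x,r}$.

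The main technical obstacle will be the combinatorial bookkeeping of which spatial region $B^n_{r,t,x}$ contains all the wavelets of scale $2^{-n}$ active in the telescoping, together with the auxiliary reference points needed in the coherence identities; because parabolic wavelets have compactly but not arbitrarily small support, a fixed-constant enlargement of the ``influence region'' is needed, and the cylindrical shape $[t-2r^2,\, t + r^2 - 2^{-2n}]\times B(x,3)$ is designed precisely to accommodate this enlargement uniformly in $n$.
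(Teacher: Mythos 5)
Your proposal is correct and takes essentially the same route as the paper: the paper's proof simply defers to the wavelet-based argument of \cite[Theorem~2.10]{HL18} (substituting the $L^\infty$ reconstruction input \cite[Theorem~3.23]{Hai14} for the $L^p$ one), and that argument is precisely the multiresolution expansion of $\psi^r_{t,x}$, the telescoping of $\rR^{(N)}f$ across dyadic scales via the refinement equation and the identity $\Pi^{y}=\Pi^{y'}\Gamma^{y';y}$, and the scale-by-scale localisation of the norms of $(\Pi,\Gamma,f)$ to the regions $B^n_{r,t,x}$ that you describe.
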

\begin{rmk}
	Note that we have the elementary bound
	\begin{equ}\label{eq:reconstruction_trivial}
		C_{t,x,r}(\Pi,\Gamma, f) \lesssim
		\|\Pi\|_{B_{r, t, x}} (1 + \|\Gamma\|_{B_{r, t, x}}) \VERT{f}_{B_{r, t, x}}
	\end{equ}
	with $B_{r, t, x} = [t - 2r^2, t + r^2] \times B(x, 3)\subset\R^{1+3}$. 
	However, this estimate is insufficient to establish the bounds~\eqref{eq:int-exp-bound-1} and \eqref{eq:int-exp-bound-2}. Instead, we must rely on~\eqref{eq:def_C_t_x_r}, following the approach in~\cite{HL18}.
\end{rmk}
\begin{proof}
	The proof is almost identical to the proof of \cite[Theorem~2.10]{HL18}. The only difference is that instead of~\cite[Proposition~2.11]{HL18} one has to use~\cite[Theorem~3.23]{Hai14}.
\end{proof}

Now we discuss the action of the Euclidean group $\R^3 \rtimes O(3)$.
\begin{defn}
\label{def:Euclidane_transform}
	For an element $\varrho=(a,A)$ of the Euclidean group $\R^3 \rtimes O(3)$, we denote by $x\mapsto \varrho\cdot x\eqdef Ax+a$ its canonical action on $\R^3$, by $\varrho\cdot(t,x)\eqdef(t,\varrho\cdot x)$ its action on $\R^{1+3}$, by $\varrho \cdot f$ its action on $f\,:\,\R^{1+3}\to\R$ defined by $(\varrho \cdot f)(t,x)=f(t,\varrho^{-1}\cdot x)$, and by $\tau\mapsto \tau\cdot\varrho$ its action on the regularity structure determined uniquely by the conditions: 
	\begin{enumerate}
		\item $\iI(\tau)\cdot\varrho=\iI(\tau\cdot\rho)$ for all $\tau\in\{\Xi\}\cup\tT$ and $\Xi\cdot\rho = \Xi$,
		\item $(\tau\bar\tau)\cdot\varrho=(\tau\cdot\varrho)(\bar\tau\cdot\varrho)$ for all $\tau,\bar\tau\in\tT$,
		\item $\oneb\cdot\varrho=\oneb$, $\bXXX^0\cdot\varrho= \bXXX^0$ and $(\bXXX^k\cdot\varrho)_{1 \leq k \leq 3}= (\sum_{j} A_{kj}\bXXX^j)_{1 \leq k \leq 3}$\,.
	\end{enumerate}
	For a model $(\Pi,\Gamma)$ we define the transformed model $(\varrho\cdot\Pi,\varrho\cdot \Gamma)$ by
	\begin{equ}
		\langle(\varrho\cdot\Pi)^z\tau,\psi\rangle \eqdef \langle\Pi^{\varrho^{-1}\cdot z}(\tau\cdot\varrho),\varrho^{-1}\cdot\psi\rangle\,,
		\quad
		(\varrho\cdot\Gamma)^{z;\bar z}\tau \eqdef (\Gamma^{\varrho^{-1}\cdot z;\varrho^{-1}\cdot \bar z}(\tau\cdot\varrho))\cdot\varrho^{-1}
	\end{equ}
	for all $\tau\in\tT$, $z,\bar z\in\R^{1+3}$ and $\psi\in C^\infty_{\mathrm{c}}(\R^{1+3})$. For a (singular) modelled distribution $f$ we define $\varrho\cdot f$ by $z\mapsto f(\varrho^{-1}\cdot z)\cdot\varrho^{-1}$. 
\end{defn}
\begin{rmk}
	One verifies $(\varrho\cdot\Pi,\varrho\cdot \Gamma)=(\Pi,\Gamma)$ on the polynomial sector of $\tT$.
\end{rmk}
\begin{rmk}
	Using the identity
	\begin{equ}
		(\varrho \cdot f)(\varrho\cdot\bar z) 
		- \big((\varrho \cdot \Gamma)^{\varrho\cdot\bar z,\varrho\cdot z} (\varrho \cdot f)\big)(\varrho\cdot z) 
		=
		(f(\bar z)
		-\Gamma^{\bar z,z} f(z))\cdot \varrho^{-1} \;,
	\end{equ}	
	one shows that if $f \in \dD^{\gamma,\eta}(\Gamma)$, then $\rho \cdot f \in \dD^{\gamma,\eta}(\varrho\cdot\Gamma)$.
\end{rmk}

\begin{rmk}	
	Let $(\Pi,\Gamma)$ be an admissible model. Using $K^+(\varrho\cdot z-\varrho\cdot \bar z)=K^+(z-\bar z)$ one checks that $(\varrho\cdot\Pi,\varrho\cdot \Gamma)$ is also an admissible model. We denote by $\rR,\kK^\pm$ and $\rR_\varrho,\kK_\varrho^\pm$ the  reconstruction and integration operators corresponding to models $(\Pi,\Gamma)$ and $(\varrho\cdot\Pi,\varrho\cdot \Gamma)$. By uniqueness of the reconstruction operator and the identity
	\begin{equ}
		\bracket{(\varrho \cdot \Pi)^{\varrho\cdot z} (\varrho \cdot f)(\varrho\cdot z), \varrho\cdot\psi_{z}^r } 
		=
		\bracket{\Pi^{z} f(z) ,\psi_z^r}
	\end{equ}
	we have $\langle \rR f,\psi\rangle=\langle\rR_\varrho(\varrho\cdot f),\varrho\cdot\psi\rangle$.
	By Lemma~\ref{lem:kernel_decomposition}, we have $K^\pm(\varrho\cdot z-\varrho\cdot \bar z)=K^\pm(z-\bar z)$.
	In consequence, it follows from Definition~\ref{def:integration_K} that $\kK^\pm_\varrho(\varrho\cdot f)=\varrho\cdot (\kK^\pm f)$.
\end{rmk}

\subsection{Initial data contribution}
\label{sec:init_data}

Let $\eta=-\f12-\kappa$ and recall that $w=\langle \bigcdot\rangle^{-\kappa}\in C(\R^3)$. The following lemma shows that for any initial condition $\phi \in \cC^\eta(w)$, we can find a sequence of smooth periodic functions $\phi_{\eps,\ell}\in C^\infty(\T_\ell^3)$ such that $\lim_{\ell\to\infty}\lim_{\eps\searrow0}\phi_{\eps,\ell} = \phi$ in $\cC^\eta(w)$.

\begin{defn}\label{def:map_T}
	Let $\chi\in C^\infty(\R^3,\R_>)$ be such that $\chi=1$ on $[-1/3,1/3]^3$, $\supp\chi\subset[-1,1]^3$ and the periodisation of $\chi$ with period $1$ coincides with the constant function~$1$. For $\ell\in\N_+$ we define $T^{(\ell)}\,:\,\cC^\eta(w)\to \cC^\eta(\T_\ell^3)$ to be the unique map such that for all $\phi\in\cC^\eta(w)$, $T^{(\ell)}\phi$ coincides with the periodisation of $\phi\chi(\bigcdot/\ell)$ with period $\ell$.
\end{defn}

\begin{lem}\label{lem:map_T}
	Let $\eta<0$ and $\phi\in\cC^\eta(w)$. For $\ell\in\N_+$ and $\eps\in(0,1]$ define $\phi_{\eps,\ell}=M_\eps\star T^{(\ell)}\phi\in C^\infty(\T_\ell^3)$, 
	where $\star$ denotes the convolution over $\R^3$ 
	and the mollifier $M_\eps\in C^\infty(\R^3)$ is given by $M_\eps(x)=\eps^{-3}M(\frac{x}{\eps})$ for $M\in C^\infty_{\mathrm c}(\R^3)$ such that $\int M(x)\,\md x=1$.
	Then $\lim_{\ell\to\infty}\lim_{\eps\searrow0}\phi_{\eps,\ell}=\phi$ in $\cC^\eta(w)$.
\end{lem}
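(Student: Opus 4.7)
The plan is to reduce the statement to the case of smooth, compactly supported $\phi$ via a density argument, and then to exploit two elementary facts: for such $\phi$ the periodisation $T^{(\ell)}\phi$ coincides with $\phi$ on a large box as soon as $\ell$ is sufficiently large, while mollification of a smooth function converges in every weighted Besov space.

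The first step will be to establish the uniform bounds
\begin{equ}
\|M_\eps \star f\|_{\cC^\eta(w)} \lesssim \|f\|_{\cC^\eta(w)}, \qquad \|T^{(\ell)} f\|_{\cC^\eta(w)} \lesssim \|f\|_{\cC^\eta(w)}
\end{equ}
for all $f\in\cC^\eta(w)$, $\eps\in(0,1]$ and $\ell\in\N_+$. The first follows by applying the weighted Young inequality of \cite[Theorem~2.1]{MW17a} to each Littlewood-Paley block, using that the ratio $w(x)/w(x-y)$ is uniformly bounded for $|y|\le\eps\le1$. For the second I will use the partition-of-unity identity $\sum_{n\in\Z^3}\chi(\bigcdot-n)\equiv 1$ together with $\chi\geq 0$, which forces $\chi((\bigcdot-n\ell)/\ell)=0$ on $[-\ell/3,\ell/3]^3$ for every $n\neq 0$; in particular only one translate contributes to $T^{(\ell)}f$ in each period cell $n\ell+[-\ell/2,\ell/2]^3$, and the polynomial weight satisfies $w(x)\lesssim w(x-n\ell)$ on that cell. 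Combined with Lemma~\ref{lem:PL_block_est} and the good localisation of the Gevrey Littlewood-Paley projectors, this yields the claim.

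The second step will verify the convergence for $\phi\in C^\infty_{\mathrm c}(\R^3)$. Fixing $L$ with $\supp\phi\subset[-L,L]^3$ and taking $\ell>3L$, the observation from Step~1 gives $T^{(\ell)}\phi=\phi$ on $[-\ell/3,\ell/3]^3$, so $T^{(\ell)}\phi-\phi$ is smooth, supported in $\{|x|_\infty>\ell/3\}$, with $C^k$-bounds depending only on $\phi$; since $\eta<0$ we have $\cC^\eta(w)\supset L^\infty(w)$, hence $\|T^{(\ell)}\phi-\phi\|_{\cC^\eta(w)}\lesssim \sup_{|x|_\infty>\ell/3}w(x)\lesssim \ell^{-\kappa}\to 0$. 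Combined with the standard fact that $M_\eps\star\phi\to\phi$ in every weighted Besov space as $\eps\searrow 0$ for smooth $\phi$, the decomposition $\phi_{\eps,\ell}-\phi=M_\eps\star(T^{(\ell)}\phi-\phi)+(M_\eps\star\phi-\phi)$, together with the uniform bound on $M_\eps\star\bigcdot$ from Step~1, yields the claim along the iterated limit. The general case will then follow by a three-$\epsilon$ argument from the density of $C^\infty_{\mathrm c}(\R^3)$ in $\cC^\eta(w)$ (which holds by definition, as $\cC^\eta(w)$ is the completion of $C^\infty_{\mathrm c}(\R^3)$ in the Besov norm) combined with the uniform operator bounds from Step~1.

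The main technical difficulty is the uniform $\cC^\eta(w)$-boundedness of $T^{(\ell)}$: since $T^{(\ell)}f$ is periodic and therefore does not decay at infinity, any naive pointwise estimate loses powers of $\ell$, and the delicate point is to carefully exploit the interplay between the compact support of $\chi$, the slow polynomial decay of $w=\bracket{\bigcdot}^{-\kappa}$, and the non-locality of the Littlewood-Paley projectors in order to prevent these losses from propagating to the Besov norm.
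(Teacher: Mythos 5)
Your proposal is correct and takes the same route as the paper, whose proof is just the one\dash line remark that $\cC^\eta(w)$ is by definition the completion of $C^\infty_{\mathrm c}(\R^3)$ together with a pointer to \cite[Lemma~13]{MW17a}; you have simply spelled out the density argument and the uniform operator bounds that this remark implicitly relies on. One small imprecision: since $\supp\chi(\bigcdot/\ell)\subset[-\ell,\ell]^3$ has side length $2\ell$, up to $27$ translates (not one) contribute on each period cell, and the correct statement is that only the trivial translate survives on $[-\ell/3,\ell/3]^3$ while elsewhere all contributing translates carry comparable weights \dash this does not affect the argument.
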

\begin{proof}
	We use the fact that the Besov space $\cC^\eta(w)$ is defined to be the completion of $C^\infty_{\mathrm c}(\R^3)$, see~\cite[Lemma~13]{MW17a} for a very similar result.
\end{proof}

\begin{defn}\label{def:K_S}
	For $\phi\in\cC^\eta(w)$ and $h\in L^\infty(\R_\geq \times\R^3,w)$ we write
	\begin{equ}
		K(\phi)(t,x) \eqdef
		\int_{\R^3} K(t,x-y)\,\phi(y)\,\md y,
		\quad
		S(h,\phi)(t,x) \eqdef 
		(K\ast \one_> h)(t,x) + K(\phi)(t,x),
	\end{equ}
	where $\one_>$ is the characteristic function of $\R_>$.
\end{defn}

\begin{lem}\label{lem:S_phi_estimate}
	Let $\eta< 0$, $\gamma\in(0,2)$ and $T>0$. For $h\in L^\infty([0,T]\times\R^3,w)$ and $\phi\in\cC^\eta(w)$, the function $S(h,\phi)$ admits a lift to a polynomial sector in $\dD^{\gamma,\eta}_{T,w}$. Moreover, we have
	\begin{equ}
		\VERT{S(h,\phi)}_{\gamma,\eta;T,w}\lesssim
		\|\phi\|_{\cC^\eta(w)} 
		+
		\|h\|_{L^\infty([0,T]\times\R^3,w)}
	\end{equ}
	uniformly over $h\in L^\infty([0,T]\times\R^3,w)$ and $\phi\in\cC^\eta(w)$.
\end{lem}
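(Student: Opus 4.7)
My plan is to lift $S(h,\phi)$ to the polynomial sector via its parabolic Taylor expansion of degree strictly less than $\gamma$. Since $\gamma < 2$ and $|k| = 2k_0 + |\bar k|$, the multi-indices $k \in \N_0^{1+3}$ with $|k| < \gamma$ all satisfy $k_0 = 0$, so I would set
\[
\tilde S(h,\phi)(t,x) \eqdef \sum_{|\bar k|<\gamma,\,\bar k\in\N_0^3} \frac{X^{(0,\bar k)}}{\bar k!}\,\d_x^{\bar k} S(h,\phi)(t,x),
\]
which involves only $S$ and its first spatial gradient. On this sector, the action of $\Gamma^{t,x;s,x}$ (pure time shift) is trivial because no symbols of pure time-parabolic-degree appear, while $\Gamma^{t,x;t,y}$ realises the classical spatial Taylor translation. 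Hence the three seminorms in Definition~\ref{def:modelled_distributions} reduce to weighted pointwise and parabolic-Hölder bounds on $S(h,\phi)$ and $\nabla_x S(h,\phi)$, with the allowed $t=0$ singularity encoded in $\eta$.

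For the initial-data piece $K(\phi)$, Lemma~\ref{lem:heat_smoothing_L_inf} directly gives
\[
\|\d_x^{\bar k} K(\phi)(t,\bigcdot)\|_{L^\infty(w)} \lesssim t^{(\eta-|\bar k|)/2}\,\|\phi\|_{\cC^\eta(w)}
\]
for all $\bar k \in \N_0^3$, yielding the pointwise bound. For the time-Hölder piece with $s < t \leq 2s$, I would use $\d_r K(\phi) = (\Delta - 1) K(\phi)$ and apply the above estimate with $|\bar k|$ replaced by $|\bar k|+2$; the elementary integration
\[
\int_s^t r^{(\eta-|\bar k|-2)/2}\,\md r \lesssim (t-s)^{(\gamma-|\bar k|)/2}\, s^{(\eta-\gamma)/2},
\]
which holds since $t - s \leq s$ and $\gamma - |\bar k| < 2$, furnishes the required bound. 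For the spatial Hölder at fixed $t$, interpolating the $L^\infty(w)$-bounds on $\d_x^{\bar k} K(\phi)(t,\bigcdot)$ across $|\bar k| \in \{\lfloor\gamma\rfloor, \lfloor\gamma\rfloor+1\}$ gives $K(\phi)(t,\bigcdot) \in C^\gamma(w)$ with norm $\lesssim t^{(\eta-\gamma)/2}$, supplying both the degree-$0$ Taylor remainder and (when $\gamma > 1$) the degree-$1$ Hölder bound on $\nabla K(\phi)(t,\bigcdot)$.

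For the source piece $u \eqdef K \ast \one_> h$, which solves $(\d_t + 1 - \Delta) u = h$ with $u(0) = 0$, I would invoke classical parabolic Schauder theory: since $h \in L^\infty([0,T]\times\R^3, w)$ and $\gamma < 2$, and since the polynomial weight $w = \langle\bigcdot\rangle^{-\kappa}$ satisfies $w(x)/w(y) \lesssim (1+|x-y|)^\kappa$ (absorbed into the Gaussian decay of the heat kernel), one obtains
\[
\sup_{t\in[0,T]} \|u(t,\bigcdot)\|_{C^\gamma(w)} + \sup_{\substack{0\leq s<t\leq T\\ x\in\R^3}} w(x)\,\frac{|u(t,x)-u(s,x)|}{|t-s|^{\gamma/2}} \lesssim \|h\|_{L^\infty([0,T]\times\R^3,w)},
\]
with no blow-up at $t=0$. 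Since $\eta < 0$ and all times are bounded by $T$, this global parabolic-Hölder bound dominates all required weighted singular modelled distribution seminorms (absorbing a $T$-dependent constant).

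The one genuinely subtle point is the time-Hölder estimate for the source term: one cannot naively use $\|\nabla^2 u\|_{L^\infty}$ combined with the fundamental theorem of calculus, since such second-derivative bounds diverge at $r = 0$ for $L^\infty$ forcing. I would instead decompose
\[
u(t) - u(s) = \int_s^t e^{(t-r)(\Delta - 1)} h(r)\,\md r + \bigl(e^{(t-s)(\Delta - 1)} - \mathrm{id}\bigr)\,u(s),
\]
bounding the first term by $(t-s)\,\|h\|_{L^\infty(w)}$ and the second by $(t-s)^{\gamma/2}\,\|u(s)\|_{C^\gamma(w)}$ up to a harmless logarithmic factor absorbed by $\gamma/2 < 1$. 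This argument is standard and parallels~\cite{HL18} specialised to the $L^\infty$-based setting.
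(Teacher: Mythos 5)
Your proposal is correct and follows essentially the same route as the paper: split $S(h,\phi)=K\ast\one_>h+K(\phi)$, control $K(\phi)$ via the weighted heat-flow smoothing of Lemma~\ref{lem:heat_smoothing_L_inf} together with the equation $\lL K(\phi)=0$ for the time increments, and treat the source term by standard heat-kernel estimates. The only difference is cosmetic: where the paper invokes the generalised Taylor expansion of \cite[Proposition~A.1]{Hai14} and dismisses $S(h,0)$ as ``standard'', you carry out the Hölder estimates by hand (FTC in time, interpolation in space, and the semigroup splitting of $u(t)-u(s)$), which correctly fills in the details the paper leaves implicit.
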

\begin{proof}
	We note that $S(h,\phi)=S(h,0)+S(0,\phi)$ and study separately $S(h,0)$ and $S(0,\phi)$. For $S(h,0)$ the result follows from standard properties of the heat kernel $K$. The statement concerning $S(0,\phi)=K(\phi)$ is a very similar to Lemma 7.5 in \cite{Hai14}. The only difference is the presence of the weight and the fact we control $\phi$ using the weighted Besov norm $\|\phi\|_{\cC^\eta(w)}$ instead of the norm from~\cite[Definition~3.7]{Hai14}. Note that $\VERT{K(\phi)}_{\gamma,\eta;T,w}=\|K(\phi)\|_{\gamma,\eta;T,w}+[K(\phi)]_{\gamma,\eta;T,w}$. By Lemma~\ref{lem:heat_smoothing_L_inf} for all $k\in\N_0^3$ we have
	\begin{equ}
		\|\partial^k K(\phi)(t,\bigcdot)\|_{L^\infty(w)} \leq t^{\frac{\alpha-|k|}{2}} \|\phi\|_{\cC^\eta(w)}
	\end{equ}
	uniformly over $t\in(0,T]$ and $\phi\in \cC^\alpha(w)$. Using the fact that $(\partial_t-\Delta)K(\phi)=0$ we conclude an analogous bound for all $k\in\N_0^{1+3}$. This proves the bound for $\|K(\phi)\|_{\gamma,\eta;T,w}$ for any $\gamma>0$. The bound for $[K(\phi)]_{\gamma,\eta;T,w}$ follows from the bound for $\|K(\phi)\|_{\gamma,\eta;T,w}$ and
	the generalised Taylor expansion from \cite[Proposition A.1]{Hai14}.
\end{proof}

\begin{lem}\label{lem:S_phi_standard}
	Let $\eta< 0$. For $h\in L^\infty([0,T]\times\R^3,w)$ and $\phi\in\cC^\eta(w)$ we have $S(h,\phi)\in C(\R_\geq,\cC^\eta(w))\cap C(\R_>,L^\infty(w))$. Moreover, we have
	\begin{equ}
		\|S(h,\phi)(t,\bigcdot)\|_{\cC^\eta(w)}
		\vee
		t^{-\eta/2}\,\|S(h,\phi)(t,\bigcdot)\|_{L^\infty(w)}
		\lesssim
		\|\phi\|_{\cC^\eta(w)} 
		+
		\|h\|_{L^\infty([0,T]\times\R^3,w)}
	\end{equ}
	uniformly over $h\in L^\infty([0,T]\times\R^3,w)$, $\phi\in\cC^\eta(w)$ and $t\in(0,1]$.
\end{lem}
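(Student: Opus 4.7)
The plan is to split $S(h,\phi)=K(\phi)+K\ast\one_> h$ and handle the two contributions separately, then assemble the bounds and upgrade to continuity.

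For the initial data part $K(\phi)$, the $L^\infty(w)$ estimate with the prescribed $t^{\eta/2}$ blow-up is exactly Lemma~\ref{lem:heat_smoothing_L_inf} applied with $\alpha=\eta$ and $k=0$. The $\cC^\eta(w)$ estimate will come from the standard fact that the (massive) heat semigroup is uniformly bounded on $\cC^\eta(w)$: decomposing $K(\phi)=\sum_j K(\delta_j\phi)$, the smoothing bounds from~\cite[Lemma~3]{MW17a} (or essentially the same computation as in Lemma~\ref{lem:heat_smoothing_L_inf} but with the block $\delta_j$ left intact) give $\|\delta_j K(\phi)(t,\cdot)\|_{L^\infty(w)}\lesssim e^{-ct 2^{2j}}\|\delta_j\phi\|_{L^\infty(w)}$, whence $\|K(\phi)(t,\cdot)\|_{\cC^\eta(w)}\lesssim\|\phi\|_{\cC^\eta(w)}$ uniformly in $t\in[0,T]$. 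Continuity on $\R_>$ in both target spaces is then a standard consequence of the same bounds applied to $K(\phi)(t+s,\cdot)-K(\phi)(t,\cdot)=K(\phi)(t,\cdot)-e^{s\Delta}K(\phi)(t,\cdot)$ together with $\Delta$-gain from the heat flow.

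For the forcing part, writing $(K\ast\one_>h)(t,x)=\int_0^t\!\int K(t-s,x-y)h(s,y)\,\md y\,\md s$ and using that $w(x)/w(y)\lesssim\bracket{x-y}^\kappa$ together with the Gaussian-type decay of the (massive) heat kernel, I get $\|K\ast\one_>h(t,\cdot)\|_{L^\infty(w)}\lesssim t\,\|h\|_{L^\infty([0,T]\times\R^3,w)}$; since $1+\eta/2=1/2-\kappa/2>0$, this entails $t^{-\eta/2}\|K\ast\one_>h(t,\cdot)\|_{L^\infty(w)}\lesssim\|h\|_{L^\infty(w)}$ uniformly on $(0,1]$. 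Combining this with the embedding $L^\infty(w)\hookrightarrow\cC^\eta(w)$ (valid since $\eta<0$) yields the $\cC^\eta(w)$ bound on this term as well. Continuity on $\R_>$ with values in $L^\infty(w)$ and $\cC^\eta(w)$ follows from dominated convergence in the above integral representation and the resulting equicontinuity in $t$ given by the same Gaussian bounds.

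The only non-routine point is the continuity of $t\mapsto K(\phi)(t,\cdot)$ at $t=0$ with values in $\cC^\eta(w)$, which I expect to be the main obstacle since $\phi$ is merely a distribution. Here I will use that, by definition, $\cC^\eta(w)=\bB^\eta_{\infty,\infty}(w)$ is the completion of $C^\infty_{\mathrm c}(\R^3)$ (Definition~\ref{defn:weighted_Besov}). For $\phi_n\in C^\infty_{\mathrm c}(\R^3)$ with $\phi_n\to\phi$ in $\cC^\eta(w)$ the convergence $K(\phi_n)(t,\cdot)\to\phi_n$ as $t\searrow 0$ in $L^\infty(w)\hookrightarrow\cC^\eta(w)$ is straightforward by direct estimation of $\int K(t,\cdot-y)(\phi_n(y)-\phi_n(\cdot))\md y$. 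A standard $3\eps$ argument combining this with the uniform bound $\|K(\phi-\phi_n)(t,\cdot)\|_{\cC^\eta(w)}\lesssim\|\phi-\phi_n\|_{\cC^\eta(w)}$ established above then gives $\|K(\phi)(t,\cdot)-\phi\|_{\cC^\eta(w)}\to 0$, concluding the proof.
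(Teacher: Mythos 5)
Your argument is correct and follows the same route the paper intends: the paper's proof consists of the single remark that the statement ``follows more or less immediately from Lemma~\ref{lem:heat_smoothing_L_inf}'', and your plan simply spells out that reduction (heat-flow smoothing for $K(\phi)$, the elementary weighted convolution bound $\|K\ast\one_>h(t)\|_{L^\infty(w)}\lesssim t\,\|h\|_{L^\infty(w)}$ for the forcing term, and a density/$3\eps$ argument for continuity at $t=0$). The only blemish is the arithmetic $1+\eta/2=1/2-\kappa/2$, which should read $3/4-\kappa/2$ for $\eta=-\tfrac12-\kappa$; since all that is needed is $1+\eta/2>0$, this does not affect the proof.
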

\begin{proof}
	This follows more or less immediately from Lemma~\ref{lem:heat_smoothing_L_inf}.
\end{proof}

\subsection{A priori bounds}\label{sec:a_priori_solution}

The aim of this section is to establish a priori bounds for the solutions $\Phi_{\eps, \ell}$ of the mild form of~\eqref{e:phi4_mod} with $H = 0$ and the initial data $\phi_{\eps,\ell}$,
\begin{equ}\label{eq:Phi4_mild}
	\Phi_{\eps, \ell} 
	=
	K\ast \one_>\Bigl(
	\xi_{\eps,\ell}-\lambda \Phi_{\eps, \ell}^3 
	+\big(3 \lambda C_{\eps, \ell}^{(1)}-9\lambda^2 C_{\eps,\ell}^{(2)}\big)\Phi_{\eps, \ell}
	\Bigr)
	+ K(\phi_{\eps,\ell})\,.
\end{equ}
These bounds will yield the compactness of the family $(\Phi_{\eps, \ell})_{\eps\in(0,1],\ell\in\N_+}$. Since the precise value of the prefactor $\lambda > 0$ in front of the cubic nonlinearity plays no role in the analysis, we set $\lambda = 1$ throughout this and the following subsection to simplify the notation.

The results of the previous subsections are general and do not rely on a specific choice of model for the regularity structure $(\aA, \tT,G)$ introduced in Section~\ref{sec:reg_structure}. 
From this point onward, however, we focus on a particular model relevant for solving~\eqref{eq:Phi4_mild}. Specifically, we denote by $(\Pi_{\eps, \ell}, \Gamma_{\eps, \ell}) \in \mM(\weight_\Pi)$ the canonical model constructed from the spatially smooth, periodic noise $\xi_{\eps, \ell}$, following the procedure introduced in~\cite[Section~9.2]{Hai14}. Recall the definition of the renormalisation group for the dynamical $\Phi^4_3$ model from~\cite[Section~9.2]{Hai14}. The model obtained by applying the renormalisation map with parameters $C^{(1)}_{\eps, \ell}$ and $C^{(2)}_{\eps, \ell}$ to $(\Pi_{\eps, \ell}, \Gamma_{\eps, \ell})$ is denoted by $(\hat\Pi_{\eps, \ell}, \hat\Gamma_{\eps, \ell}) \in \mM(\weight_\Pi)$. We write $\kK_{\eps, \ell}$ and $\rR_{\eps, \ell}$ for the abstract integration and reconstruction maps associated to $(\hat\Pi_{\eps, \ell}, \hat\Gamma_{\eps, \ell})$. For notational convenience, we omit the indices $\eps, \ell$ when referring to objects in the limit $\ell \to \infty$ and $\eps \searrow 0$. We denote $\gamma\eqdef\f32-5\kappa$, $\bar\gamma\eqdef\f32-6\kappa$, $\eta\eqdef-\f12-\kappa$, $\bar\eta\eqdef-\f12-2\kappa$.

\begin{defn}\label{def:remainders}
	We use the shorthands
	\begin{equ}
		\ou_{\eps, \ell} \eqdef K\ast \xi_{\eps, \ell}\,,
		\qquad
		\ou^\pm_{\eps, \ell} \eqdef K^\pm\ast \xi_{\eps, \ell}\,,
		\qquad
		S_{\eps,\ell}(\phi)\eqdef S(\lL \ou_{\eps,\ell}^-,\phi-\ou^+_{\eps,\ell}(0))\,,
	\end{equ}
	where the map $S$ was introduced in Definition~\ref{def:K_S}.
	We define $v^+_{\eps, \ell},v^\star_{\eps, \ell}\in C(\R_\geq,C(\T_\ell^3))$ by the equalities
	\begin{equ}\label{eq:reminders}
		\Phi_{\eps, \ell}=\ou^+_{\eps, \ell}+v^+_{\eps, \ell}
		=
		\ou^+_{\eps, \ell}+v^\star_{\eps, \ell}+S_{\eps,\ell}(\phi_{\eps,\ell})\,,
	\end{equ}
	where $\Phi_{\eps, \ell}$ is the solution of~\eqref{eq:Phi4_mild}.
\end{defn}

The main result of \cite[Theorem~2.1]{MW20} implies almost immediately the following a priori bound.
\begin{lem}
	\label{le:MW20}
	Let $T=1$. We have
	\begin{equ}
		\sup_{t\in(0,T]}t^{\frac{1}{2}}\, 
		\|v^+_{\eps, \ell}(t)\|_{L^{\infty}(w^{1/3})}
		\lesssim
		1+\|(\hat\Pi_{\eps, \ell},\hat\Gamma_{\eps, \ell})\|_{T, \weight_\Pi}^3
		+
		\|\lL \ou^-_{\eps, \ell}\|_{L^\infty([0,T]\times\R^3,\weight_{\Pi})}
	\end{equ}
	uniformly over $\eps\in(0,1]$, $\ell\in\N_+$, $\xi_{\eps,\ell}\in C^{-\f12-\kappa}(\R,C(\T_\ell^3))$ and $\phi_{\eps,\ell}\in C(\T_\ell^3)$.
\end{lem}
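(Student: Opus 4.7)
The strategy is to reduce the claim to the space--time localisation estimate of~\cite{MW20}, recalled as Theorem~\ref{thm:spacetime_localization} in Appendix~\ref{sec:spacetime_localization}. The first step is to convert the mild equation~\eqref{eq:Phi4_mild} into a strong PDE for $v^+_{\eps,\ell}$. Applying $\lL$ to both sides of $\Phi_{\eps,\ell}=\ou^+_{\eps,\ell}+v^+_{\eps,\ell}$ and using that $\lL K=\delta$ together with the decomposition $K=K^++K^-$ from Lemma~\ref{lem:kernel_decomposition} (so that $\lL\ou^+_{\eps,\ell}=\xi_{\eps,\ell}-\lL\ou^-_{\eps,\ell}$), one arrives at
\begin{equ}
\lL v^+_{\eps,\ell} = -(\ou^+_{\eps,\ell}+v^+_{\eps,\ell})^3 + \bigl(3 C^{(1)}_{\eps,\ell} - 9 C^{(2)}_{\eps,\ell}\bigr)(\ou^+_{\eps,\ell}+v^+_{\eps,\ell}) + \lL\ou^-_{\eps,\ell}\,.
\end{equ}
After expanding the cube and absorbing the counterterms into Wick renormalised products of $\ou^+_{\eps,\ell}$ (together with its iterated integrals), this is exactly the renormalised $\Phi^4_3$ remainder equation on $\R_>\times\R^3$, but with the additional smooth source $\lL\ou^-_{\eps,\ell}$.

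The second step is to invoke Theorem~\ref{thm:spacetime_localization} for this equation. The decisive feature of~\cite{MW20} is that the resulting bound is of ``coming down from infinity'' type: the right hand side depends on the data only through the size of the Wick renormalised stochastic objects produced by the expansion of the cube and, linearly, on the $L^\infty$ norm of any extra forcing. The stochastic objects are all encoded in the canonical renormalised model $(\hat\Pi_{\eps,\ell},\hat\Gamma_{\eps,\ell})$; since three copies of the model appear multiplicatively, they contribute at most the third power $\|(\hat\Pi_{\eps,\ell},\hat\Gamma_{\eps,\ell})\|_{T,\weight_\Pi}^3$, while the new source contributes precisely $\|\lL\ou^-_{\eps,\ell}\|_{L^\infty([0,T]\times\R^3,\weight_\Pi)}$. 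No dependence on $\phi_{\eps,\ell}$ appears, since the strong dissipation produced by the cubic nonlinearity controls the solution at any positive time, with the universal blow-up rate $t^{-1/2}$ at the initial time hypersurface.

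The main technicality, and in my view the only potential obstacle, is the bookkeeping of spatial weights when passing from the natural weight $\weight_\Pi$ on the model to the target weight $w^{1/3}$ on $v^+_{\eps,\ell}$. With the explicit choice $\weight_\Pi=\bracket{\bigcdot}^{-\bar\kappa^5}$ from Lemma~\ref{lem:weights_solution} and $w^{1/3}=\bracket{\bigcdot}^{-\bar\kappa^4/3}$, the inequality $3\bar\kappa^5\leq \bar\kappa^4/3$ (equivalent to $9\bar\kappa\leq 1$, which holds for $\bar\kappa=\frac1{10}$) gives $\weight_\Pi^3\geq w^{1/3}$ pointwise, so both the cubic and the source contributions fit into $L^\infty(w^{1/3})$ once the bound of Theorem~\ref{thm:spacetime_localization} is applied pointwise and the supremum over $x\in\R^3$ is taken. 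Multiplying by $t^{1/2}$ and taking the supremum over $t\in(0,T]$ then yields the claimed estimate.
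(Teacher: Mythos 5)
Your proof is correct and follows essentially the same route as the paper's: identify $v^+_{\eps,\ell}$ as a (weak) solution of the deterministic renormalised equation \eqref{e:deterministic_phi4} with source $h_3=\lL\ou^-_{\eps,\ell}$, apply the space--time localisation bound \eqref{eq:spacetime_localization} with $r=\sqrt t$ on $\fK=[0,1]\times B(x,1)$, control the trees by the model norm, and check that $w^{1/3}$ is dominated by the appropriate power of $\weight_\Pi$. The only cosmetic inaccuracy is your explanation of the exponents: they come from the power $2/(1-2\kappa)$ in the definition \eqref{eq:tilde_fX_h} of $\tilde\fX(\fK,h)$ (for which $3$ on the model norm and $1$ on the source are convenient upper bounds up to the additive constant), not from ``three copies of the model appearing multiplicatively''.
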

\begin{proof}
	Let us define $\oud^+_{\eps,\ell},\out^+_{\eps,\ell}\in C(\R,C(\T_\ell^3))$ by
	\begin{equ}
		\oud^+_{\eps,\ell} \eqdef (\ou^+_{\eps,\ell})^2 - \Cone \;, 
		\quad
		\out^+_{\eps,\ell} \eqdef (\ou^+_{\eps,\ell})^3 - 3\Cone \ou^+_{\eps,\ell}\,.
	\end{equ}
	Using~\eqref{eq:Phi4_mild} we obtain
	\begin{equ}\label{eq:v_+}
		v^+_{\eps, \ell} = K\ast \one_> \Bigl(
		 - (v^+_{\eps, \ell})^3-3(v^+_{\eps, \ell})^2 \ou^+_{\eps, \ell}-3v^+_{\eps, \ell}\oud_{\eps, \ell}^+- \out_{\eps, \ell}^+
		- 9 C_{\eps, \ell}^{(2)} (\ou^+_{\eps, \ell}+v^+_{\eps, \ell})
		\Bigr) + S_{\eps,\ell}(\phi_{\eps, \ell})\,.
	\end{equ}
	As a result, $v^+_{\eps, \ell}$ is a weak solution of \eqref{e:deterministic_phi4} on $\R_\geq\times\R^3$ with $h_3 = \lL \ou^-_{\eps, \ell}$, $h_4 = 0$, $\ou = \ou^+_{\eps, \ell}$, $\oud = \oud^+_{\eps, \ell}$, $\out = \out^+_{\eps, \ell}$, $C^{(2)} = C_{\eps, \ell}^{(2)}$. Let $\oudz$, $\outz$, $h_1,h_2$ be as in Remark~\ref{rmk:spactime_loc_h}. By the estimate~\eqref{eq:spacetime_localization} from Theorem~\ref{thm:spacetime_localization}, we obtain
	\begin{equ}
		\| v^+_{\eps, \ell} \|_{L^{\infty}(\fK_r)}
		\lesssim
		\frac1r\vee \tilde \fX(\fK,h)
		\lesssim
		1\vee \frac1r\vee 
		\Big(\|(\hat\Pi_{\eps, \ell},\hat\Gamma_{\eps, \ell})\|_{\fK}
		\vee
		\|\lL \ou^-_{\eps, \ell}\|^{1/3}_\fK
		\Bigr)^{\frac{2}{1 - 2\kappa}}
	\end{equ}
for all space-time cubes $\fK\subset\R_\geq\times\R^3$ and $r\in(0,1]$, where $\tilde\fX(\fK)$ and $\fK_r$ are defined at the beginning of Appendix~\ref{sec:spacetime_localization} and $\|f\|_{\fK}\eqdef \sup_{z\in\fK}|f(z)|$. For any $t \in (0, 1)$ and $x \in \R^3$, we take $r = \sqrt{t}$ and $\fK = [0, 1] \times B(x, 1)$. Then the previous estimate implies
	\begin{equ}
		|v^+_{\eps, \ell}(t, x)| \lesssim \frac{1}{\sqrt{t}} 
		\vee
		\Big(\bigl(
		\|(\hat\Pi_{\eps, \ell},\hat\Gamma_{\eps, \ell})\|_{T, \weight_{\Pi}}
		\vee
		\|\lL \ou^-_{\eps, \ell}\|_{L^\infty([0,T]\times\R^3,\weight_{\Pi})}^{1/3}
		\bigr)\,
		\weight^{-1}_{\Pi}(x)
		\Big)^{\frac{2}{1 - 2\kappa}} \;.
	\end{equ}
	The result follows, since $w^{1/3}\leq\weight_\Pi^{\frac{2}{1-2\kappa}}$.
\end{proof}

The main input in this section is the following new a priori bound, which provides an improved blow-up rate as $t \searrow 0$ compared to the estimate in~\cite{MW20}.
\begin{lem}\label{le:apriori_bound_Linfty}
	Let $T=1$. There exists $M>0$ such that
	\begin{equs}
		\sup_{t\in[0,T]}&
		\|v^\star_{\eps, \ell}(t) \|_{L^{\infty}(w^3)}
		\vee 
		\sup_{t\in(0,T]} 
		t^{-\frac{\eta}{2}}\|v^+_{\eps, \ell}(t)\|_{L^{\infty}(w^3)}
		\\
		&\lesssim
		1
		+\|\phi_{\eps,\ell}\|_{\cC^\eta(w)}^M
		+\|(\hat\Pi_{\eps, \ell},\hat\Gamma_{\eps, \ell})\|_{T, \weight_\Pi}^M
		+\|\ou^+_{\eps,\ell}(0)\|_{\cC^\eta(w)}^M
		+\|\lL \ou^-_{\eps, \ell}\|_{L^\infty([0,T]\times\R^3,\weight_{\Pi})}^M
	\end{equs}
	uniformly over $\eps\in(0,1]$, $\ell\in\N_+$, $\xi_{\eps,\ell}\in C^{-\f12-\kappa}(\R,C(\T_\ell^3))$ and $\phi_{\eps,\ell}\in C(\T_\ell^3)$.
\end{lem}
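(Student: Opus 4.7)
The strategy is to reduce the analysis for $v^+_{\eps,\ell}$ to that of $v^\star_{\eps,\ell}=v^+_{\eps,\ell}-S_{\eps,\ell}(\phi_{\eps,\ell})$, which vanishes at $t=0$, and then apply the space-time localisation estimate of Theorem~\ref{thm:spacetime_localization} on a space-time cube reaching below the initial hyperplane, thereby avoiding the $t^{-1/2}$ blow-up that one obtains in the proof of Lemma~\ref{le:MW20}. I start by substituting the decomposition $\Phi_{\eps,\ell}=\ou^+_{\eps,\ell}+v^\star_{\eps,\ell}+g$, with $g\eqdef S_{\eps,\ell}(\phi_{\eps,\ell})$, into~\eqref{eq:Phi4_mild}. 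Expanding the cubic nonlinearity and using the fact that $g$ solves the linear inhomogeneous heat equation with source $\lL\ou^-_{\eps,\ell}$ and initial condition $\phi_{\eps,\ell}-\ou^+_{\eps,\ell}(0)$, I obtain for $v^\star_{\eps,\ell}$ an equation of the same form as~\eqref{eq:v_+} but with the stochastic objects $\ou^+_{\eps,\ell},\oud^+_{\eps,\ell},\out^+_{\eps,\ell}$ replaced by ``modified trees'' $\tilde\ou,\tilde\oud,\tilde\out$ that absorb contributions from $g$; for example $\tilde\oud=\oud^+_{\eps,\ell}+2g\ou^+_{\eps,\ell}+g^2$ and $\tilde\out=\out^+_{\eps,\ell}+3g\oud^+_{\eps,\ell}+3g^2\ou^+_{\eps,\ell}+g^3+9C^{(2)}_{\eps,\ell}(\ou^+_{\eps,\ell}+g)$, and analogously for the paracontrolled companions $\tilde\oudz,\tilde\outz$.

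Since $v^\star_{\eps,\ell}(0,\bigcdot)=0$, I extend it by zero for $t<0$, so that it becomes a weak solution of the $\Phi^4_3$-type equation on all of $\R\times\R^3$ (driven by the extended trees, which are also naturally zero for $t<0$). Applying Theorem~\ref{thm:spacetime_localization} on cubes $\fK=[t_0-1,t_0+1]\times B(x,1)$ with $r=1$, which now lie strictly inside the domain of definition regardless of how close $t_0$ is to $0$, yields
\begin{equ}
\|v^\star_{\eps,\ell}(t_0,x)\|\lesssim 1\vee \tilde\fX(\fK,\tilde h)^{M}\,,
\end{equ}
uniformly over $t_0\in[0,T]$ and $x\in\R^3$, where $\tilde\fX$ is the joint space-time norm of the modified trees in the weight $\weight_\Pi$.

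The main obstacle is controlling $\tilde\fX$: one has to verify that the modified trees (and their paracontrolled partners) remain bounded in the weighted norms appearing in Theorem~\ref{thm:spacetime_localization} despite the $t^{\eta/2}$ singularity of $g$ at $t=0$. The required bounds are precisely those collected in Appendix~\ref{sec:stochastic_estimates}, in particular Lemma~\ref{lem:convergence_model_phi_deterministic}, which expresses these norms polynomially in terms of $\|(\hat\Pi_{\eps,\ell},\hat\Gamma_{\eps,\ell})\|_{T,\weight_\Pi}$, $\|\lL\ou^-_{\eps,\ell}\|_{L^\infty([0,T]\times\R^3,\weight_\Pi)}$, $\|\ou^+_{\eps,\ell}(0)\|_{\cC^\eta(w)}$ and $\|\phi_{\eps,\ell}\|_{\cC^\eta(w)}$. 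The key point is that the heat-kernel convolutions defining $\tilde\oudz$ and $\tilde\outz$ integrate out the $t^{\eta/2}$ singularity of $g$, so that the modified trees admit genuine space-time bounds (with the weight $\weight_\Pi$, which then degrades to $w^3$ after applying $\weight_\Pi^{2/(1-2\kappa)}\lesssim w^3$ as in Lemma~\ref{le:MW20}).

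Putting these ingredients together gives the uniform bound $\sup_{t\in[0,T]}\|v^\star_{\eps,\ell}(t)\|_{L^\infty(w^3)}\lesssim 1+\ldots$ stated in the lemma. To conclude, I write $v^+_{\eps,\ell}=v^\star_{\eps,\ell}+g$ and invoke Lemma~\ref{lem:S_phi_standard}, which gives $\|g(t)\|_{L^\infty(w)}\lesssim t^{\eta/2}(\|\phi_{\eps,\ell}\|_{\cC^\eta(w)}+\|\lL\ou^-_{\eps,\ell}\|_{L^\infty([0,T]\times\R^3,\weight_\Pi)})$. Since $w^3\leq w$, this yields the improved blow-up rate $t^{-\eta/2}\|v^+_{\eps,\ell}(t)\|_{L^\infty(w^3)}\lesssim 1+\ldots$, completing the proof.
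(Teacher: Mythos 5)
Your proposal is correct and follows essentially the same route as the paper: subtract $S_{\eps,\ell}(\phi_{\eps,\ell})$, absorb it into modified trees $\ou^\star,\oud^{\,\star},\out^{\,\star}$, extend $v^\star_{\eps,\ell}$ by zero to negative times so that Theorem~\ref{thm:spacetime_localization} applies on cubes straddling $t=0$ with no blow-up, control the modified trees via Lemma~\ref{lem:convergence_model_phi_deterministic} with $S=S_{\eps,\ell}(\phi_{\eps,\ell})$ (itself bounded by Lemma~\ref{lem:S_phi_estimate}), and recover $v^+_{\eps,\ell}$ from Lemma~\ref{lem:S_phi_standard}. The only cosmetic deviation is that you fold the $9C^{(2)}_{\eps,\ell}(\ou+g)$ term into $\tilde\out$ rather than feeding $\one_>C^{(2)}_{\eps,\ell}$ into the $C^{(2)}$ slot of Theorem~\ref{thm:spacetime_localization}, where it is needed to pair with the $[\oudd]$ and $[\outd]$ norms — but since you then invoke Lemma~\ref{lem:convergence_model_phi_deterministic}, which handles exactly this bookkeeping, the argument goes through.
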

\begin{proof}
Let us define $\ou^{\star}_{\eps,\ell},\oud^{\,\star}_{\eps,\ell},\out^{\,\star}_{\eps,\ell}\in C(\R,C(\T_\ell^3))$ by
\begin{equ}
	\ou^{\star}_{\eps,\ell}\eqdef \one_>\ou^+_{\eps, \ell} + S_{\eps,\ell}(\phi_{\eps, \ell})\;,
	\quad
	\oud^{\,\star}_{\eps,\ell} \eqdef (\ou^{\star}_{\eps,\ell})^2 - \one_>\Cone \;, 
	\quad
	\out^{\,\star}_{\eps,\ell} \eqdef (\ou^{\star}_{\eps,\ell})^3 - 3\Cone \ou^{\star}_{\eps,\ell}\,.
\end{equ}
Note that the above trees vanish on $\R_\leq\times\R^3$. We define $v^\star_{\eps, \ell}\in C(\R,C(\T_\ell^3))$ by the equality~\eqref{eq:reminders} on $\R_>$ and $v^\star_{\eps, \ell}(t)=0$ for $t\leq 0$. Using~\eqref{eq:Phi4_mild} one shows that
\begin{equ}\label{e:tilde_v_eq}
	v^\star_{\eps, \ell} =  K \ast \Big( - (v^\star_{\eps, \ell})^3-3(v^\star_{\eps, \ell})^2 \ou^{\star}_{\eps, \ell}-3v^\star_{\eps, \ell}\oud^{\,\star}_{\eps, \ell}- \out^{\,\star}_{\eps, \ell} -9 C_{\eps, \ell}^{(2)}(\ou^{\star}_{\eps, \ell}+v^\star_{\eps, \ell}) \Big) \;.
\end{equ}
We stress that the above equation does not involve $\one_>$ and is satisfied in entire space-time. Therefore, $v^\star_{\eps, \ell}$ is a weak solution of \eqref{e:deterministic_phi4} on $\R^{1+3}$ with $h_3 = h_4 = 0$, $\ou = \ou^{\star}_{\eps, \ell}$, $\oud = \oud^{\,\star}_{\eps, \ell}$, $\out = \out^{\,\star}_{\eps, \ell}$, $C^{(2)} = \one_> C_{\eps, \ell}^{(2)}$. Note that in this case, we choose $C^{(2)}$ to be time-dependent rather than a constant as in the proof of Lemma~\ref{le:MW20}. Let $\oudz$, $\outz$, $h_1,h_2$ be as in Remark~\ref{rmk:spactime_loc_h}. By the estimate~\eqref{eq:spacetime_localization} from Theorem~\ref{thm:spacetime_localization}, we obtain
\begin{equ}
	\| v^\star_{\eps, \ell} \|_{L^{\infty}(\fK_r)}
	\lesssim
	\frac1r\vee \tilde \fX(\fK)^{\frac{2}{1 - 2\kappa}}
\end{equ}
for all space-time cubes $\fK$ and $r\in(0,1]$. Taking $r = \frac{1}{2}$ and $\fK = [-1, 1] \times B(x, 1)$ with $x \in \R^3$ and using Lemma~\ref{lem:convergence_model_phi_deterministic} with $S=S_{\eps,\ell}(\phi_{\eps, \ell})$, we get
\begin{equs}
	\sup_{t \in [0, 1]}& |v^\star_{\eps, \ell}(t, x)| 
	\\
	&\lesssim \Big(
	(1+\|(\hat\Pi_{\eps, \ell},\hat\Gamma_{\eps, \ell})\|_{T, \weight_\Pi})^{4}\,
	(1+\VERT{S_{\eps,\ell}(\phi_{\eps, \ell})}_{\gamma,\eta;T,w})
	\,w^{-1}(x) \weight^{-4}_\Pi(x) \Big)^{\frac{2}{1 - 2\kappa}} \;.
\end{equs}
Observe that $w^3\leq(w\weight_\Pi^4)^{\frac{2}{1-2\kappa}}$ and by Lemma~\ref{lem:S_phi_estimate} we have
\begin{equ}
	\VERT{S_{\eps,\ell}(\phi_{\eps, \ell})}_{\gamma,\eta;T,w}
	\lesssim 
	1+\|\phi_{\eps,\ell}-\ou^+_{\eps,\ell}(0)\|_{\cC^\eta(w)}
	+
	\|\lL \ou^-_{\eps, \ell}\|_{L^\infty([0,T]\times\R^3,\weight_{\Pi})}\,.
\end{equ}
Thus, the desired bound for $v^\star_{\eps, \ell}$ follows. The corresponding bound for $v^+_{\eps, \ell}$ is a~consequence of the estimate for $v^\star_{\eps, \ell}$ together with Lemma~\ref{lem:S_phi_standard}.
\end{proof}

Now we upgrade the $L^{\infty}$ bound to a regularity bound for the corresponding modelled distributions.
\begin{lem}\label{lem:V_modelled}
	Let $V_{\eps, \ell}\in\dD^{\gamma,\eta}$ be the solution to the abstract fixed point problem
	\begin{equ}\label{eq:V_cutoff}
		V_{\eps,\ell} = -\kK_{\eps, \ell} \one_>  (\bou+V_{\eps, \ell})^3  + S_{\eps,\ell}(\phi_{\eps, \ell})
	\end{equ}
	associated with the model $(\hat \Pi_{\eps, \ell}, \hat \Gamma_{\eps, \ell})$, where we identified $S_{\eps,\ell}(\phi_{\eps, \ell})$ with its lift to $\dD^{\gamma,\eta}$. Then we have $v^+_{\eps, \ell} = \rR_{\eps, \ell} V_{\eps, \ell}=\bracket{\oneb^*,V_{\eps, \ell}}$. Moreover, $V_{\eps, \ell}$ takes the form 
	\begin{equ}
	\label{e:V_form}
		V_{\eps, \ell} =  \one_> \big( v^+_{\eps,\ell}\oneb -\boutz - 3  v^+_{\eps,\ell} \boudz +  v^{\sharp}_{\eps, \ell}\cdot \bXXX \big) \;,
	\end{equ}
	where 
	\begin{equ}
		v_{\eps, \ell}^{\sharp} = \nabla v^+_{\eps, \ell} + K^+ * \nabla \out^+_{\eps, \ell} + 3v^+_{\eps, \ell} \big(K^+ * \nabla\oud^+_{\eps, \ell} \big) \;.
	\end{equ}
\end{lem}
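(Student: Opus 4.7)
The proof is in two parts: verifying the explicit form \eqref{e:V_form} of $V_{\eps,\ell}$, and then extracting $\rR_{\eps,\ell} V_{\eps,\ell} = v^+_{\eps,\ell} = \bracket{\oneb^*, V_{\eps,\ell}}$.

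For the first part, my plan is to substitute the ansatz $V = v^+\oneb - \boutz - 3v^+\boudz + v^\sharp\cdot\bXXX$ into the right-hand side of \eqref{eq:V_cutoff} and verify it reproduces the same expression. Expanding $(\bou + V)^3 = \bout + 3\bou^2 V + 3\bou V^2 + V^3$ and retaining only components of abstract degree less than $\gamma = \tfrac{3}{2}-5\kappa$, the symbols of negative degree surviving are $\bout$ (with coefficient $1$), $3v^+\boud$, and $3(v^+)^2\bou$, plus a collection of cross terms involving $\boutz, \boudz, \bXXX$ that are of non-negative degree but still below $\gamma$. Applying $-\kK_{\eps,\ell}\one_>$ according to Definition~\ref{def:integration_K}, the action on $\bout$ produces $-\boutz$ plus polynomial corrections in $\bXXX^k$, and the action on $3v^+\boud$ produces $-3v^+\boudz$ plus polynomial corrections. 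These polynomial corrections, combined with the contributions of the higher-order symbols and with the lift of $S_{\eps,\ell}(\phi_{\eps,\ell})$ to the polynomial sector, should reassemble into $v^+\oneb + v^\sharp\cdot\bXXX$ with the claimed formula for $v^\sharp$. Matching the $\oneb$-coefficient reproduces the mild equation \eqref{eq:v_+} satisfied by $v^+_{\eps,\ell}$; matching the $\bXXX^k$ coefficient for $|k|=1$ extracts $v^\sharp_{\eps,\ell} = \nabla v^+_{\eps,\ell} + K^+\ast\nabla\out^+_{\eps,\ell} + 3v^+_{\eps,\ell}(K^+\ast\nabla\oud^+_{\eps,\ell})$, where the first summand comes from the Taylor-remainder part of the lift of $v^+_{\eps,\ell}$, while the second and third come from the polynomial-correction terms of $\kK^+$ applied to $\bout$ and $3v^+\boud$ respectively, via the admissibility relations in Definition~\ref{def:model}.

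For the second part, I use that for the continuous admissible renormalised model $(\hat\Pi_{\eps,\ell}, \hat\Gamma_{\eps,\ell})$ the reconstruction satisfies $(\rR_{\eps,\ell} V_{\eps,\ell})(z) = (\hat\Pi^z_{\eps,\ell} V_{\eps,\ell}(z))(z)$. By the admissibility identity
\begin{equ}
(\hat\Pi^z_{\eps,\ell} \iI(\tau))(\bar z) = \bracket{\hat\Pi^z_{\eps,\ell}\tau, K^+(\bar z - \bigcdot)} - \sum_{|k|<|\iI\tau|}\tfrac{(\bar z - z)^k}{k!}\bracket{\hat\Pi^z_{\eps,\ell}\tau, \partial^k K^+(z - \bigcdot)},
\end{equ}
evaluating at $\bar z = z$ yields $(\hat\Pi^z_{\eps,\ell}\iI(\tau))(z) = 0$ whenever $|\iI(\tau)| > 0$. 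Applied to $\iI(\bout) = \boutz$ and $\iI(\boud) = \boudz$, this kills their contributions at the basepoint. Combined with $(\hat\Pi^z_{\eps,\ell}\bXXX^k)(z) = \delta_{k,0}$ and $(\hat\Pi^z_{\eps,\ell}\oneb)(z) = 1$, only the $\oneb$-coefficient of $V_{\eps,\ell}(z)$ survives pointwise reconstruction, so $\rR_{\eps,\ell} V_{\eps,\ell} = v^+_{\eps,\ell} = \bracket{\oneb^*, V_{\eps,\ell}}$. Consistency with the classical equation \eqref{eq:v_+} for $v^+_{\eps,\ell}$ is obtained by applying $\rR_{\eps,\ell}$ to both sides of \eqref{eq:V_cutoff} and using $\rR_{\eps,\ell}\kK_{\eps,\ell}^\pm = K^\pm\ast\rR_{\eps,\ell}$ from \eqref{eq:abs_integration_identity}.

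The main obstacle lies in the first part: one must carefully enumerate every symbol of abstract degree below $\gamma$ appearing in $(\bou + V)^3$, track the polynomial corrections produced by $\kK^+$ and $\kK^-$ (each of which involves several Taylor subtractions), and verify that they assemble into exactly the specific combination $\nabla v^+_{\eps,\ell} + K^+\ast\nabla\out^+_{\eps,\ell} + 3v^+_{\eps,\ell}(K^+\ast\nabla\oud^+_{\eps,\ell})$ in the $\bXXX$-coefficient.
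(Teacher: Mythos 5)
Your proposal is correct and follows essentially the same route as the paper, which simply defers to \cite[Proposition~9.10]{Hai14}: one identifies the components of the fixed point below degree $\gamma$ by expanding $(\bou+V_{\eps,\ell})^3$, reads off the $\oneb$- and $\bXXX$-coefficients from Definition~\ref{def:integration_K} (whence the three terms of $v^\sharp_{\eps,\ell}$ arise exactly as you describe), and evaluates the reconstruction at the base point using admissibility. The only point worth making explicit is that your argument runs by verifying that the ansatz built from $v^+_{\eps,\ell}$ is \emph{a} fixed point, so you must invoke uniqueness of the solution to \eqref{eq:V_cutoff} in $\dD^{\gamma,\eta}$ (or, as in \cite{Hai14}, argue directly that any fixed point is forced into the form \eqref{e:V_form} with $u=\bracket{\oneb^*,V_{\eps,\ell}}$ and then identify $u$ with $v^+_{\eps,\ell}$ via \eqref{eq:v_+} and the renormalised reconstruction of the cube).
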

\begin{proof}
	The proof follows the same argument as in \cite[Proposition~9.10]{Hai14}.
\end{proof}

\begin{prop}\label{pr:apriori_bound_modelled_distribution}
	Let $T=1$. There exists $M>0$ such that
	\begin{equs}
		\VERT{V_{\eps, \ell}}&_{\gamma, \eta; T, w^9}
		\\
		&\lesssim
		1
		+\|\phi_{\eps,\ell}\|_{\cC^\eta(w)}^M
		+\|(\hat\Pi_{\eps, \ell},\hat\Gamma_{\eps, \ell})\|_{T, \weight_\Pi}^M
		+\|\ou^+_{\eps,\ell}(0)\|_{\cC^\eta(w)}^M
		+\|\lL \ou^-_{\eps, \ell}\|_{L^\infty([0,T]\times\R^3,\weight_{\Pi})}^M
	\end{equs}
	uniformly over $\eps\in(0,1]$, $\ell\in\N_+$, $\xi_{\eps,\ell}\in C^{-\f12-\kappa}(\R,C(\T_\ell^3))$ and $\phi_{\eps,\ell}\in C(\T_\ell^3)$.
\end{prop}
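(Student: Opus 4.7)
The strategy is to upgrade the $L^\infty$ a priori bound from Lemma~\ref{le:apriori_bound_Linfty} to the full modelled distribution norm by exploiting the fact that, thanks to Lemma~\ref{lem:V_modelled}, the modelled distribution $V_{\eps,\ell}$ has the very explicit form~\eqref{e:V_form}. Every component of $V_{\eps,\ell}$ in the sectors of regularity $0$, $1-2\kappa$, $1$ and $\tfrac32-3\kappa$ is directly expressible in terms of the two concrete functions $v^+_{\eps,\ell}$, $v^\sharp_{\eps,\ell}$ and a handful of stochastic trees whose contributions are absorbed into the model seminorm $\|(\hat\Pi_{\eps,\ell},\hat\Gamma_{\eps,\ell})\|_{T,\weight_\Pi}$.

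First I would read off the pointwise bounds $\|V_{\eps,\ell}(t,x)\|_\beta/t^{((\eta-\beta)\wedge 0)/2}$ directly from \eqref{e:V_form}. For $\beta=0$ and $\beta=1-2\kappa$ this reduces to
$\|v^+_{\eps,\ell}(t)\|_{L^\infty(w^9)}\lesssim t^{\eta/2}$,
which is immediate from Lemma~\ref{le:apriori_bound_Linfty} since $w^9\leq w^3$ and $t^{\eta/2}\leq t^{((\eta-1+2\kappa)/2)}\cdot t^{(1-2\kappa)/2}$. The $\boutz$-component is a scalar, so its bound is trivial. The $\bXXX$-component requires $\|v^\sharp_{\eps,\ell}(t)\|_{L^\infty(w^9)}\lesssim t^{(\eta-1)/2}$; here I would control the $K^+\ast\nabla\out^+_{\eps,\ell}$ and $v^+_{\eps,\ell}\,(K^+\ast\nabla\oud^+_{\eps,\ell})$ pieces directly by the model norm, while for $\nabla v^+_{\eps,\ell}$ I would use parabolic Schauder estimates applied to~\eqref{eq:v_+}: the source is estimated in a weighted H\"older norm by combining the $L^\infty(w^3)$ control on $v^+_{\eps,\ell}$ with the model bounds, and a single heat-kernel derivative turns a $t^{3\eta/2}$ blow-up into the required $t^{(\eta-1)/2}$.

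Next I would verify the time and space seminorms $[V_{\eps,\ell}]^{\mathrm{time}}_{\gamma,\eta;T,w^9}$ and $[V_{\eps,\ell}]^{\mathrm{space}}_{\gamma,\eta;T,w^9}$ from Definition~\ref{def:modelled_distributions}. Expanding $V_{\eps,\ell}(t,x)-\Gamma^{t,x;s,y}_{\eps,\ell}V_{\eps,\ell}(s,y)$ using~\eqref{e:V_form} and the action of the structure group on the symbols $\oneb,\boudz,\boutz,\bXXX$ (the latter being explicitly computable in terms of the stochastic objects represented by the model), these increments reduce to weighted parabolic H\"older estimates on $v^+_{\eps,\ell}$ and $v^\sharp_{\eps,\ell}$, plus model-dependent quantities already controlled by $\|(\hat\Pi,\hat\Gamma)\|_{T,\weight_\Pi}$. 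The required H\"older estimates on $v^+_{\eps,\ell}$ and $v^\sharp_{\eps,\ell}$ again follow from Schauder theory applied to~\eqref{eq:v_+}.

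The main technical obstacle is the bookkeeping of polynomial weights. Each evaluation of a tree via $\hat\Pi^{z}_{\eps,\ell}$ introduces a $\weight_\Pi^{-1}$ factor, each action of $\hat\Gamma_{\eps,\ell}$ another, and the product $v^+_{\eps,\ell}\boudz$ (and $v^+_{\eps,\ell}\,(K^+\ast\nabla\oud^+_{\eps,\ell})$ inside $v^\sharp_{\eps,\ell}$) multiplies the $w^3$ from Lemma~\ref{le:apriori_bound_Linfty} by further polynomial weights coming from the increments of these trees between neighbouring space-time points. One must check that the total accumulated weight is dominated by $w^9$; this works because $\weight_\Pi$ is a slower-decaying polynomial than $w$ (as built into Lemma~\ref{lem:weights_solution}), so a fixed small number of $\weight_\Pi$ factors is absorbed into the difference $w^3/w^9=w^{-6}$. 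A secondary point is to ensure that the singular exponents match up: each blow-up rate arising from $v^+_{\eps,\ell}$ or $\nabla v^+_{\eps,\ell}$ must be consistent with the $t^{((\eta-\beta)\wedge 0)/2}$ and $t^{(\eta-\gamma)/2}$ denominators in Definition~\ref{def:modelled_distributions}, and this is a direct computation using $\eta=-\tfrac12-\kappa$, $\gamma=\tfrac32-5\kappa$. Collecting the bounds over all four sectors yields the stated inequality with an explicit polynomial $M$.
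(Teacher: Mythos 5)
Your overall strategy coincides with the paper's: start from the explicit paracontrolled form \eqref{e:V_form} of $V_{\eps,\ell}$, feed the improved $L^\infty$ bound of Lemma~\ref{le:apriori_bound_Linfty} into a Schauder-type estimate for the remainder equation \eqref{eq:v_+} applied on parabolic cylinders $[t,1]\times B(x,2)$ with $r=\sqrt t$ to produce the $t^{(\eta-\gamma)/2}$ blow-up, and then patch the local bounds into the weighted global norm via Remark~\ref{rmk:norm_cubes}. The paper does not re-derive the Schauder estimates but simply invokes the pre-packaged bounds \eqref{eq:spacetime_localization_holder}--\eqref{eq:spacetime_localization_V_holder} of Theorem~\ref{thm:spacetime_localization}, which are stated for exactly the quantities needed.

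The one place where your plan is under-specified, and would fail if taken literally, is the $\gamma$-H\"older control of the $\oneb$-component of the increment $V_{\eps,\ell}(z)-\Gamma^{z;\bar z}_{\eps,\ell}V_{\eps,\ell}(\bar z)$. That component is precisely
\begin{equ}
\bar U_{\eps,\ell}(z,\bar z)= v^+(\bar z)-v^+(z)+\outz(\bar z)-\outz(z)+3v^+(z)\bigl(\oudz(\bar z)-\oudz(z)\bigr)-v^\sharp(z)\cdot X(\bar z-z)\,,
\end{equ}
and you need it to be $O(|z-\bar z|^{\gamma})$ with $\gamma=\tfrac32-5\kappa$. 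This cannot be obtained by ``weighted parabolic H\"older estimates on $v^+_{\eps,\ell}$ and $v^\sharp_{\eps,\ell}$'' applied term by term: $v^+$ itself only has regularity $\tfrac12-3\kappa$ (it contains $-\outz$), and even the improved combination $v^++\outz$ only reaches $1-2\kappa$, cf.\ \eqref{eq:spacetime_localization_holder} and \eqref{eq:spacetime_localization_holder2}. The full rate $\tfrac32-5\kappa$ is available only for the entire combination $\bar U$ at once, which is the content of \eqref{eq:spacetime_localization_U} and is where the genuinely hard Schauder analysis (carried out in \cite{MW20}) lives. So your argument goes through provided ``Schauder theory applied to \eqref{eq:v_+}'' is understood as the generalised Schauder estimate for $\bar U$, not as separate estimates for its constituents; as written, the reduction step is the gap.
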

\begin{proof}
Recall that $v^+_{\eps, \ell}$ solves~\eqref{eq:v_+}.
We also note that, in the notation of Theorem \ref{thm:spacetime_localization}, for $V_{\eps, \ell}$ of the form \eqref{e:V_form}, we have
\begin{equ}
	V_{\eps, \ell}(z) - \Gamma_{z\bar{z}} V_{\eps, \ell}(\bar{z}) = - \Bar U_{\eps, \ell}(z, \bar{z}) \oneb - 3 (v^+_{\eps, \ell}(z) - v^+_{\eps, \ell}(\bar{z})) \boudz + (v^{\sharp}_{\eps, \ell}(z) - v^{\sharp}_{\eps, \ell}(\bar{z})) \bXXX \;.
\end{equ}
Let
\begin{equs}
	h_3 = \lL \ou^-_{\eps, \ell}\;, 
	\quad
	C^{(2)} =  C_{\eps, \ell}^{(2)} \;,
	\quad
	\,&\ou = \ou_{\eps, \ell}^{+}\;, \quad \oud = \oud_{\eps, \ell}^{+} \;,
	\quad
	\out = \out_{\eps, \ell}^{+}
\end{equs}
and $\oudz$, $\outz$, $h_1,h_2,h_4$ be as in Remark~\ref{rmk:spactime_loc_h}. Applying the estimate~\eqref{eq:spacetime_localization_U} from Theorem~\ref{thm:spacetime_localization} 
in the compact set $\fK = [t,1] \times B(x, 2)$ with $x \in \R^3$ and choosing $r = \sqrt{t}$, we get that
\begin{equs}
	\|\bar U_{\eps, \ell}\|_{\gamma,[2t,1]\times B(x,1)}
	\lesssim
	\left(
	\tilde\fX(\fK,h)\vee\|v^+_{\eps, \ell}\|_{\fK}
	\right)
	\,
	\left(
	\frac{1}{\sqrt{t}}\vee\tilde\fX(\fK,h)\vee\|v^+_{\eps, \ell}\|_{\fK}
	\right)^{\gamma}.
\end{equs}
By Remark~\ref{rmk:spactime_loc_h} and the definition of the model, we have 
\begin{equ}
	\tilde \fX(\fK,h)
	\lesssim
	\Big(\bigl(
	1\vee
	\|(\hat\Pi_{\eps, \ell},\hat\Gamma_{\eps, \ell})\|_{T, \weight_{\Pi}}
	\vee
	\|\lL \ou^-_{\eps, \ell}\|_{L^\infty([0,T]\times\R^3,\weight_{\Pi})}^{1/3}
	\bigr)\,
	\weight^{-1}_{\Pi}(x)
	\Big)^{\frac{2}{1 - 2\kappa}}\,.
\end{equ}
Then using Lemma~\ref{le:apriori_bound_Linfty} to bound $\|v^+_{\eps, \ell}\|_{\fK}$, we get
\begin{equs}
	\| \bar U_{\eps, \ell}& \|_{\gamma, [2t, 1] \times B(x, 1)}
	\lesssim t^{\frac{\eta - \gamma}{2}} w(x)^{-9}
	\\
	&
	\times\bigl(1
	+\|\phi_{\eps,\ell}\|_{\cC^\eta(w)}^M
	+\|(\hat\Pi_{\eps, \ell},\hat\Gamma_{\eps, \ell})\|_{T, \weight_\Pi}^M
	+\|\ou^+_{\eps,\ell}(0)\|_{\cC^\eta(w)}^M
	+\|\lL \ou^-_{\eps, \ell}\|_{L^\infty([0,T]\times\R^3,\weight_{\Pi})}^M
	\bigr)
\end{equs}
for some $M>0$.
The result follows, by applying similar arguments to $v^+_{\eps, \ell}$, $v^{\sharp}_{\eps, \ell}$, and subsequently invoking Remark~\ref{rmk:norm_cubes}.
\end{proof}

\subsection{Proof of Theorem~\ref{thm:global_solution}}
\label{sec:proof_markov_process_construction}

In this section, we combine the results from the preceding sections to construct a solution to the $\Phi^4_3$ model in infinite volume. We begin by stating two auxiliary lemmas, which follow directly from the analysis in Section~\ref{sec:modelled_distributions}. Note that the parameters $\gamma=\f32-5\kappa$, $\bar\gamma=\f32-6\kappa$, $\eta=-\f12-\kappa$, $\bar\eta=-\f12-2\kappa$ satisfy the conditions $\bar\gamma+2\bar\eta+2,3\bar\eta+2\notin\N_0$ as well as $\tT_{[\bar\gamma,\gamma)}=\emptyset$ and $\gamma+2\eta+2-\fc,3\eta+2-\fc\notin\N_0$, $\tT_{[\gamma+2\eta-\fc,\gamma+2\eta)}=\emptyset$ with $\fc=2\bar\kappa$. These parameters are considered fixed throughout this section, so we will not
separately recall their values in each of the statements.
The same goes for the weights $w,\weight_\Pi,\weight_{\mathrm{S}},
\weight_{\mathrm{L/R}}$ as defined in the statement of Lemma~\ref{lem:weights_solution}.

\begin{lem}
	\label{le:abstract_convolution_estimates_new}
	For all $T\in(0,1]$ we have
	\begin{equ}
		\VERT{\kK f^3}_{\gamma,0;T,\weight_{\mathrm{S}}^3\weight_\Pi^7} 
		\lesssim 1
	\end{equ}
	and
	\begin{equ}
		\VERT{\kK f^3;\bar\kK \bar f^3}_{\gamma,0;T,\weight_{\mathrm{S}}^3\weight_\Pi^7} 
		\lesssim 
		\VERT{f;\bar f}_{\bar\gamma,\bar\eta;T, \weight_{\mathrm{S}} }+\|(\Pi,\Gamma)-(\bar\Pi,\bar\Gamma)\|_{T, \weight_\Pi}\,
	\end{equ}
	locally uniformly over
	\begin{equ}
		(\Pi,\Gamma),(\bar\Pi,\bar\Gamma)\in\mM(\weight_\Pi)
		\,,\quad
		f\in\dD^{\bar\gamma,\bar\eta}_{T,\weight_{\mathrm{S}}}(\tT_{[\bar\eta,\bar\gamma)},\Gamma)
		\,,\quad
		\bar f\in\dD^{\bar\gamma,\bar\eta}_{T,\weight_{\mathrm{S}}}(\tT_{[\bar\eta,\bar\gamma)},\bar\Gamma)\,.
	\end{equ}
\end{lem}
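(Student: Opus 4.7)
The plan is to derive both bounds by composing three previously established estimates: the multiplication estimate (Lemma~\ref{lem:multiplication}), the Schauder estimate for polynomial weights (Lemma~\ref{lem:integration_K}), and the projection estimate (Lemma~\ref{lem:embedding_modelled_distributions}), carefully tracking the regularity exponents and weight factors at each stage. The key numerical values are $\bar\gamma+2\bar\eta = \f12-10\kappa \in (0,1)$, $3\bar\eta = -\f32-6\kappa > -2$, $\bar\gamma+2\bar\eta+2 = \f52-10\kappa$ and $3\bar\eta+2 = \f12-6\kappa > 0$, all non-integer as required.

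First, I apply Lemma~\ref{lem:multiplication} twice (once for $f^2$, once for $(f^2)\cdot f$) to place $f^3$ in $\dD^{\bar\gamma+2\bar\eta,\,3\bar\eta}_{T,\weight_{\mathrm{S}}^3\weight_\Pi^4}(\tT_{[3\bar\eta,\bar\gamma+2\bar\eta)})$, with seminorm controlled by a constant depending polynomially on $\|(\Pi,\Gamma)\|_{T,\weight_\Pi}$ and $\VERT{f}_{\bar\gamma,\bar\eta;T,\weight_{\mathrm{S}}}$. The resulting parameters satisfy the hypotheses of Lemma~\ref{lem:integration_K}, so applying it to $\kK = \kK^+ + \kK^-$ (with $\weight_{\mathrm{S}}^3\weight_\Pi^4$ playing the role of $\weight_{\mathrm{S}}$ in that lemma) yields
\begin{equ}
  \VERT{\kK f^3}_{\bar\gamma+2\bar\eta+2,\,3\bar\eta+2;\,T,\,\weight_{\mathrm{S}}^3\weight_\Pi^6} \lesssim 1\,.
\end{equ}
Since $\bar\gamma+2\bar\eta+2 > \gamma$, I then apply Lemma~\ref{lem:embedding_modelled_distributions} (again with $\weight_{\mathrm{S}}^3\weight_\Pi^6$ in place of $\weight_{\mathrm{S}}$, which remains a polynomial-decay weight satisfying~\eqref{eq:W-0}) to project down to regularity $\gamma$ at the cost of one further factor of $\weight_\Pi$, giving
\begin{equ}
  \VERT{\qQ_{<\gamma}\kK f^3}_{\gamma,\,3\bar\eta+2;\,T,\,\weight_{\mathrm{S}}^3\weight_\Pi^7} \lesssim 1\,.
\end{equ}

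The final step is to lower the singularity exponent from $3\bar\eta+2 > 0$ down to $0$ without incurring any further weight factor. The key observation is that $\kK f^3$ takes values only in $\tT_{[0,\gamma)}$: all polynomial contributions $\bXXX^k$ produced by the definitions of $\kK^\pm$ have grading $|k|\geq 0$, while the $\mathcal{I}$-part of $\kK^+$ acts on $f^3$, which has regularity $\geq 3\bar\eta$, producing components of grading $\geq 3\bar\eta+2 > 0$. For any modelled distribution $g$ valued in $\tT_{[0,\gamma)}$ and any $T\leq 1$, inspection of the divisors $t^{((\eta-\beta)\wedge 0)/2}$, $s^{(\eta-\gamma)/2}$ and $t^{(\eta-\gamma)/2}$ appearing in Definition~\ref{def:modelled_distributions} shows directly that decreasing $\eta$ from a positive value down to $0$ only decreases each of the three seminorms. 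Hence $\VERT{\kK f^3}_{\gamma,0;T,\weight_{\mathrm{S}}^3\weight_\Pi^7} \leq \VERT{\qQ_{<\gamma}\kK f^3}_{\gamma,3\bar\eta+2;T,\weight_{\mathrm{S}}^3\weight_\Pi^7} \lesssim 1$, which is the first bound.

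The second estimate is proved by the same four-step procedure, this time invoking the difference statements of Lemmas~\ref{lem:multiplication} and~\ref{lem:integration_K} and a straightforward linear variant of Lemma~\ref{lem:embedding_modelled_distributions} that follows from the same proof. The model-difference term $\|(\Pi,\Gamma)-(\bar\Pi,\bar\Gamma)\|_{T,\weight_\Pi}$ is produced at each multiplication and integration step. No new analytic input is needed; the entire argument is essentially a matter of bookkeeping, and the main (mild) obstacle is to ensure that the final $\weight_\Pi$-exponent comes out to exactly $7$: one factor of $\weight_\Pi^2$ from each of the two multiplications in the first step, one factor of $\weight_\Pi^2$ from the Schauder estimate, and a single factor of $\weight_\Pi$ from the projection down to regularity $\gamma$.
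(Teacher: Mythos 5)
Your proof is correct and follows essentially the same route as the paper, whose own proof is a one-line citation of Lemmas~\ref{lem:multiplication}, \ref{lem:integration_K}, \ref{lem:embedding_modelled_distributions} and Remark~\ref{rmk:embedding_modelled_distributions} chained in exactly the order you use. Your only deviation is replacing the citation of Remark~\ref{rmk:embedding_modelled_distributions} in the last step by a direct monotonicity-in-$\eta$ argument, which is fine and in fact slightly more careful: $\kK f^3$ is not valued in $\tT_{[3\bar\eta+2,\gamma)}$, so the Remark does not literally apply there and would in any case cost an extra factor of $\weight_\Pi$, whereas your bookkeeping lands exactly on $\weight_\Pi^7$.
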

\begin{proof}
	The result follows from Lemmas~\ref{lem:embedding_modelled_distributions},~\ref{lem:multiplication},~\ref{lem:integration_K} and Remark~\ref{rmk:embedding_modelled_distributions}.
\end{proof}

\begin{lem}
\label{le:linearise_abstract_equ}
	Fix a model $(\Pi,\Gamma)\in\mM(\weight_\Pi)$. For all $T\in(0,1]$ we have
	\begin{equ}
		\VERT{f}_{\gamma,\eta;T, \weight_{\mathrm{L}}}
		\vee
		\VERT{f-K(\phi)}_{\gamma,0;T, \weight_{\mathrm{L}}} \lesssim \|\phi\|_{\cC^\eta(w)}
	\end{equ}
	locally uniformly over $f\in\dD^{\gamma,\eta}_{T,\weight_{\mathrm{S}}}(\tT_{[0,\gamma)})$, $g\in\dD^{\gamma,\eta}_{T,\weight_{\mathrm{S}}}(\tT_{[\eta,\gamma)})$ and $\phi\in\cC^\eta(w)$ satisfying the equation
	\begin{equ}
		f = K(\phi)-\kK (g^2 f)  \;.
	\end{equ}
\end{lem}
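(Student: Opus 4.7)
I plan to prove both bounds by a contraction argument in the space $\dD^{\gamma,\eta}_{T,\weight_{\mathrm{L}}}$, exploiting the strictly improved range of the map $u \mapsto \kK(g^2 u)$ provided by the exponential-weight Schauder estimate. The contribution of the initial data is handled by Lemma~\ref{lem:S_phi_estimate}, which gives $\VERT{K(\phi)}_{\gamma,\eta;T,w} \lesssim \|\phi\|_{\cC^\eta(w)}$; since the weights of Lemma~\ref{lem:weights_solution} satisfy $\weight_{\mathrm{L}}^{(i)}(\beta;t,\bigcdot) \leq w$ for all relevant $\beta$ and $t\in[0,T]$, monotonicity transfers this to $\VERT{K(\phi)}_{\gamma,\eta;T,\weight_{\mathrm{L}}} \lesssim \|\phi\|_{\cC^\eta(w)}$. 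For the convolution term, chaining the product estimate (Lemma~\ref{lem:multiplication_exp}) with the exponential Schauder estimate (Theorem~\ref{thm:integration_K_exp}) yields
\begin{equ}
\VERT{\kK(g^2 f)}_{\gamma-2\eta+2-\fc,\,3\eta+2-\fc;\,T,\weight_{\mathrm{L}}}
\lesssim
\VERT{g}^2_{\gamma,\eta;T,\weight_{\mathrm{S}}}\,\VERT{f}_{\gamma,\eta;T,\weight_{\mathrm{L}}}\,,
\end{equ}
which places $\kK(g^2 f)$ in a strictly better space, since both $\gamma-2\eta+2-\fc>\gamma$ and $3\eta+2-\fc>\eta$.

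The next step is to embed back into $\dD^{\gamma,\eta}_{T,\weight_{\mathrm{L}}}$: the regularity projection $\qQ_{<\gamma}$ is handled by Lemma~\ref{lem:embedding_modelled_distributions} (which preserves the weight $\weight_{\mathrm{L}}$ thanks to~\eqref{eq:W-7}), while the blow-up exponent is downgraded from $3\eta+2-\fc>0$ down to $\eta$ via Remark~\ref{rmk:embedding_modelled_distributions} applied with $\eta_1=0$ and $\eta_2=\eta$ (using that $\kK(g^2 f)$ is $\tT_{[0,\gamma)}$-valued). This produces an explicit time gain $T^c$ with $c>0$, at the cost of downgrading the weight by a factor of $\weight_{\Pi}$; this latter loss is absorbed via the weight hierarchy encoded in Assumption~\ref{ass:weights}. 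Combined, I obtain
\begin{equ}
\VERT{\kK(g^2 f)}_{\gamma,\eta;T,\weight_{\mathrm{L}}}
\lesssim
T^{c}\,\VERT{g}^2_{\gamma,\eta;T,\weight_{\mathrm{S}}}\,\VERT{f}_{\gamma,\eta;T,\weight_{\mathrm{L}}}\,.
\end{equ}
Choosing $T_0\in(0,T]$ small enough that the implicit constant times $T_0^c$ is at most $1/2$, one can absorb the right-hand side on $[0,T_0]$ to conclude $\VERT{f}_{\gamma,\eta;T_0,\weight_{\mathrm{L}}} \lesssim \|\phi\|_{\cC^\eta(w)}$, and iterating on successive sub-intervals of length $T_0$ gives the bound on all of $[0,T]$; the time-decaying exponential factor in $\weight_{\mathrm{L}}$ keeps the constants uniform across restarts. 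The companion bound $\VERT{f-K(\phi)}_{\gamma,0;T,\weight_{\mathrm{L}}} \lesssim \|\phi\|_{\cC^\eta(w)}$ then follows from $f-K(\phi)=-\kK(g^2 f)$ and the same chain, using that $\kK(g^2 f)\in\dD^{\gamma',\eta'}$ with $\eta'=3\eta+2-\fc>0$.

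The hard part will be the bookkeeping of weights through the chain of Schauder, multiplication, projection, and blow-up embedding steps. Each step may a priori replace $\weight_{\mathrm{L}}$ by the strictly smaller weight $\weight_{\mathrm{L}}\weight_{\Pi}$, and closing the inequality with \emph{the same} weight on both sides requires that the hierarchy between $\weight_{\mathrm{L}}$, $\weight_{\mathrm{R}}$, $\weight_{\mathrm{S}}$, and $\weight_{\Pi}$ formulated in Assumption~\ref{ass:weights}, and in particular the conditions~\eqref{eq:W-1}, \eqref{eq:W-5}, and~\eqref{eq:W-7}, is arranged precisely so that every $\weight_{\Pi}$-loss is traded against the integrable time factor $(t-s)^{-\fc/2}$ of~\eqref{eq:W-1}.
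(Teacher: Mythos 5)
Your proposal follows essentially the same route as the paper's proof: bound $K(\phi)$ via Lemma~\ref{lem:S_phi_estimate} and monotonicity of the weights, chain Lemma~\ref{lem:multiplication_exp} with Theorem~\ref{thm:integration_K_exp} to land $\kK(g^2f)$ in $\dD^{\gamma-2\eta+2-\fc,\,3\eta+2-\fc}$, re-embed via Lemma~\ref{lem:embedding_modelled_distributions} and Remark~\ref{rmk:embedding_modelled_distributions} to extract a positive power of $T$, absorb for small $T$, and iterate in time, with the companion bound following from $f-K(\phi)=-\kK(g^2f)$. This matches the paper's argument (including the weight bookkeeping you flag, which is exactly what Assumption~\ref{ass:weights} is designed for), so no further comparison is needed.
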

\begin{proof}
	First, note that by Lemmas~\ref{lem:multiplication},~\ref{lem:multiplication_exp} and Theorem~\ref{thm:integration_K_exp},
	\begin{equ}
		\VERT{\kK (g^2 f)}_{\gamma-2\eta+2-\fc,3\eta+2-\fc;T, \weight_{\mathrm{L}} }
		\lesssim
		\VERT{g}^2_{\gamma,\eta;T, \weight_{\mathrm{S}} }\,\,\VERT{f}_{\gamma,\eta;T, \weight_{\mathrm{L}} }
	\end{equ}
	uniformly over $(\Pi,\Gamma)\in\mM(\weight_\Pi)$, $g,f\in\dD^{\gamma,\eta}_{T,\weight_{\mathrm{S}}}(\tT_{[\eta,\gamma)})$ and $T\in(0,1]$. Hence, by Lemma~\ref{lem:embedding_modelled_distributions} and Remark~\ref{rmk:embedding_modelled_distributions}
	\begin{equ}\label{e:uniqueness_main}
		\VERT{\kK (g^2 f)}_{\gamma,\eta;T, \weight_{\mathrm{L}} }
		\lesssim
		\VERT{\kK (g^2 f)}_{\gamma,0;T, \weight_{\mathrm{L}} }
		\lesssim
		T^{\frac{\kappa}{2}}\,
		\VERT{g}^2_{\gamma,\eta;T, \weight_{\mathrm{S}} }\,
		\VERT{f}_{\gamma,\eta;T, \weight_{\mathrm{L}} }\,.
	\end{equ}
	Next, observe that by Lemma~\ref{lem:S_phi_estimate},
	\begin{equ}
		\VERT{K(\phi)}_{\gamma,\eta;T,w}\lesssim
		\|\phi\|_{\cC^\eta(w)} \,.
	\end{equ}
	This proves the bound
	\begin{equ}\label{eq:bound_f_phi}
		\VERT{f}_{\gamma,\eta;T, \weight_{\mathrm{L}}}
		\lesssim
		\|\phi\|_{\cC^\eta(w)}
	\end{equ}
	for $T^{\frac{\kappa}{2}}\leq 1\wedge \frac12\VERT{g}^{-2}_{\gamma,\eta;1, \weight_{\mathrm{S}}}$. To extend this bound to all $T \in (0,1]$, we employ a time iteration argument, analogous to the one used in the proof of~\cite[Theorem~5.2]{HL18}. Using~\eqref{e:uniqueness_main} and~\eqref{eq:bound_f_phi} we complete the proof.
\end{proof}

We adopt a deterministic perspective and construct the solution map pathwise, on the event
given by the following lemma.

\begin{lem}\label{lem:event}
	There exist 
	\begin{equ}
		(\hat\Pi,\hat\Gamma)\in\mM(\weight_\Pi)\,,
		\quad
		\lL\ou^-\in C(\R_\geq\times\R^3)\,,
		\quad
		\ou^+\in C(\R_\geq,\cC^{\eta+\frac\kappa2}(\weight_\Pi))
	\end{equ}
	such that almost surely, for all $T>0$, the stochastic data $(\hat\Pi_{\eps,\ell},\hat\Gamma_{\eps,\ell},\ou^+_{\eps,\ell},\lL\ou^-_{\eps,\ell})_{\eps\in(0,1],\ell\in\N_+}$ satisfies the conditions
	\begin{equs}
		\lim_{\ell\to\infty}\lim_{\eps\searrow0}\|(\hat\Pi,\hat\Gamma)-(\hat\Pi_{\eps,\ell},\hat\Gamma_{\eps,\ell})\|_{T, \weight_\Pi}&=0 \,,
		\label{e:good_event1} 
		\\
		\lim_{\ell\to\infty}\lim_{\eps\searrow0} \sup_{t \in [0, T]} \|\lL\ou^-(t) -\lL\ou^-_{\eps,\ell}(t)\|_{L^\infty(\weight_\Pi)}&=0 \,,
		\label{e:good_event2} 
		\\
		\lim_{\ell\to\infty}\lim_{\eps\searrow0}\sup_{t\in[0,T]}\|\ou^+(t)-\ou^+_{\eps,\ell}(t)\|_{\cC^{\eta +\frac\kappa2}(\weight_\Pi)}&=0 \,.
		\label{e:good_event3}
	\end{equs}
\end{lem}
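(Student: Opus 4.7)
The proof combines the BPHZ convergence theorem with a Borel--Cantelli argument that upgrades local $L^p$ bounds to the desired weighted almost-sure statement.

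First, I would invoke the BPHZ convergence theorems of \cite{BHZ19,HS24} applied to space-time white noise $\xi$ on $\R^{1+3}$ with the renormalisation constants $C^{(1)}_\eps,C^{(2)}_\eps$ of Definition~\ref{def:smoothed_noise}. This produces an admissible model $(\hat\Pi,\hat\Gamma)\in\mM$ whose law is translation invariant in space-time. The stationary pieces are defined analogously by Gaussian integration: using the decomposition of Lemma~\ref{lem:kernel_decomposition}, I set $\ou^+ \eqdef K^+\ast\xi$ and $\lL\ou^- \eqdef (\lL K^-)\ast\xi$, noting that $K^-$ is smooth so $\lL\ou^-\in C(\R\times\R^3)$, while $\ou^+(t)\in \cC^{\eta+\kappa/2}(\weight_\Pi)$ at every time by standard Besov-Gaussian arguments (of the kind used in Appendix~\ref{sec:stochastic_estimates}). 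The same estimates yield, for every $p\geq 1$ and every compact $\fK\subset\R^{1+3}$, the local convergences
\begin{equ}
\E\|(\hat\Pi,\hat\Gamma)-(\hat\Pi_{\eps,\ell},\hat\Gamma_{\eps,\ell})\|_\fK^p \to 0
\end{equ}
and analogous statements for $\ou^+,\lL\ou^-$, as $\eps\searrow 0$ followed by $\ell\to\infty$.

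Second, I would upgrade these local bounds to the weighted norms via a cube-decomposition argument. Using Remark~\ref{rmk:norm_cubes} adapted to models,
\begin{equ}
\|(\hat\Pi,\hat\Gamma)-(\hat\Pi_{\eps,\ell},\hat\Gamma_{\eps,\ell})\|_{T,\weight_\Pi} \lesssim \sup_{n\in\Z^3}\weight_\Pi(n)\,\|(\hat\Pi,\hat\Gamma)-(\hat\Pi_{\eps,\ell},\hat\Gamma_{\eps,\ell})\|_{[-1,T]\times B(n,1)}\,,
\end{equ}
and similarly for $\ou^+$ and $\lL\ou^-$. Translation invariance of $\xi$ and periodic stationarity of $\xi_{\eps,\ell}$ make the $p$-th moment of each local norm uniformly bounded in $n\in\Z^3$. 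Since $\weight_\Pi=\bracket{\bigcdot}^{-\bar\kappa^5}$ decays polynomially, I can pick $p$ large enough so that $\sum_{n\in\Z^3}\weight_\Pi(n)^p<\infty$, whence
\begin{equ}
\E\|(\hat\Pi,\hat\Gamma)-(\hat\Pi_{\eps,\ell},\hat\Gamma_{\eps,\ell})\|_{T,\weight_\Pi}^p \lesssim \sum_{n\in\Z^3}\weight_\Pi(n)^p\cdot o_{\eps,\ell}(1) \to 0\,,
\end{equ}
giving $L^p$-convergence of the weighted norms.

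Finally, to extract the almost-sure statement for all $T>0$, I would pick a countable sequence $T_k\uparrow\infty$, apply Markov's inequality and Borel--Cantelli along a subsequence $(\eps_k,\ell_k)$ to obtain almost sure convergence of the weighted norms for each $T_k$, and use that these norms are non-decreasing in $T$ to cover every $T>0$ by taking a countable intersection.

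\emph{Main obstacle.} The delicate step is the weighted upgrade, because $\xi_{\eps,\ell}$ is only periodic and not translation invariant on all of $\R^3$, so the local moment bound on $B(n,1)$ is not uniform in $n$ at first sight. This is resolved by exploiting that $K^+$ has compact support (Lemma~\ref{lem:kernel_decomposition}), so the model on $B(n,1)$ depends only on the noise in a unit neighbourhood, which can be translated by a multiple of $\ell$ into $Q_\ell$ without changing the law. The consistency of the periodic and non-periodic limits follows from the fact that $\xi_\ell$ agrees with $\xi$ on $B(0,\ell/2-1)$, so the local BPHZ convergence passes to the joint limit; moreover, the tree-based renormalisation constants for the torus and for $\R^3$ differ by $o(1)$ as $\ell\to\infty$, contributing only a vanishing error in the $\ell\to\infty$ comparison.
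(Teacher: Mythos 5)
Your proposal is correct and follows essentially the same route as the paper: the paper's proof simply cites Lemma~\ref{lem:convergence_stationary_model} and Lemma~\ref{lem:stochastic_paracontrolled}, whose proofs are exactly your outline (BPHZ convergence on compacts, triviality of the $\ell\to\infty$ limit locally since $K^+$ has compact support and $\xi_\ell=\xi$ on $\R\times Q_\ell$, then the weighted upgrade via stationarity and Remark~\ref{rmk:norm_cubes}). The only point to sharpen is the renormalisation constants: the difference between $C^{(i)}_{\eps,\ell}$ and the BPHZ constants $\bar C^{(i)}_\eps$ converges to a \emph{finite nonzero} constant (Lemma~\ref{le:renormalisation_diff}, \eqref{eq:C_diff_convergence}), so the limit model is the BPHZ model shifted by a fixed element of the renormalisation group rather than the BPHZ model itself — but this does not affect existence of the limit.
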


\begin{proof}
	The statement relies crucially on the coupling of the family $(\xi_{\eps,\ell})_{\eps \in(0,1],\ell\in\N_+}$ with the space-time white noise $\xi$ introduced in Definition~\ref{def:smoothed_noise} and follows immediately from Lemmas~\ref{lem:convergence_stationary_model} and \ref{lem:stochastic_paracontrolled}. 
\end{proof}

Now we are ready to prove Theorem~\ref{thm:global_solution}. 
\begin{proof}[Proof of Theorem~\ref{thm:global_solution}]
	We work deterministically on the event of full measure on which the conclusions of Lemma~\ref{lem:event} hold.
	
	\step{Construction of $\Phi$.}\label{step:construction_Phi} Set $T=1$ and recall that $\mathrm{w}_{\mathrm{S}} \leq w^9$. Given $\phi\in \cC^{-\frac12-\kappa}(w)$ we define the initial data for the regularised dynamic $\phi_{\eps,\ell} \in C^\infty(\T_\ell^3)$ as in Lemma~\ref{lem:map_T}. Then $\lim_{\ell\to\infty}\lim_{\eps\searrow0}\|\phi-\phi_{\eps,\ell}\|_{\cC^\eta(w)}=0$, and by Lemma~\ref{lem:S_phi_estimate} we have $S(\phi)\in\dD^{\gamma,\eta}_{T,w}$ and
	\begin{equ}\label{eq:S_v}
		\lim_{\ell\to\infty}\lim_{\eps\searrow0}\VERT{S(\phi) - S_{\eps,\ell}(\phi_{\eps,\ell})}_{\gamma,\eta;T,w} =0 \;.
	\end{equ}
	Let $V_{\eps,\ell}\in\dD^{\gamma,\eta}$ be the solution to the abstract fixed point problem~\eqref{eq:V_cutoff}. By Proposition~\ref{pr:apriori_bound_modelled_distribution} and Lemma~\ref{le:compactness_modelled_distribution}, for every sequence $(\bar\eps_n,\bar\ell_n)_{n\in\N_+}$ there exists a subsequence $(\eps_n,\ell_n)_{n\in\N_+}$ and a singular modelled distribution $V\in\dD^{\gamma,\eta}_{T,\weight_{\mathrm S}}$ with respect to the model $(\hat \Pi, \hat \Gamma)$ such that 
	\begin{equ}\label{eq:V_accumulation}
		\lim_{n\to\infty}\VERT{V_{\eps_n, \ell_n}; V}_{\bar \gamma, \bar \eta;T,\weight_{\mathrm S}}=0\,.
	\end{equ}
	Using Lemma~\ref{le:abstract_convolution_estimates_new}, the fact that $V_{\eps,\ell}$ satisfies~\eqref{eq:V_cutoff} and the conditions \eqref{e:good_event1} and~\eqref{eq:S_v} we obtain
	\begin{equ}\label{e:limit_characterisation}
		V = \kK \one_> (\bou+V)^3 + S(\phi)
	\end{equ}
	on $[0,1]\times\R^3$ for all $V\in\dD^{\gamma,\eta}_{T,\weight_{\mathrm S}}$ such that~\eqref{eq:V_accumulation} holds. Now, suppose that $V, \bar V\in\dD^{\gamma,\eta}_{T,\weight_{\mathrm S}}$ solve~\eqref{e:limit_characterisation} with the initial data $\phi$ and $\bar \phi$, respectively. Then the difference $D = V - \bar V$ satisfies the equation
	\begin{equ}\label{e:difference_eq}
		D = \kK \one_>
		\Bigl(
		(\bou + V)^2 + (\bou + V)(\bou + \bar V) + (\bou + \bar V)^2 
		\Bigr) D
		+
		K(\phi - \bar \phi) \;.
	\end{equ}
	Applying Lemma~\ref{le:linearise_abstract_equ}, in the case $\phi = \bar \phi$ we get that $D=V-\bar V=0$, which implies that~\eqref{e:limit_characterisation} admits a unique solution in $\dD^{\gamma,\eta}_{T,\weight_{\mathrm S}}$. Therefore, there exists a unique modelled distribution $V\in \dD^{\gamma,\eta}_{T,\weight_{\mathrm S}}$ solving \eqref{e:limit_characterisation} such that
\begin{equ}\label{e:subsequential_limit}
	\lim_{\ell\to\infty}\lim_{\eps\searrow0}\VERT{V_{\eps,\ell};V}_{\bar\gamma,\bar\eta;T, \weight_{\mathrm{S}} } = 0 \;.
\end{equ}
Using \eqref{e:subsequential_limit}, \eqref{eq:S_v}, Lemma~\ref{le:abstract_convolution_estimates_new} and the fact that $V_{\eps,\ell}$  and $V$ satisfy~\eqref{eq:V_cutoff} and~\eqref{e:limit_characterisation}, respectively, one shows that
\begin{equ}
	\lim_{\ell\to\infty}\lim_{\eps\searrow0} \VERT{V_{\eps, \ell}-S_{\eps,\ell}(\phi_{\eps,\ell}); V-S(\phi)}_{\gamma,0; T, \weight_{\mathrm{S}}^3\weight_\Pi^7} = 0 \;.
	\end{equ}
Recall Definition~\ref{def:remainders}. Since $v^\star_{\eps, \ell} = \bracket{\oneb^*,V_{\eps, \ell}-S_{\eps,\ell}(\phi_{\eps,\ell})}\in C([0, 1] \times \T_\ell^3)$, using Definition~\ref{def:modelled_distributions} we obtain that there exists $v^\star\in C([0, 1] \times \R^3)$ such that 
\begin{equ}
\label{e:v_convergence}
	\lim_{\ell\to\infty}\lim_{\eps\searrow0} \sup_{t\in (0, 1]}  
	\|v^\star_{\eps, \ell}(t)-v^\star(t))\|_{L^{\infty}(\weight_{\mathrm{S}}^3\weight_\Pi^7)} 
	= 0 \;.
\end{equ}
By~\eqref{e:v_convergence}, Lemma~\ref{le:apriori_bound_Linfty} and $v^\star_{\eps, \ell}\in C([0, 1],L^{\infty}(w^3))$ we obtain
\begin{equ}
	\label{e:v_convergence_improved}
	\lim_{\ell\to\infty}\lim_{\eps\searrow0} \sup_{t\in [0,1]}
	\|v^\star_{\eps, \ell}(t)-v^\star(t))\|_{L^{\infty}(w^4)}
	= 0
\end{equ}
and
\begin{equ}
	v^\star \in C([0, 1],L^{\infty}(w^4))
	\subset C([0,1],\cC^\eta(w^4)) \;.
\end{equ}
By Lemma~\ref{lem:S_phi_standard} we have
\begin{equ}\label{e:convergence_S}
	\lim_{\ell\to\infty}\lim_{\eps\searrow0}\sup_{t>0}t^{-\frac\eta2}\|S_{\eps,\ell}(\phi_{\eps,\ell})(t)-S(\phi)(t)\|_{L^\infty(w)}=0
\end{equ}
and
\begin{equ}
	S(\phi) \in C(\R_\geq,\cC^\eta(w))
	\subset C(\R_\geq,\cC^\eta(w^4)) \;.
\end{equ}
By~\eqref{e:v_convergence_improved}, \eqref{e:convergence_S}, Lemma~\ref{le:MW20} and $v^+_{\eps,\ell}\in C([0,1],L^\infty(w^{1/3}))$ we furthermore infer that
\begin{equ}
	\label{e:v_+_convergence_improved}
	\lim_{\ell\to\infty}\lim_{\eps\searrow0} \sup_{t\in (0, 1]}
	t^{-\frac\eta2}\,\|v^+_{\eps, \ell}(t)-v^+(t))\|_{L^{\infty}(w^{1/2})} 
	= 0
\end{equ}
and
\begin{equ}
	v^+=v^\star+ S(\phi) \in C((0, 1],L^{\infty}(w^{1/2}))
	\subset C((0, 1],\cC^{\eta+\frac\kappa2}(w^{1/2}))\,.
\end{equ}
We also note that by Lemma~\ref{lem:event} we have
\begin{equ}
	\ou^+\in 
	C(\R_\geq,\cC^{\eta+\frac\kappa2}(\weight_\Pi))\subset
	C(\R_\geq,\cC^{\eta+\frac\kappa2}(w^{1/2}))
	\subset C(\R_\geq,\cC^{\eta}(w^4))\,.
\end{equ}
For $t \in [0, 1]$ and a realisation $\xi$ of the white noise, we define
\begin{equ}
	\Phi(\phi;\bigcdot)\equiv\Phi(\phi,\xi;\bigcdot) \eqdef \ou^+ + v^+
	=
	\ou^+ + v^\star +S(\phi)
\end{equ}
and from the previous discussions we conclude immediately that
\begin{equ}
	\Phi(\phi;\bigcdot)\in C([0,1],\cC^\eta(w^4))\cap C((0,1],\cC^{\eta+\frac{\kappa}{2}}(w^{\frac12}))
\end{equ}
This proves~\eqref{e:solMap_inf} restricted to the time interval $[0,1]$. The convergence \eqref{e:Phi_convergence} for $t\in(0,1]$ follows from~\eqref{e:v_+_convergence_improved} and~\eqref{e:good_event3}. The bound \eqref{e:bound} follows directly from the definition of $\Phi$, \eqref{e:v_+_convergence_improved} and Lemmas~\ref{le:MW20}, \ref{lem:convergence_stationary_model} and~\ref{lem:stochastic_paracontrolled}.

From \eqref{e:Phi_convergence} and the properties of the finite volume dynamic we deduce that $\Phi$ satisfies the cocycle property:
\begin{equ}
\label{e:cocycle}
	\Phi(\phi,\xi; t_1 + t_2) = \Phi(\Phi(\phi,\xi; t_1),\theta(t_1)\,\xi; t_2)
\end{equ}
for all $0 < t_1 + t_2 \leq 1$ and $\phi \in \cC^{-\frac{1}{2} - \kappa}(w)$, where $\theta(t)\,\xi$ denotes the noise obtained by shifting $\xi$ by $t$ into the past. Using \eqref{e:cocycle} and the definition of $\Phi(\phi;t)$ for $t \in [0, 1]$, one defines $\Phi(\phi; t)$ iteratively for all $t\geq 0$ and verifies that it satisfies~\eqref{e:solMap_inf} and~\eqref{e:Phi_convergence}.

\step{Continuity of $\Phi$ with respect to initial data.}
By \eqref{e:cocycle}, it suffices to study the time interval $[0, 1]$. Let $V, \bar V\in\dD^{\gamma,\eta}_{T,\weight_{\mathrm S}}$ be solutions of~\eqref{e:limit_characterisation} with initial data $\phi,\bar\phi\in\cC^\eta(w)$, respectively. Set
\begin{equ}
	v^+ = \bracket{\oneb^*,V}\,,
	\quad
	v^\star = v^+-S(\phi)\,,
	\quad
	\bar v^+ = \bracket{\oneb^*,\bar V}\,,
	\quad
	\bar v^\star = \bar v^+-S(\bar\phi)\,.
\end{equ}
Applying Lemma~\ref{le:linearise_abstract_equ} to \eqref{e:difference_eq} and using Definition~\ref{def:modelled_distributions} we obtain
\begin{equs}
	\sup_{t\in(0,1]}
	t^{-\frac\eta2}\,\|v^+(t)-\bar v^+(t)\|_{L^{\infty}(\weight_{\mathrm{L}}(t))}
	\vee
	\sup_{t\in(0,1]}
	\|v^\star(t)-\bar v^\star(t)\|_{L^{\infty}(\weight_{\mathrm{L}}(t))}
	\lesssim \|\phi- \bar\phi\|_{\cC^{\eta}(w)} \;.
\end{equs}
By~\eqref{e:v_convergence_improved} and Lemma~\ref{le:apriori_bound_Linfty} we have $\sup_{t\in[0,1]}
\|v^\star(t)-\bar v^\star(t)\|_{L^{\infty}(w^3)}\lesssim 1$ locally uniformly over initial data $\phi,\bar\phi$. Similarly, by~\eqref{e:v_+_convergence_improved} and Lemma~\ref{le:MW20} we have $\sup_{t\in(0,1]}
t^{1/2}\,\|v^+(t)-\bar v^+(t)\|_{L^{\infty}(w^{1/3})}\lesssim 1$ uniformly over initial data $\phi,\bar\phi$.

The continuity of the map~\eqref{e:solMap_inf} follows now from Lemma~\ref{lem:S_phi_standard} and the following observation: given $\beta>\alpha\geq 0$ and an interval $I\subset\R$,
if $\lim_{n\to\infty}\sup_{t\in I}\|f_n(t)\|_{L^\infty(\weight_{\mathrm{L}}(t))}=0$ and $\sup_{n\in\N_+}\sup_{t\in I}\|f_n(t)\|_{L^\infty(w^\alpha)}<\infty$, then $\lim_{n\to\infty}\sup_{t\in I}\|f_n(t)\|_{L^\infty(w^\beta)}=0$.

\step{Euclidean invariance.} From~\ref{step:construction_Phi} we know that for any $\phi \in \cC^{\eta}(w)$ there exists a unique singular modelled distribution $V$ such that \eqref{e:limit_characterisation} holds, where $S(\phi) = \kK \one_> \lL \ou^- + K(\phi - \ou^+(0))$. Recall Definition~\ref{def:Euclidane_transform}. For any $\varrho \in \R^3 \rtimes O(3)$, by~(A) and direct calculation, we get that $\varrho\cdot V$ is the unique solution of the equation
\begin{equ}
	\varrho\cdot V = \kK_{\varrho} \one_> (\bou + \varrho\cdot V)^3 + \kK_{\varrho} \one_> (\varrho\cdot \lL\ou^- ) + K( \varrho\cdot \phi - \varrho\cdot \ou^+(0)) \;.
\end{equ}
Here both sides are singular modelled distribution with respect to $(\varrho \cdot \hat \Pi, \varrho \cdot \hat \Gamma)$. From our definition of $\Phi$ and Lemma~\ref{lem:convergence_stationary_model} we have 
\begin{equ}
	\Phi(\varrho \cdot \phi; \bigcdot, \varrho \cdot \xi) = \bracket{\oneb^*, \varrho\cdot V} + \varrho\cdot \ou^+ = \varrho \cdot \Phi(\phi; \bigcdot, \xi) \;.
\end{equ}
The result then follows, since $\varrho \cdot \xi \eqlaw \xi$.
\end{proof}

\appendix

\section{Space-time localisation bound}\label{sec:spacetime_localization}

Recall that $|(t,x)-(s,y)|\eqdef \max\{|t-s|^{1/2},|x-y|\}$ is the usual parabolic distance of $(t,x),(s,y)\in\R^{1+3}$. For $z=(t,x)\in\R^{1+3}$ we set $X(z)=x$. We denote by $B_-(z,r)$ the parabolic ball of centre $z=(t,x)$ and radius $r>0$ with respect to the parabolic distance looking only into the past. We define the parabolic boundary of a subset $\fK$ of space-time as the set of points $z$ in the closure $\fK^{\mathrm{cl}}$ of $\fK$ such that $B_-(z,r)\not\subset\fK^{\mathrm{cl}}$ for all $r>0$. For $r>0$ we define $\fK_r\subset \fK$ as the set at distance $r$ from the parabolic boundary. We call a set $\fK\subset\R^{1+3}$ a space-time cube if $\fK=I_0\times \ldots\times I_3$ for some closed intervals $I_0,\ldots,I_3\subset \R$. For $\fK\subset\R^{1+3}$, $\alpha > 0$ and a weight $\weight\in C(\R^{1+3},\R_>)$, we let $\|f\|_{\fK,\weight}\eqdef \sup_{z\in\fK} \weight(z)|f(z)|$ and denote by $[f]_{\alpha,\fK,\weight}$ the weighted $\alpha$-Hölder seminorm restricted to points in $\fK$ defined with the use of the parabolic distance. Let $(\psi^r)_{r\in(0,1]}$ be a family of smooth compactly supported test functions over space-time with a~semigroup property at dyadic scales constructed in \cite[Section~2]{MW20}. For an open set $\fK\subset\R^{1+3}$, a weight $\weight\in C(\R^{1+3},\R_>)$ and $\alpha<0$ we define the local Besov $\cC^\alpha$ norm of a distribution $f\in\dD'(\fK)$ by 
\begin{equ}\label{e:weighted_Holder_norm}
 [f]_{\alpha,\fK,\weight}\eqdef\sup_{r\in(0,1]} r^{-\alpha} \|\weight(\psi^r\ast f)\|_{\fK}\,.
\end{equ}
We omit the weight $\weight$ if $\weight=1$.

\begin{defn}\label{def:enhanced_noise_spacetime}
Given a space-time region $\fK\subset\R^{1+3}$ and functions $\ou,\oud,\out,\oudz,\outz,\weight,C^{(2)}$ we define
\begin{equs}[eq:def_X_tilde_st]
	\tilde\fX&(\ou,\oud,\out,\oudz,\outz,C^{(2)},\fK,\weight)
	\eqdef
	\max\Bigl\{
	[\ou]_{|\bou|,\fK,\weight},
	[\oud]_{|\boud|,\fK,\weight^2}^{1/2},
	[\out]_{|\bout|,\fK,\weight^3}^{1/3},
	\\
	&	
	\|\oudz\|_{\fK,\weight^2}^{1/2},
	[\oudz]_{|\boudz|,\fK,\weight^2}^{1/2},
	\|\outz\|_{\fK,\weight^3}^{1/3},
	[\outz]_{|\boutz|,\fK,\weight^3}^{1/3},
	[\oud \XXX]_{\fK,\weight^2}^{1/2},
	[\oudd]_{\fK,\weight^4}^{1/4},
	[\outu]_{\fK,\weight^4}^{1/4},
	[\outd]_{\fK,\weight^5}^{1/5}\Bigr\}\,,
\end{equs}
where
\begin{equs}{}
[\oud \XXX]_{\fK,\weight}&\eqdef\sup_{r\in(0,1]}r^{-|\boud \bXXX|}
\left\|\weight\int X(z-\bigcdot)\oud(z)\psi^r(z-\bigcdot)\,\md z\right\|_{\fK}\,,
\\{}
[\oudd]_{\fK,\weight}&\eqdef\sup_{r\in(0,1]}r^{-|\boudd|}
\left\|\weight\int((\oudz(z)-\oudz(\bigcdot))\oud(z)-C^{(2)}(z))\psi^r(z-\bigcdot)\,\md z\right\|_{\fK}\,,
\\{}
[\outu]_{\fK,\weight}&\eqdef\sup_{r\in(0,1]}r^{-|\boutu|}
\left\|\weight\int(\outz(z)-\outz(\bigcdot))\ou(z)\psi^r(z-\bigcdot)\,\md z\right\|_{\fK}\,,
\\{}
[\outd]_{\fK,\weight}&\eqdef\sup_{r\in(0,1]}r^{-|\boutd|}
\left\|\weight\int((\outz(z)-\outz(\bigcdot))\oud(z)-3C^{(2)}(z)\ou(z))\psi^r(z-\bigcdot)\,\md z\right\|_{\fK}\,.
\end{equs}
Here $|\bou|$, $|\boud|$, etc.\ refer to the grading defined in Section~\ref{sec:reg_structure}. We omit $\weight$ if $\weight=1$.
\end{defn}

\begin{thm}\label{thm:spacetime_localization}
There exists a constant $C>0$ such that the following statement is true. Let $\fK\subset\R^{1+3}$ be a space-time cube. Recall that $\lL=\partial_t-\Delta+1$. Suppose that $\ou,\oud,\out,C^{(2)},h_1,h_2,h_3,h_4\in L^\infty_{\mathrm{loc}}(\R^{1+3})$ and $\oudz,\outz,v\in C^{0,1}(\R^{1+3})$ satisfy the relations
\begin{equ}
\label{e:relation_trees}
\lL\oudz=\oud+h_1\,,
\qquad
\lL\outz=\out+h_2\,,
\end{equ}
\begin{equ}
\label{e:deterministic_phi4}
\lL v=-v^3-3v^2\ou-3v\oud-\out-9C^{(2)}(v+\ou)
+h_3+h_4 v
\end{equ}
in the weak sense in $\fK$. Recall that $\tilde\fX(\fK)=\tilde\fX(\ou,\oud,\out,\oudz,\outz,C^{(2)},\fK)$ was introduced in Definition~\ref{def:enhanced_noise_spacetime} and set
\begin{equ}\label{eq:tilde_fX_h}
\tilde\fX(\fK,h)
\eqdef
\left(\tilde\fX(\fK)\vee\|h_1\|_\fK^{1/2}\vee\|h_2\|_\fK^{1/3}\vee \|h_3\|_\fK^{1/3}\vee\|h_4\|_\fK^{1/2}\right)^{2/(1-2\kappa)}\,.
\end{equ} 
Then
\begin{equ}
\label{eq:spacetime_localization}
\|v\|_{\fK_r}\leq C\,(1/r\vee \tilde\fX(\fK,h))
\end{equ}
and
\begin{equs}
\label{eq:spacetime_localization_holder}
[v]_{1/2-3\kappa,\fK_r}
&\leq C\,\left(\tilde\fX(\fK,h)\vee\|v\|_{\fK}\right)\,
\left(\frac{1}{r}\vee \tilde\fX(\fK,h)\vee\|v\|_{\fK}\right)^{1/2-3\kappa}\,,
\\{}
\label{eq:spacetime_localization_holder2}
[v+\outz]_{1-2\kappa,\fK_r}
&\leq C\,\left(\tilde\fX(\fK,h)\vee\|v\|_{\fK}\right)\,
\left(\frac{1}{r}\vee \tilde\fX(\fK,h)\vee\|v\|_{\fK}\right)^{1-2\kappa}\,,
\\{}
\label{eq:spacetime_localization_U}
[\bar U]_{3/2-5\kappa,\fK_r}
&\leq C\,\left(\tilde\fX(\fK,h)\vee\|v\|_{\fK}\right)\,
\left(\frac{1}{r}\vee \tilde\fX(\fK,h)\vee\|v\|_{\fK}\right)^{3/2-5\kappa}\,,
\\{}
\label{eq:spacetime_localization_V}
\|v^\sharp\|_{\fK_r}&\leq C\,\left(\tilde\fX(\fK,h)\vee\|v\|_{\fK}\right)\,
\left(\frac{1}{r}\vee \tilde\fX(\fK,h)\vee\|v\|_{\fK}\right)\,,
\\{}
\label{eq:spacetime_localization_V_holder}
[v^\sharp]_{1/2-5\kappa,\fK_r}
&\leq C\,\left(\tilde\fX(\fK,h)\vee\|v\|_{\fK}\right)\,
\left(\frac{1}{r}\vee \tilde\fX(\fK,h)\vee\|v\|_{\fK}\right)^{3/2-5\kappa}
\end{equs}
for all $r\in(0,1]$, where
\begin{equs}
v^\sharp(z) \eqdef& \nabla v(z)+\nabla\outz(z)+3v(z)\nabla\oudz(z),
\\	
\bar U(z,\bar z)\eqdef& v(\bar z)-v(z) + \outz(\bar z)-\outz(z) + 3v(z)(\oudz(\bar z)-\oudz(z))-v^\sharp(z)\cdot X(\bar z-z)
\end{equs}
for $z,\bar z\in \fK$ and
\begin{equ}{}
[\bar U]_{\alpha,\fK}\eqdef\sup_{\substack{z,\bar z\in \fK\\z\neq \bar z}} \frac{|\bar U(z,\bar z)|}{|z-\bar z|^\alpha}\,.
\end{equ}
\end{thm}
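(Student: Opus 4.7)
\bigskip

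The plan is to adapt the space–time localisation strategy of \cite{MW20} to our enriched setting. Their theorem already covers the case $h_1=h_2=h_3=h_4=0$, so the proof splits into two tasks: (i) check that their argument is stable under the inclusion of the bounded forcing terms $h_1,h_2,h_3,h_4$ with the correct scalings encoded in $\tilde\fX(\fK,h)$, and (ii) promote the resulting $L^\infty$ bound~\eqref{eq:spacetime_localization} to the Hölder–type bounds \eqref{eq:spacetime_localization_holder}--\eqref{eq:spacetime_localization_V_holder} via paracontrolled Schauder estimates. The quantities $\ou,\oud,\out,\oudz,\outz$ play exactly the role they play in \cite{MW20}: the relations \eqref{e:relation_trees} are needed so that after an appropriate paracontrolled substitution the cubic term $v^3$ remains the only non–integrable contribution on the right–hand side of the equation satisfied by the remainder.

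For \eqref{eq:spacetime_localization}, I would follow the maximum–principle style bootstrap of \cite{MW20}. One introduces the paracontrolled ansatz $\bar U(z,\bar z)$ as in the statement and observes that its $L^\infty$ control is equivalent, modulo terms in $\tilde\fX$, to the control of the solution $v$ itself. Using \eqref{e:relation_trees} to replace $\oud=\lL\oudz-h_1$ and $\out=\lL\outz-h_2$ in \eqref{e:deterministic_phi4}, the equation for the remainder reads schematically $\lL(v+\outz-3v\oudz)=-v^3+(\text{lower order})$, where the ``lower order'' terms are estimated in a negative Hölder space by $\tilde\fX(\fK)\,(1+\|v\|_\fK)^2$ plus an additive contribution from $h_1,h_2,h_3$ and a multiplicative contribution from $h_4$. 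The crucial observation is that the competing scales are fixed by comparing each term against $v^3$: $h_3$ contributes a $\|h_3\|_\fK^{1/3}$, while the multiplicative $h_4 v$ term is handled by $|h_4 v|\le \tfrac12(\|h_4\|_\fK + v^2)$, i.e.\ the $\|h_4\|_\fK^{1/2}$ exponent in \eqref{eq:tilde_fX_h}. Running the dyadic iteration of \cite{MW20} with these replacements gives \eqref{eq:spacetime_localization}.

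Once $\|v\|_\fK$ is under control, estimates \eqref{eq:spacetime_localization_holder}--\eqref{eq:spacetime_localization_V_holder} are obtained by a standard paracontrolled Schauder argument. On a slightly interior cube, write $v+\outz=\lL^{-1}(\ldots)$ using \eqref{e:deterministic_phi4} and \eqref{e:relation_trees} and bound each term of the right–hand side in the Besov–type seminorm~\eqref{e:weighted_Holder_norm}, using Definition~\ref{def:enhanced_noise_spacetime} for the enhanced–noise pieces and the $L^\infty$ bound already established for $v$. This immediately yields \eqref{eq:spacetime_localization_holder2}; the Hölder bound \eqref{eq:spacetime_localization_holder} for $v$ itself follows by subtracting $\outz$ and using $[\outz]_{|\boutz|}\leq \tilde\fX^3$. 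Pushing the paracontrolled expansion one step further (subtracting also $-3v\oudz$ and recognising the first–order Taylor remainder) and using the renormalised estimates on $\oudd,\outu,\outd$ gives the third–order bound \eqref{eq:spacetime_localization_U} on $\bar U$; differentiating the ansatz then yields \eqref{eq:spacetime_localization_V} and \eqref{eq:spacetime_localization_V_holder} for $v^\sharp$.

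The main obstacle is bookkeeping rather than conceptual. On the one hand, the additive forcings $h_1,h_2$ alter the trees $\oudz,\outz$ and therefore the composite objects $\oudd,\outu,\outd$, so one has to check that the corresponding resonant products remain well defined and satisfy the scalings prescribed in \eqref{eq:def_X_tilde_st} after this modification; this is why the exponents $\tfrac12,\tfrac13$ assigned to $\|h_1\|_\fK,\|h_2\|_\fK$ in \eqref{eq:tilde_fX_h} are exactly those needed to make them compatible with the parabolic scaling $|\boudz|=2+|\boud|$ and $|\boutz|=2+|\bout|$. On the other hand, the multiplicative term $h_4 v$ must be reabsorbed into $v^3$ at each step of the iteration without degrading the bootstrap; this is the only place where the argument genuinely requires $\|h_4\|_\fK<\infty$ rather than mere integrability, but it is precisely the same mechanism as the mass term $-v$ already present through $\lL$, so no new idea is needed.
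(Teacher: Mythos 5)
Your overall approach is the same as the paper's: the theorem is proved by adapting the argument of \cite{MW20}, combining a local Schauder estimate with a coercive maximum-principle bound that exploits the cubic term, and checking that the forcings $h_1,\dots,h_4$ enter with exactly the exponents prescribed in \eqref{eq:tilde_fX_h}. Two remarks, one of which is a genuine problem with the way you have organised the argument.

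First, the structural issue. You present the $L^\infty$ bound \eqref{eq:spacetime_localization} as the first step and the H\"older bounds \eqref{eq:spacetime_localization_holder}--\eqref{eq:spacetime_localization_V_holder} as a consequence (``once $\|v\|_\fK$ is under control\dots''). The logical dependency runs the other way. The bounds \eqref{eq:spacetime_localization_holder}--\eqref{eq:spacetime_localization_V_holder} carry $\|v\|_\fK$ on their right-hand sides precisely so that they can be established \emph{first}, by the local Schauder estimate alone on scales $r\lesssim(\|v\|_\fK\vee\tilde\fX(\fK,h))^{-1}$, without any quantitative control on $\|v\|_\fK$. They are then an indispensable input to the maximum-principle step: the mollified equation $(\partial_t-\Delta)(\psi^{\hat r}\ast v)+(\psi^{\hat r}\ast v)^3=g$ contains the commutator $(\psi^{\hat r}\ast v)^3-\psi^{\hat r}\ast v^3$ and the singular products $\psi^{\hat r}\ast(v^2\ou)$, $\psi^{\hat r}\ast(v\oud)$, and these can only be bounded by pairing the positive H\"older regularity of $v+\outz$ and of the second-order remainder $\bar U$ (i.e.\ exactly \eqref{eq:spacetime_localization_holder2} and \eqref{eq:spacetime_localization_U}) with the negative-regularity norms of the trees and the renormalised objects $\oudd,\outu,\outd$. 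Your claim that the lower-order terms in the remainder equation ``are estimated in a negative H\"older space by $\tilde\fX(\fK)\,(1+\|v\|_\fK)^2$'' is therefore not a bound you can write down at that stage: it is false for the resonant parts if only $\|v\|_{L^\infty}$ is available, and this is the entire content of the paracontrolled analysis. The correct scheme is: Schauder bounds (with $\|v\|_\fK$ on the right) $\Rightarrow$ control of $g$ on the region where $\tilde\fX(\fK,h)\lesssim\|v\|$ $\Rightarrow$ maximum principle and iteration in the mollification scale $\Rightarrow$ \eqref{eq:spacetime_localization}.

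Second, a simplification you miss: rather than re-verifying that the composite objects $\oudz,\outz,\oudd,\outu,\outd$ remain well defined after perturbing by $h_1,h_2$, one can simply absorb $h_1,h_2$ (and the mass term of $\lL$, which reduces the problem to the massless operator of \cite{MW20}) into a redefinition of $\oud$ and $\out$, and bound the additive contributions trivially via $\|\psi^r\ast h_3\|\le\|h_3\|$ and $\|\psi^r\ast(vh_4)\|\le\|v\|\,\|h_4\|$. Your heuristic for the exponents of $\|h_3\|$ and $\|h_4\|$ (comparison against $v^3$) is the right one, although the displayed inequality $|h_4v|\le\tfrac12(\|h_4\|_\fK+v^2)$ is not Young's inequality and should be $|h_4v|\lesssim\|h_4\|_\fK^{3/2}+|v|^3$. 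Finally, note that uniformity of the constant over all space-time cubes $\fK$ is not automatic from \cite{MW20} and requires the maximum-principle lemma with a barrier adapted to a general cube.
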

\begin{rmk}\label{rmk:spactime_loc_h}
	Recall the decomposition of the heat kernel $K=K^++K^-$ from Lemma~\ref{lem:kernel_decomposition}. We will always apply the above theorem with
	\begin{equ}
		\oudz=K^+\ast \oud\,,
		\quad
		\outz=K^+\ast \out\,,
		\quad		
		h_1=\lL K^-\ast \oud\,,
		\quad
		h_2=\lL K^-\ast \out\,,
		\quad
		h_4=0
	\end{equ}
	for some $\oud$ and $\out$.
	In this situation, we have the estimate
	\begin{equ}
		\tilde\fX(\fK,h)\lesssim \left(\tilde\fX(\fK)\vee\|h_3\|_\fK^{1/3}\right)^{2/(1-2\kappa)},
	\end{equ}
	which follows trivially from~\eqref{eq:tilde_fX_h} and the fact that $\lL K^-=\delta-\lL K^+$ is smooth and supported in the unit ball.
\end{rmk}

\begin{proof}
The general strategy of the proof is to combine a bound for the high regularity norm of the solution provided by a local Schauder estimate stated as Lemma 2.11 in~\cite{MW20} with a coercive bound for the $L^\infty$ norm stated as Lemma~\ref{lem:maximum} below, which is a slight generalisation of Lemma~2.7 in~\cite{MW20}, whose proof is based on the maximum principle and crucially exploits the cubic term in~\eqref{e:deterministic_phi4}.

Since a nearly identical result was established in~\cite{MW20}, we only discuss necessary modifications of the original proof. 
\begin{enumerate}
	\item Our constant $C>0$ does not depend on the space-time cube. The constant of proportionality in the local Schauder estimate states in~\cite{MW20} does not depend on the space-time region. The same is true for the constant of proportionality in Lemma~\ref{lem:maximum}.
	
	\item Our equations~\eqref{e:relation_trees} and~\eqref{e:deterministic_phi4} involve functions $h_1,h_2,h_3,h_4$ and are assumed to hold in the weak sense. 
	By redefining the trees $\oud$ and $\out$ we can reduce to the case $h_1=h_2=0$ and the extra contributions coming from $h_3$ and $h_4$ can be easily bounded using $\|\psi^r\ast h_3\|_{\bar\fK_r}\leq\|h_3\|_{\bar\fK}$ and $\|\psi^r\ast (v h_4)\|_{\bar\fK_r}\leq \|v\|_{\bar\fK}\|h_4\|_{\bar\fK}$ for all $r>0$ and all $\bar\fK\subset\R^{1+3}$. Even though~\cite{MW20} assumes that~\eqref{e:relation_trees} and~\eqref{e:deterministic_phi4} hold pointwise, actually only the equations obtained by convolving both side of~\eqref{e:relation_trees} and~\eqref{e:deterministic_phi4} with a smooth test function $\psi^r$ are used in the proof.
	
	\item $C^{(2)}\in L^\infty(\fK)$, $\ou,\oud,\out\in L^1(\fK)$, $\oudz,\outz,v,h_1,h_2,h_3,h_4\in C^{0,1}(\fK)$ are not assumed to be smooth\footnote{Note that~\eqref{e:phi4_mod} involve space-time white noise mollified only in space.}. The regularity assumption we made is sufficient to ensure that all operations are well-defined. 
	
	\item We work with the massive parabolic differential operator $\lL=\partial_t-\Delta+1$ whereas in~\cite{MW20} the massless operator is used. However, the statement for $\lL=\partial_t-\Delta+1$ follows immediately from the statement for $\lL=\partial_t-\Delta$ since the mass term can be absorbed in $h_1,h_2$ and $h_4$. Hence, in the remaining part of the proof we assume that $\lL=\partial_t-\Delta$. Note that, by the argument we present below, in the massless case the bounds~\eqref{eq:spacetime_localization}-\eqref{eq:spacetime_localization_V_holder} are true even when $\|\oudz\|_{\fK,w}^{1/2}$ and $\|\outz\|_{\fK,w}^{1/3}$ are removed from the maximum in~\eqref{eq:def_X_tilde_st}.
\end{enumerate}

Let us demonstrate the bounds~\eqref{eq:spacetime_localization_holder}-\eqref{eq:spacetime_localization_V_holder}. To this end, we use the fact that $v$ satisfies~\eqref{e:deterministic_phi4} and apply a local Schauder estimate stated as Lemma 2.11 in~\cite{MW20}. Fix a space-time cube $\bar\fK\subset\fK$ and let 
\begin{equ}{}
	\frac{1}{r_0}\eqdef c\,\left(\|v\|_{\bar\fK}\vee \tilde\fX(\bar\fK,h)\right)
\end{equ}
with a small constant $c>0$. By estimates analogous to the estimates (4.2)-(4.18) in Section 4 of~\cite{MW20} we prove that there is a universal constant $C>0$ such that 
\begin{equs}
	\label{eq:aux_reg1}
	r^{1/2-3\kappa}\,[v]_{1/2-3\kappa,\bar\fK_r,r}
	&\leq C\,\left(\tilde\fX(\bar\fK,h)\vee\|v\|_{\bar\fK}\right)\,,
	\\{}
	\label{eq:aux_reg2}
	r^{1-2\kappa}\,[v+\outz]_{1-2\kappa,\bar\fK_r,r}
	&\leq C\,\left(\tilde\fX(\bar\fK,h)\vee\|v\|_{\bar\fK}\right)\,,
	\\{}
	\label{eq:aux_reg3}
	r^{3/2-5\kappa}\,[\bar U]_{3/2-5\kappa,\bar\fK_r}
	&\leq C\,\left(\tilde\fX(\bar\fK,h)\vee\|v\|_{\bar\fK}\right)\,,
	\\{}
	\label{eq:aux_reg4}
	r\,\|v^\sharp\|_{\bar\fK_r}&\leq C\,\left(\tilde\fX(\bar\fK,h)\vee\|v\|_{\bar\fK}\right)\,,
	\\{}
	\label{eq:aux_reg5}
	r^{3/2-5\kappa}\,[v^\sharp]_{1/2-5\kappa,\bar\fK_r,r}
	&\leq C\,\left(\tilde\fX(\bar\fK,h)\vee\|v\|_{\bar\fK}\right)
\end{equs}
for all $r\in(0,r_0)$ provided the constant $c>0$ in the definition of $r_0$ is small enough. We denoted by $[\bigcdot]_{\alpha,\bar\fK,r}$ in~\eqref{eq:aux_reg1}, \eqref{eq:aux_reg2} and \eqref{eq:aux_reg5} the usual $\alpha$-Hölder seminorm defined with the use of the parabolic distance restricted to points in $\bar\fK$ at the distance not bigger than $r$.
Note that in contrast to Section~4 of~\cite{MW20}, in this paragraph\footnote{However, we introduce this assumption in the next paragraph to prove the bound~\eqref{eq:spacetime_localization}.}, we do not assume that $\tilde\fX(\bar\fK,h)\lesssim \|v\|_{\bar\fK}$. Consequently, to prove the bounds~\eqref{eq:aux_reg1}-\eqref{eq:aux_reg5} we have to undo the simplifications in the estimates (4.2)-(4.18) in Section~4 of~\cite{MW20} due to this assumption. This amounts to replacing in all these estimates the $L^\infty$ norm of $v$ over the region of interest by $\|v\|_{\bar\fK}\vee \tilde\fX(\bar\fK,h)$. The bounds~\eqref{eq:aux_reg1}-\eqref{eq:aux_reg5} are analogs of (4.23), (4.22), (4.20), (4.21) and (4.24) in~\cite{MW20}, respectively. Next, we observe that for $r>0$ we have trivial estimates
\begin{equs}{}
	[v]_{1/2-3\kappa,\bar\fK_r} &\leq [v]_{1/2-3\kappa,\bar\fK_r,r} \vee \frac{2\|v\|_{\bar\fK}}{r^{1/2-3\kappa}} \;,
	\\{}
	[v+\outz]_{1-2\kappa,\bar\fK_r} &\leq  [v+\outz]_{1-2\kappa,\bar\fK_r,r} \vee \left( \frac{2\|v\|_{\bar\fK}}{r^{1-2\kappa}} + \frac{[\outz]_{1/2-3\kappa,\bar\fK}}{r^{1/2+\kappa}}\right) \;,
	\\{}
	[v^\sharp]_{1/2-5\kappa,\bar\fK_r} &\leq  [v^\sharp]_{1/2-5\kappa,\bar\fK_r,r} \vee \frac{2\|v^\sharp\|_{\bar\fK_r}}{r^{1/2-5\kappa}} \;.
\end{equs}
Combining these estimates and \eqref{eq:aux_reg1}-\eqref{eq:aux_reg5}, we get that there is a universal constant $C>0$ such that 
\begin{equs}
	\label{eq:aux_reg21}
	(r_0\wedge r)^{1/2-3\kappa}\,[v]_{1/2-3\kappa,\bar\fK_r}
	&\leq C\,\left(\tilde\fX(\bar\fK,h)\vee\|v\|_{\bar\fK}\right)\,,
	\\{}
	\label{eq:aux_reg22}
	(r_0\wedge r)^{1-2\kappa}\,[v+\outz]_{1-2\kappa,\bar\fK_r}
	&\leq C\,\left(\tilde\fX(\bar\fK,h)\vee\|v\|_{\bar\fK}\right)\,,
	\\{}
	\label{eq:aux_reg23}
	(r_0\wedge r)^{3/2-5\kappa}\,[\bar U]_{3/2-5\kappa,\bar\fK_r}
	&\leq C\,\left(\tilde\fX(\bar\fK,h)\vee\|v\|_{\bar\fK}\right)\,,
	\\{}
	\label{eq:aux_reg24}
	(r_0\wedge r)\,\|v^\sharp\|_{\bar\fK_r}&\leq C\,\left(\tilde\fX(\bar\fK,h)\vee\|v\|_{\bar\fK}\right)\,,
	\\{}
	\label{eq:aux_reg25}
	(r_0\wedge r)^{3/2-5\kappa}\,[v^\sharp]_{1/2-5\kappa,\bar\fK_r}
	&\leq C\,\left(\tilde\fX(\bar\fK,h)\vee\|v\|_{\bar\fK}\right)\,.
\end{equs}
for all $r \in (0, 1)$, where in the case $r > r_0$ we used the inclusion $\fK_r \subset \fK_{r_0}$. Choosing $\bar\fK=\fK$ we obtain the bounds~\eqref{eq:spacetime_localization_holder}-\eqref{eq:spacetime_localization_V_holder}. Observe that in this part of the proof we only used the local Schauder estimate.

In order to prove the bound~\eqref{eq:spacetime_localization} we apply the estimate stated in Lemma~\ref{lem:maximum} to the equation
\begin{equ}\label{eq:phi4_eq_st_loc_proof}
 	(\partial_t-\Delta)(\psi^{\hat r}\ast v)+(\psi^{\hat r}\ast v)^3=g\,,
\end{equ}
obtained by convolving both sides of~\eqref{e:deterministic_phi4} with a test function $\psi^{\hat r}$ of characteristic length scale $\hat r>0$ and support contained in $B_-(0,\hat r)$, where
\begin{equ}
	g=(\psi^{\hat r}\ast v)^3+\psi^{\hat r}\ast(-v^3-3v^2\ou-3v\oud-\out-9C^{(2)}(v+\ou)+h_3+h_4 v)\,.
\end{equ}
Note that although \eqref{e:deterministic_phi4} only holds in a weak sense in $\fK$, \eqref{eq:phi4_eq_st_loc_proof} holds in a strong sense in $\fK_{\hat r}$. By Lemma~\ref{lem:maximum}, we obtain
\begin{equs}[eq:bound_st_loc_proof]
	\|\psi^{\hat r}&\ast v\|_{\bar\fK_R} \lesssim \max \bigg\{\frac{1}{R-R'},
	\|(\psi^{\hat r}\ast v)^3-\psi^{\hat r}\ast v^3\|^{1/3}_{\bar\fK_{R'}},
	\|\psi^{\hat r}\ast (v^2\ou)\|^{1/3}_{\bar\fK_{R'}},
	\\
	&\|\psi^{\hat r}\ast(v\oud+3C^{(2)}(v+\ou))\|^{1/3}_{\bar\fK_{R'}},
	\|\psi^{\hat r}\ast\out\|^{1/3}_{\bar\fK_{R'}},
	\|\psi^{\hat r}\ast (h_3+h_4 v)\|^{1/3}_{\bar\fK_{R'}}
	\bigg\} 
\end{equs}
uniformly in $0<R'<R$ and $\hat r>0$ such that $\bar\fK_{R'}\subset\fK_{\hat r}$. To deduce the bound~\eqref{eq:spacetime_localization} it is convenient to proceed as in~\cite{MW20} and assume that $\tilde\fX(\fK,h)\leq c\,\|v\|_{\bar\fK}$ for some domain $\bar\fK=\fK_{\bar r}$ with $\bar r\geq 0$, where $c>0$ is a fixed small constant. If the above bound is false for all $\bar r\geq 0$ then~\eqref{eq:spacetime_localization} holds true and the proof is finished. Hence, it remains to prove that if $\tilde\fX(\fK,h)\leq c\,\|v\|_{\bar\fK}$ for some $\bar\fK=\fK_{\bar r}$, then $\|v\|_{\bar\fK}\leq\,\frac{C}{\bar r}$. To this end, we simplify the right-hand side of the bound~\eqref{eq:bound_st_loc_proof} using~\eqref{eq:aux_reg1}-\eqref{eq:aux_reg5} and subsequently use an iteration argument. The details can be found in Section~4.4-4.6 of~\cite{MW20}. We note that $D$, $T$ and $r$ therein correspond to our $\bar\fK$, $\hat r$ and $\bar r$ and $R_N=1/2$ therein has to be replaced by half the diameter of $\fK$. This completes the proof.
\end{proof}

\begin{lem}[]\label{lem:maximum}
There exists a constant $C>0$ such that the following statement is true.
Let $\fK\subset\R^{1+d}$ be a space-time cube and $u,g\in C(\fK)$ be such that the following equality
$
(\partial_t-\Delta)u+u^3=g
$
holds pointwise in $\fK$. Then
\begin{equ}
 \|u\|_{\fK_r} \leq C\left(\frac{1}{r}\vee \|g\|^{1/3}_\fK\right),
\end{equ}
for all $r\in(0,1]$.
\end{lem}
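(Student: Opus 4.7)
The plan is to establish the bound by a classical parabolic maximum principle argument, exploiting the defocusing cubic nonlinearity through an explicit barrier function. By replacing $u$ with $-u$ (which turns $g$ into $-g$), it suffices to prove the one-sided estimate $u(z_0) \leq C\,(1/r \vee \|g\|_\fK^{1/3})$ for every fixed $z_0 = (t_0,x_0) \in \fK_r$. The condition $z_0 \in \fK_r$ together with the fact that $\fK$ is a space-time cube ensures that the backward parabolic cylinder $Q \eqdef [t_0 - r^2, t_0] \times \bar B_2(x_0, r)$, where $B_2(x_0, r)$ denotes the open Euclidean ball of radius $r$, is contained in $\fK$, so the equation holds classically on $Q$.

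Set $M \eqdef \|g\|_\fK^{1/3}$ and on $Q$ define the candidate supersolution
\begin{equ}
\Phi(t,x) \eqdef \frac{A\,r}{r^2 - |x-x_0|_2^2} + \frac{B\,r}{t - t_0 + r^2} + M ,
\end{equ}
where $A, B > 0$ are absolute constants to be chosen. A direct elementary computation shows that $-\Delta\bigl(Ar/(r^2-|x-x_0|_2^2)\bigr) + \bigl(Ar/(r^2-|x-x_0|_2^2)\bigr)^3 \geq 0$ on $B_2(x_0, r)$ once $A^2 \geq 2d \vee 8$, while $\partial_t\bigl(Br/(t-t_0+r^2)\bigr) + \bigl(Br/(t-t_0+r^2)\bigr)^3 \geq 0$ on $(t_0-r^2, t_0]$ once $B \geq 1$. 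Combining these two inequalities with the super-additivity $(\alpha+\beta+\gamma)^3 \geq \alpha^3 + \beta^3 + \gamma^3$ on $\R_\geq$ then yields $(\partial_t - \Delta)\Phi + \Phi^3 \geq M^3 = \|g\|_\fK \geq g$ pointwise in the interior of $Q$. By construction, $\Phi \to +\infty$ on the entire parabolic boundary $\partial_p Q$: the spatial summand blows up as $|x-x_0|_2 \nearrow r$, while the temporal summand blows up as $t \searrow t_0 - r^2$.

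The parabolic maximum principle then gives $u \leq \Phi$ throughout $Q$. Indeed, $w \eqdef u - \Phi$ is continuous on $Q \setminus \partial_p Q$ and tends to $-\infty$ at every point of $\partial_p Q$ (since $u \in C(\fK)$ is bounded on the compact set $Q$), so if $\sup_Q w > 0$ the supremum is attained at some interior point $(t^*, x^*)$ with $t^* \in (t_0 - r^2, t_0]$ and $x^* \in B_2(x_0, r)$. At any such maximiser one has $\partial_t w(t^*,x^*) \geq 0$ and $\Delta w(t^*,x^*) \leq 0$, whence $(\partial_t - \Delta)w(t^*,x^*) \geq 0$; on the other hand, subtracting the equation for $u$ from the differential inequality for $\Phi$ gives $(\partial_t - \Delta)w \leq \Phi^3 - u^3$, which together with strict monotonicity of $s \mapsto s^3$ forces $u(t^*,x^*) \leq \Phi(t^*, x^*)$, contradicting $w(t^*,x^*) > 0$. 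Evaluating $u \leq \Phi$ at $z_0$ then gives $u(z_0) \leq (A+B)/r + M \leq C\,(1/r \vee \|g\|_\fK^{1/3})$ with $C = A+B+1$.

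The proof is essentially classical and I do not expect any serious obstacle beyond the verification of the two differential inequalities for the pieces of $\Phi$. The only mildly delicate point is the design of the barrier itself: a purely spatial barrier $A r/(r^2 - |x-x_0|_2^2)$ alone would not suffice, because applying the maximum principle on $Q$ would then require an a priori bound on $u$ along the bottom face $\{t_0 - r^2\}\times \bar B_2(x_0, r)$, which is not available. Adding the time-dependent summand $Br/(t - t_0 + r^2)$, which is itself a supersolution by a simple ODE check and blows up precisely on this bottom face, is exactly what allows $\Phi$ to dominate $u$ on the entire parabolic boundary of $Q$ without any external input.
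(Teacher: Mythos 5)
Your barrier argument is correct: the computation $\Delta\bigl(Ar(r^2-|x-x_0|_2^2)^{-1}\bigr)=Ar\bigl(2d s^{-2}+8\rho^2 s^{-3}\bigr)$ with $s=r^2-\rho^2$ reduces the spatial supersolution property to $A^2r^2\geq 2ds+8\rho^2$, which indeed follows from $A^2\geq 2d\vee 8$ since $s+\rho^2=r^2$; the temporal piece and the superadditivity of $x\mapsto x^3$ on $\R_\geq$ are checked correctly; and the comparison step at an interior maximiser of $u-\Phi$ (allowing $t^*=t_0$) is the standard parabolic argument, legitimate here because the blow-up of $\Phi$ on the whole parabolic boundary of the backward cylinder removes any need for boundary information on $u$. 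The constant $C=A+B+1$ is uniform in $\fK$, $r$ and $g$ as required.

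The paper proves the lemma differently in form, though the underlying mechanism is the same (maximum principle plus the good sign of the cubic term): following \cite{MW20}, it works globally on the cube $\fK=[a_0,b_0]\times\cdots\times[a_d,b_d]$ and bounds the weighted function $\eta u$, where $\eta$ is essentially the reciprocal of your barrier, namely $\eta=(1/5)\bigl(\tfrac15\|g\|_\fK^{1/3}+(t-a_0)^{-1/2}+\sum_i (x_i-a_i)^{-1}+\sum_i(b_i-x_i)^{-1}\bigr)^{-1}$, the key input being the differential inequality $2\eta(\partial_t-\Delta)\eta+4|\nabla\eta|^2\leq 1$. Your version is self-contained (no appeal to the computations of \cite{MW20}), localises to a single backward cylinder per point of $\fK_r$ rather than treating the whole cube at once, and makes the supersolution explicit; the paper's version has the advantage of reusing an existing proof verbatim up to the choice of $\eta$ and of producing the bound on all of $\fK_r$ in one pass. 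Either route is acceptable. The only point worth making explicit in a final write-up is that ``holds pointwise'' is being read as $u$ admitting pointwise time derivative and spatial Hessian, so that the second-derivative test at the interior maximiser applies; this is the same implicit regularity the paper's proof uses.
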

\begin{proof}
	The statement is a generalisation of Lemma~2.7 in~\cite{MW20}, which is stated only for $\fK=[0,1]\times[-1,1]^d$. To show the result for an arbitrary space-time cube $\fK=[a_0,b_0]\times\ldots\times[a_d,b_d]\subset\R^{1+d}$ it is enough to apply the argument from the proof Lemma~2.7 in~\cite{MW20} with a function
	\begin{equ}
		\eta(t,x)\eqdef\frac{\frac{1}{5}}{\frac{1}{5}\|g\|^{1/3}_\fK+\frac{1}{\sqrt{t-a_0}}+\sum_{i=1}^d \frac{1}{x_i-a_i}+\sum_{i=1}^d \frac{1}{b_i-x_i}}\,,
	\end{equ}
	replacing $\eta$ defined by~(5.17) therein, and noting that, since\footnote{The last formula on the page 2553 of~\cite{MW20} could suggest that the estimate for the expression appearing on the left-hand side is not uniform in $-\infty<a_i<b_i<\infty$, $i\in\{0,\ldots,d\}$. However, there is a typo in this formula.}
	\begin{equs}
		(2\eta& (\partial_t - \Delta) \eta + 4 |\nabla \eta|^2)(t,x) 
	 	\\
	 	&= \left(\frac{5}{\sqrt{t-a_0}^{3}} + 20 \sum_{i=1}^{3} \left( \frac{1}{(a_i-x_i)^3} + \frac{1}{(b_i - x_i)^3} \right)\right)\eta(t,x)^3 \leq 1\,,
	\end{equs}
	 their bound~(5.15) is satisfied in the interior of $\fK$.
\end{proof}

\begin{proof}[Proof of Lemma~\ref{le:psi}]
	Recall the notation introduced at the beginning of Section~\ref{sec:stochastic_objects}. For $x\in\R^3$, let $\fK=[s,t]\times B(x,2)$, 
	\begin{equ}
		\ou=\lambda^{1/2}\ou_{\eps,\ell,s}\,,
		\quad
		\oud=\lambda\tilde\oud_{\eps,\ell,s}\,,
		\quad
		\out=\lambda^{3/2}\tilde\out_{\eps,\ell,s}\,,
		\quad
		C^{(2)}=\lambda^{2}\one_>\Ctwo\,,
	\end{equ}
	\begin{equ}
		h_3=S,
		\quad
		v=\lambda^{1/2}\PSImod=\lambda^{1/2}(\PSI-\lambda\outz_{\eps,\ell,s})
	\end{equ}
	and $\oudz$, $\outz$, $h_1,h_2,h_4$ be as in Remark~\ref{rmk:spactime_loc_h}. Since $\PSImod$ satisfies~\eqref{e:psi_mod} it is easy to see that the assumptions of Theorem~\ref{thm:spacetime_localization} hold true. As a result, by~\eqref{eq:spacetime_localization} there is a universal constant $C>0$ such that
	\begin{equs}
		\lambda^{1/2}\,\|\PSImod\|_{\{t\}\times B(x,1)}&\leq C\,((t-s)^{-1/2}\vee \tilde\fX_{\eps,\ell,s,t,x}^{2/(1-2\kappa)})\,,
		\\{}
		\lambda^{1/2}\,[\PSImod(t)-\lambda\tilde\outz_{\eps,\ell,s}]_{1-2\kappa,\{t\}\times B(x,1)}
		&\leq C\,((t-s)^{-1/2}\vee\tilde\fX_{\eps,\ell,s,t,x}^{2/(1-2\kappa)})^{2-2\kappa}
	\end{equs}
	for all $\lambda\in(0,1]$, $\eps\in(0,1]$, $\ell\in\N_+$, $s\in\R$, $t\in(s,s+1]$, $x,\zzz\in\R^3$, where
	\begin{equs}
		&\tilde\fX_{\eps,\ell,s,t,x}
		\\
		&\eqdef\tilde\fX(\lambda^{1/2}\ou_{\eps,\ell,s},\lambda\tilde\oud_{\eps,\ell,s},\lambda^{3/2}\tilde\out_{\eps,\ell,s},\lambda \tilde\oudz_{\eps,\ell,s},\lambda^{3/2}\tilde\outz_{\eps,\ell,s},\lambda^2\one_>\Ctwo,[s,t]\times B(x,2))
	\end{equs}
	and the trees appearing above were introduced in Definition~\ref{def:enhanced_noise}. Note that there is $c>0$ such that
	\begin{equ}
		\frac{1}{c}\leq \frac{w_\zzz(y)}{w_\zzz(x)}\leq c
	\end{equ}
	for all $x,\zzz\in\R^3,y\in B(x,2)$	and recall $\fX_{\eps,\ell,s,t,\zzz}$ introduced in Definition~\ref{def:size_enhanced_noise}. There exists $C>0$ such that
	\begin{equ}
		\tilde\fX_{\eps,\ell,s,t,x}
		\leq C\,
		w_\zzz(x)^{-1}
		\fX_{\eps,\ell,s,t,\zzz}
	\end{equ}
	for all $\lambda\in(0,1]$, $\eps\in(0,1]$, $\ell\in\N_+$, $s\in\R$, $t\in(s,s+1]$, $x,\zzz\in\R^3$. Using the fact that for $\alpha,\beta>0$ the norm $\|\bigcdot\|_{\cC^\alpha(w^\beta)}$ is equivalent to the norm $\|\bigcdot\|_{L^\infty(w^\beta)}+[\bigcdot]_{\alpha,\R^3,w^\beta}$ we obtain
	\begin{equs}
		\lambda^{1/2}\,\|\PSI(t)\|_{L^\infty(w_{\zzz}^{2/(1-2\kappa)})}
		&\leq C\,(t-s)^{-1/2}\vee C\, \fX_{\eps,\ell,s,t,\zzz}^{2/(1-2\kappa)},
		\\
		\lambda^{1/2}\,\|\PSI(t)\|_{\cC^{1/2 + 4\kappa}(w_\zzz^{(3+4\kappa)/(1-2\kappa)})}
		&\leq C\,(t-s)^{-3/4-2\kappa}\vee C\, \fX_{\eps,\ell,s,t,\zzz}^{(3+8\kappa)/(1-2\kappa)}
	\end{equs}
	with a universal constant $C>0$, which implies the bounds stated in the lemma.
\end{proof}

\section{Stochastic estimates}\label{sec:stochastic_estimates}
\begin{lem}\label{lem:convergence_stationary_model}
	Let $\weight_\Pi=\bracket{\bigcdot}^{-a}\in C(\R^3)$ and $\xi_{\eps,\ell}$ be constructed from the space-time white noise $\xi$ as in Definition~\ref{def:smoothed_noise}. Suppose that $(\Pi_{\eps,\ell},\Gamma_{\eps,\ell})\in\mM$ is the canonical model constructed in terms of $\xi_{\eps,\ell}$ and $(\hat\Pi_{\eps,\ell},\hat\Gamma_{\eps,\ell})\in\mM$ is the corresponding model obtained by application of the renormalisation map with parameters $\Cone$ and $\Ctwo$ introduced in Definition~\ref{def:enhanced_noise}. Then there exists a random model $(\hat\Pi,\hat\Gamma)\in\mM(\weight_\Pi)$ independent of the choice of the mollifier used in the definition of $\xi_{\eps,\ell}$ such that $\|(\hat\Pi,\hat\Gamma)\|_{T, \weight_\Pi}\in L^p(\Omega)$ and
	\begin{equ}
		\lim_{\ell\to\infty}\lim_{\eps\searrow0}\|(\hat\Pi,\hat\Gamma)-(\hat\Pi_{\eps,\ell},\hat\Gamma_{\eps,\ell})\|_{T, \weight_\Pi}=0
	\end{equ}
	almost surely and in $L^p(\Omega)$ for every $T>0$, $a>0$ and $p\geq1$. Furthermore, for every element of the Euclidean group $\varrho$, the transformed model $(\varrho\cdot\hat\Pi,\varrho\cdot \hat\Gamma)$ coincides with the model $(\hat\Pi,\hat\Gamma)$ constructed using the transformed white noise $\varrho\cdot\xi$; see the notation introduced in Definition~\ref{def:Euclidane_transform}.
\end{lem}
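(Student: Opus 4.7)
The approach combines standard BPHZ convergence on compact sets with spatial stationarity and a covering estimate exploiting the polynomial decay of $\weight_\Pi$. For any compact $K \subset \R^{1+3}$ and $\ell$ large enough so that a unit neighbourhood of $K$ lies inside $[-1,T]\times Q_\ell$ at distance $>1$ from $\partial Q_\ell$, the noise $\xi_{\eps,\ell}$ coincides on that neighbourhood with $M_\eps \star \xi$, and the constants $\Cone, \Ctwo$ differ from their full-space limits by $O(\ell^{-N})$ for all $N$. The BPHZ convergence theorems of~\cite{Hai14,BHZ19,HS24} then produce, in probability and in $L^p(\Omega)$ for every $p$, a mollifier-independent limit $(\hat\Pi, \hat\Gamma)|_K$ as one takes $\eps \searrow 0$ and then $\ell \to \infty$.

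To upgrade compact-set convergence to the weighted global norm, I will first obtain local moment bounds uniform in the base point. Each $(\hat\Pi_{\eps,\ell}^z\tau)(\psi^r_z)$ lies in a Wiener chaos of fixed order; the usual BPHZ $L^2$ estimate gives $\EE|(\hat\Pi_{\eps,\ell}^z\tau)(\psi^r_z)|^2 \lesssim r^{2|\tau|+\kappa}$ uniformly in $\eps$, in large $\ell$, and in $z$ at distance $>1$ from $\partial([-1,T]\times Q_\ell)$, using spatial stationarity of $M_\eps \star \xi$. Nelson's hypercontractivity upgrades this to all even moments $2n$, and a Kolmogorov-type argument (with analogous estimates for $\Gamma$ and for the differences between models at different approximation parameters) yields
\[
\sup_{n \in \Z^3} \EE \|(\hat\Pi_{\eps,\ell}, \hat\Gamma_{\eps,\ell})\|^q_{[-1,T] \times B(n,1)} \leq C_q
\]
uniformly in $\eps$ and $\ell$, for $n$ deep in $Q_\ell$, together with the corresponding vanishing estimate for the differences $(\hat\Pi_{\eps,\ell}, \hat\Gamma_{\eps,\ell}) - (\hat\Pi, \hat\Gamma)$ as $\eps \searrow 0, \ell \to \infty$, uniformly in such $n$. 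By Definition~\ref{defn:norm_model} and condition \eqref{eq:W-0}, the weighted global norm satisfies
\[
\|\cdot\|^q_{T, \weight_\Pi} \lesssim \sum_{n \in \Z^3} \weight_\Pi(n)^q\,\|\cdot\|^q_{[-1,T]\times B(n,1)} \,.
\]
Given $a > 0$ and $p \geq 1$, choose $q \geq p$ with $aq > 3$, so that $\sum_n \weight_\Pi(n)^q < \infty$. Summing the uniform local moment bounds yields $\EE\|(\hat\Pi,\hat\Gamma)\|^q_{T,\weight_\Pi} < \infty$, and Hölder gives the $L^p$ moment bound; dominated convergence with envelope $\weight_\Pi(n)^q$ upgrades cubewise $L^q$-convergence to $L^p$-convergence of the global norm, and a Borel--Cantelli argument along a geometric subsequence gives almost-sure convergence. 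Euclidean invariance is then inherited via mollifier independence: choosing $M$ rotation-symmetric, the canonical approximate model for $\varrho \cdot \xi$ is the $\varrho$-transform of the canonical model for $\xi$ at each level, and the full-space limits of $\Cone, \Ctwo$ are $\varrho$-invariant (they depend only on the isotropic kernel $K^+$ from Lemma~\ref{lem:kernel_decomposition} and the symmetric mollifier), so the identity propagates to the limit by uniqueness.

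The main obstacle is precisely the uniformity in $n$: cubes $B(n,1)$ lying within distance $O(1)$ of $\partial Q_\ell$ fail stationarity and cannot be treated by the translation-invariance argument. These bad cubes belong to the tail $|n| \gtrsim \ell/2$ of $\Z^3$; a counting estimate gives $\sum_{|n| \gtrsim \ell} \weight_\Pi(n)^q \lesssim \ell^{3-aq}$, which vanishes as $\ell \to \infty$ thanks to $aq > 3$. This is precisely why one cannot work directly at the level $p = 1$ and must first establish the estimate at a large exponent $q$, then descend to arbitrary $p$ via Hölder.
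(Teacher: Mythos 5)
Your proposal follows essentially the same route as the paper's proof: local convergence on compact sets from the BPHZ machinery together with the fact that the compactly supported kernel $K^+$ makes the model depend only locally on the noise (so that $\xi_{\eps,\ell}=M_\eps\star\xi$ deep inside $Q_\ell$ renders the $\ell$-limit essentially trivial on compacts), followed by stationarity, the cube decomposition of Remark~\ref{rmk:norm_cubes} and summability of $\weight_\Pi^q$ to pass to the global weighted norm, and isotropy of $K^+$ for Euclidean invariance. The one step your appeal to ``the BPHZ convergence theorems'' papers over is that the model here is renormalised with $\Cone$ and $\Ctwo$ rather than with the BPHZ constants $\bar C^{(1)}_{\eps},\bar C^{(2)}_{\eps}$, so both the existence and the mollifier-independence of the limit require the additional input that $\Cone-\bar C^{(1)}_{\eps}$ and $\Ctwo-\bar C^{(2)}_{\eps}$ converge (to mollifier-independent constants); in the paper this is supplied by Lemma~\ref{le:renormalisation_diff} combined with the continuity of the renormalisation-group action from \cite{BHZ19}.
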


\begin{proof}
	Let $(\bar\Pi_{\eps,\ell},\bar\Gamma_{\eps,\ell})=M(\bar C^{(1)}_{\eps},\bar C^{(2)}_{\eps})(\Pi_{\eps,\ell},\Gamma_{\eps,\ell})\in\mM$ be the so-called BPHZ model, which is obtained by the application of the renormalisation map $M(\bar C^{(1)}_{\eps},\bar C^{(2)}_{\eps})$ with parameters defined by~\eqref{eq:C_hat} to the model $(\Pi_{\eps,\ell},\Gamma_{\eps,\ell})$. By\footnote{Actually, \cite[Section~10]{Hai14} uses a truncated massless heat kernel and a noise mollified in both space and time in the construction of the canonical model. However, the arguments presented therein apply also to a truncated massive heat kernel and a spatially mollified noise. Even though our $\Pi^{t,x}(\tau)$ is not smooth in time, it is a continuous function over space-time, which is sufficient to define the canonical model.}~\cite[Section~10]{Hai14} for every $\ell\in\N_+$ there exists $(\bar\Pi_\ell,\bar\Gamma_\ell)\in\mM$ such that $\|(\bar\Pi_\ell,\bar\Gamma_\ell)\|_{\fK}\in L^p(\Omega)$ and
	\begin{equ}
		\lim_{\eps\searrow0}\|(\bar\Pi_\ell,\bar\Gamma_\ell)-(\bar\Pi_{\eps,\ell},\bar\Gamma_{\eps,\ell})\|_{\fK}=0
	\end{equ}
	almost surely and in $L^p(\Omega)$ for every compact set $\fK\subset\R^{1+3}$ and $p\geq 1$. By assumption,
	\begin{equ}
		(\hat\Pi_{\eps,\ell},\hat\Gamma_{\eps,\ell})
		=
		M(\Cone,\Ctwo)(\Pi_{\eps,\ell},\Gamma_{\eps,\ell})
		=
		M(\Cone-\bar C^{(1)}_{\eps},\Ctwo-\bar C^{(2)}_{\eps})(\bar\Pi_{\eps,\ell},\bar\Gamma_{\eps,\ell})\,.	
	\end{equ}
	Since by~\eqref{eq:C_diff_convergence} we have
	\begin{equ}
		\lim_{\ell\to\infty}\lim_{\eps\searrow0} M(\Cone-\bar C^{(1)}_{\eps},\Ctwo-\bar C^{(2)}_{\eps})
		=M(\bar C^{(1)},\bar C^{(2)})\,,
	\end{equ}
	it follows from~\cite[Theorem~6.16]{BHZ19} that for every $\ell\in\N_+$ there exists $(\hat\Pi_\ell,\hat\Gamma_\ell)\in\mM$ such that $\|(\hat\Pi_\ell,\hat\Gamma_\ell)\|_{\fK}\in L^p(\Omega)$ and
	\begin{equ}
		\lim_{\eps\searrow0}\|(\hat\Pi_\ell,\hat\Gamma_\ell)-(\hat\Pi_{\eps,\ell},\hat\Gamma_{\eps,\ell})\|_{\fK}=0
	\end{equ}
	almost surely and in $L^p(\Omega)$ for every compact set $\fK\subset\R^{1+3}$ and $p\geq 1$.
	
	Exploiting the fact that the kernel $K^+$ used in the construction of the model $(\hat\Pi_{\eps,\ell},\hat\Gamma_{\eps,\ell})$ is supported in the unit ball centred at the origin,
	one shows that, given any compact set $\fK\subset\R^{1+3}$, $(t,x),(s,y)\in\fK$ and $\psi\in\bB$,
	the random variables
	$\hat\Pi^{t,x}_{\eps,\ell}(\psi^r_{t,x})$ and $\hat\Gamma^{t,x;s,y}_{\eps,\ell}$ are measurable with respect to the $\sigma$-algebra generated by the noise $\xi_{\eps,\ell}$ restricted to $N$-fattening $\hat\fK$ of $\fK$, where $N\in\N_+$ is some fixed constant that depends only on the level of truncation of the regularity structure. In particular, since $\xi_{\eps,\ell}=\xi$ on $\R \times [-\frac{\ell}{2},\frac{\ell}{2})^3$, the functions $\ell\mapsto\hat\Pi^{t,x}_{\eps,\ell}(\psi^r_{t,x})$ and $\ell\mapsto\hat\Gamma^{t,x;s,y}_{\eps,\ell}$ are constant  for all $\ell\in\N_+$ such that $\hat\fK \subset \R \times [-\frac{\ell}{2},\frac{\ell}{2})^3$. The same is true for $(\hat\Pi_\ell,\hat\Gamma_\ell)$. Hence, on compact subsets of $\R^{1+3}$ the infinite volume limit is trivial and the model $(\hat\Pi_\ell,\hat\Gamma_\ell)$ on $\T_\ell^3$ automatically yields a candidate model $(\hat\Pi,\hat\Gamma)\in\mM$. Using stationarity of the models in space, the assumed form of the weight and Remark~\ref{rmk:norm_cubes}, we show that $\|(\hat\Pi,\hat\Gamma)\|_{T, \weight_\Pi}\in L^p(\Omega)$ and
	\begin{equ}
		\lim_{\ell\to\infty}\|(\hat\Pi,\hat\Gamma)-(\hat\Pi_\ell,\hat\Gamma_\ell)\|_{T, \weight_\Pi}=0
	\end{equ}
	almost surely and in $L^p(\Omega)$ for every $T>0$, $a>0$ and $p\geq1$.
	
	To prove Euclidean invariance one uses the representation of $(\hat\Pi,\hat\Gamma)$ as an element of an inhomogeneous Wiener chaos of finite order and the fact that $K^+(t-s,\varrho\cdot x-\varrho\cdot y)=K^+(t-s,x-y)$, which follows from Lemma~\ref{lem:kernel_decomposition}.
\end{proof}

\begin{defn}
	Let $\eta,\gamma\in\R$ and $\fF\subset\tT$. Recall that $\dD^\gamma(\fF)$ and $\dD^{\gamma,\eta}(\fF)$ are the locally convex spaces introduced in~\cite[Definition~3.1]{Hai14} and~\cite[Definition~6.2]{Hai14} equipped with families of seminorms $\VERT{\bigcdot}_{\gamma;\fK}$ and $\VERT{\bigcdot}_{\gamma,\eta;\fK}$ indexed by compact sets $\fK\subset\R^{1+3}$. We define $\bar\dD^{\gamma,\eta}(\fF)$ to be the space consisting of $f\in\dD^{\gamma,\eta}(\fF)$ such that $\qQ_{<\eta}f\in\dD^\eta$ and denote by $\hat\dD^{\gamma,\eta}(\fF)$ its subspace consisting of $f\in\bar\dD^{\gamma,\eta}(\fF)$ such that $f(t,x)=0$ for $t\leq0$. We omit $\fF$ if it is clear from the context.
\end{defn}
\begin{rmk}\label{rmk:bar_dD}
	One shows that $\bar\dD^{\gamma,\eta}$ and $\hat\dD^{\gamma,\eta}$ are closed subsets of $\dD^{\gamma,\eta}$ and $\VERT{\qQ_{<\eta}f}_{\eta;\fK}\lesssim(1+\|(\Pi,\Gamma)\|_\fK)\,\VERT{f}_{\gamma,\eta;\fK}$ uniformly over $f\in\bar\dD^{\gamma,\eta}$, $(\Pi,\Gamma)\in\mM$ and compact $\fK\subset\R^{1+3}$.
\end{rmk}

\begin{thm}[Theorem~A.6 in~\cite{CCHS22}]\label{thm:reconstruction_singular}
	Let $\gamma > 0$ and $\eta \in (-2, \gamma]$. The reconstruction operator $\rR$ satisfies the bound
	\begin{equ}
		(\rR f-\Pi^z\qQ_{<\eta}f(z))(\psi^z_r) \lesssim r^\eta\,(1+\|(\Pi,\Gamma)\|_{B(z,1)})^2\,\VERT{f}_{\gamma,\eta;B(z,1)}
	\end{equ}
	uniformly over $r\in(0,1]$, $z\in\R^{1+3}$, $\psi\in\mathcal{B}$, $(\Pi,\Gamma)\in\mM$ and $f\in\bar\dD^{\gamma,\eta}$. 
\end{thm}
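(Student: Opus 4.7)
The plan is to adapt Hairer's proof of the reconstruction theorem for singular modelled distributions (\cite[Proposition~6.9]{Hai14}), using the $L^\infty$-type framework underlying Lemma~\ref{lem:reconstruction}. Set $E \eqdef (1+\|(\Pi,\Gamma)\|_{B(z,1)})^2\,\VERT{f}_{\gamma,\eta;B(z,1)}$ for brevity, and write $z = (t_z, x_z)$. I would distinguish two regimes according to the relative size of $r$ and $\sqrt{(t_z)_+}$.

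\emph{Regime 1 ($r \leq \sqrt{t_z}/4$).} Here $\psi^z_r$ is supported in $\{t \geq t_z/2\}$, a region on which $f$ restricts to a classical modelled distribution with local $\dD^\gamma$-norm bounded by $t_z^{(\eta-\gamma)/2}\,\VERT{f}_{\gamma,\eta;B(z,1)}$. Applying Lemma~\ref{lem:reconstruction} yields $|(\rR f - \Pi^z f(z))(\psi^z_r)| \lesssim r^\gamma\,t_z^{(\eta-\gamma)/2}\,E$, which is $\lesssim r^\eta\,E$ via $r \leq \sqrt{t_z}$. Adding $\Pi^z\qQ_{[\eta,\gamma)}f(z)(\psi^z_r)$ as a correction, each of the finitely many terms $\Pi^z\qQ_\beta f(z)(\psi^z_r)$ for $\eta \leq \beta < \gamma$ is bounded by $r^\beta\,\|\Pi\|_{B(z,1)}\,t_z^{(\eta-\beta)/2}\,\VERT{f}_{\gamma,\eta;B(z,1)} \lesssim r^\eta\,E$ by the same interpolation $r^\beta t_z^{(\eta-\beta)/2} = r^\eta(r/\sqrt{t_z})^{\beta-\eta} \leq r^\eta$.

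\emph{Regime 2 ($r > \sqrt{t_z}/4$).} I would split $\psi^z_r$ via a smooth dyadic partition of unity adapted to the hyperplane $\{t=0\}$: take smooth cutoffs $(\chi_n)_{n \geq 0}$ with $\chi_n$ supported in $\{t : 2^{-n-1}r^2 \leq t \leq 2^{-n+1}r^2\}$, and $\chi_\bullet$ equal to $1$ on $\{t \leq 0\}$ and supported in $\{t \leq r^2/4\}$, so that $1 = \chi_\bullet + \sum_{n \geq 0}\chi_n$ on the support of $\psi^z_r$. Each piece $\psi^z_r\chi_n$ can be written as a sum of $\asymp (r/r_n)^3$ rescaled bumps in $\mathcal{B}$ (up to a universal multiplicative constant) at scale $r_n \eqdef 2^{-n/2-1}r$ centred at points with time coordinate $\asymp r_n^2$; each falls into Regime~1 and contributes $\lesssim r_n^\eta\,E$, so summing yields $E\,r^\eta\sum_{n\geq 0}2^{-n(\eta+2)/2}$, convergent precisely because $\eta > -2$. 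The piece $\psi^z_r\chi_\bullet$ is itself (up to a universal constant) a scale-$r$ bump in $\mathcal{B}$: $\rR f$ tested against it reduces to testing against $\psi^z_r\chi_\bullet$ restricted to $\{t > 0\}$ (handled by further dyadic subdivision as above), while $\Pi^z\qQ_{<\eta}f(z)$ tested against $\psi^z_r\chi_\bullet$ is bounded termwise by $r^\beta\,\|\Pi\|_{B(z,1)}\,\|f(z)\|_\beta$ for $\beta < \eta$, which combines with the previous contribution to yield $\lesssim r^\eta E$ using that $\aA$ is bounded below and only finitely many $\beta$ are relevant.

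The main obstacle will be the bookkeeping in Regime~2: verifying that the decomposition of $\psi^z_r\chi_n$ into $\asymp (r/r_n)^3$ bumps in $\mathcal{B}$ can be arranged with universally bounded constants, so that the repeated applications of Lemma~\ref{lem:reconstruction} on each bump collapse into the single factor $E$ on the right-hand side rather than accumulating norms of the model and modelled distribution over dyadic scales. The condition $\eta > -2$ is precisely what makes the $n$-sum convergent and reflects the parabolic codimension $2$ of the singular hyperplane $\{t=0\}$.
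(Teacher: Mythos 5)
First, a remark on the ground truth: the paper does not prove this statement at all — it is quoted verbatim as Theorem~A.6 of \cite{CCHS22}, so the only ``proof'' in the paper is the citation. Your sketch must therefore be judged against the argument in that reference, which indeed has the same architecture as yours: two regimes according to $r$ versus $\sqrt{t_z}$, a dyadic decomposition of $\psi^z_r$ towards the hyperplane $\{t=0\}$ into $\asymp(r/r_n)^3$ bumps at scale $r_n$ with coefficients $\asymp(r_n/r)^5$, and summability coming from $\eta>-2$. Your Regime~1 is correct as written.

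There is, however, a genuine gap in Regime~2, betrayed by the fact that your argument never invokes the hypothesis $f\in\bar\dD^{\gamma,\eta}$, i.e.\ $\qQ_{<\eta}f\in\dD^{\eta}$, without which the statement is false for general $f\in\dD^{\gamma,\eta}$. Concretely, applying Regime~1 to a bump $\phi^{r_n}_{z_j}$ controls $(\rR f-\Pi^{z_j}\qQ_{<\eta}f(z_j))(\phi^{r_n}_{z_j})$, whereas the target is centred at $z$; you must in addition bound the recentring term $(\Pi^{z_j}[\qQ_{<\eta}f(z_j)-\Gamma^{z_j;z}\qQ_{<\eta}f(z)])(\phi^{r_n}_{z_j})$, and it is precisely the $\dD^{\eta}$-membership of $\qQ_{<\eta}f$ that yields $\|\qQ_{<\eta}f(z_j)-\Gamma^{z_j;z}\qQ_{<\eta}f(z)\|_\beta\lesssim|z_j-z|^{\eta-\beta}\lesssim r^{\eta-\beta}$ for $\beta<\eta$. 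The resulting contribution $r^{\eta}\sum_n(r_n/r)^{2+\beta}$ is summable only if $\beta>-2$ for every homogeneity $\beta<\eta$ in the sector, i.e.\ $\min\aA>-2$ — true for the truncated structure here (where $\min\aA=|\bout|=-\f32-3\kappa$) but nowhere mentioned in your sketch, and not implied by $\eta>-2$ alone. Relatedly, your treatment of the $\chi_\bullet$ piece does not work as stated: for $\beta<\eta$ and $r\le1$ one has $r^{\beta}\ge r^{\eta}$, so the termwise bound $r^{\beta}\|\Pi\|\,\|f(z)\|_\beta$ cannot ``combine to yield $\lesssim r^{\eta}E$'', and ``$\aA$ bounded below'' is not the relevant input. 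What saves this term is again $\qQ_{<\eta}f\in\dD^{\eta}$ together with the vanishing of $f$ on $\{t\le0\}$, which upgrade the bound on the low components to $\|f(z)\|_\beta\lesssim t_z^{(\eta-\beta)/2}\lesssim r^{\eta-\beta}$ in Regime~2 (where $t_z\lesssim r^2$), giving $r^{\beta}\cdot r^{\eta-\beta}=r^{\eta}$. With these two corrections inserted, your sketch becomes a complete proof.
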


\begin{lem}\label{lem:convergence_model_phi_deterministic}
	Let $(\Pi,\Gamma)\in\mM(\weight_\Pi)$ be the canonical model constructed in terms of a~regular in space noise $\xi\in C^{-\f12-\kappa}(\R,C_{\mathrm{b}}(\R^3))$ and $(\hat\Pi,\hat\Gamma)\in\mM(\weight_\Pi)$ be the corresponding model obtained by application of the renormalisation map with parameters $C^{(1)}$ and $C^{(2)}$. For $S\in L^\infty_{\mathrm{loc}}(\R^{1+3})$ define $\ou_S,\oud_S,\out_S\in L^\infty_{\mathrm{loc}}(\R^{1+3})$ and $\oudz_S,\outz_S\in C^{0,1}(\R^{1+3})$ by
		\begin{equs}
		\ou_S &\eqdef \one_>\rR\bou + S\,,
		\\
		\oud_S &\eqdef \one_>\rR\boud  + 2\rR\bou S + S^2\,,
		\\
		\out_S &\eqdef \one_>\rR\bout  + 3\rR\boud S + 3\rR \bou S^2 +  S^3\,,
		\\
		\oudz_S &\eqdef K^+\ast \oud_S\,,
		\\
		\outz_S &\eqdef K^+\ast \out_S\,,	
	\end{equs}
	where $\rR$ is the reconstruction operator associated to the model $(\hat\Pi,\hat\Gamma)$. Recall Definition~\ref{def:enhanced_noise_spacetime}. Let $N=4$ and $\hat w=\bracket{\bigcdot}^{-a}\in C(\R^3)$ with $a\geq 0$. We have
	\begin{equ}
		\tilde\fX(\ou_S,\oud_S,\out_S,\oudz_S,\outz_S,\one_>C^{(2)},\bar\oO_T,\tilde w) 
		\lesssim (1+\|(\hat\Pi,\hat\Gamma)\|_{T, \weight_\Pi})^N\, (1+\VERT{S}_{\gamma,\eta;T,\hat w})
	\end{equ} 
	uniformly over all $C^{(1)},C^{(2)}\in\R$, $\xi\in \cC^{-\f12-\kappa}(\R,C_{\mathrm{b}}(\R^3))$ and $S\in L^\infty(\R^{1+3})$ that admit a~lift to a polynomial sector in $\hat\dD^{\gamma+\eta,\eta}_{T,\hat w}$ with $\gamma=\f32+4\kappa$ and $\eta=-\f12-\kappa$, where $\bar\oO_T=[-1,T]\times\R^3$ and $\tilde w=\hat w \weight_\Pi^N$.
\end{lem}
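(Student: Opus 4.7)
The strategy is to realise every quantity appearing in $\tilde\fX$ as the reconstruction of a suitable singular modelled distribution built from $F:=\one_>\bou+S$ via the abstract multiplication and integration operators of the regularity structure, and then to bound these modelled distributions using the product and Schauder estimates (Lemmas~\ref{lem:multiplication} and~\ref{lem:integration_K}) combined with the singular reconstruction theorem (Theorem~\ref{thm:reconstruction_singular}). Since $S$ lifts to the polynomial sector and lies in $\hat\dD^{\gamma+\eta,\eta}_{T,\hat{w}}$ while the constant modelled distribution $z\mapsto\bou$ is trivially in $\hat\dD^{\gamma,\eta}$, the sum $F$ satisfies $\VERT{F}_{\gamma,\eta;T,\hat{w}}\lesssim 1+\VERT{S}_{\gamma,\eta;T,\hat{w}}$. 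The extended trees appearing in $\tilde\fX$ are obtained by iterating abstract products and Schauder steps:
\begin{equs}
F_{\boud}&:=F^{2},\quad F_{\bout}:=F^{3},\quad F_{\boudz}:=\kK^{+}F_{\boud},\quad F_{\boutz}:=\kK^{+}F_{\bout},\\
F_{\boudd}&:=F_{\boudz}F_{\boud},\ F_{\boutu}:=F_{\boutz}F,\ F_{\boutd}:=F_{\boutz}F_{\boud},\ F_{\boud\bXXX}:=F_{\boud}\bXXX.
\end{equs}
A direct check using that the renormalised model satisfies $(\hat\Pi^{z}\bou\cdot\bou)(\bar z)=(\hat\Pi^{z}\bou)(\bar z)^{2}-C^{(1)}$ (and its analogues for $\bou^{3}$, $\boudz\cdot\boud$, $\boutz\cdot\bou$, $\boutz\cdot\boud$) shows that $\rR F_{\bou}=\ou_{S}$, $\rR F_{\boud}=\oud_{S}$, $\rR F_{\bout}=\out_{S}$, $\rR F_{\boudz}=\oudz_{S}$, $\rR F_{\boutz}=\outz_{S}$, and that the test-function pairings defining $[\oud_{S}\XXX]$, $[\oudd_{S}]$, $[\outu_{S}]$, $[\outd_{S}]$ coincide with the reconstructions of $F_{\boud\bXXX}$, $F_{\boudd}$, $F_{\boutu}$, $F_{\boutd}$; the counterterms $-C^{(2)}$ and $-3C^{(2)}\ou$ are precisely those incorporated into $\hat\Pi$ by the BPHZ renormalisation.

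Iterating Lemmas~\ref{lem:multiplication} and~\ref{lem:integration_K} then yields
\begin{equ}
\VERT{F_{\tau}}_{\gamma_{\tau},\eta_{\tau};T,\hat{w}^{n_{\tau}}\weight_{\Pi}^{m_{\tau}}}\lesssim (1+\|(\hat\Pi,\hat\Gamma)\|_{T,\weight_{\Pi}})^{2n_{\tau}}\,(1+\VERT{S}_{\gamma,\eta;T,\hat{w}})^{n_{\tau}},
\end{equ}
where $n_{\tau}$ counts the copies of $F$ in $F_{\tau}$ (so $n_{\bou}=1$, $n_{\boud}=n_{\boudz}=n_{\boud\bXXX}=2$, $n_{\bout}=n_{\boutz}=3$, $n_{\boudd}=n_{\boutu}=4$, $n_{\boutd}=5$) and $m_{\tau}\le 2n_{\tau}$ is bookkept by the product rule ($\gamma_{\tau\tau'}=(\gamma_{\tau}+\eta_{\tau'})\wedge(\gamma_{\tau'}+\eta_{\tau})$, $\eta_{\tau\tau'}=\eta_{\tau}+\eta_{\tau'}$) and the Schauder rule ($\gamma_{\kK\tau}=\gamma_{\tau}+2$, $\eta_{\kK\tau}=\eta_{\tau}+2$). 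Applying Theorem~\ref{thm:reconstruction_singular} to each $F_{\tau}$ converts this into a local pointwise bound on $(\rR F_{\tau})(\psi^{r}_{z})-(\hat\Pi^{z}\qQ_{<\eta_{\tau}}F_{\tau}(z))(\psi^{r}_{z})$ of size $r^{\eta_{\tau}}$ times the weight $\hat{w}(x)^{-n_{\tau}}\weight_{\Pi}(x)^{-m_{\tau}}$ times the right-hand side above; the polynomial piece contributes the expected $r^{|\tau|}$-scaling and thereby supplies all the negative-Hölder seminorms of $\tilde\fX$. For the positive-regularity trees $\boudz$ and $\boutz$, the sup and positive-Hölder parts of $\tilde\fX$ follow directly from the coefficients of $\oneb$ in $F_{\boudz}(z)$, $F_{\boutz}(z)$ via Definition~\ref{def:modelled_distributions}. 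The extension from $\oO_{T}$ to $\bar\oO_{T}=[-1,T]\times\R^{3}$ is harmless because $S\equiv 0$ on $\R_{\leq}\times\R^{3}$, so each $F_{\tau}$ reduces there to the unperturbed stationary tree built from $\bou$ alone, whose reconstruction is controlled directly by $\|(\hat\Pi,\hat\Gamma)\|_{T,\weight_{\Pi}}$.

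The principal technical hurdle is the weight bookkeeping: each abstract multiplication costs a factor $\weight_{\Pi}^{2}$ and each Schauder step another $\weight_{\Pi}^{2}$, so one must check that $N=4$ suffices, i.e.\ that the weight $\tilde{w}^{n_{\tau}}=\hat{w}^{n_{\tau}}\weight_{\Pi}^{4n_{\tau}}$ multiplying each seminorm in $\tilde\fX$ absorbs the cumulative $\weight_{\Pi}^{m_{\tau}}$. The worst case is $\boutd$ with $n_{\tau}=5$ and $m_{\tau}\le 10$, comfortably within $\weight_{\Pi}^{20}$. A subsidiary point is that the Taylor-type remainders $\oudz_{S}(z)-\oudz_{S}(\bar z)$ and $\outz_{S}(z)-\outz_{S}(\bar z)$ appearing in $[\oudd_{S}]$, $[\outu_{S}]$, $[\outd_{S}]$ match exactly what Theorem~\ref{thm:reconstruction_singular} produces for $F_{\boudd}$, $F_{\boutu}$, $F_{\boutd}$, because the abstract products $\boudz\cdot\boud$, $\boutz\cdot\bou$, $\boutz\cdot\boud$ kill the $\oneb$-components of $F_{\boudz}$ and $F_{\boutz}$ relative to the base point; the resulting recentering is how the Wick-type counterterms enter through $\hat\Pi$.
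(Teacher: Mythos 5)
Your overall strategy coincides with the paper's: lift everything to singular modelled distributions built from $F=\one_>\bou+S$, apply the product and Schauder estimates, and convert to pointwise bounds via the singular reconstruction theorem. However, two steps that you treat as automatic are precisely where the real work lies, and as written your argument does not close.

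First, the recentering. The seminorms $[\oudd]$, $[\outu]$, $[\outd]$ in Definition~\ref{def:enhanced_noise_spacetime} pair the test function $\psi^r$ based at $\bar z$ against expressions containing the increment $\oudz_S(z)-\oudz_S(\bar z)$ (resp.\ $\outz_S(z)-\outz_S(\bar z)$), i.e.\ they are recentred at the base point of the test function. You define base-point-\emph{independent} products $F_{\boudd}=F_{\boudz}F_{\boud}$ and assert that the reconstruction theorem ``kills the $\oneb$-components relative to the base point.'' It does not: Theorem~\ref{thm:reconstruction_singular} controls $\rR f-\Pi^{\bar z}\qQ_{<\eta}f(\bar z)$, and for your $F_{\boudd}$ the subtracted term $\Pi^{\bar z}\qQ_{<\eta}F_{\boudd}(\bar z)$ is not the quantity $\oudz_S(\bar z)\,\oud_S+C^{(2)}$ that the definition of $[\oudd]$ requires you to remove. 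The paper resolves this by making the modelled distribution itself depend on the base point, $F^{t,x}(\boudd)=F(\boud)\bigl(\kK^+F(\boud)-\oudz_S(t,x)\oneb\bigr)$, so that $\qQ_{<|\tau|}F^{t,x}(\tau)(t,x)=0$ and the reconstruction bound applies verbatim to the recentred pairing. If you insist on base-point-independent products you must supply the additional computation identifying $\Pi^{\bar z}\qQ_{<\eta}F_{\boudd}(\bar z)$ explicitly; you have not done so.

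Second, the time-zero singularity for $\boudd$, $\boutu$, $\boutd$. Once the constant $\oudz_S(t,x)\oneb$ (or $\outz_S(t,x)\oneb$) is subtracted, the resulting modelled distribution no longer has the homogeneous blow-up $|\tau|$ at the $\{t=0\}$ hypersurface, because the subtracted constant does not vanish at the rate dictated by the singular index. A single application of Theorem~\ref{thm:reconstruction_singular} therefore does not yield the required $r^{|\tau|}$ uniformly down to $t=0$; your claim that ``the polynomial piece contributes the expected $r^{|\tau|}$-scaling'' skips exactly this difficulty. The paper's proof splits into the regimes $t\le 2r^2$ and $t>2r^2$ using cutoffs $\chi(\bigcdot/r^2)$ and $\vartheta(\bigcdot/t)$, trades powers of $r$ against powers of $t$ through the two elementary rescaling inequalities for modelled distributions supported in $[0,r^2]\times\R^3$ and $[r^2,\infty)\times\R^3$, and finally absorbs the factor $t^{-|\boutz|/2}|\outz_S(t,x)|$ into $[\outz_S]_{|\boutz|,\fK}$ using that $\outz_S$ vanishes for $t\le 0$. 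Without some version of this argument the bound for the three ``recentred'' trees is not established. (A minor further slip: on $[-1,0]\times\R^3$ the trees $\ou_S,\oud_S,\dots$ vanish identically because of the factors $\one_>$ and $S\in\hat\dD$, rather than reducing to the stationary trees as you state; this does not affect the conclusion.)
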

\begin{proof}
It is enough to show that we have
\begin{equ}
	\tilde\fX(\fK)=	\tilde\fX(\ou_S,\oud_S,\out_S,\oudz_S,\outz_S,\one_>C^{(2)},\fK) 
	\lesssim (1+\|(\hat\Pi,\hat\Gamma)\|_{\hat\fK})^N\, (1+\VERT{S}_{\gamma,\eta;\hat\fK})
\end{equ} 
uniformly over compact $\fK\subset\R^{1+3}$ and $C^{(1)},C^{(2)}$, $\xi$, $S$ as in the statement of the lemma. Here and in what follows, $\bar\fK$ and $\hat\fK$ denote the $1$- and $2$-fattenings of $\fK$, respectively.

\step{Expression~\eqref{eq:X_tilde_proof} for $\tilde\fX(\fK)$.} For $(t,x)\in\R^{1+3}$, we introduce the following singular modelled distributions
\begin{equs}
	F^{t,x}(\bou)&=F(\bou)& &\eqdef \one_>\bou + S\,,
	\\\
	F^{t,x}(\boud)&=F(\boud)& &\eqdef \one_>\boud + 2S \bou + S^2\,,
	\\
	F^{t,x}(\bout)&=F(\bout)& &\eqdef \one_> \bout + 3S \boud + 3S^2 \bou + S^3\,.
\end{equs}
Moreover, we define
\begin{equs}
	F^{t,x}(\boud\bXXX) &\eqdef F(\boud)\,\bXXX + F(\boud)\,X(\bigcdot-x)\,,
	\\
	F^{t,x}(\boudd) &\eqdef F(\boud)\, (\kK^+ F(\boud) - \oudz_S(t,x)\oneb)\,,
	\\
	F^{t,x}(\boutu) &\eqdef F(\bou)\,(\kK^+ F(\bout) - \outz_S(t,x)\oneb) \,,
	\\
	F^{t,x}(\boutd) &\eqdef F(\boud)\,(\kK^+ F(\bout) - \outz_S(t,x)\oneb) \,.
\end{equs}
Using Definition~\ref{def:integration_K} of $\kK^+$, the identity $\rR\kK^+ f = K^+ \ast \rR f$ and
\begin{equ}
	\rR\boudz=
	\rR\boutz=
	\rR\boud\bXXX=
	\rR\boutu=0\,,\quad
	\rR\boudd=-C^{(2)}\,,\qquad
	\rR\boutd=-3C^{(2)}\rR\bou \,,
\end{equ}
it is straightforward to check that
\begin{equ}
	\rR F^{t,x}(\bou)=\ou_S\,,
	\qquad
	\rR F^{t,x}(\boud)=\oud_S\,,
	\qquad
	\rR F^{t,x}(\bout)=\out_S\,,
\end{equ}
and
\begin{equs}	
	\rR F^{t,x}(\boud\bXXX) &= X(\bigcdot-(t,x)) \oud_S\,,
	\\
	\rR F^{t,x}(\boudd) &= \oud_S (\oudz_S-\oudz_S(t,x))-\one_> C^{(2)}\,,
	\\
	\rR F^{t,x}(\boutu) &= \ou_S (\outz_S-\outz_S(t,x))\,,
	\\
	\rR F^{t,x}(\boutd) &= \oud_S (\outz_S-\outz_S(t,x))-3 C^{(2)}\ou_S\,.	
\end{equs}
Note that for all $\tau$ the support of $\rR F^{t,x}(\tau)$ is contained in $\R_\geq\times\R^3$. One checks that
\begin{equ}
	\qQ_{<|\tau|} F^{t,x}(\tau)(t, x)=0
\end{equ}
for $\tau\in\tilde\tT^\circ$. Moreover, by Definition~\ref{def:integration_K} we have
\begin{equ}
	\qQ_{<|\boudz|}\kK^+ F(\boud) =\oudz_S\oneb \,,
	\qquad
	\qQ_{<|\boutz|}\kK^+ F(\bout) =\outz_S\oneb \,.
\end{equ}
Hence,
\begin{equ}
	\|\oudz_S\|_{\fK}
	\vee
	[\oudz_S]_{|\boudz|,\fK}
	\simeq
	\VERT{\qQ_{<|\boudz|}\kK^+ F(\boud)}_{|\boudz|;\fK}\,,
	\quad
	\|\outz_S\|_{\fK}
	\vee
	[\outz_S]_{|\boutz|,\fK}
	\simeq
	\VERT{\qQ_{<|\boutz|}\kK^+ F(\bout)}_{|\boutz|;\fK}\,,
\end{equ}
where $\VERT{\bigcdot}_{\gamma;\fK}$ is the norm in the space of modelled distributions introduced in~\cite[Definition~3.1]{Hai14} and $[\bigcdot]_{\alpha, \fK}$ is the H\"older semi-norm. As a result, by Definition~\ref{def:enhanced_noise_spacetime} we obtain 
\begin{equs}[eq:X_tilde_proof]
	\tilde\fX(\fK) 
	\simeq&
	\sup_{\tau\in\{\boudz,\boutz\}}\VERT{\qQ_{<|\tau|}F(\tau)}_{|\tau|;\fK}^{1/n(\tau)}
	\\
	&\vee
	\sup_{\tau\in\tT_<^\circ}
	\left(\sup_{r\in(0,1]}\sup_{(t,x)\in\fK}r^{-|\tau|}\,
	|(\rR F^{t,x}(\tau))(\psi^r_{t,x})|\right)^{1/n(\tau)}
	\;,
\end{equs}
where
$
	\tT_<^\circ\eqdef\{\bout,\boud,\boutd,\bou,\boudd,\boutu,\boud\bXXX\}
$
and $n(\tau)$ denotes the number of leaves of $\tau$. To bound the second line in~\eqref{eq:X_tilde_proof} we will prove that for all $\tau\in\tT_<^\circ$ there is $N(\tau)\leq 4n(\tau)$ such that we have\footnote{Actually, it would suffice to prove this for $\psi\in\bB_-\subset\bB$ fixed at the beginning of Appendix~\ref{sec:spacetime_localization}.}
\begin{equs}[eq:stochastic_proof_bound]
	r^{-|\tau|}
	|(\rR F^{t,x}(\tau))(\psi^r_{t,x})|
	\lesssim (1+\|(\hat\Pi,\hat\Gamma)\|_{\hat\fK})^N(1+\VERT{S}_{\gamma,\eta;\hat\fK})^{n(\tau)}
\end{equs}
uniformly over compact $\fK\subset\R^{1+3}$, $r\in(0,1]$, $\psi\in\bB$ and $(t,x)\in\fK$.

\step{Bounds for $\tau \in \{ \bou, \boud, \bout, \boud\bXXX \}$.} By the estimate for the product of singular modelled distributions~\cite[Lemma~A.8]{CCHS22}, we have $F(\tau)\in \hat\dD^{\gamma+|\tau|,|\tau|}$ and
\begin{equ}\label{eq:estimates_first_level}
	\VERT{F(\tau)}_{\gamma+|\tau|,|\tau|;\fK} \lesssim 
	(1+\|(\hat\Pi,\hat\Gamma)\|_{\fK})^{2(n(\tau)-1)}
	\,(1+\VERT{S}_{\gamma,\eta;\fK})^{n(\tau)}
\end{equ}
for $\tau\in\{\bou,\boud,\bout\}$ uniformly over compact $\fK\subset\R^{1+3}$. Hence, for $\tau\in\{\bout,\boud,\bou\}$ the bound~\eqref{eq:stochastic_proof_bound} with $N(\tau)=2n(\tau)$ is a consequence of Theorem~\ref{thm:reconstruction_singular}. 
The bound for the contribution coming from $\tau=\boud\bXXX$ follows trivially from the bound for the term $\tau=\boud$.

\step{Bounds for $\tau\in\{\boudz,\boutz\}$.} By the estimate for $\kK^+$ from~\cite[Theorem~A.9]{CCHS22}, Remark~\ref{rmk:bar_dD} and~\eqref{eq:estimates_first_level}, we have $\kK^+ F(\tau)\in \hat\dD^{\gamma+|\tau|+2,|\tau|+2}$ and
\begin{equ}
	\VERT{\qQ_{<|\tau|}\kK^+ F(\tau)}_{|\tau|;\fK} \lesssim 
	(1+\|(\hat\Pi,\hat\Gamma)\|_{\bar\fK})^7
	\,(1+\VERT{S}_{\gamma,\eta;\bar\fK})^{n(\tau)}
\end{equ}
for $\tau\in\{\boud,\bout\}$ uniformly over compact $\fK\subset\R^{1+3}$. This implies the desired bound for the first line of~\eqref{eq:X_tilde_proof}.

\step{Bounds for $\tau \in \{\boutd,\boudd,\boutu \}$.}
By the estimate for the product of singular modelled distributions~\cite[Lemma~A.8]{CCHS22}, the estimate for $\kK^+$ from~\cite[Theorem~A.9]{CCHS22} and~\eqref{eq:estimates_first_level}, we obtain that $F(\boud)\,\kK^+ F(\bout)\in\hat\dD^{\gamma+|\boud|,|\boutd|}$ and
\begin{equs}{}
	&\VERT{F(\boud)\,\kK^+ F(\bout)}_{\gamma+|\boud|,|\boutd|;\bar\fK}
	\\
	&\qquad\lesssim
	(1+\|(\hat\Pi,\hat\Gamma)\|_{\bar\fK})^2\,
	\VERT{F(\boud)}_{\gamma+|\boud|,|\boud|;\bar\fK}\,
	\VERT{\kK^+ F(\bout)}_{\gamma+|\boutz|,|\boutz|;\bar\fK}
	\\
	&\qquad\lesssim
	(1+\|(\hat\Pi,\hat\Gamma)\|_{\hat\fK})^4\,
	\VERT{F(\boud)}_{\gamma+|\boud|,|\boud|;\bar\fK}\,
	\VERT{F(\bout)}_{\gamma+|\bout|,|\bout|;\hat\fK}\,
	\\
	&\qquad\lesssim
	(1+\|(\hat\Pi,\hat\Gamma)\|_{\hat\fK})^{12}
	\,(1+\VERT{S}_{\gamma,\eta;\hat\fK})^{n(\boutd)}
\end{equs}
uniformly over compact $\fK\subset\R^{1+3}$. Let $\chi\in C^\infty(\R)$ be such that $\chi=1$ on $[-3,3]$ and $\chi=0$ on $\R\setminus[-4,4]$. Let $\vartheta\in C^\infty(\R)$ be such that $\vartheta=1$ on $[\f12,\f32]$ and $\vartheta=0$ on $\R\setminus[\f14,\f74]$. We can view $(\chi(\bigcdot/r^2))_{r\in(0,1)}$ and $(\vartheta(\bigcdot/t))_{t\in(0,1)}$ as uniformly bounded families of elements of $\bar\dD^{\gamma,0}$. We have
\begin{equ}
	(\rR F^{t,x}(\boutd))(\psi_r^{t,x})
	=
	1_{t\leq 2r^2}\,(\rR \chi(\bigcdot/r^2)F^{t,x}(\boutd))(\psi_r^{t,x})
	+ 
	1_{2r^2<t}\,(\rR\vartheta(\bigcdot/t)F^{t,x}(\boutd))(\psi_r^{t,x}) \,,
\end{equ}
where we used the fact that $(\rR f)(\psi)=(\rR g)(\psi)$ if $f=g$ on $\supp\psi$. Consequently, by Theorem~\ref{thm:reconstruction_singular} we obtain
\begin{equs}[eq:stochastic_proof_boudd]
	&|(\rR F^{t,x}(\boutd))(\psi_r^{t,x})| 
	\lesssim (1+\|(\hat\Pi,\hat\Gamma)\|_{\bar\fK})^2
	\\
	&\times\Big( 
	1_{t\leq 2r^2} r^{|\boud|}  
	\VERT{\chi(\bigcdot/r^2)F^{t,x}(\boutd)}_{\gamma+|\boud|,|\boud|;\bar\fK}
	+ 
	r^{|\boutd|}
	\VERT{\vartheta(\bigcdot/t)F^{t,x}(\boutd)}_{\gamma+|\boud|,|\boutd|;\bar\fK} 
	\Big)
\end{equs}
uniformly over $(t,x)\in\fK$ and compact $\fK\subset\R^{1+3}$. Note that for a fixed $\delta\geq0$ we have $\VERT{F}_{\gamma+|\boud|,\eta;\bar\fK}\lesssim r^{\delta}\, \VERT{F}_{\gamma+|\boud|,\eta+\delta;\bar\fK}$ uniformly over $r\in(0,1]$ and $F\in\hat\dD^{\gamma+|\boud|,\eta}(\tT_{\geq\eta})$ supported in $[0,r^2]\times\R^3$. Hence,
\begin{equs}
	&\VERT{\chi(\bigcdot/r^2) F(\boud)\,\kK^+ F(\bout)}_{\gamma+|\boud|,|\boud|;\bar\fK}
	\\
	&\qquad\lesssim
	r^{|\boutd|-|\boud|}\,\VERT{\chi(\bigcdot/r^2) F(\boud)\,\kK^+ F(\bout)}_{\gamma+|\boud|,|\boutd|;\bar\fK}
	\\
	&\qquad\lesssim
	r^{|\boutd|-|\boud|}\,(1+\|(\hat\Pi,\hat\Gamma)\|_{\bar\fK})^2
	\,\VERT{F(\boud)\,\kK^+ F(\bout)}_{\gamma+|\boud|,|\boutd|;\bar\fK}\,,
\end{equs}
where in the second line we used the estimate for the product of singular modelled distributions~\cite[Lemma~A.7]{CCHS22}. In consequence, for $t\leq 2r^2$ we have
\begin{equs}  
	r^{|\boud|}\,  &\VERT{\chi(\bigcdot/r^2)F^{t,x}(\boutd)}_{\gamma+|\boud|,|\boud|; \bar\fK}
	\\
	&=
	r^{|\boud|}\,\VERT{\chi(\bigcdot/r^2) F(\boud)\,(\kK^+ F(\bout) - \outz_S(t,x)\oneb)}_{\gamma+|\boud|,|\boud|; \bar\fK}
	\\
	&\lesssim
	r^{|\boutd|}(1+\|(\hat\Pi,\hat\Gamma)\|_{\bar\fK})^2\,\VERT{F(\boud)\,\kK^+ F(\bout)}_{\gamma+|\boud|,|\boutd|; \bar\fK}
	\\
	&\qquad+
	r^{|\boutd|}\VERT{F(\boud)}_{\gamma+|\boud|,|\boud|;\bar\fK}~ t^{-|\boutz|/2}|\outz_S(t,x)|\,.
\end{equs}
For the second term, observe that for a fixed $\delta\geq0$ we have $\VERT{F}_{\gamma+|\boud|,\eta;\bar\fK}\lesssim r^{-\delta}\, \VERT{F}_{\gamma+|\boud|,\eta-\delta;\bar\fK}$ uniformly over $r\in(0,1]$ and $F\in\hat\dD^{\gamma+|\boud|,\eta}(\tT_{\geq\eta})$ supported in $[r^2,\infty)\times\R^3$. Hence,
\begin{equs}
	\VERT{\vartheta(\bigcdot/t)F(\boud)}_{\gamma+|\boud|,|\boutd|;\bar\fK}
	&\lesssim
	t^{-|\boutz|/2}\VERT{\vartheta(\bigcdot/t)F(\boud)}_{\gamma+|\boud|,|\boud|;\bar\fK}
	\\
	&\lesssim
	t^{-|\boutz|/2}(1+\|(\hat\Pi,\hat\Gamma)\|_{\bar\fK})^2\,
	\VERT{F(\boud)}_{\gamma+|\boud|,|\boud|;\bar\fK}\,,	
\end{equs}
where in the second line we used the estimate for the product of singular modelled distributions~\cite[Lemma~A.7]{CCHS22}. Thus, we have
\begin{equs}
	r^{|\boutd|}\,
	\VERT{\vartheta(\bigcdot/t)F^{t,x}(\boutd)}_{\gamma+|\boud|,|\boutd|; \bar\fK}
	&=
	r^{|\boutd|}\,\VERT{\vartheta(\bigcdot/t)F(\boud)\,(\kK^+ F(\bout) - \outz_S(t,x)\oneb)}_{\gamma+|\boud|,|\boutd|; \bar\fK}
	\\
	&\lesssim
	r^{|\boutd|}(1+\|(\hat\Pi,\hat\Gamma)\|_{\bar\fK})^2\,\VERT{F(\boud)\,\kK^+ F(\bout)}_{\gamma+|\boud|,|\boutd|; \bar\fK}
	\\
	&\quad +
	r^{|\boutd|}\VERT{F(\boud)}_{\gamma+|\boud|,|\boud|;\bar\fK} ~t^{-|\boutz|/2}|\outz_S(t,x)|\,.
\end{equs}
Using the fact that $\outz_S(t,x)=0$ for $t\leq 0$ we obtain
\begin{equ}
	t^{-|\boutz|/2}|\outz_S(t,x)|\leq [\outz_S]_{|\boutz|,\fK}\,.
\end{equ}
Plugging the above estimates into~\eqref{eq:stochastic_proof_boudd} one concludes the bound~\eqref{eq:stochastic_proof_bound} for $\tau=\boutd$ with $N(\boutd)=16$. To prove the bound for $\tau=\boudd$ we use the same argument with $\boutd$, $\bout$, $\boutz$, $\outz_S$ replaced by $\boudd$, $\boud$, $\boudz$, $\oudz_S$. Finally, in the case $\tau=\boutu$ we replace $\boutd$, $\boud$, by $\boutu$, $\bou$.
\end{proof}

\begin{lem}\label{lem:stochastic_spacetime}
	Recall Definitions~\ref{def:smoothed_noise},~\ref{def:enhanced_noise} and~\ref{def:size_enhanced_noise}. For every $p>0$ there exists $C>0$ such that
	\begin{equ}
		\EE (\tilde\fX_{\eps,\ell,s,t})^p \leq C
	\end{equ}
	for all $\eps\in(0,1]$, $\ell\in\N_+$, $s\in\R$, $t>s$.
\end{lem}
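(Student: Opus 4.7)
The plan is to invoke Lemma~\ref{lem:convergence_model_phi_deterministic} directly, interpreting the ``started-at-$s$'' objects of Definition~\ref{def:enhanced_noise} as the $(\ou_S,\oud_S,\out_S,\oudz_S,\outz_S)$ produced by that lemma with a carefully chosen auxiliary modelled distribution $S$. By time-stationarity in law of the noise $\xi_{\eps,\ell}$, we may assume $s=0$. The starting point is the identity $\ou_{\eps,\ell,0}(u,\bigcdot) = \ou_{\eps,\ell}(u,\bigcdot) + S(u,\bigcdot)$ for $u\geq 0$ (and both sides zero for $u\leq 0$), where $\ou_{\eps,\ell} = \rR\bou$ is the fully stationary heat process associated to the BPHZ model $(\hat\Pi,\hat\Gamma)$ of Lemma~\ref{lem:convergence_stationary_model}, and
\begin{equ}
S(u,x) \eqdef -\one_>(u)\,\bigl(e^{u(\Delta-1)}\ou_{\eps,\ell}(0,\bigcdot)\bigr)(x)\,.
\end{equ}
A direct Wick-algebra computation using $\rR\boud = \ou_{\eps,\ell}^2 - \Cone$ and $\rR\bout = \ou_{\eps,\ell}^3 - 3\Cone\ou_{\eps,\ell}$ shows that the stationary subtractions $\one_>\Cone, \one_>\Ctwo$ appearing in Definition~\ref{def:enhanced_noise} exactly compensate the cross-terms between $\ou_{\eps,\ell}$ and $S$, so that
\begin{equ}
\tilde\oud_{\eps,\ell,0} = \oud_S,\qquad \tilde\out_{\eps,\ell,0} = \out_S,\qquad \tilde\oudz_{\eps,\ell,0}=\oudz_S,\qquad \tilde\outz_{\eps,\ell,0} = \outz_S
\end{equ}
in the notation of Lemma~\ref{lem:convergence_model_phi_deterministic}.

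Applying Lemma~\ref{lem:convergence_model_phi_deterministic} with $\hat w$ taken to be the spatial weight implicit in the definition of $\tilde\fX_{\eps,\ell,s,t}$ then gives, deterministically,
\begin{equ}
\tilde\fX_{\eps,\ell,0,t} \;\lesssim\; \bigl(1+\|(\hat\Pi,\hat\Gamma)\|_{t,\weight_\Pi}\bigr)^{N}\,\bigl(1+\VERT{S}_{\gamma,\eta;t,\hat w}\bigr)
\end{equ}
for a fixed integer $N$, with $\gamma=\frac32+4\kappa$, $\eta=-\frac12-\kappa$. The modelled-distribution norm of $S$ is itself a pure heat-flow quantity, since $S(u,\bigcdot) = -K(\ou_{\eps,\ell}(0))(u,\bigcdot)$ in the sense of Definition~\ref{def:K_S}; hence Lemma~\ref{lem:S_phi_estimate} yields $\VERT{S}_{\gamma,\eta;t,\hat w} \lesssim \|\ou_{\eps,\ell}(0)\|_{\cC^\eta(\hat w)}$.

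Taking $p$-th moments and using H\"older's inequality, it remains to bound $\EE\|(\hat\Pi,\hat\Gamma)\|_{t,\weight_\Pi}^q$ and $\EE\|\ou_{\eps,\ell}(0)\|_{\cC^\eta(\hat w)}^q$ uniformly in $\eps,\ell$ (and, where relevant, on the time window in question) for every $q\geq 1$. The first is exactly Lemma~\ref{lem:convergence_stationary_model}. The second is a standard bound for a centred Gaussian field: pointwise, spatial stationarity of $\xi_{\eps,\ell}$ together with a direct heat-kernel computation gives $\EE|\delta_j\ou_{\eps,\ell}(0,x)|^2 \lesssim 2^{j(1+2\kappa)}$ on each Littlewood--Paley block, which upgrades to a moment bound on $\|\ou_{\eps,\ell}(0)\|_{\cC^\eta(\hat w)}$ by Nelson hypercontractivity combined with Kolmogorov/Besov chaining and the polynomial summability of $\hat w$. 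The main (indeed, essentially only) obstacle is the Wick-algebra identification in the first paragraph, which has to be carried out carefully because the tilde-objects use the \emph{stationary} subtraction $\one_>\Cone$ rather than the time-dependent $\Cones$; this is however precisely what makes them slot into the template of Lemma~\ref{lem:convergence_model_phi_deterministic}.
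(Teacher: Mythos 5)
Your overall strategy is the same as the paper's: reduce to $s=0$ by stationarity, realise the zero-initial-data trees as the objects $\ou_S,\oud_S,\out_S,\oudz_S,\outz_S$ of Lemma~\ref{lem:convergence_model_phi_deterministic} for a suitable modelled distribution $S$, control $\VERT{S}_{\gamma,\eta;T,\hat w}$ via Lemma~\ref{lem:S_phi_estimate}, and close with moment bounds on the model norm (Lemma~\ref{lem:convergence_stationary_model}) and on the remaining Gaussian data. The ``Wick-algebra compensation'' you single out as the main obstacle is in fact automatic from the structure of that lemma: since $S$ vanishes for $u\leq 0$, one has the pointwise identities $\oud_S=\ou_S^2-\one_>C^{(1)}$ and $\out_S=\ou_S^3-3\one_>C^{(1)}\ou_S$, so once $\ou_S=\ou_{\eps,\ell,0}$ the identification of all higher trees (including the $K^+$-convolved ones) follows with no further computation.

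The genuine slip is in your choice of $S$, and it stems from misidentifying $\rR\bou$. For an admissible model (Definition~\ref{def:model}) abstract integration is realised through the \emph{truncated} kernel $K^+$, so $\rR\bou=\ou^+_{\eps,\ell}=K^+\ast\xi_{\eps,\ell}$, not the fully stationary $\ou_{\eps,\ell}=K\ast\xi_{\eps,\ell}$; Lemma~\ref{lem:convergence_model_phi_deterministic} accordingly sets $\ou_S=\one_>\ou^+_{\eps,\ell}+S$. With your $S(u)=-\one_>(u)\,e^{u(\Delta-1)}\ou_{\eps,\ell}(0)$ one gets $\ou_{\eps,\ell,0}-\ou_S=\one_>\ou^-_{\eps,\ell}$, which is not zero, so none of the claimed identities $\tilde\oud_{\eps,\ell,0}=\oud_S$, etc., hold as stated. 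The correct choice is $S=K\ast\one_>\lL\ou^-_{\eps,\ell}-K(\ou^+_{\eps,\ell}(0))$, i.e.\ $S_{\eps,\ell}(0)$ in the notation of Definition~\ref{def:remainders} (the paper uses exactly this, writing the $K^-$ part as $K\ast\one_>\lL\ou^-_{\eps,\ell}$). The extra term is harmless: $\lL K^-=\delta-\lL K^+$ is smooth and compactly supported, so $\lL\ou^-_{\eps,\ell}$ is a bounded field whose weighted $L^\infty$ moments are controlled by Lemma~\ref{lem:stochastic_paracontrolled}, and Lemma~\ref{lem:S_phi_estimate} with $h=\lL\ou^-_{\eps,\ell}$ and $\phi=-\ou^+_{\eps,\ell}(0)$ then bounds $\VERT{S}_{\gamma,\eta;T,\hat w}$. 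With that correction your argument goes through exactly as the paper's does.
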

\begin{proof}
	Note that we have
	\begin{equ}
		\tilde\fX_{\eps,\ell,s,t,z}=\tilde\fX(\ou_{\eps,\ell,s},\tilde\oud_{\eps,\ell,s},\tilde\out_{\eps,\ell,s},\tilde\oudz_{\eps,\ell,s},\tilde\outz_{\eps,\ell,s},\one_{(s,\infty)}\Ctwo,[s,t]\times\R^3,w_z)\,.
	\end{equ}
	By translation invariance, we may assume without loss of generality that $s=0$ and $z=0$. We apply Lemma~\ref{lem:convergence_model_phi_deterministic} with the canonical model $(\Pi,\Gamma)$ constructed with the use of $\xi_{\eps,\ell}$ and $S=K\ast \one_> \lL\ou^-_{\eps,\ell}$. 
	The claim then follows by applying Lemma~\ref{lem:S_phi_estimate} with $\phi=0$, $h=\lL\ou^-_{\eps,\ell}$, $w=\bracket{\bigcdot}^{-a}$, together with Lemmas~\ref{lem:convergence_stationary_model} and~\ref{lem:stochastic_paracontrolled}, and choosing $a>0$ small enough.
\end{proof}

\begin{lem}\label{le:renormalisation_diff}
	Recall that 
	\begin{equs}
		\Cone &\eqdef \EE |\ou_{\eps,\ell}(t, x)|^2\,,
		\qquad&
		\Ctwo&\eqdef\EE\oudz_{\eps,\ell}(t,x)\oud_{\eps,\ell}(t,x)\,,
		\\
		\Cones(t) &\eqdef \EE |\ou_{\eps,\ell,s}(t, x)|^2\,,
		\qquad&
		\Ctwos(t)&\eqdef\EE|\nabla\oudz_{\eps,\ell,s}(t, x)|^2\,.
	\end{equs}
	Let
	\begin{equ}\label{eq:C_hat}
		\bar C^{(1)}_{\eps}\eqdef \int_{\R^4} K^+_\eps(t,x)\,\md t\md x\,,
		\qquad
		\bar C^{(2)}_{\eps}\eqdef 2\int_{\R^4} ((K^+_\eps\ast K^+_\eps)(t,x))^2 K^+(t,x)\,\md t\md x\,,		
	\end{equ}
	where $K^+_\eps\eqdef M_\eps\star K^+$, $M_\eps$ is the mollifier used in the definition of $\xi_{\eps,\ell}$ and $\star$ denotes the convolution over $\R^3$. There exists $C>0$ such that
	\begin{equ}
		\label{eq:counterterm-bound}
		|\Cones(t) - \Cone| \leq C\,(t-s)^{-1/2}\;,
		\qquad
		|\Ctwos(t) - \Ctwo | \leq C\, (t - s)^{-\kappa}\;,
	\end{equ}
	for all $\eps\in(0,1], \ell\in\N_+,s\in\R,t\in(s,s+1]$. Moreover, there exist $\bar C^{(1)},\bar C^{(2)}\in\R$ such that
	\begin{equ}\label{eq:C_diff_convergence}
		\lim_{\ell\to\infty}\lim_{\eps\searrow0}\Cone-\bar C^{(1)}_{\eps}=\bar C^{(1)}\,,
		\qquad
		\lim_{\ell\to\infty}\lim_{\eps\searrow0}\Ctwo-\bar C^{(2)}_{\eps}=\bar C^{(1)}\,.
	\end{equ}
\end{lem}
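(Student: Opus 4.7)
The argument proceeds by computing each constant as an explicit Gaussian covariance and then comparing the stationary and non-stationary versions.

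\emph{First bound.} Since $\ou_{\eps,\ell,s}(t,\cdot)$ depends only on $\xi_{\eps,\ell}$ restricted to $(s,t]$, whereas the difference $\ou_{\eps,\ell}(t,\cdot)-\ou_{\eps,\ell,s}(t,\cdot) = e^{(t-s)(\Delta-1)}\ou_{\eps,\ell}(s,\cdot)$ depends only on the noise on $(-\infty,s]$, the two fields are independent, so
\begin{equ}
\Cone - \Cones(t) = \EE\bigl[\bigl(e^{(t-s)(\Delta-1)}\ou_{\eps,\ell}(s,\cdot)(x)\bigr)^2\bigr]\,.
\end{equ}
On $\T_\ell^3$ with $\ell\ge 1$ this expectation is a Riemann sum over the dual lattice that is uniformly comparable to the $\R^3$ integral $\int \frac{|\hat M_\eps(k)|^2}{2(1+|k|^2)}\,e^{-2(t-s)(1+|k|^2)}\,\md k$. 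A dyadic split at scale $|k|\sim(t-s)^{-1/2}$ bounds this by $C(t-s)^{-1/2}$ whenever $(t-s)\ge\eps^2$, and for $(t-s)<\eps^2$ one simply uses $|\Cone-\Cones(t)|\le 2\Cone\lesssim\eps^{-1}\le(t-s)^{-1/2}$.

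\emph{Second bound.} Differentiating $t\mapsto\EE[\oudz_{\eps,\ell,s}(t,x)^2]$ and using $\lL\oudz_{\eps,\ell,s}=\oud_{\eps,\ell,s}$, then passing to the stationary limit (where equal-time expectations are time-independent), gives the identity
\begin{equ}\label{eq:plan_stationary_identity}
\Ctwo \;=\; \EE[\oudz_{\eps,\ell}\cdot\oud_{\eps,\ell}] \;=\; \EE[|\nabla\oudz_{\eps,\ell}|^2] + \EE[\oudz_{\eps,\ell}^2]\,.
\end{equ}
Since $\oudz$ has strictly positive parabolic regularity uniformly in $\eps,\ell$, the correction $\EE[\oudz^2]$ is uniformly bounded and is absorbed into the constant (note $(t-s)^{-\kappa}\ge 1$ on $(s,s+1]$). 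It thus remains to bound the non-stationary correction $\EE[|\nabla\oudz_{\eps,\ell}|^2]-\EE[|\nabla\oudz_{\eps,\ell,s}(t)|^2]$. Expanding via Wick's theorem, this difference is the ``sunset''-type integral on three internal heat propagators connecting $\nabla\oudz(t,x)$ to itself, restricted to configurations in which at least one of the time integration variables lies in $(-\infty,s]$; using the spectral representation of the propagator in time combined with the standard subcritical power-counting of the $\Phi^4_3$ sunset graph, the constrained integration contributes at worst a factor $(t-s)^{-\kappa}$, uniformly in $\eps\in(0,1]$ and $\ell\in\N_+$.

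\emph{Convergence of the BPHZ subtractions.} Using the decomposition $K=K^++K^-$ from Lemma~\ref{lem:kernel_decomposition}, one expands $\Cone$ (resp.\ $\Ctwo$) into the integral involving only $K^+_\eps$ (which matches $\bar C^{(1)}_\eps$ resp.\ $\bar C^{(2)}_\eps$), plus remainder terms each containing at least one $K^-$ factor. Because $K^-$ is smooth and compactly supported, these remainder terms converge as $\eps\searrow 0$ by dominated convergence. The spatial periodisation on $\T_\ell^3$ produces further lattice-sum corrections which vanish uniformly in $\eps$ as $\ell\to\infty$, thanks to the fast spatial decay of the time-integrated heat kernels. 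Collecting the two limits yields explicit constants $\bar C^{(1)},\bar C^{(2)}\in\R$.

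\emph{Principal obstacle.} The delicate step is the second bound: the identity \eqref{eq:plan_stationary_identity} links $\Ctwo$ to $\EE[|\nabla\oudz|^2]$ only in the stationary regime, so one must quantify precisely the rate at which $\Ctwos(t)$ approaches its stationary value. A crude $L^\infty$ bound on the three sunset propagators is logarithmically divergent and just misses the target; the polynomial decay $(t-s)^{-\kappa}$ must be extracted by a spectral/dyadic decomposition of the time variable carrying the restricted integration, exploiting the fact that the resulting diagram is strictly subcritical in $d=3$.
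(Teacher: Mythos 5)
Your overall strategy is sound, and two of the three parts follow essentially the paper's own route. For the first bound, the paper likewise identifies $\Cone-\Cones(t)$ with the variance of the propagated initial data and bounds the resulting Fourier\slash Riemann sum over the dual lattice by $t^{-1/2}$ (your extra case split at $t-s<\eps^2$ is harmless but unnecessary, since dropping $|\hat M_\eps|^2\le 1$ already gives the bound uniformly in $\eps$). For the convergence of the BPHZ subtractions, the paper is equally terse and relies on the same $K=K^++K^-$ splitting and standard heat-kernel properties; note only that $K^-$ is smooth with rapid decay rather than compactly supported \dash it is $K^+$ that has compact support. Your identity $\Ctwo=\EE|\nabla\oudz_{\eps,\ell}|^2+\EE\,\oudz_{\eps,\ell}^2$ is also exactly the paper's starting point (stationarity kills $\EE[\oudz_{\eps,\ell}\,\d_t\oudz_{\eps,\ell}]$ and integration by parts on the torus handles the Laplacian). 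Where you genuinely diverge is in comparing $\EE|\nabla\oudz_{\eps,\ell}|^2$ with $\Ctwos(t)$: you propose a direct Wick\slash diagram computation in which, after expanding both quantities all the way down to the white noise, the difference is the marginal (logarithmically divergent) sunset-type integral constrained so that at least one noise time lies in $(-\infty,s]$; this constraint keeps the integration at parabolic distance $\gtrsim(t-s)^{1/2}$ from the singular configuration and yields $\log(1/(t-s))\lesssim (t-s)^{-\kappa}$ uniformly in $\eps,\ell$. The paper instead argues pathwise: it writes $\nabla(\oudz_{\eps,\ell}-\oudz_{\eps,\ell,0})$ via Duhamel and paraproduct decompositions in terms of the stationary trees and the propagated data, proves the deterministic weighted Besov bound with the loss $t^{-\kappa/2}$, and closes using the $L^p$ stochastic estimates for the stationary objects that it needs elsewhere anyway. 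Your route is more elementary and self-contained (pure Gaussian calculus); the paper's reuses existing machinery and avoids explicit diagram bookkeeping. If you carry your version out in full, make sure the expansion is done at the level of the noise variables (so that the non-stationary diagram really is the stationary integrand with all time variables restricted to $(s,t]$); if you instead work at the level of the internal covariances, you must separately control the replacement of the stationary covariance by the covariance of the field started from zero at time $s$, which is an additional, though manageable, term.
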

\begin{proof}
	By translational invariance without loss of generality we can restrict attention to the case $s=0$. A direct computation yields
	\begin{equs}
		0\leq\Cone-C^{(1)}_{\eps,\ell,0}(t)
		&\lesssim
		\frac{1}{\ell^3}\sum_{k\in (2\pi\Z/\ell)^3} \frac{e^{-2t\langle k\rangle^2}}{2\langle k\rangle^2} \\
		&
		\lesssim
		\frac{1}{\ell^3}+ t^{-1/2} \biggl(\frac{t^{3/2}}{\ell^3}\sum_{k\in (2\pi t^{1/2}\Z/\ell)^3\setminus\{0\}}\frac{e^{-2|k|^2}}{2|k|^2}\biggr)\;,
	\end{equs}
	which implies the first of the bounds~\eqref{eq:counterterm-bound}. Next, by stationarity and integration by parts, we observe that
	\begin{equs}
		\Ctwo-C^{(2)}_{\eps,\ell,0} 
		& = \frac{1}{|\T_\ell^3|}\int_{\T_\ell^3} \EE \oudz_{\eps, \ell}(t, x) \big(\d_t - \Laplace + 1 \big) \oudz_{\eps, \ell}(t, x) \md x - \EE |\nabla\oudz_{\eps,\ell,0}(t,0)|^2
		\\
		& = 
		\EE(\nabla\oudz_{\eps,\ell}(t,0)-\nabla\oudz_{\eps,\ell,0}(t,0))
		(\nabla\oudz_{\eps,\ell}(t,0)+\nabla\oudz_{\eps,\ell,0}(t,0)) 
		+ 
		\EE \oudz_{\eps,\ell}(t,0)^2\,.
	\end{equs}
	Let $S_{\eps,\ell}(t)=K(t)\star\ou_{\eps,\ell}(0)$ for $t\geq0$ and $S_{\eps,\ell}(t)=0$ for $t<0$. We have
	\begin{equ}
		\nabla(\oudz_{\eps,\ell}-\oudz_{\eps,\ell,0})(t) 
		= 
		(\nabla K\ast(\one_>\oud_{\eps,\ell}-\oud_{\eps,\ell,0}))(t)
		+
		\nabla K(t)\ast\oudz_{\eps,\ell}(0)
	\end{equ}
	and
	\begin{equ}
		\ou_{\eps,\ell,0}
		=
		\one_>\ou_{\eps,\ell}-S_{\eps,\ell}\,,
		\qquad
		\oud_{\eps,\ell,0}
		=
		\one_>\oud_{\eps,\ell}-2\ou_{\eps,\ell}\rpara S_{\eps,\ell}-2\ou_{\eps,\ell}\pareq S_{\eps,\ell}+S^2_{\eps,\ell}\,.
	\end{equ}
	Using estimates for paraproducts and regularising effect of the heat kernel we obtain
	\begin{equ}
		t^{\kappa/2}\|\nabla(\oudz_{\eps,\ell}-\oudz_{\eps,\ell,0})(t)\|_{\cC^{\f\kappa4}(w)}
		\lesssim 
		\sup_{u\in[0,1]}\|\ou_{\eps,\ell}(u)\|^2_{\cC^{-\f12-\f\kappa8}(w)}
		+
		\|\oudz_{\eps,\ell}(0)\|_{\cC^{1-\f\kappa4}(w)} \,.
	\end{equ}
	By the standard stochastic estimates for the stationary trees $\ou_{\eps,\ell}$ and $\oudz_{\eps,\ell}$, see for example~\cite[Theorem~3.4]{GH19}, for any $p>1$ the expressions
	\begin{equ}
		\sup_{u\in[0,1]}\|\ou_{\eps,\ell}(u)\|_{\cC^{-\f12-\f\kappa8}(w)}\,,
		\qquad
		\sup_{u\in[0,1]}\|\oudz_{\eps,\ell}(u)\|_{\cC^{1-\f\kappa8}(w)}
	\end{equ}
	are bounded in $L^p(\Omega)$ uniformly over $\eps\in(0,1]$ and $\ell\in\N_+$. Thus, using the formula for $\Ctwo-C^{(2)}_{\eps,\ell,0}$ given above and the multiplication theorem in Besov spaces we obtain the second of the bounds~\eqref{eq:counterterm-bound}. To prove~\eqref{eq:C_diff_convergence} we note that
	\begin{equ}
		\Cone=\int_{\R^4} K_{\eps,\ell}(t,x)\,\md t\md x\,,
		\qquad
		\Ctwo= 2\int_{\R^4} ((K_{\eps,\ell}\ast K_{\eps,\ell})(t,x))^2 K_{\ell}(t,x)\,\md t\md x\,,		
	\end{equ}
	where $K_\ell$ coincides with the periodisation in space of the heat kernel $K$ with unit mass and $K_{\eps,\ell}=M_\eps\star K_\ell$, and use standard properties of the heat kernel.
\end{proof}

\begin{lem}\label{lem:kolmogorov}
	Let $d,n \in \N_+$, $p\geq 1$, $\beta < \alpha$ and $\hat w=\bracket{\bigcdot}^{-a}\in C(\R^d)$, $a>0$.	We have
	\begin{equ}
		\EE\left(\sup_{t\in[0,1]}\|\tau(t)\|^p_{\cC^\beta(\hat w)}\right) \lesssim C^p
	\end{equ}
	uniformly over $\ell\in\N_+$ and stationary in space stochastic processes $\tau \in C([0,1],C(\T_\ell^d))$ in the Wiener chaos of order $n$ such that 
	\begin{equ}\label{eq:covariance_kolmogorov}
		\EE|\langle\tau(t),e_k\rangle|^2
		\vee 
		|t_1-t_2|^{-2\kappa}\EE|\langle\tau(t_1)-\tau(t_2),e_k\rangle|^2 
		\leq
		\ell^d \,C^2 \bracket{k}^{-d-2\alpha}
	\end{equ}
	with $C>0$ for all $t,t_1,t_2\in[0,1]$, $t_1\neq t_2$, and all Fourier modes $e_k\in C(\T_\ell^d)$.
\end{lem}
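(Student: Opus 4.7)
The plan is to combine three standard ingredients: hypercontractivity on a fixed-order Wiener chaos, a Plancherel computation for the Littlewood--Paley blocks of the spatially stationary process, and a Kolmogorov continuity argument upgrading pointwise-in-time moment bounds to a uniform-in-time bound.

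First I would fix $q\geq p$ large enough that $q(\alpha-\beta)>d$, $q\kappa>1$ and $\|\hat w\|_{L^q(\R^d)}<\infty$ (possible since $a>0$); by Jensen's inequality it then suffices to prove the bound with $p$ replaced by $q$. Interpreting $\tau(t)$ as a periodic distribution on $\R^d$ and using the weighted Bernstein estimate $\|\delta_j g\|_{L^\infty(\hat w)}\lesssim 2^{dj/q}\|\delta_j g\|_{L^q(\hat w)}$ (whose proof follows the lines of Lemma~\ref{lem:PL_block_est}) together with $\sup_j a_j\leq(\sum_j a_j^q)^{1/q}$, one obtains
\begin{equ}
	\|\tau(t)\|_{\cC^\beta(\hat w)}^q \lesssim \sum_{j\geq -1} 2^{(\beta q+d)j}\,\|\delta_j \tau(t)\|_{L^q(\hat w)}^q.
\end{equ}
Taking expectations, Fubini combined with spatial stationarity and Nelson's hypercontractive inequality (applicable because $\delta_j\tau(t,0)$ lies in the $n$-th Wiener chaos) yields $\EE\|\delta_j \tau(t)\|_{L^q(\hat w)}^q\lesssim (\EE|\delta_j\tau(t,0)|^2)^{q/2}\,\|\hat w\|_{L^q}^q$. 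A direct Fourier computation on $\T^d_\ell$ using the orthogonality of distinct Fourier modes (which is a consequence of stationarity) together with~\eqref{eq:covariance_kolmogorov} then gives $\EE|\delta_j\tau(t,0)|^2\lesssim C^2\,2^{-2\alpha j}$ uniformly in $\ell$, and summing over $j$ produces the fixed-time bound $\EE\|\tau(t)\|^q_{\cC^\beta(\hat w)}\lesssim C^q$.

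Applying the identical chain to the increment $\tau(t_1)-\tau(t_2)$, using the time-regularity half of~\eqref{eq:covariance_kolmogorov}, delivers
\begin{equ}
	\EE\|\tau(t_1)-\tau(t_2)\|_{\cC^\beta(\hat w)}^q\lesssim C^q\,|t_1-t_2|^{\kappa q}.
\end{equ}
Since $\kappa q>1$, Kolmogorov's continuity theorem in the separable Banach space $\cC^\beta(\hat w)$ (for instance via the Garsia--Rodemich--Rumsey lemma) yields a continuous modification such that $\EE\sup_{t\in[0,1]}\|\tau(t)\|^q_{\cC^\beta(\hat w)}\lesssim C^q$, which is the desired bound after reverting to general $p$ via Jensen.

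The only mildly delicate point is uniformity in $\ell$ of the Fourier estimate on $\EE|\delta_j\tau(t,0)|^2$: this reduces to bounding $\ell^{-d}\sum_{k\in(2\pi\Z/\ell)^d}\chi_j(k)^2\,\langle k\rangle^{-d-2\alpha}$, which is a Riemann sum of mesh $2\pi/\ell$ for the corresponding integral on $\R^d$, and is therefore bounded uniformly in $\ell$ and comparable to $2^{-2\alpha j}$ by the Schwartz decay of the integrand.
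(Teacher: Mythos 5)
Your argument is correct and is essentially the proof behind the reference the paper cites (\cite{MWX17}, Proposition~5): the same combination of a weighted Bernstein/Besov embedding with large $q$, Nelson's hypercontractivity on a fixed Wiener chaos, a Plancherel computation using spatial stationarity and \eqref{eq:covariance_kolmogorov}, and a Kolmogorov continuity argument in time. You also correctly flag and resolve the only delicate point, the uniformity in $\ell$ of the Riemann-sum bound on $\EE|\delta_j\tau(t,0)|^2$.
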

\begin{proof}
	See~\cite[Proposition~5]{MWX17}.
\end{proof}

\begin{lem}\label{lem:stochastic_paracontrolled}
	Let $p\geq1$, $\hat w=\bracket{\bigcdot}^{-a}\in C(\R^3)$, $a>0$ and $\xi_{\eps,\ell}$ be constructed from the space-time white noise $\xi$ as specified in Definition~\ref{def:smoothed_noise}. Recall Definition~\ref{def:enhanced_noise} and set $\ou^\pm_{\eps,\ell}\eqdef K^\pm\ast \xi_{\eps,\ell}$ and $\ou^\pm\eqdef K^\pm\ast \xi$. The random variable
	\begin{equs}
		\sup_{t\in[s,s+1]}\Bigl(&\|\ou_{\eps,\ell,s}(t)\|_{\cC^{-\f12-\kappa}(\hat w)}
		\vee
		\|\oudz_{\eps,\ell,s}(t)\|_{\cC^{1-2\kappa}(\hat w)}
		\vee
		\|\outz_{\eps,\ell,s}(t)\|_{\cC^{1-3\kappa}(\hat w)}
		\\
		&\vee\,\|\ou_{\eps,\ell,s}(t)\reso \outz_{\eps,\ell,s}(t)\|_{\cC^{-4\kappa}(\hat w)}
		\vee
		\|(\nabla\oudz_{\eps,\ell,s}(t))^2-\Ctwos(t)\|_{\cC^{-4\kappa}(\hat w)}\Bigr)
	\end{equs}
	is bounded in $L^p(\Omega)$ uniformly in $\eps\in(0,1]$, $\ell\in\N_+$ and $s\in\R$. Moreover, for all $T>0$ we have
	\begin{equs}
		\lim_{\ell\to\infty}\lim_{\eps\searrow0}\sup_{t\in[0,T]}\|\ou^+(t)-\ou^+_{\eps,\ell}(t)\|_{\cC^{-\f12-\f\kappa2}(\hat w)}&=0\;,
		\\
		\lim_{\ell\to\infty}\lim_{\eps\searrow0}\sup_{t\in[0,T]}\|\lL\ou^-(t)-\lL\ou^-_{\eps,\ell}(t)\|_{L^\infty(\hat w)}&=0
	\end{equs}
	almost surely and in $L^p(\Omega)$.
\end{lem}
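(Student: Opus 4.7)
The plan is to apply the Kolmogorov-type criterion of Lemma~\ref{lem:kolmogorov} to each random function appearing in the supremum. Each such object lies in a Wiener chaos of finite order (at most four), so by hypercontractivity its $L^p$ moments are controlled by the $L^2$ moments, and it suffices to verify the Fourier-mode covariance bounds \eqref{eq:covariance_kolmogorov} with an index $\alpha$ slightly above the claimed regularity. Time translation invariance reduces us to $s=0$, and since the Markov structure of white noise depends only on $t-s\in[0,1]$, the resulting bounds are automatically uniform in $s\in\R$.

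For the linear objects $\ou_{\eps,\ell,s}$, $\oudz_{\eps,\ell,s}$, $\outz_{\eps,\ell,s}$, the Fourier modes on $\T_\ell^3$ admit explicit Duhamel representations as Gaussian integrals against the noise. A direct computation yields second-moment bounds consistent with regularities $-\f12$, $1$, $1$ respectively, uniformly over $\eps,\ell$ and $t-s\in[0,1]$. The zero-initial-condition modification compared to the stationary case only subtracts a smooth, pointwise-in-$k$ exponentially decaying-in-time Gaussian, so it does not affect these estimates.

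The two renormalised terms $\ou_{\eps,\ell,s}\reso\outz_{\eps,\ell,s}$ and $(\nabla\oudz_{\eps,\ell,s})^2-\Ctwos$ require a Wiener chaos decomposition. The top-chaos (fourth-chaos) components are regular and handled by standard graphical moment estimates in Fourier. The lower-chaos components come from contractions of the underlying noises and are where divergences would a~priori occur; but the definitions encode exactly the needed counterterms. In $\ou\reso\outz$, the Wick renormalisation $\out_{\eps,\ell,s}=\ou_{\eps,\ell,s}^3-3\Cones(t)\,\ou_{\eps,\ell,s}$ cancels the singular contraction of $\ou$ with one leaf of $\outz$; in $(\nabla\oudz_{\eps,\ell,s})^2-\Ctwos$, the subtraction of $\Ctwos(t)$ is by construction the diagonal of the second-chaos component. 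The main obstacle here is that these renormalisation constants are \emph{time-dependent} and differ from the stationary $\Cone,\Ctwo$. Lemma~\ref{le:renormalisation_diff} is what guarantees that the discrepancy is integrable and does not accumulate over $t-s\in[0,1]$, so the cancellation of divergent diagrams goes through uniformly in $s$.

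For the convergence statement, we use that $\xi_{\eps,\ell}=\xi$ identically on $\R\times[-\ell/2,\ell/2)^3$ and that $K^\pm$ has compact spatial support (Lemma~\ref{lem:kernel_decomposition}). Hence on any fixed compact set $\fK\subset\R^{1+3}$, once $\ell$ is sufficiently large, $\ou^\pm_{\eps,\ell}$ coincides with $M_\eps\star K^\pm\ast\xi$. Standard second-moment Fourier computations yield the $\eps\searrow0$ convergence of $\ou^\pm_\eps$ to $\ou^\pm$ in the stated slightly weaker regularities, and the polynomial decay of $\hat w$ together with the uniform $L^p$ bounds established in the first part of the lemma promotes local convergence to convergence in $\cC^{-\f12-\f\kappa2}(\hat w)$ and $L^\infty(\hat w)$ respectively, both almost surely (along a subsequence) and in $L^p(\Omega)$ via dominated convergence.
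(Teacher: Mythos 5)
Your proposal follows essentially the same route as the paper: reduce to $s=0$ by translation invariance, verify the covariance bounds \eqref{eq:covariance_kolmogorov} chaos by chaos (with the counterterms cancelling the divergent contractions, following the graphical estimates of the type in \cite{MWX17} adapted to infinite volume and zero initial data), and conclude via Lemma~\ref{lem:kolmogorov}; the convergence statement is handled, as you do, by locality of $K^+$ and of $\lL K^-$ together with standard interpolation in the weight. One small correction: Lemma~\ref{le:renormalisation_diff} plays no role here — the objects in this lemma are renormalised by their \emph{own} exact time-dependent constants $\Cones(t)$, $\Ctwos(t)$ (which are by definition the relevant contractions), so there is no discrepancy with the stationary constants to control, and uniformity in $s$ is already guaranteed by translation invariance; that lemma is instead used later, in the PDE analysis, where $\Cone-\Cones$ appears as a coefficient.
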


\begin{proof}
By translational invariance  we can restrict our attention to the case $s=0$. In order to 
prove the first part of the lemma it is enough to verify the covariance 
condition~\eqref{eq:covariance_kolmogorov} and to apply the Kolmogorov type estimate from 
Lemma~\ref{lem:kolmogorov}. To this end, one studies separately components $\tau^{(n)}_{\eps,\ell}(t)$ of the stochastic processes
	\begin{equ}
		\ou_{\eps,\ell,0}(t)\,,\quad
		\oudz_{\eps,\ell,0}(t)\,,\quad
		\outz_{\eps,\ell,0}(t)\,,\quad
		\ou_{\eps,\ell,0}(t)\reso \outz_{\eps,\ell,0}(t)\,,\quad
		(\nabla\oudz_{\eps,\ell,0}(t))^2-C^{(2)}_{\eps,\ell,0}(t)
	\end{equ} 
	in the $n$th Wiener chaos. The case $n=0$ is trivial as the expected values of the above processes vanish by definition. For $n\in\N_+$ the bounds for the covariances of the components of the first four processes from the list are quite standard and follow, for example, by a straightforward generalisation of the argument in \cite{MWX17} to infinite volume and trees with zero initial data. As argued in~\cite[Lemma~A.1]{JP23}, the proof of the bounds for $|\grad \oudz_{\eps,\ell,0}|^2-C^{(2)}_{\eps,\ell,0} $ is very similar to $\oud_{\eps,\ell} \reso \oudz_{\eps,\ell} -\Ctwo$, which was also discussed in \cite{MWX17}.
\end{proof}

\endappendix

\bibliographystyle{Martin}
\bibliography{phi43_mixing}

\end{document}